\let\originalmiddle=\middle
\def\middle#1{\mathrel{}\originalmiddle#1\mathrel{}}
\newtheorem{mythm}{Theorem}[section]
\newtheorem*{mythm*}{Theorem}
\newtheorem{myprop}[mythm]{Proposition}
\newtheorem*{myprop*}{Proposition}
\newtheorem{mylemma}[mythm]{Lemma}
\newtheorem{mycor}[mythm]{Corollary}
\newtheorem*{mycor*}{Corollary}
\theoremstyle{definition}
\newtheorem{mydef}[mythm]{Definition}
\newtheorem{myrem}[mythm]{Remark}
\newtheorem{myex}[mythm]{Example}
\newtheorem{mydefprop}[mythm]{Definition/Proposition}
\newcommand\lsub[1]{{}_{#1}\,\!}
\newcommand\lsup[1]{{}^{#1}\,\!}
\newcommand{\q}{\mathbb{Q}[\eta]}
\newcommand{\C}{\mathbb{C}}
\newcommand{\Z}{\mathbb{Z}}
\newcommand{\Q}{\mathbb{Q}}
\newcommand{\K}{\mathbb{K}}
\newcommand{\id}{\mathrm{Id}}
\newcommand{\R}{\mathbb{R}}
\renewcommand{\t}{\mathfrak{t}}
\renewcommand{\b}{\mathfrak{b}}
\DeclareMathOperator{\coker}{\text{coker}}
\DeclareMathOperator{\im}{\text{im}}
\DeclareMathOperator{\Hom}{\mathrm{Hom}}
\DeclareMathOperator{\End}{\mathrm{End}}
\DeclareMathOperator{\add}{\mathrm{add}}
\DeclareMathOperator{\rad}{\mathrm{rad}}
\DeclareMathOperator{\sgn}{\mathrm{sgn}}
\DeclareMathOperator{\gldim}{\mathrm{gldim}}
\DeclareMathOperator{\Mat}{\mathrm{Mat}}
\title[Mutating Species with Potentials and Cluster Tilting Objects]{Mutating Species with Potentials and Cluster Tilting Objects}
\author{Christoffer S\"oderberg}
\begin{document}
\begin{abstract}
	Buan, Iyama, Reiten and Smith proved in \cite{BIRS2011Mutation} that cluster-tilting objects in triangulated 2-Calabi--Yau categories are closely connected with mutation of quivers with potentials over an algebraically closed field. We prove a more general statement where instead of working with quivers with potentials we consider species with potential over a perfect field.

	We describe the $3$-preprojective algebra of the tensor product of two tensor algebras of acyclic species using a species with potential. In the case when the Jacobian algebra of a species with potential is self-injective, we provide a description of the Nakayama automorphism of a particular case of mutation of the species with potential where you mutate along orbits of the Nakayama permutation, which preserves self-injectivity.

	For certain types of Jacobian algebras of species with potentials, we prove that they lie in the scope of the derived Auslander-Iyama correspondence due to Jasso-Muro. Mutating along orbits of the Nakayama permutation stays within this setting, yielding a rich source of examples. All $2$-representation finite $l$-homogeneous algebras that are constructed using certain species with potential and mutations of such species with potentials are considered.
\end{abstract}
\maketitle
\tableofcontents

\section{Introduction}
The theory of cluster algebras was introduced and studied in several articles by Fomin-Zelevinsky in \cite{FZ2002clustermutation,FZ2003clusteralgebras,FZ2007clusteralgebras} and together with Berenstein in \cite{FZ2005withberenstein}. Cluster categories were formed as a categorification of cluster algebras. The cluster category associated to a hereditary algebra was first introduced by \cite{BMRRT2006tiltingtheory} and \cite{CC06clusteralgashallalgebras} for the Dynkin type $A_n$ case. Several more articles delved deeper in the theory of cluster categories (e.g. \cite{MRZ03generalizedassociahedra,BMR07clustertiltedalgebras,BMR08clustermutationquiverrep,BMRT07clustersandseedsacycliccluster,CK06triangcattoclusteralg2,CK08triangcattoclusteralg}). They considered the cluster category $\mathcal{C}_Q$ associated to the path algebra of an acyclic quiver $Q$ in order to categorify the cluster algebra associated to $Q$. Keller and Reiten proved in \cite{KR08acycliccalabiyaucat} that an algebraic $2$-Calabi--Yau triangulated category over an algebraically closed field is a cluster category if it contains a cluster tilting subcategory whose quiver has no oriented cycles. Amiot generalised the approach of Keller and Reiten and introduced the cluster category $\mathcal{C}_\Lambda$ for an algebra $\Lambda$ of global dimension at most $2$ in \cite{amiotclustercategories}, where $\Lambda$ can be viewed as a cluster tilting object in $\mathcal{C}_\Lambda$. Amiot also introduced the construction of a new class of $2$-Calabi--Yau cluster categories $\mathcal{C}_{(Q, W)}$ for Jacobi-finite quivers with potential $(Q, W)$, i.e. its Jacobian algebra $\mathcal{P}(Q, W)$ is finite dimensional, with a cluster tilting object $T$ such that $\mathrm{End}_{\mathcal{C}_{(Q, W)}}(T)\cong \mathcal{P}(Q, W)$. In the case when $\Lambda = \K Q'/R$ is an algebra of global dimension at most $2$, for some ideal $R$, we have that $\mathcal{C}_\Lambda\simeq \mathcal{C}_{(Q, W)}$ (\cite[Theorem 6.12]{Keller2011CYcompletions}). The equivalence sends $\Lambda$ to $T$.

Iyama and Oppermann introduced $d$-representation finite algebras in \cite{IO11nRFalgandnAPRtilt} using the notion of $d$-cluster tilting objects. The $(d+1)$-preprojective algebra $\Pi_{d+1}(\Lambda)$ of a $\K$-algebra $\Lambda$ of global dimension at most $d$ was later introduced in \cite{IO13Stablecateofhigherpreproj}. In this setting, they studied the natural higher analogue of Amiot's cluster categories for algebras of global dimension at most $d$ called the $d$-Amiot cluster category and denoted by $\mathcal{C}_\Lambda^d$. If we assume that $\Lambda$ is $d$-representation finite, then there is a $d$-cluster tilting object $\Lambda$ in $\mathcal{C}_\Lambda^d$ such that $\mathrm{End}_{\mathcal{C}_\Lambda^d}(\Lambda)\cong \Pi_{d+1}(\Lambda)$. In particular, $\Pi_3(\Lambda)\cong\mathrm{End}_{\mathcal{C}_\Lambda^2}(\Lambda)$ for the $2$-cluster tilting object $\Lambda$ in the category $\mathcal{C}_\Lambda^2 = \mathcal{C}_\Lambda$ if $\Lambda$ is $2$-representation finite, which is the focus in this article. Iyama and Oppermann also showed in \cite{IO13Stablecateofhigherpreproj} that, in this case, $\Pi_3(\Lambda)$ is self-injective. By \cite[Proposition 3.6]{IO13Stablecateofhigherpreproj} it means that $T\cong T[2]$, where $[2]$ is the Serre functor in $\mathcal{C}_\Lambda$. An immediate consequence is that $\Pi_3(\Lambda)$ is a $2\Z$-cluster tilting object in the sense of \cite[Definition 1.3.1]{jasso2023derived}. In the case when $\Lambda = \K Q'/R$, as before, its $3$-preprojective algebra $\Pi_3(\Lambda)$ is thus isomorphic to a Jacobian algebra $\mathcal{P}(Q, W)$.

A quiver with potential $(Q, W)$ is said to be self-injective if $\mathcal{P}(Q, W)$ is a finite dimensional self-injective $\K$-algebra (\cite[Definition 3.6]{HerschendOsamu2011quiverwithpotential}). For a $d$-representation finite $\K$-algebra $\Lambda$, Herschend and Iyama introduced the notion of being $l$-homogeneous in \cite{herschend2011n}. It means that the $\tau_d$-orbits, where $\tau_d$ is the higher analogue of the Auslander-Reiten translation functor $\tau$, for all projective $\Lambda$-modules are of the same length $l$. Assuming that $\K$ is perfect Herschend and Iyama proved in \cite{herschend2011n} that if $\Lambda_1$ and $\Lambda_2$ are both $l$-homogeneous and $d_1$- respectively $d_2$-representation finite $\K$-algebras, then $\Lambda_1\otimes_\K \Lambda_2$ is a $l$-homogeneous $d_1+d_2$-representation finite $\K$-algebra. For the case $d_1 = d_2 = 1$ and when $\Lambda_1$ and $\Lambda_2$ are path algebras, the algebra $\Lambda = \Lambda_1\otimes_\K \Lambda_2$ will be a $2$-representation finite algebra, and its $3$-preprojective algebra $\Pi_3(\Lambda)\cong \mathcal{P}(Q, W)$ for some self-injective quiver with potential $(Q, W)$. In previous work \cite{soderberg2022preprojective} we showed that when $\Lambda_1$ and $\Lambda_2$ are given by tensor algebras of representation finite species we have that $\Pi_3(\Lambda)$ is isomorphic to a tensor algebra of a certain species with relations. In this article we prove that $\Pi_3(\Lambda)$ is in fact given by a self-injective species with potential $(S, W)$ (Proposition~\ref{prop - preprojective algebra of tensor species}), i.e. the Jacobian algebra $\mathcal{P}(S, W)$, which is a notion derived from the more general theory studied by \cite{Nquefack2012PotentialsJacobian}, is a finite dimensional self-injective $\K$-algebra.

The derived Auslander-Iyama correspondence was introduced in \cite{jasso2023derived}, see also Theorem~\ref{theorem - Jasso-Muro}. Assuming that $\K$ is perfect and $d\ge 1$ is an integer, it gives a bijection between pairs $(\mathcal{C}, T)$, consisting of
\begin{enumerate}
	\item a triangulated category $\mathcal{C}$ with certain properties and
	\item a basic $d\Z$-cluster tilting object $T\in \mathcal{C}$,
\end{enumerate}
and pairs $(\Pi, I)$, consisting of
\begin{enumerate}
	\item a basic finite dimensional self-injective $\K$-algebra $\Pi$ that is twisted $(d+2)$-periodic and
	\item a certain invertible $\Pi$-bimodule $I$.
\end{enumerate}
This equivalence is given by $(\mathcal{C}, T)\mapsto (\mathrm{End}_\mathcal{C}(T), \mathrm{Hom}_\mathcal{C}(T, T[-d]))$. If $\Lambda$ is $d$-representation finite, then by the above discussion the pair $(\mathcal{C}_\Lambda^d, \Lambda)$ is an example to which the correspondence applies. In particular, if $\Lambda = \Lambda_1\otimes_\K \Lambda_2$, where $\Lambda_1$ and $\Lambda_2$ are given by $l$-homogeneous tensor algebras of representation finite species, then $\Pi = \Pi_3(\Lambda)$ is in this case an algebra that appears in the correspondence above for $d=2$. In this way we obtain a new family of twisted periodic self-injective $\K$-algebras parametrised by certain pairs of Dynkin diagrams, which is explicitly given by species with potentials.

The mutation theory of cluster algebras introduced in \cite{FZ2002clustermutation} motivated several mutation theories on different objects. We have a mutation theory of cluster tilting objects in hom-finite triangulated $2$-Calabi--Yau categories over an algebraically closed field \cite{BMRRT2006tiltingtheory,IyamaYoshino2008Mutation} defined using exchange sequences. Later on Derksen, Weyman and Zelevinsky defined a mutation on quivers with potentials in \cite{dwz2008mutation}. The relationship between these types of mutation is discussed in \cite{BIRS2011Mutation}, where they proved that the mutation of cluster tilting objects directly corresponds to mutation of certain Jacobian algebras over quivers with potentials. In this article we partially extend this relationship to a more general setting of species with potential. In the paper \cite{HerschendOsamu2011quiverwithpotential} it was proved that mutating along Nakayama orbits of a self-injective quiver with potential $(Q, W)$ preserves the self-injective property under certain conditions, i.e. if $\mu_{(k)}$ denotes the mutation along a Nakayama orbit then $\mu_{(k)}(Q, W)$ is a self-injective quiver with potential. Note that if $(S, W)$ is self-injective then $\mathcal{P}(S, W)$ is a Frobenius algebra, meaning that $\mathcal{P}(S, W) \cong D\mathcal{P}(S, W)_\gamma$ as $\mathcal{P}(S, W)$-bimodules where $\gamma$ is the Nakayama automorphism of $\mathcal{P}(S, W)$. We show in this article that the Nakayama automorphism $\gamma_{(k)}$ of $\mathcal{P}(\mu_{(k)}(S, W))$ is given in terms of $\gamma$. This is described in our main theorem, where we assume that $(S, W)$ is reduced, i.e. $W$ does not have any $2$-cycles.

\begin{mythm*}(Theorem~\ref{Theorem A}, Theorem~\ref{Theorem B})\label{Theorem - Main mutation theorem}
	Let $\mathcal{C}$ be a $2$-Calabi--Yau triangulated category and $T = \bigoplus_{i = 1}^n T_i\in\mathcal{C}$ a basic cluster tilting object with indecomposable summands $T_i$, that satisfies the vanishing condition at $i$ for all $i\in Q_0$ and $k\in Q_0$ is mutable.
	\begin{enumerate}
		\item If $\mathrm{End}_\mathcal{C}(T)^{op}\cong \mathcal{P}(S, W)$ for a reduced species with potential $(S, W)$, then $\mathrm{End}_\mathcal{C}(\mu_{T_k}(T))^{op}\cong \mathcal{P}(\mu_k(S, W))$.
		\item Assume furthermore, that $\mathcal{P}(S, W)$ is self-injective with Nakayama permutation $\sigma$ and Nakayama automorphism $\gamma$ satisfying conditions (A) and (B) in Section~\ref{Section - nakayama automorphism}. If the Nakayama orbit $(k) = \{\sigma^i(k)\mid i\ge 0\}$ is sparse (see Definition~\ref{definition - sparse orbit}), then $\mathcal{P}(\mu_{(k)}(S, W))$ is self-injective with Nakayama automorphism $\mu_{(k)}(\gamma)$.
	\end{enumerate}
\end{mythm*}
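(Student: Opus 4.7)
The plan is to prove part (1) by adapting the strategy of Buan-Iyama-Reiten-Smith \cite{BIRS2011Mutation} from quivers with potentials to species with potentials over a perfect field. Starting from the cluster tilting object $T = \bigoplus T_i$, I would work with the exchange triangles
\[
T_k \xrightarrow{f} B \xrightarrow{g} T_k^{*} \to T_k[1], \qquad T_k^{*} \xrightarrow{f'} B' \xrightarrow{g'} T_k \to T_k^{*}[1]
\]
obtained from minimal $\mathrm{add}(T/T_k)$-approximations in $\mathcal{C}$; the mutated cluster tilting object is $\mu_{T_k}(T) = T/T_k \oplus T_k^{*}$, and the task is to identify $\mathrm{End}_{\mathcal{C}}(\mu_{T_k}(T))^{op}$ with $\mathcal{P}(\mu_k(S, W))$. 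Using the vanishing condition at $i$, I would recover the arrow bimodules of $S$ as intrinsic Hom-spaces in $\mathcal{C}$, and then read off $\mu_k(S)$ from the exchange triangles: compositions through $B$ and $B'$ give the new composite bimodules through $k$, while the connecting maps produce the reversed bimodules at $k$ prescribed by Nguefack's species-level mutation rule \cite{Nquefack2012PotentialsJacobian}. A Keller-style cyclic derivative computation then yields the mutated potential, after which reducedness of $(S, W)$ together with the $2$-cycle reduction step gives the result up to right equivalence.

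For part (2), I would iterate part (1) along the orbit $(k) = \{\sigma^i(k) \mid i \ge 0\}$. Sparsity of the orbit ensures that no two of its vertices are joined by an arrow of $S$, so the single mutations $\mu_{\sigma^i(k)}$ commute on the species-with-potential level and compose to yield $\mu_{(k)}$. Self-injectivity transports through each step since $\mathcal{P}(\mu_{(k)}(S, W)) \cong \mathrm{End}_{\mathcal{C}}(\mu_{(k)}(T))^{op}$ and the $2\Z$-cluster tilting property $T \cong T[2]$ of Iyama-Oppermann persists under cluster tilting mutation in a $2$-Calabi--Yau category. For the Nakayama automorphism, I would track the action of $\gamma$ on the idempotents and arrow bimodules step by step through each $\mu_{\sigma^i(k)}$, using conditions (A) and (B) to guarantee that $\gamma$ descends compatibly to the new dual bimodules at $\sigma^i(k)$ and to the composite bimodules created by the mutation.

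The main obstacle I expect is tracking $\gamma$ precisely through mutation. In the quiver case of \cite{HerschendOsamu2011quiverwithpotential} this is already delicate because $\gamma$ acts on arrows only up to scalars twisted by idempotents; in the species setting $\gamma$ acts by twisted bimodule maps over the residue division algebras $\K_i$, and one must verify that these twists interact correctly with the passage to dual and composite bimodules under $\mu_k$. Conditions (A) and (B) should be precisely what is needed to ensure this compatibility, with sparsity preventing the different single mutations from interfering. A secondary subtlety is the $2$-cycle reduction in the mutated potential, which in the species setting requires a right-equivalence argument sensitive to the $\K_i$-bimodule structure; verifying that the resulting reduced species with potential has Nakayama automorphism exactly $\mu_{(k)}(\gamma)$ is where the bulk of the technical work should lie.
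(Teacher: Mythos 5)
Your outline for part (1) follows the same broad strategy as the paper (adapting \cite{BIRS2011Mutation} via exchange triangles and weak $2$-almost split sequences), but it skips over the one point where the species setting genuinely departs from the quiver setting: after replacing $T_k$ by $T_k^*$ you must equip $T_k^*$ with an action of the semisimple algebra $D_k$, i.e.\ define a $\K$-algebra morphism $D_k\to \mathrm{End}_\mathcal{C}(T_k^*)^{op}$ compatible with the exchange triangles, before the dual bimodules $M_{\alpha^*}$, $M_{\beta^*}$ can be realised inside the relevant $\mathrm{Hom}$-spaces. This is exactly the content of the mutation compatibility condition at $k$ (Definition~\ref{definition - mut condition at k}), which is part of the hypothesis that $k$ is mutable and is automatic only when $D_k=\K$; your sketch never invokes it, and without it the functor $\Phi'\colon \mathrm{proj}\widehat{T}(S')\to\mathcal{C}$ cannot be defined. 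Note also that the statement concerns the semi-mutation $\mu_k(S,W)$, whose Jacobian algebra is used directly, so the $2$-cycle reduction and right-equivalence issues you anticipate do not arise; likewise the sparsity hypothesis builds reducedness of the intermediate potentials into Definition~\ref{definition - sparse orbit}, so no reduction step is ever performed.

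For part (2) there are two genuine gaps. First, self-injectivity does \emph{not} transport through each single mutation $\mu_{T_{\sigma^i(k)}}$: Proposition~\ref{Prop - U=U[2] implies selfinjective} requires $U\cong U[2]$, which fails for $U=T_{\sigma^i(k)}$ unless $\sigma^i(k)$ is a fixed point of $\sigma$. The paper instead mutates at the whole orbit summand $U=\bigoplus_i T_{\sigma^i(k)}$, which does satisfy $U\cong U[2]$, and uses sparsity together with Proposition~\ref{prop - no arrows mu + and mu -} to identify $\mu_U^+(T)$ with the composition of single mutations; only the end result is self-injective. This also means your plan to track $\gamma$ step by step cannot work as stated, since the intermediate endomorphism algebras carry no Nakayama automorphism; the paper instead lifts the isomorphism $\varphi\colon T\to T[2]$ to an isomorphism $\tilde\varphi\colon \mu_{(k)}(T)\to \mu_{(k)}(T)[2]$ in one step, using Lemma~\ref{lemma - tildevarphi_k} to produce $\tilde\varphi_{k'}\colon T^*_{\sigma(k')}\to T^*_{k'}[2]$ from the exchange triangles, and then reads off $\mu_{(k)}(\gamma)$ from the Nakayama automorphism $\tilde\psi$ determined by $\tilde\varphi$. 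Second, the hypothesis only makes $k$ itself mutable, so before part (1) can be applied at the other vertices of the orbit one must show that mutability propagates along $\sigma$; this is Lemma~\ref{lemma - nakayama permutation mutable}, which is precisely where conditions (A) and (B) enter the argument, and it is absent from your outline.
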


The vanishing condition is a technical assumption which appears in \cite{BIRS2011Mutation} and is presented in the beginning of Subsection~\ref{subsection - relation}. Part (1) is a species version of \cite[Theorem 5.2]{BIRS2011Mutation}. Part (2) says that mutating along Nakayama orbits preserves the self-injective property, and so we obtain a new $2\Z$-cluster tilting object (compare with \cite[Theorem 4.2]{HerschendOsamu2011quiverwithpotential}). Hence mutation yields a rich source of examples that fit in the derived Auslander-Iyama correspondence (Theorem~\ref{theorem - Jasso-Muro}). The new Nakayama automorphism $\mu_{(k)}(\gamma)$ satisfies the conditions (A) and (B), thus we can iterate the process with another sparse Nakayama orbit.

The proof shares a similar outline as in \cite[Theorem 5.2]{BIRS2011Mutation} but suited for species with potentials. The main differences are due to the richer structure that species enjoy compared to quivers. For the first part we show that any isomorphism $\mathrm{End}_\mathcal{C}(T)^{op}\cong \mathcal{P}(S, W)$ can be defined using certain weak $2$-almost split sequences. Then in order to define an isomorphism $\mathrm{End}_\mathcal{C}(\mu_{T_k}T)^{op}\cong \mathcal{P}(\mu_k(S, W))$ we study weak $2$-almost split sequences for $\mu_{T_k}(T)$ and compare them to $\mathcal{P}(\mu_k(S, W))$. For the second part we note that we immediately have $\mathrm{End}_\mathcal{C}(\mu_{T_{(k)}}T)^{op}\cong \mathcal{P}(\mu_{(k)}(S, W))$. Since the mutation $\mu_{T_{(k)}}(T)$ is given in terms of exchange sequences, we use exchange sequences to define $\mathcal{P}(\mu_{(k)}(S, W))$. The vanishing condition and the mutable condition are technical assumptions needed to define the isomorphism in the first part.

Let us produce an example where the main theorem is applicable, given in more detail in Example~\ref{example - A times B}. Let $S^1$ and $S^2$ be the species
\begin{equation*}
	S^1: \R \xrightarrow{\R} \R \xleftarrow{\R} \R, \qquad S^2:\C\xrightarrow{\C} \R
\end{equation*}
which are of type $A_3$ and $B_2$ respectively. Both tensor algebras $T(S^1)$ and $T(S^2)$ are $2$-homogeneous and representation finite and therefore $\Lambda = T(S^1)\otimes_\R T(S^2)$ is a $2$-representation finite algebra. Its $3$-preprojective algebra $\Pi_3(\Lambda)$ is isomorphic to $\mathcal{P}(S, W)$, where $S$ is the species
\begin{equation*}
	\begin{tikzcd}[row sep = 1cm, column sep = 1cm]
		\C \arrow[r, "\C"] \arrow[d, "\C"] & \C \arrow[d, "\C"] & \C \arrow[l, "\C"'] \arrow[d, "\C"] \\
		\R \arrow[r, "\R"] & \R \arrow[lu, "\C"'] \arrow[ru, "\C"'] & \R \arrow[l, "\R"']
	\end{tikzcd}
\end{equation*}
over the quiver
\begin{equation*}
	\begin{tikzcd}[row sep = 1cm, column sep = 1cm]
		1 \arrow[r] \arrow[d] & 2 \arrow[d] & 3 \arrow[l] \arrow[d] \\ 
		4 \arrow[r] & 5 \arrow[lu] \arrow[ru] & 6 \arrow[l]
	\end{tikzcd}
\end{equation*}
and $W$ is the potential in \eqref{eq - potential in example at end 1}. In this case the Nakayama permutation is the identity. Mutating $(S, W)$ at vertex $5$ yields the species with potential $(S', W')$, where $S'$ is
\begin{equation*}
	\begin{tikzcd}[row sep = 2cm, column sep = 4cm, bezier bounding box]
		\C \arrow[r, "\C", bend left=15] \arrow[rd, "\C"] \arrow[d, "\C", bend left=15] & \C \arrow[l, "\C\otimes_\R \C", bend left=15] \arrow[r, "\C\otimes_\R \C", bend left=15] & \C \arrow[ld, "\C"'] \arrow[l, "\C", bend left=15] \arrow[d, "\C", bend left=15] \\
		\R \arrow[u, "\C", bend left=15] \arrow[urr, "\C"', out=-40, in=0, looseness = 2] & \R \arrow[l, "\R"'] \arrow[r, "\R"] \arrow[u, "\C"'] & \R \arrow[u, "\C", bend left=15] \arrow[ull, "\C", out=220, in=-180, looseness = 2, crossing over]
	\end{tikzcd}
\end{equation*}
and $W'$ is the potential in \eqref{eq - potential example at end 2}. Here $(S', W')$ is not reduced since its potential includes $2$-cycles. Due to our main theorem its Jacobian algebra $\mathcal{P}(S', W')$ is a basic finite dimensional self-injective $\K$-algebra that is twisted $4$-periodic. Therefore it will show up as a pair $(\mathcal{P}(S', W'), I)$ in the Jasso-Muro equivalence, where $I$ is given as the $4$-syzygy in the projective $\mathcal{P}(S', W')$-bimodule resolution of $\mathcal{P}(S', W')$. \\

\noindent\textbf{Acknowledgements.} The author wishes to thank his supervisor Martin Herschend for valuable discussions during the making of this article.

\subsection{Outline}
In Section~\ref{Section - prel} we introduce our conventions and recall the dual basis lemma. In Section~\ref{Section - species with potential and preproj} we define several of the notions used throughout the article such as species, potentials, derivative operators, preprojective algebras and study tensor products of tensor algebras of species. In Section~\ref{Section - mutation} we state various results from \cite{IyamaYoshino2008Mutation,HerschendOsamu2011quiverwithpotential,IO13Stablecateofhigherpreproj} regarding the mutation theory of cluster tilting objects in a cluster category. We also define the mutation of a species with potential using the theory of \cite{Nquefack2012PotentialsJacobian}. We prove the first part of Theorem~\ref{Theorem - Main mutation theorem}. In Section~\ref{Section - nakayama automorphism} we introduce self-injective species with potential and prove the second part of Theorem~\ref{Theorem - Main mutation theorem}. In Section~\ref{Section - Jasso-muro plus examples} we state the connection to the derived Auslander-Iyama correspondence (Theorem~\ref{theorem - Jasso-Muro}) and compute some examples.

\section{Preliminaries}\label{Section - prel}
In this section we will cover the different definitions used throughout the article such as species with potentials, Jacobian algebras of species with potentials and higher preprojective algebras. We also compute a particular example of a $3$-preprojective algebra of a tensor product of tensor algebras of two species.

\subsection{Conventions}
By default $R$-modules means left $R$-modules and right $R$-modules are considered as left $R^{op}$-modules. Composition of arrows (respectively morphisms) goes from right to left. We denote by $\mathcal{Z}(D)$ the center of an algebra $D$. The $D$-center $\mathcal{Z}_D(A)$, of some $D$-bimodule $A$, is defined to be the $\mathcal{Z}(D)$-subbimodule of $A$ consisting of all elements $a\in A$ such that $ad = da$ for all $d\in D$.

\subsection{Casimir Elements}
The following lemma is well-known and we will refer to \cite{kulshammer2017pro} for the proof. It can also be found in \cite{PareigisLnotes}.

\begin{mylemma}\label{lemma - dual basis}(Dual basis lemma)
	Let $P_R$ be a right $R$-module. Then, the following statements are equivalent:
	\begin{enumerate}
		\item $P$ is finitely generated and projective.
		\item There are $x_1, x_2, \dots, x_m\in P$ and $f_1, f_2, \dots, f_m\in \mathrm{Hom}_{R^{op}}(P, R)$ such that for every $x\in P$ we have
		\begin{equation*}
			x = \sum_{i=1}^m x_if_i(x).
		\end{equation*}
		\item The dual basis homomorphism $P\otimes_R \mathrm{Hom}_{R^{op}}(P, R) \to \mathrm{End}_{R^{op}}(P)$, $p\otimes f\mapsto (q\mapsto pf(q))$ is an isomorphism.
	\end{enumerate}
\end{mylemma}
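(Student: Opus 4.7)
My plan is to establish the circle of implications (1) $\Rightarrow$ (2) $\Rightarrow$ (3) $\Rightarrow$ (1). All three conditions are essentially bookkeeping about the interplay of $P$ with its dual, so each implication reduces to a concrete unwinding of definitions; the only subtlety is keeping track of which side of $R$ acts where.

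For (1) $\Rightarrow$ (2), I would first use finite generation to choose a surjection $\pi\colon R^m \twoheadrightarrow P$ of right $R$-modules, and then use projectivity to split it by $\iota\colon P \to R^m$. Setting $x_i := \pi(e_i)$ for the standard basis of $R^m$, and letting $f_i \in \Hom_{R^{op}}(P,R)$ be the $i$-th coordinate of $\iota$, the identity $\pi\iota = \id_P$ unfolds immediately to $x = \sum_i x_i f_i(x)$. For (3) $\Rightarrow$ (1) the same argument runs in reverse: surjectivity of the dual-basis map applied to $\id_P$ produces a finite expression $\id_P = \sum_{i=1}^m x_i f_i(-)$, which is exactly (2), and the data $(f_1,\dots,f_m)\colon P \to R^m$ together with $(r_j) \mapsto \sum_i x_i r_i \colon R^m \to P$ assemble into a split surjection, exhibiting $P$ as a direct summand of a finitely generated free module.

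For (2) $\Rightarrow$ (3) I would exhibit an explicit two-sided inverse $\psi\colon \End_{R^{op}}(P) \to P \otimes_R \Hom_{R^{op}}(P,R)$ by the formula $\psi(g) := \sum_i g(x_i) \otimes f_i$. The composite $\phi\psi$ applied to $g$ and evaluated at $q \in P$ recovers $g(q)$ immediately from (2). The composite $\psi\phi$ applied to $p \otimes f$ reduces, after balancing the tensor over $R$, to $p \otimes \sum_i f(x_i) f_i$, and the identity $\sum_i f(x_i) f_i = f$ in $\Hom_{R^{op}}(P,R)$ then follows by evaluating at an arbitrary $y \in P$ and using right $R$-linearity of $f$ to pull it through the sum, ending again with (2).

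The only real pitfall I foresee is notational: $P$ is a right $R$-module, $\Hom_{R^{op}}(P,R)$ inherits a left $R$-action from left multiplication on $R$, and the tensor $P \otimes_R \Hom_{R^{op}}(P,R)$ is formed with respect to those actions. Once that is kept straight, the balancing identity $p \cdot f(x_i) \otimes f_i = p \otimes f(x_i) f_i$ used in the (2) $\Rightarrow$ (3) step is automatic, so no genuine obstacle remains and the three verifications above close the loop.
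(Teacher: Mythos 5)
Your proof is correct: the cycle (1) $\Rightarrow$ (2) $\Rightarrow$ (3) $\Rightarrow$ (2) $\Rightarrow$ (1) closes, the explicit inverse $\psi(g) = \sum_i g(x_i)\otimes f_i$ is the standard one, and you have handled the one genuine subtlety (the left $R$-action on $\mathrm{Hom}_{R^{op}}(P,R)$ and the balancing relation $p\,f(x_i)\otimes f_i = p\otimes f(x_i)f_i$) correctly. The paper itself gives no proof and simply cites the literature for this well-known lemma; your argument is precisely the standard proof found in those references.
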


\begin{mydef}
	Let $x_i$ and $f_i$ be as in Lemma~\ref{lemma - dual basis}. The element
	\begin{equation*}
		\sum_{i=1}^n x_i\otimes_R f_i\in P\otimes_R \mathrm{Hom}_{R^{op}}(P, R)
	\end{equation*}
	is called the Casimir element of $P\otimes_R \mathrm{Hom}_{R^{op}}(P, R)$. The Casimir element does not depend on the choice of $x_i$ and $f_i$ by \cite[Lemma 1.1]{Dlab_1980}.
\end{mydef}

Let $\Gamma$ be a $\K$-algebra with Jacobson radical $\mathcal{J}_\Gamma$. We can view $\Gamma$ as a topological algebra with the $\mathcal{J}_\Gamma$-adic topology with the basic system $\{\mathcal{J}_\Gamma^i\}_{i\ge 0}$ of open neighbourhoods of $0$. Thus the closure of a subset $S\subseteq \Gamma$ becomes
\begin{equation}\label{eq - closure of a set}
	\overline{S} = \bigcap_{l\ge 0}(S + \mathcal{J}_\Gamma^l).
\end{equation}
For background on the adic topology see \cite[Section 10]{atiyahmacdonald1969}.

\section{Species with Potential and Higher Preprojective Algebras}\label{Section - species with potential and preproj}
\subsection{Species}
Throughout this article we will assume that $\K$ is a perfect field. The main reason being that we need that $D\otimes_\K D'$ is semisimple where $D$ and $D'$ are two division $\K$-algebras. Although many results are still true without this assumption, we make this assumption for the whole article for convenience.

\begin{mydef}\label{Definition - Species}(Species)
	Let $Q$ be a finite quiver. 
	\begin{enumerate}
		\item A species $S=(D_i, M_\alpha)_{i\in Q_0, \alpha\in Q_1}$ is a collection where each $D_i$ is a semisimple $\K$-algebra and $M_\alpha\in D_j$-$D_i$-$\mathrm{mod}$, where $\alpha:i\rightarrow j$, such that $\mathrm{Hom}_{D_i^{op}}(M_\alpha, D_i)\cong \mathrm{Hom}_{D_j}(M_\alpha, D_j)$ as $D_i$-$D_j$-modules, and such that $\mathcal{Z}_\K(D_i) = D_i$ and $\mathcal{Z}_\K(M_\alpha) = M_\alpha$ for all $i\in Q_0$ and $\alpha\in Q_1$.
		\item A finite dimensional species $S$ is a species $S$ such that $\dim_\K D_i<\infty$ and $\dim_\K M_\alpha<\infty$ for all $i\in Q_0$ and $\alpha\in Q_1$.
		\item We say that a finite dimensional species $S$ is a $\mathbb{K}$-species if all $D_i$ are division $\K$-algebras over a common central subfield $\mathbb{K}$.
		\item We call a species $S$ acyclic if $Q$ is acyclic.
	\end{enumerate}
\end{mydef}

\begin{mydef}
	Let $S$ be a species, $D=\bigoplus_{i\in Q_0}D_i$ and $M=\bigoplus_{\alpha\in Q_1}M_\alpha\in D$-$D\mathrm{-mod}$. 
	\begin{enumerate}
		\item We define the tensor algebra $T(S)$ to be the tensor ring $T(D, M)$. More explicitly,
		\begin{equation*}
			T(S) = T(D, M) = \bigoplus_{k\ge 0} M^k = \bigoplus_{k\ge 0} M^{\otimes_D k}, \quad M^0 = D.
		\end{equation*}
		\item We define the complete tensor algebra $\widehat{T}(S)$ as
		\begin{equation*}
			\widehat{T}(S) = \prod_{k\ge 0} M^k = \prod_{k\ge 0} M^{\otimes_D k}, \quad M^0 = D.
		\end{equation*}
	\end{enumerate}
\end{mydef}

The benefit of using $\widehat{T}(S)$ is that its Jacobson radical is $\langle M\rangle = \widehat{T}(S)_{\ge 1}$.

\begin{myrem}
	If $S$ is a finite dimensional species then the following are equivalent
	\begin{enumerate}
		\item $S$ is acyclic,
		\item $T(S)\cong \widehat{T}(S)$,
		\item $\dim_\K T(S)<\infty$.
	\end{enumerate}
	Note that in this case $T(S)$ is a finite dimensional hereditary algebra.
\end{myrem}

\begin{mylemma}\label{lemma - division algebra species}
	Let $S$ be a species and assume that $\dim_\K D_i<\infty$ for all $i\in Q_0$. There exists a species $S'=(D_k', M_\alpha')_{k\in Q_0', \alpha\in Q_1'}$ over a quiver $Q'$, where all $D_k'$ matrix rings over some division $\K$-algebra, such that $T(S)\cong T(S')$ and $\widehat{T}(S)\cong \widehat{T}(S')$.
\end{mylemma}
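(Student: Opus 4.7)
The plan is to use the Artin--Wedderburn structure theorem together with a refinement of the quiver $Q$ that splits each vertex into one vertex per matrix-ring factor of $D_i$. Since $\K$ is perfect and $\dim_\K D_i < \infty$, each semisimple $\K$-algebra $D_i$ decomposes as $D_i = \prod_{k=1}^{n_i} D_i^{(k)}$ where each $D_i^{(k)}$ is a matrix ring over a division $\K$-algebra; let $e_i^{(k)}$ denote the corresponding central idempotent, so $1_{D_i} = \sum_k e_i^{(k)}$ and $D_i^{(k)} = e_i^{(k)} D_i$.

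I would then define the new quiver $Q'$ with vertex set $Q'_0 = \{(i,k) : i \in Q_0,\ 1 \le k \le n_i\}$, and for every $\alpha : i \to j$ in $Q_1$ together with every pair $(k,l)$ introduce an arrow $\alpha_{k,l} : (i,k) \to (j,l)$ in $Q'_1$. Set $D'_{(i,k)} := D_i^{(k)}$ and $M'_{\alpha_{k,l}} := e_j^{(l)} M_\alpha e_i^{(k)}$, which is naturally a $D_j^{(l)}$-$D_i^{(k)}$-bimodule. Observe that
\begin{equation*}
\bigoplus_{(i,k) \in Q'_0} D'_{(i,k)} \;=\; \bigoplus_{i \in Q_0} D_i, \qquad \bigoplus_{\alpha_{k,l} \in Q'_1} M'_{\alpha_{k,l}} \;=\; \bigoplus_{\alpha \in Q_1} M_\alpha,
\end{equation*}
because for each $\alpha$ the decomposition $M_\alpha = \bigoplus_{k,l} e_j^{(l)} M_\alpha e_i^{(k)}$ exhausts $M_\alpha$.

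Next I would verify that $S'$ satisfies Definition~\ref{Definition - Species}. The centrality conditions $\mathcal{Z}_\K(D'_{(i,k)}) = D'_{(i,k)}$ and $\mathcal{Z}_\K(M'_{\alpha_{k,l}}) = M'_{\alpha_{k,l}}$ are inherited from the corresponding conditions on $D_i$ and $M_\alpha$, since central elements of $D_i$ restrict to central elements of $D_i^{(k)}$ via $e_i^{(k)}$. For the Hom-compatibility, applying the functors $e_i^{(k)}(-)$ and $(-)e_j^{(l)}$ to the given isomorphism $\Hom_{D_i^{op}}(M_\alpha, D_i) \cong \Hom_{D_j}(M_\alpha, D_j)$ yields $\Hom_{(D_i^{(k)})^{op}}(M'_{\alpha_{k,l}}, D_i^{(k)}) \cong \Hom_{D_j^{(l)}}(M'_{\alpha_{k,l}}, D_j^{(l)})$ as $D_i^{(k)}$-$D_j^{(l)}$-bimodules; the direct-sum decomposition of $M_\alpha$ guarantees that each block can be isolated in this way.

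Finally, with $D = \bigoplus_i D_i$, $M = \bigoplus_\alpha M_\alpha$, $D' = \bigoplus_{(i,k)} D'_{(i,k)}$ and $M' = \bigoplus_{\alpha_{k,l}} M'_{\alpha_{k,l}}$, the vector-space equalities above become identifications of $\K$-algebras ($D = D'$) and of $D$-$D$-bimodules ($M = M'$), because the multiplication on $D_i$ corresponds to componentwise multiplication on $\prod_k D_i^{(k)}$ and the bimodule actions agree on each block. Hence $T(S) = T(D,M) = T(D',M') = T(S')$ and likewise for the completions. The only subtlety I would have to pay attention to is the $\Hom$-isomorphism condition for the pieces $M'_{\alpha_{k,l}}$, which is the step that genuinely uses both sides of the original species axiom together with the idempotent decomposition; everything else is bookkeeping.
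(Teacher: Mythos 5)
Your proposal is correct and follows essentially the same route as the paper: both apply Artin--Wedderburn to split each $D_i$ into matrix-ring blocks, refine the quiver by one vertex per block, and set $M'_{\alpha_{k,l}} = e_{t(\alpha)}^{(l)} M_\alpha e_{s(\alpha)}^{(k)}$, so that the total data $(D,M)$ is unchanged and the (complete) tensor algebras coincide. Your extra verification of the species axioms for the blocks is a detail the paper leaves implicit, but it does not change the argument.
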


\begin{proof}
	By the Artin-Wedderburn theorem there exist for each $i\in Q_0$ division $\K$-algebras $D_j'$ such that $D_i \cong \bigoplus_{j=1}^{n_i}M_{m_j}(D_j')$, where $M_{m_j}(D_j')$ is the matrix ring of $D_j'$ of some size $m_j$. Let $e_{ij}$ be the identity element in $M_{m_j}(D_j')$. Clearly
	\begin{equation*}
		1_{T(S)} = \sum_{\substack{i\in Q_0 \\ j\in \{1, 2, \dots, n_i \}}}e_{ij}.
	\end{equation*}
	Let $Q'$ be the quiver given by
	\begin{equation*}
		\begin{aligned}
			Q_0' &= \{(i, j) \mid i\in Q_0, j\in \{1, 2, \dots, n_i\} \}, \\
			Q_1' &= \{\alpha_{jj'}: (s(\alpha), j) \to (t(\alpha), j') \mid \alpha\in Q_1, j\in \{1, 2, \dots, n_{s(\alpha)}\}, j'\in \{1, 2, \dots, n_{t(\alpha)}\} \}.
		\end{aligned}
	\end{equation*}
	Define the species $S'$ over $Q'$ as
	\begin{equation*}
		\begin{aligned}
			D_{(i, j)} = M_{m_j}(D_j'), \\
			M_{\alpha_{jj'}} = e_{t(\alpha)j'}M_\alpha e_{s(\alpha)j}.
		\end{aligned}
	\end{equation*}
	By construction we immediately have that $T(S)\cong T(S')$ and $\widehat{T}(S)\cong \widehat{T}(S')$. 
\end{proof}

\begin{myrem}
	Definition~\ref{Definition - Species} of a species is not the same as the definitions given in \cite{berg2011structure, ringel1976representations, gabriel1973indecomposable}. They require that all $D_i$ are division $\K$-algebras for all $i\in Q_0$. However, our definition for $\K$-species coincide. We make this change in the definition of a species since we want $S(S^1, S^2)$ defined in Definition~\ref{definition - S(S^1, S^2)} to be a species. For a finite dimensional species $S$ there exists a Morita equivalent $\K$-species $S'$ described later in Proposition~\ref{proposition - morita equivalent k-species}. Thus, in practice, we can assume without loss of generality that our finite dimensional species are $\K$-species.
\end{myrem}

\begin{myrem}
	From the construction of $S'$ we have a natural surjective quiver morphism $\pi: Q' \to Q$ given by
	\begin{equation*}
		\begin{aligned}
			\pi_0: Q_0' &\to Q_0, \\
			(i, j) &\mapsto i, \\
			\pi_1: Q_1' &\to Q_1, \\
			\alpha_{jj'} &\mapsto \alpha.
		\end{aligned}
	\end{equation*}
\end{myrem}

Throughout this article we will assume that all species are finite dimensional. Since $D$ is semisimple and finite dimensional it is a symmetric Frobenius algebra by \cite[Corollary 5.17]{skowronski2011frobenius}. In other words, there exists a linear map $\t: D\to \K$ such that there is no non-zero ideal contained in the kernel of $\t$ and that $\t(ab) = \t(ba)$ for all $a,b\in D$.

We assume that each species is equipped with such a map. Hence we get an isomorphism
\begin{equation}\label{eq - iso frobenius D}
	\begin{aligned}
		D&\xrightarrow{\sim} \mathrm{Hom}_\K(D, \K),  \\
		d&\mapsto \t(d-),
	\end{aligned}
\end{equation}
of $D$-bimodules. The module $D$ is projective over itself, and therefore by applying Lemma~\ref{lemma - dual basis} we can write down the Casimir element of $D$ as
\begin{equation*}
	\sum_{i\in Q_0}\sum_{j=1}^{\dim_\K D_i} e_i^j\otimes \hat{e}_i^j\in D\otimes_\K \mathrm{Hom}_\K(D, \K).
\end{equation*}
Using the isomorphism in \eqref{eq - iso frobenius D} we can define elements $\bar{e}_i^j\in D$ such that $\hat{e}_i^j = \t(\bar{e}_i^j-)$. We call the element
\begin{equation*}
	\sum_{i\in Q_0}\sum_{j=1}^{\dim_\K D_i} e_i^j\otimes \bar{e}_i^j\in D\otimes_\K D
\end{equation*}
the Casimir element of $D$ associated to $\t$.

\begin{myrem}\label{remark - dualising condition}\cite[Remark 2.2]{Nquefack2012PotentialsJacobian}
	The dualising condition, i.e. $\mathrm{Hom}_{D_i^{op}}(M_\alpha, D_i)\cong \mathrm{Hom}_{D_j}(M_\alpha, D_j)$ as $D_i$-$D_j$-modules, in Definition~\ref{Definition - Species} is automatic if $D$ is a symmetric Frobenius algebra (see also \cite[Section 2.1]{BSWsuperpotential} for the case when $D$ is a semisimple $\C$-algebra). The $D$-bimodule morphisms
	\begin{equation*}
		\begin{aligned}
			\t_l &= \t\circ -: \mathrm{Hom}_D(M, D) \xrightarrow{\sim} \mathrm{Hom}_\K(M, \K) \\
			\t_r &= \t\circ -: \mathrm{Hom}_{D^{op}}(M, D) \xrightarrow{\sim} \mathrm{Hom}_\K(M, \K)
		\end{aligned}
	\end{equation*}
	are isomorphisms with the inverses given as
	\begin{equation*}
		\begin{aligned}
			\t_l^{-1} : \mathrm{Hom}_\K(M, \K) &\to \mathrm{Hom}_D(M, D), \\
			f&\mapsto \left[x\mapsto \sum_{i\in Q_0}\sum_{j=1}^{\dim_\K D_i} e_i^jf(\bar{e}_i^jx) \right], \\
			\t_r^{-1} : \mathrm{Hom}_\K(M, \K) &\to \mathrm{Hom}_{D^{op}}(M, D), \\
			f&\mapsto \left[x\mapsto \sum_{i\in Q_0}\sum_{j=1}^{\dim_\K D_i} e_i^jf(x\bar{e}_i^j) \right].
		\end{aligned}
	\end{equation*}
\end{myrem}

Let us denote $M^* = \mathrm{Hom}_\K(M, \K)$. Following \cite{Nquefack2012PotentialsJacobian} we define the $D$-bimodule morphism
\begin{equation*}
	\begin{aligned}
		\mathfrak{b}: M\otimes_D M^* \oplus M^*\otimes_D M &\to D, \\
		x\otimes x^* + y^*\otimes y &\mapsto \t_l^{-1}(x^*)(x) + \t_r^{-1}(y^*)(y).
	\end{aligned}
\end{equation*}

\begin{myrem}\label{remark - b determined by t}
	Note that $\b$ is completely determined by $\t$.
\end{myrem}

We will denote the morphism $\b$ as $\b_S$ to make it clear that it is defined for a given species $S$.

\begin{mylemma}
	The morphism $\mathfrak{b}$ satisfies:
	\begin{enumerate}
		\item $\t$ is a symmetrising trace for $\b$, i.e. $\t(\mathfrak{b}(x\otimes x^*)) = \t(\b(x^*\otimes x))$ for all $x\in M$ and $x^*\in M^*$.
		\item The morphisms
		\begin{equation*}
			\begin{aligned}
				\b_{1, l}:& M^* \to \mathrm{Hom}_D(M, D), \\
				&\hspace*{0.2cm}x^*\mapsto \b(-\otimes x^*), \\
				\b_{1, r}:& M \to \mathrm{Hom}_{D^{op}}(M^*, D), \\
				&\hspace*{0.2cm}x \mapsto \b(x\otimes -),
			\end{aligned}
		\end{equation*}
		are bijective. Or equivalently, the morphisms
		\begin{equation*}
			\begin{aligned}
				\b_{2, r}:& M^* \to \mathrm{Hom}_{D^{op}}(M, D), \\
				&\hspace*{0.2cm}x^*\mapsto \b(x^*\otimes -), \\
				\b_{2, l}:& M \to \mathrm{Hom}_D(M^*, D), \\
				&\hspace*{0.2cm}x \mapsto \b(-\otimes x),
			\end{aligned}
		\end{equation*}
		are bijective.
	\end{enumerate}
\end{mylemma}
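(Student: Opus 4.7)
My plan is to reduce everything to the fact that, after composing with the trace $\t$, the morphism $\b$ reproduces the canonical evaluation pairing between $M$ and $M^*$.

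For part (1), I would unwind the definition of $\b$ on $M \otimes_D M^*$ together with the fact that $\t_l \circ \t_l^{-1} = \mathrm{id}$ to immediately obtain $\t(\b(x \otimes x^*)) = \t(\t_l^{-1}(x^*)(x)) = x^*(x)$, and an identical computation on $M^* \otimes_D M$ using $\t_r \circ \t_r^{-1} = \mathrm{id}$ yields $\t(\b(x^* \otimes x)) = x^*(x)$. These agree.

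For part (2), I would first observe that $\b_{1,l}$ literally coincides with $\t_l^{-1}$ pointwise, since $\b_{1,l}(x^*)(x) = \b(x \otimes x^*) = \t_l^{-1}(x^*)(x)$, and similarly $\b_{2,r} = \t_r^{-1}$; so both are bijective by Remark~\ref{remark - dualising condition}. For the remaining $\b_{1,r}: M \to \mathrm{Hom}_{D^{op}}(M^*, D)$ and $\b_{2,l}: M \to \mathrm{Hom}_{D}(M^*, D)$, I would apply Remark~\ref{remark - dualising condition} to the bimodule $M^*$ in place of $M$ to obtain isomorphisms $\t_r^{M^*}: \mathrm{Hom}_{D^{op}}(M^*, D) \xrightarrow{\sim} \mathrm{Hom}_\K(M^*, \K) = M^{**}$ and $\t_l^{M^*}: \mathrm{Hom}_{D}(M^*, D) \xrightarrow{\sim} M^{**}$. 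The computation from part (1) then shows that both composites $\t_r^{M^*} \circ \b_{1,r}$ and $\t_l^{M^*} \circ \b_{2,l}$ equal the canonical evaluation map $\mathrm{ev}: M \to M^{**}$, $x \mapsto (x^* \mapsto x^*(x))$, which is bijective because $M$ is finite-dimensional over $\K$. Hence $\b_{1,r}$ and $\b_{2,l}$ are also bijections.

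The equivalence of the two formulations in (2) is then automatic, since all four morphisms have been shown to be bijective. The main obstacle I anticipate is bookkeeping: verifying that $D$-bimodule linearities (such as right-$D$-linearity of $\b_{1,r}(x)$) are preserved throughout, and that Remark~\ref{remark - dualising condition} can indeed be invoked for the bimodule $M^*$. Both are routine, as $M^*$ inherits a finite-dimensional $D$-bimodule structure from $M$ and the Casimir-based formulas for $\t_l^{-1}$, $\t_r^{-1}$ make sense for any such bimodule.
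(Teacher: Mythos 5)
Your proposal is correct and follows essentially the same route as the paper: part (1) comes from the fact that $\t\circ -$ inverts $\t_l^{-1}$ and $\t_r^{-1}$ (Remark on the dualising condition), $\b_{1,l}$ and $\b_{2,r}$ are identified with $\t_l^{-1}$ and $\t_r^{-1}$ directly, and $\b_{1,r}$, $\b_{2,l}$ are handled by applying the same remark to the bimodule $M^*$ composed with the canonical double-dual isomorphism. Your write-up is merely more explicit than the paper's (which states $\b_{1,r}=\t_l^{-1}\circ\phi^{-1}$ rather tersely), but the underlying argument is the same.
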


\begin{proof}
	\begin{enumerate}
		\item This follows from Remark~\ref{remark - dualising condition}, that is that $\t\circ -$ is the inverse to $\t_l^{-1}$ and $\t_r^{-1}$.
		\item Again, from Remark~\ref{remark - dualising condition} the inverse of $\b_{1,l}$ is $\t\circ -$, thus $\b_{1,l}$ is bijective. To show that $\b_{1,r}$ is bijective we will first denote $\phi: M\to \mathrm{Hom}_\K (M^*, \K)$ as the standard isomorphism. Note that Remark~\ref{remark - dualising condition} holds for any $D$-bimodule, therefore we can observe that $\b_{1,r}=\t_l^{-1}\circ \phi^{-1}$, hence bijective.
	\end{enumerate}
\end{proof}

\begin{mycor}
	If $S$ is a $\K$-species, then the ordered data $\{M, M^*; \mathfrak{b}_S\}$ becomes a symmetrisable dualising pair of bimodules as in \cite[Definition 2.3]{Nquefack2012PotentialsJacobian}.
\end{mycor}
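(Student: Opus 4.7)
The plan is to unpack \cite[Definition 2.3]{Nquefack2012PotentialsJacobian} and verify each clause of it using the preceding lemma, with the added hypothesis that $S$ is a $\K$-species (so every $D_i$ is a division $\K$-algebra, placing us exactly in Nquefack's setup). Recall that the definition of a symmetrisable dualising pair requires two $D$-bimodules together with a $D$-bimodule pairing into $D$ such that (i) the four maps into $\Hom_D(-,D)$ and $\Hom_{D^{op}}(-,D)$ obtained by partially evaluating the pairing are all bijective (the dualising condition), and (ii) there exists a $\K$-linear trace on $D$ realising the pairing as symmetric in the sense $\mathfrak{t}(\mathfrak{b}(x\otimes x^*))=\mathfrak{t}(\mathfrak{b}(x^*\otimes x))$.

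First I would note that $M = \bigoplus_\alpha M_\alpha$ and $M^* = \Hom_\K(M,\K)$ are both $D$-bimodules (the bimodule structure on $M^*$ being the standard one induced by the two-sided action on $M$), and that $\mathfrak{b}_S: M\otimes_D M^*\oplus M^*\otimes_D M\to D$ is a $D$-bimodule morphism by construction, since both $\mathfrak{t}_l^{-1}$ and $\mathfrak{t}_r^{-1}$ are $D$-bimodule isomorphisms. Then clause (i) is precisely part (2) of the preceding lemma, which establishes the bijectivity of $\mathfrak{b}_{1,l}$, $\mathfrak{b}_{1,r}$, $\mathfrak{b}_{2,l}$, $\mathfrak{b}_{2,r}$. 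Clause (ii) is precisely part (1) of the same lemma, which says that the symmetrising trace $\mathfrak{t}$ on $D$ (which exists because $D$ is a finite dimensional semisimple, hence symmetric Frobenius, $\K$-algebra) provides the required symmetry of $\mathfrak{b}_S$.

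The only subtlety that remains is checking that the $\K$-species hypothesis is what secures the conventions in Nquefack's definition. I would point out that once each $D_i$ is a division $\K$-algebra over a common central subfield $\K$, the indecomposable summands $e_iDe_j$ of $D$ are of the required type, and the two-sided evaluation conventions for $\mathfrak{b}_S$ agree with those in \cite[Definition 2.3]{Nquefack2012PotentialsJacobian}.

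The main potential obstacle is purely a bookkeeping one: ensuring that the conventions (left vs.\ right duals, side of the trace, order of the tensor factors) in the present paper and in \cite{Nquefack2012PotentialsJacobian} line up so that the bijectivity and symmetry statements translate verbatim. No further computation is needed beyond the preceding lemma and Remark~\ref{remark - dualising condition}; the corollary is essentially a repackaging of those results under the name fixed by Nquefack.
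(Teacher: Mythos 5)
Your proposal is correct and matches the paper's approach: the paper gives no explicit proof of this corollary, treating it as an immediate consequence of the preceding lemma, whose parts (1) and (2) supply exactly the symmetrising trace and the bijectivity of the partial evaluations required by Nquefack's Definition 2.3. Your additional remarks about the $\K$-species hypothesis and the matching of conventions are reasonable bookkeeping but do not change the substance.
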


Let
\begin{equation*}
	\sum_{i=1}^n \hat{x}_i\otimes x_i\in \mathrm{Hom}_D(M, D)\otimes_D M, \qquad \sum_{j=1}^m y_j\otimes \hat{y}_j\in M\otimes_D \mathrm{Hom}_{D^{op}}(M, D),
\end{equation*}
be Casimir elements. Setting $x^*_i = \b_{1, l}^{-1}(\hat{x}_i)$ and $y^*_j = \b_{2, r}^{-1}(\hat{y}_j)$ we get elements
\begin{equation}\label{eq - Casimir elements of M M^*}
	\sum_{i=1}^n x_i^*\otimes x_i\in M^*\otimes_D M, \qquad \sum_{j=1}^m y_j\otimes y_j^*\in M\otimes_D M^*,
\end{equation}
satisfying
\begin{equation}\label{eq - casimir elements equations ass. to b}
	\sum_{i=1}^n \b(x\otimes x_i^*)x_i = x = \sum_{j=1}^my_j\b(y_j^*\otimes x), \qquad \sum_{i=1}^n x_i^*\b(x_i\otimes \xi) = \xi = \sum_{j=1}^m\b(\xi\otimes y_j)y_j^*,
\end{equation}
for all $x\in M$ and $\xi\in M^*$.

\begin{myrem}\label{remark - delta of casimir elements}
	Notice that by \eqref{eq - casimir elements equations ass. to b} we have that $\b(x_i\otimes x_j^*) = \delta_{ij}$ and $\b(y_i^*\otimes y_j) = \delta_{ij}$.
\end{myrem}

\subsection{Jacobian Algebras}
In this subsection we define the Jacobian algebra for a species with potential as in \cite{Nquefack2012PotentialsJacobian}.

\begin{mydef}\cite[Definition 3.5]{Nquefack2012PotentialsJacobian}
	Let $S$ be a species. We call elements in $\mathcal{Z}_D(\widehat{T}(S)_{\ge 2})$ potentials. If a potential $W\in \mathcal{Z}_D(\widehat{T}(S)_{\ge 3})$, i.e. the potential $W$ does not have any $2$-cycles, we call it a reduced potential.
\end{mydef}

\begin{mydef}\cite[Proposition 3.2]{Nquefack2012PotentialsJacobian}\label{definition - derivative operators}
	Let $S$ be a species. We define the derivative operators as
	\begin{equation*}
		\begin{aligned}
			\partial^l: M^*\otimes_D M\otimes_D \widehat{T}(S) &\to \widehat{T}(S), \\
			\xi\otimes a\otimes b &\mapsto \partial^l_\xi(a\otimes b) = \b(\xi\otimes a)b, \\
			\partial^r: \widehat{T}(S) \otimes_D M\otimes_D M^* &\to \widehat{T}(S), \\
			a\otimes b\otimes \xi &\mapsto \partial^r_\xi(a\otimes b) = a\b(b\otimes \xi).
		\end{aligned}
	\end{equation*}
\end{mydef}

For each $\xi\in M^*$ we use the notation $\partial^l_\xi = \partial^l(\xi\otimes-)$ and $\partial^r_\xi = \partial(-\otimes \xi)$. We extend these operators to $\widehat{T}(S)$ by setting $\partial^l_\xi(d) = 0 = \partial^r_\xi(d)$ for all $d\in D$.

\begin{mylemma}\cite[Lemma 2.5, Proposition 3.2]{Nquefack2012PotentialsJacobian}
	Let $S$ be a species and let $W\in \mathcal{Z}_D(\widehat{T}(S)_{\ge 2})$ be a potential. We define the permutation operators $\varepsilon_l, \varepsilon_r: \mathcal{Z}_D(\widehat{T}(S)_{\ge 2})\to \mathcal{Z}_D(\widehat{T}(S)_{\ge 2})$ by
	\begin{equation*}
		\begin{aligned}
			\varepsilon_l(W) = \sum_{i=1}^m \partial^l_{x^*_i}(W)x_i, \\
			\varepsilon_r(W) = \sum_{i=1}^n y_i\partial^r_{y^*_i}(W),
		\end{aligned}
	\end{equation*}
	where $x_i, x^*_i, y_i, y^*_i$ make up the Casimir elements in \eqref{eq - Casimir elements of M M^*}. We define the cyclic permutation operator $\varepsilon_c: \mathcal{Z}_D(\widehat{T}(S)_{\ge 2})\to \mathcal{Z}_D(\widehat{T}(S)_{\ge 2})$ on the homogeneous element of degree $d+1$ by $\varepsilon_c(W) = \sum_{k=0}^d \varepsilon^k_l(W) = \sum_{k=0}^d \varepsilon^k_r(W)$.
\end{mylemma}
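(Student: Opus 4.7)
The plan proceeds in three stages.

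\textbf{Well-definedness.} Both operators are independent of the choice of Casimir elements by the uniqueness built into the dual basis lemma (Lemma~\ref{lemma - dual basis}, together with \cite[Lemma 1.1]{Dlab_1980}). To see that the image lies in $\mathcal{Z}_D(\widehat{T}(S)_{\ge 2})$, I would use that $\b$ is a $D$-bimodule map and that the Casimir elements $\sum_i x_i^*\otimes x_i$ and $\sum_j y_j\otimes y_j^*$ are $D$-central elements of $M^*\otimes_D M$ and $M\otimes_D M^*$ respectively, so that the outputs of $\varepsilon_l$ and $\varepsilon_r$ remain $D$-central whenever $W$ is.

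\textbf{Cyclic shift interpretation.} The heart of the argument is to show that on a simple tensor $W = m_0 \otimes m_1 \otimes \cdots \otimes m_d \in M^{\otimes_D(d+1)}$ one has
\[
	\varepsilon_l(W) \;=\; \sum_i \b(x_i^*\otimes m_0)\, m_1\otimes\cdots\otimes m_d \otimes x_i \;=\; m_1\otimes m_2\otimes\cdots\otimes m_d\otimes m_0.
\]
The second equality is obtained by transporting the $D$-valued scalar $\b(x_i^*\otimes m_0)$ through the tensor string using the $D$-bilinearity of $\otimes_D$ and collapsing the resulting coevaluation-evaluation zigzag via the Casimir identity \eqref{eq - casimir elements equations ass. to b}. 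A symmetric calculation using $\sum_j y_j\otimes y_j^*$ shows $\varepsilon_r(W) = m_d\otimes m_0\otimes\cdots\otimes m_{d-1}$, the inverse cyclic shift.

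\textbf{Equality of the cyclic sums.} Since $\varepsilon_l$ and $\varepsilon_r$ act as mutually inverse cyclic rotations on degree-$(d+1)$ tensors, both satisfy $\varepsilon_l^{d+1} = \varepsilon_r^{d+1} = \mathrm{id}$, and the two sums $\sum_{k=0}^d \varepsilon_l^k(W)$ and $\sum_{k=0}^d \varepsilon_r^k(W)$ each compute the cyclic symmetrisation of $W$ over its $d+1$ rotations. They therefore coincide, so $\varepsilon_c$ is well-defined on homogeneous elements, and extends by continuity for the adic topology to all of $\mathcal{Z}_D(\widehat{T}(S)_{\ge 2})$.

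The main obstacle is the second stage: $\b(x_i^*\otimes m_0)$ lies in a specific summand $D_j$ of $D=\bigoplus_{i\in Q_0} D_i$ and does not commute with the intermediate factors $m_k$, so one cannot simply ``slide'' the scalar across the tensor. The cleanest way to manage the bookkeeping is to view $\varepsilon_l$ as a string-diagram composite in which the Casimir element plays the role of a coevaluation dual to the evaluation $\b$, and then collapse the zigzag; making this rigorous requires decomposing $M = \bigoplus_{\alpha\in Q_1} M_\alpha$ along the quiver and tracking the idempotent structure of $D$ carefully throughout the computation.
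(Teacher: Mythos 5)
The paper does not actually prove this lemma: it is imported wholesale from \cite[Lemma 2.5, Proposition 3.2]{Nquefack2012PotentialsJacobian}, and the only in-text justification is the subsequent remark observing that independence of the choice of Casimir elements (hence well-definedness of $\varepsilon_l$, $\varepsilon_r$) follows from $W\in\mathcal{Z}_D(\widehat{T}(S)_{\ge 2})$, and that $\varepsilon_l^d(W)=W$ in degree $d$. So you are supplying an argument where the paper defers to a citation; judged on its own merits, your outline has a genuine gap exactly where you suspect it does.

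The problem is stage two. The identity $\varepsilon_l(m_0\otimes\cdots\otimes m_d)=m_1\otimes\cdots\otimes m_d\otimes m_0$ is not merely hard to verify --- its right-hand side is not even well defined on $M^{\otimes_D(d+1)}$, because cyclic rotation of tensor factors is not $D$-balanced when $D$ is noncommutative ($m_0d\otimes\cdots$ and $m_0\otimes d(\cdots)$ rotate to different elements). Moreover, the Casimir identities \eqref{eq - casimir elements equations ass. to b} do not collapse the expression $\sum_i\b(x_i^*\otimes m_0)(m_1\otimes\cdots\otimes m_d)\otimes x_i$: the identity involving the $x_i$'s reads $\sum_i\b(x\otimes x_i^*)x_i=x$, with the pairing in the opposite order and the scalar adjacent to $x_i$, whereas here the scalar $\b(x_i^*\otimes m_0)$ sits at the far left and lies in $D_{s(\alpha)}$ rather than $D_{t(\alpha)}$; note also that $\b(x\otimes x^*)$ and $\b(x^*\otimes x)$ land in different summands of $D$ and are related only through the symmetrising trace via $\t(\b(x\otimes x^*))=\t(\b(x^*\otimes x))$. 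Careful bookkeeping of idempotents, as you propose, cannot fix this, since the obstruction is noncommutativity of the division algebras $D_i$ themselves, not the quiver grading. The correct route (the one taken in Nguefack's proof) avoids interpreting $\varepsilon_l$ as a pointwise rotation of simple tensors: one uses the $D$-centrality of $W$ and the symmetry of $\t$ to establish the operator identities $\partial^r_\xi(\varepsilon_l(W))=\partial^l_\xi(W)$ and $\varepsilon_r\circ\varepsilon_l=\mathrm{id}=\varepsilon_l\circ\varepsilon_r$ on $\mathcal{Z}_D$ (these are the paper's Lemma~\ref{lemma - epsilon property}), from which $\varepsilon_l^{d+1}=\mathrm{id}$ in degree $d+1$ and the equality $\sum_{k=0}^d\varepsilon_l^k=\sum_{k=0}^d\varepsilon_r^k$ follow exactly as in your stage three. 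Your first and third stages are essentially sound; the second needs to be replaced by an argument that works at the level of central elements rather than simple tensors.
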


\begin{myrem}\label{remark - comapare to nquefack}
	Recall that the Casimir element is independent of the choice of basis. The operators $\varepsilon_l$ and $\varepsilon_r$ are also independent of the choice of $x_i$, $x_i^*$, $y_i$ and $y_i^*$, which is a consequence of $W\in \mathcal{Z}_D(\widehat{T}(S)_{\ge 2})$. Hence $\varepsilon_l$ and $\varepsilon_r$ are well-defined. Note that in \cite{Nquefack2012PotentialsJacobian} $\varepsilon_l^k$ is defined in terms of the Casimir element for $M^{\otimes_D k}$. However this gives the same result as $\varepsilon_l^{\circ k}$. In particular, $\varepsilon_l^d(W) = W$ for $W \in \mathcal{Z}_D(\widehat{T}(S)_d)$.
\end{myrem}

\begin{myrem}
	The derivative operators in Definition~\ref{definition - derivative operators} satisfy the following:
	\begin{equation*}
		\begin{aligned}
			\partial^l_\xi(d) &= 0 = \partial^r_\xi(d), \\
			\partial^l_\xi(mx) &= \partial^l_\xi(m)x, \\
			\partial^r_\xi(xm) &= x\partial^r_\xi(m),
		\end{aligned}
	\end{equation*}
	for all $d\in D$, $m\in M$ and $x\in \widehat{T}(S)$.
\end{myrem}

\begin{mylemma}\cite[Proposition 3.2]{Nquefack2012PotentialsJacobian}\label{lemma - epsilon property}
	Let $S$ be a species and $W\in \mathcal{Z}_D(\widehat{T}(S)_{\ge 2})$.
	\begin{enumerate}
		\item The derivative operators in Definition~\ref{definition - derivative operators} satisfy $\partial^r_\xi(\varepsilon_l (W)) = \partial^l_\xi(W)$ and $\partial^l_\xi(\varepsilon_r(W)) = \partial^r_\xi(W)$ for $\xi\in M^*$.
		\item $\varepsilon_l\circ \varepsilon_r(W) = W = \varepsilon_r\circ \varepsilon_l(W)$. 
	\end{enumerate}
\end{mylemma}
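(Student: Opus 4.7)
The plan is to prove part (1) by a direct computation exploiting the $D$-centrality of $W$, and then to derive part (2) as a short consequence of part (1) together with the Casimir identities \eqref{eq - casimir elements equations ass. to b}. The essential preliminary step is to observe that the assignment $\xi \mapsto \partial^l_\xi(W)$ defines a $D$-bimodule map $M^* \to \widehat{T}(S)$, and similarly for $\xi \mapsto \partial^r_\xi(W)$. Left $D$-linearity is immediate from $\b$ being a $D$-bimodule morphism. Right $D$-linearity is where $D$-centrality of $W$ enters crucially: on a homogeneous component $W = \sum_\alpha m_1^\alpha \otimes \cdots \otimes m_n^\alpha$ one has
\begin{equation*}
\partial^l_{\xi d}(W) = \partial^l_\xi(dW) = \partial^l_\xi(Wd) = \partial^l_\xi(W)\, d,
\end{equation*}
where the middle equality uses $W \in \mathcal{Z}_D(\widehat{T}(S)_{\ge 2})$.

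For part (1), I would expand $\varepsilon_l(W) = \sum_i \partial^l_{x_i^*}(W) \cdot x_i$ and note that $x_i$ sits as the rightmost tensor factor of $\partial^l_{x_i^*}(W) \cdot x_i$, so that
\begin{equation*}
\partial^r_\xi(\varepsilon_l(W)) = \sum_i \partial^l_{x_i^*}(W)\, \b(x_i \otimes \xi).
\end{equation*}
Right $D$-linearity of $\xi \mapsto \partial^l_\xi(W)$ together with the Casimir identity $\xi = \sum_i x_i^* \b(x_i \otimes \xi)$ then rewrites this sum as $\partial^l_\xi(W)$. The identity $\partial^l_\xi(\varepsilon_r(W)) = \partial^r_\xi(W)$ follows symmetrically, using left $D$-linearity of $\xi \mapsto \partial^r_\xi(W)$ and the Casimir identity $\xi = \sum_j \b(\xi \otimes y_j) y_j^*$.

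For part (2), applying part (1) inside $\varepsilon_r$ immediately yields
\begin{equation*}
\varepsilon_r(\varepsilon_l(W)) = \sum_j y_j\, \partial^r_{y_j^*}(\varepsilon_l(W)) = \sum_j y_j\, \partial^l_{y_j^*}(W),
\end{equation*}
and on a homogeneous piece $W = \sum_\alpha m_1^\alpha \otimes \cdots \otimes m_n^\alpha$ the Casimir identity $\sum_j y_j \b(y_j^* \otimes m_1^\alpha) = m_1^\alpha$ applied to the first factor recovers $W$. Symmetrically, $\varepsilon_l(\varepsilon_r(W)) = \sum_i \partial^r_{x_i^*}(W) \cdot x_i$ collapses to $W$ via $\sum_i \b(m_n^\alpha \otimes x_i^*) x_i = m_n^\alpha$ applied to the last factor, and the inhomogeneous case follows by decomposing $W$ degree-wise.

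The only non-routine step is the right $D$-linearity of $\xi \mapsto \partial^l_\xi(W)$ (and its left-analogue): without $D$-centrality of $W$ one could not convert $\sum_i \partial^l_{x_i^*}(W)\b(x_i \otimes \xi)$ into a single $\partial^l_\xi(W)$. Everything else reduces to bookkeeping with the two dual bases in \eqref{eq - Casimir elements of M M^*}.
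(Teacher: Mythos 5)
Your argument is correct. The paper itself gives no proof of this lemma --- it is cited directly from \cite[Proposition 3.2]{Nquefack2012PotentialsJacobian} --- so there is no internal proof to compare against; your write-up is a sound self-contained verification of the cited fact. The crux is exactly where you locate it: the identity $\partial^l_{\xi d}(W) = \partial^l_\xi(dW) = \partial^l_\xi(Wd) = \partial^l_\xi(W)\,d$ (and its left-handed analogue $\partial^r_{d\xi}(W) = d\,\partial^r_\xi(W)$), which uses both that $\b$ is balanced over $D$ and that $W$ is $D$-central; once this is in place, the Casimir identities $\sum_i x_i^*\b(x_i\otimes\xi)=\xi$ and $\sum_j\b(\xi\otimes y_j)y_j^*=\xi$ from \eqref{eq - casimir elements equations ass. to b} collapse the sums in part (1), and the companion identities $\sum_j y_j\b(y_j^*\otimes x)=x$ and $\sum_i\b(x\otimes x_i^*)x_i=x$ applied to the outermost tensor factors give part (2). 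The reduction to homogeneous components is harmless since all operators involved are degree-homogeneous and continuous with respect to the product decomposition of $\widehat{T}(S)$.
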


\begin{mydef}
	We define the cyclic derivative operator
	\begin{equation*}
		\begin{aligned}
			\partial: M^*\otimes_\K \mathcal{Z}_D(\widehat{T}(S)_{\ge 2}) &\to \widehat{T}(S)_{\ge 1} \\
			\xi\otimes W &\mapsto \partial^l(\xi\otimes \varepsilon_c(W)) = \partial^r(\varepsilon_c(W)\otimes \xi).
		\end{aligned}
	\end{equation*}
\end{mydef}

\begin{mydef}
	For a species with potential $(S, W)$ we define the Jacobian algebra $\mathcal{P}(S, W) = \widehat{T}(S)/\mathcal{J}(S, W)$, where $\mathcal{J}(S, W) = \overline{\langle \partial_\xi (W) \mid \xi\in M^*\rangle}$. We say that a species with potential $(S, W)$ is reduced if $W$ is reduced.
\end{mydef}

Let us introduce the following notation. For every $\alpha\in Q_1$ we define $\underline{\alpha}\subseteq M_\alpha$ as sets of elements such that
\begin{equation*}
	c_\alpha = \sum_{a\in \underline{\alpha}} a \otimes a^* \in M_\alpha \otimes_{D_{s(\alpha)}} M_\alpha^*
\end{equation*}
is the Casimir element of $M_\alpha\otimes_{D_{s(\alpha)}}M_\alpha^*$ associated to $\b_S$, i.e.
\begin{equation*}
	\sum_{a\in \underline{\alpha}} a \otimes \b(a^*\otimes -)\in M_\alpha \otimes_{D_{s(\alpha)}} \mathrm{Hom}_{D_{s(\alpha)}^{op}}(M, D_{s(\alpha)})
\end{equation*}
is the Casimir element of $M_\alpha\otimes_{D_{s(\alpha)}} \mathrm{Hom}_{D_{s(\alpha)}^{op}}(M, D_{s(\alpha)})$. Similarly, we define $\overline{\alpha}\subseteq M_\alpha$ such that
\begin{equation*}
	c_{\alpha^*} = \sum_{a'\in \overline{\alpha}} {a'}^* \otimes a' \in M_\alpha^* \otimes_{D_{t(\alpha)}} M_\alpha
\end{equation*}
is the Casimir element of $M_\alpha^*\otimes_{D_{s(\alpha)}} M_\alpha$ associated to $\b_S$, i.e.
\begin{equation*}
	\sum_{a'\in \overline{\alpha}} \b(-\otimes {a'}^*)\otimes a'\in \mathrm{Hom}_{D_{t(\alpha)}}(M, D_{t(\alpha)}) \otimes_{D_{t(\alpha)}}  M_\alpha
\end{equation*}
is the Casimir element of $\mathrm{Hom}_{D_{t(\alpha)}}(M, D_{t(\alpha)}) \otimes_{D_{t(\alpha)}}  M_\alpha$. We also introduce the notation that $\overline{\alpha^*} = \{a^* \mid a\in \underline{\alpha} \}$ and $\underline{\alpha^*} = \{a^* \mid a\in \overline{\alpha} \}$. By construction we have
\begin{equation}\label{eq - M alpha basis presentation}
	M_\alpha = \bigoplus_{a\in \underline{\alpha}}aD_{s(\alpha)} = \bigoplus_{a'\in \overline{\alpha}} D_{s(\alpha)}a'.
\end{equation}
An immediate consequence of this is that
\begin{equation*}
	|\underline{\alpha}| = \dim_{D_{s(\alpha)}}M_\alpha,\quad |\overline{\alpha}| = \dim_{D_{t(\alpha)}}M_\alpha.
\end{equation*}

\begin{myrem}
	A useful observation is that
	\begin{equation*}
		\mathcal{J}(S, W) = \overline{\langle \partial_{a^*}(W) \mid a\in \underline{\alpha}, \alpha\in Q_1 \rangle} = \overline{\langle \partial_{a^*}(W) \mid a\in \overline{\alpha}, \alpha\in Q_1 \rangle}
	\end{equation*}
	which is a consequence of \eqref{eq - M alpha basis presentation}.
\end{myrem}

Up until now all constructions have been independent of the choice of bases. However this is not true for all results. Thus from now on, we assume that each species $S$ comes equipped with a choice of bases $\underline{\alpha}$ and $\overline{\alpha}$ for all $Q_1$. These choices are mainly used in Subsection~\ref{subsection - product of species} when computing the $3$-preprojective algebra of tensor product of species, in Section~\ref{Section - mutation} when defining mutation of species with potential, in Section~\ref{Section - nakayama automorphism} when defining a Nakayama automorphism of the mutated species with potential and in Section~\ref{Section - Jasso-muro plus examples} when computing examples.

\subsection{Basic Version of a Species with Potential}
The following general definition is motivated by \cite[Section I.6]{assem2006elements}.

\begin{mydef}\label{definition - basic idemp}
	Let $R$ be a $\K$-algebra. We say that an element $e\in R$ is a basic idempotent if the following hold:
	\begin{enumerate}[label = (\alph*)]
		\item $e = \sum_{i=1}^n e_i$, where $\{e_1, e_2, \dots, e_n \}\subseteq I$ and $I$ is a complete set of primitive orthogonal idempotents of $R$.
		\item $Re_i\not\cong Re_j$ for $i\not= j$ and for $s\in I$ there exists a $k\in \{1, 2, \dots, n\}$ such that $Re_s\cong Re_k$.
	\end{enumerate}
\end{mydef}

\begin{mylemma}\label{lemma - ReR = R}
	Given a basic idempotent $e\in R$ we have that $ReR = R$.
\end{mylemma}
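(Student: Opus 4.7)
The plan is to reduce the statement to showing $1 \in ReR$, by factoring the identity through the complete set $I$ of primitive orthogonal idempotents. Since $1 = \sum_{s \in I} s$, it suffices to prove $s \in ReR$ for every $s \in I$.

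First I would handle the idempotents appearing in $e$ itself. For $k \in \{1, \dots, n\}$, orthogonality of the $e_i$ inside $I$ and $e_k^2 = e_k$ give $e_k e = e_k = e e_k$, so in particular $e_k = e_k \cdot e \cdot e_k \in ReR$. Thus $Re_kR \subseteq R(ReR)R = ReR$ for each such $k$, which means it is enough to locate an arbitrary $s \in I$ inside some $Re_kR$.

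The main step is to exploit condition (b) of Definition~\ref{definition - basic idemp}: given $s \in I$, choose $k$ with $Rs \cong Re_k$ as left $R$-modules. Using the standard identifications
\begin{equation*}
\Hom_R(Re_k, Rs) \cong e_k R s, \qquad \Hom_R(Rs, Re_k) \cong s R e_k,
\end{equation*}
obtained by evaluation at $e_k$ and $s$ respectively, the isomorphism $Rs \cong Re_k$ and its inverse correspond to elements $x \in e_kRs$ and $y \in sRe_k$. The fact that the two maps compose to the identity on $Re_k$ and $Rs$ translates, by tracking generators, into the relations $xy = e_k$ and $yx = s$. Consequently
\begin{equation*}
s = yx \in sRe_k \cdot e_kRs \subseteq Re_kR \subseteq ReR.
\end{equation*}

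Summing over $s \in I$ then yields $1 = \sum_{s \in I} s \in ReR$, and since $ReR$ is a two-sided ideal this forces $ReR = R$. There is no serious obstacle here; the only delicate point is the Hom–idempotent dictionary, namely verifying that a pair of mutually inverse $R$-module maps really does produce elements $x, y$ with $yx = s$ and $xy = e_k$ rather than, say, only a one-sided factorisation. Once that is checked the rest is bookkeeping.
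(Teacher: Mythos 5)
Your proof is correct and follows essentially the same route as the paper: both use condition (b) to realise the isomorphism $Rs\cong Re_k$ by elements $x,y$ with $xy=e_k$ and $yx=s$, conclude $s\in Re_kR\subseteq ReR$, and sum over $I$ to get $1\in ReR$. The only cosmetic difference is that you spell out the Hom--idempotent dictionary where the paper simply asserts the existence of such $p,q$.
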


\begin{proof}
	Let $s\in I$. Then by (b) in Definition~\ref{definition - basic idemp} there exists $k\in \{1, 2, \dots, n\}$ such that $Re_s \cong Re_k$. This implies that there exist $p, q\in R$ such that $pq = e_s$ and $qp = e_k$. Now
	\begin{equation*}
		Re_sR = Re_s^2R = RpqpqR = Rpe_kqR \subseteq Re_kR \subseteq ReR.
	\end{equation*}
	Hence
	\begin{equation*}
		R = R1_R R = \sum_{s\in I}Re_sR \subseteq \sum_{k\in \{1, 2, \dots, n\}} Re_kR = ReR.
	\end{equation*}
\end{proof}

The name stems from the fact that $eRe$ is a basic $\K$-algebra. The main use of basic idempotents is the following result.

\begin{mycor}\cite[Corollary I.6.10]{assem2006elements}\label{corollary - assem corollary morita equivalent}
	For a $\K$-algebra $R$ and a basic idempotent $e\in R$ we have that $eRe$ and $R$ are Morita equivalent.
\end{mycor}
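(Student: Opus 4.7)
The plan is to apply Morita's theorem by exhibiting $Re$ as a progenerator of the category of left $R$-modules whose endomorphism ring is isomorphic to $eRe$, from which the Morita equivalence between $R$ and $eRe$ follows.

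First I would note that $Re$ is finitely generated projective as a left $R$-module, since the decomposition $R = Re \oplus R(1-e)$ displays $Re$ as a direct summand of the free module $R$. Thus only the generator property and the endomorphism ring computation remain.

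To see that $Re$ is a generator, invoke Lemma~\ref{lemma - ReR = R}, which gives $ReR = R$. Writing $1_R = \sum_{i=1}^{m} r_i e s_i$ for some $r_i, s_i \in R$, the map $(Re)^m \to R$ sending $(x_1, \dots, x_m)$ to $\sum_i x_i s_i$ is a left $R$-linear morphism whose image contains $\sum_i r_i e s_i = 1$, hence is surjective. So $Re$ generates the regular module $R$, and therefore generates all of $R\text{-mod}$, making $Re$ a progenerator.

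For the endomorphism ring, any left $R$-linear map $f: Re \to Re$ is determined by the value $f(e) = f(e\cdot e) = e f(e) \in eRe$, and conversely each $y \in eRe$ defines an endomorphism by right multiplication $x \mapsto xy$. This yields the identification $\End_R(Re) \cong eRe$ (up to taking opposites, depending on the convention that composition of morphisms goes from right to left). Morita's theorem, applied to the adjoint pair $\Hom_R(Re, -)$ and $Re \otimes_{eRe} -$, then concludes that $R$ and $eRe$ are Morita equivalent. The only non-trivial input is Lemma~\ref{lemma - ReR = R}; the remaining steps are the standard Morita-theoretic package, so there is no real obstacle beyond what has already been established.
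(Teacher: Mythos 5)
Your proof is correct and follows essentially the same route as the cited reference \cite[Corollary I.6.10]{assem2006elements} (the paper itself gives no proof beyond the citation): exhibit $Re$ as a progenerator using $ReR=R$ from Lemma~\ref{lemma - ReR = R}, identify $\mathrm{End}_R(Re)$ with $eRe$ (up to opposite), and invoke Morita's theorem. No gaps.
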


\begin{myprop}\label{proposition - morita equivalent k-species}
	Let $(S, W)$ be a species with potential. There is a Morita equivalent $\K$-species with potential $(S', W')$, which we call the basic version of $(S, W)$. In other words, $T(S)\mathrm{-mod}\cong T(S')\mathrm{-mod}$, $\widehat{T}(S)\mathrm{-mod}\cong \widehat{T}(S')\mathrm{-mod}$ and $\mathcal{P}(S, W)\mathrm{-mod}\cong \mathcal{P}(S', W')\mathrm{-mod}$. 
\end{myprop}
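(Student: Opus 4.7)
The plan is to combine Lemma~\ref{lemma - division algebra species} with the idempotent reduction supplied by Corollary~\ref{corollary - assem corollary morita equivalent}. First I would apply Lemma~\ref{lemma - division algebra species} to pass to an isomorphic species $\widetilde{S}$ over a quiver $\widetilde{Q}$ in which each $\widetilde{D}_{(i,j)} = M_{m_{ij}}(D'_j)$ is a matrix algebra over a division $\K$-algebra $D'_j$. The isomorphisms $T(S)\cong T(\widetilde{S})$ and $\widehat{T}(S)\cong \widehat{T}(\widetilde{S})$ transport $W$ to a potential $\widetilde{W}\in \mathcal{Z}_{\widetilde{D}}(\widehat{T}(\widetilde{S})_{\ge 2})$, and we may therefore replace $(S,W)$ by $(\widetilde{S},\widetilde{W})$ at the outset.

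Next, for each vertex $(i,j)$ I would choose a primitive idempotent $f_{(i,j)}\in \widetilde{D}_{(i,j)}$, for instance the matrix unit $E_{11}$, so that $f_{(i,j)}\widetilde{D}_{(i,j)}f_{(i,j)}\cong D'_j$ is a division $\K$-algebra, and set $e=\sum_{(i,j)}f_{(i,j)}$. By construction $e$ is a basic idempotent of $R:=\widehat{T}(\widetilde{S})$ in the sense of Definition~\ref{definition - basic idemp}, so Lemma~\ref{lemma - ReR = R} gives $ReR=R$ and Corollary~\ref{corollary - assem corollary morita equivalent} yields a Morita equivalence $R\sim eRe$. The cut-out $eRe$ admits the description $\widehat{T}(S')$, where $S'$ is the species over the quiver $Q'$ with the same vertex set as $\widetilde{Q}$, with $D'_{(i,j)}=f_{(i,j)}\widetilde{D}_{(i,j)}f_{(i,j)}$, and with $M'_\alpha=f_{t(\alpha)}\widetilde{M}_\alpha f_{s(\alpha)}$; this follows because the $\mathcal{J}_R$-adic filtration is preserved by the cut-out and because tensor products over $\widetilde{D}$ restrict to tensor products over $D'$ once both sides are sandwiched by $e$. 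The same identification restricts to $eT(\widetilde{S})e\cong T(S')$. Since all $D'_{(i,j)}$ are division $\K$-algebras over the common central subfield $\K$, the species $S'$ is a $\K$-species.

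Setting $W':=eWe$ gives a well-defined element of $\widehat{T}(S')_{\ge 2}$, because $\widetilde{W}$ is $\widetilde{D}$-central and therefore commutes with $e$; and $W'$ is $D'$-central because $D'\subseteq e\widetilde{D}e$ and $\widetilde{W}$ is already $\widetilde{D}$-central. Hence $(S',W')$ is a species with potential. Under the Morita equivalence $R\sim eRe$, closed two-sided ideals of $R$ correspond bijectively to closed two-sided ideals of $eRe$ via $I\mapsto eIe$, and this bijection respects closures taken in the respective $\mathcal{J}$-adic topologies because $e\overline{S}e=\overline{eSe}$ for subsets $S\subseteq R$. It therefore suffices to verify that $e\mathcal{J}(\widetilde{S},\widetilde{W})e=\mathcal{J}(S',W')$.

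The main obstacle is exactly this last identification of ideals, since the Jacobian ideal is defined in terms of the cyclic derivatives $\partial_\xi$, which depend on the Casimir element associated to $\b_{\widetilde{S}}$ through the formulas of Definition~\ref{definition - derivative operators}. The key observation is that a Casimir element for $\widetilde{M}$ over $\widetilde{D}$ restricts, after the cut-out by $e$, to a Casimir element for $M'$ over $D'$; consequently, for each $\xi\in M'^*$ extended by zero to $\widetilde{M}^*$ one has $e\,\partial_\xi(\widetilde{W})\,e=\partial_\xi(W')$, and conversely every $e\,\partial_\eta(\widetilde{W})\,e$ can be rewritten as a $D'$-linear combination of derivatives $\partial_\xi(W')$ using the decomposition $\widetilde{M}=\bigoplus_{j,j'} f_{(t,j')}\widetilde{M}f_{(s,j)}$. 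Expanding both generating sets confirms $e\mathcal{J}(\widetilde{S},\widetilde{W})e=\mathcal{J}(S',W')$, so $\mathcal{P}(\widetilde{S},\widetilde{W})\sim \mathcal{P}(S',W')$ as Morita equivalent algebras, and chaining this with the isomorphism $\mathcal{P}(S,W)\cong \mathcal{P}(\widetilde{S},\widetilde{W})$ (and similarly for $T$ and $\widehat{T}$) yields the three module equivalences asserted in the statement.
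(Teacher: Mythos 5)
Your proposal is correct and follows essentially the same route as the paper: reduce to matrix algebras over division rings via Lemma~\ref{lemma - division algebra species}, cut down by the basic idempotent $e$ formed from one matrix unit per vertex, invoke Corollary~\ref{corollary - assem corollary morita equivalent}, identify $eT(S)e$ with $T(S')$ using $T(S)eT(S)=T(S)$, set $W'=eWe$, and match the Jacobian ideals through the observation that the Casimir element cuts down to a Casimir element for $M'$ over $D'$. The only place the paper is more explicit is in verifying $e\,\varepsilon_c^{S}(W)\,e=\varepsilon_c^{S'}(eWe)$ before comparing cyclic derivatives, which is the precise form of your "Casimir element restricts" step.
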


\begin{proof}
	By Lemma~\ref{lemma - division algebra species} we can assume that $D_i\cong M_{m_i}(D_i')$ for some division $\K$-algebra $D_i'$ for all $i\in Q_0$.

	Let $E_{i, kl}\in M_{m_i}(D_i')$ be the matrix with zeros everywhere except at $(k, l)$ where it has a $1$. First note that
	\begin{equation*}
		\{e_{ij} = E_{i, jj} \mid i\in Q_0, j\in \{1, 2, \dots, n_i \} \}
	\end{equation*}
	is a complete set of primitive orthogonal idempotents of $T(S)$. Let $e = \sum_{i\in Q_0}e_{i1}$. We will first show that $e$ is a basic idempotent of $T(S)$. Condition (a) follows immediately. Note that $D_i'\subseteq T(S)e_{j1}$ if and only if $i=j$. This together with the isomorphisms
	\begin{equation*}
		\begin{tikzcd}
			T(S)e_{i1} \arrow[r, "-\cdot E_{i, 1j}", shift left] & T(S)e_{ij} \arrow[l, "-\cdot E_{i, j1}", shift left]
		\end{tikzcd}
	\end{equation*}
	shows condition (b). Note that $e$ is also a basic idempotent of $\widehat{T}(S)$ by the same argument. By Corollary~\ref{corollary - assem corollary morita equivalent} $eT(S)e$ is Morita equivalent to $T(S)$ and $e\widehat{T}(S)e$ is Morita equivalent to $\widehat{T}(S)$.

	Let $M' = eMe$. Then $D' = \bigoplus_{i\in Q_0}D_i'$ and $M'$ defines a species $S'$. Here we identify $D'$ with $eDe$, which is symmetric via $\t|_{eDe}$. Note that $S'$ is a $\K$-species by construction. We need to show that $T(S') \cong eT(S)e$. Note that
	\begin{equation*}
		eT(S)e = \bigoplus_{k\ge 0}eM^ke = \bigoplus_{i\in Q_0}eD_ie \oplus \bigoplus_{k\ge 1}eM^ke = \bigoplus_{i\in Q_0}D_i' \oplus \bigoplus_{k\ge 1}eM^ke.
	\end{equation*}
	Therefore it is enough to show that $eM^ke \cong (M')^k$. We claim that $T(S)eT(S) = T(S)$. Indeed $e_{ij} = E_{i, j1}eE_{i, 1j}$ and therefore $e_{ij}\in T(S)eT(S)$, which implies that $1_{T(S)} = \sum_{i\in Q_0}\sum_{j=1}^{n_i}e_{ij}\in T(S)eT(S)$. In particular $De \otimes_{eDe}eD \cong D$. Thus
	\begin{equation}\label{eq - M x M = Me x eM}
		M\otimes_DM \cong M\otimes_D De\otimes_{eDe}eD\otimes_D M \cong Me\otimes_{eDe}eM.
	\end{equation}
	Using that $eDe = D'$ and induction on $k$ yields
	\begin{equation*}
		eM^ke \cong (eMe)^{\otimes_{D'}k} = (M')^k.
	\end{equation*}
	Hence $T(S)$ and $T(S')$ are Morita equivalent. The argument above holds in the complete case as well, i.e.
	\begin{equation}\label{eq - Morita isomorphism equation}
		e\widehat{T}(S)e = \prod_{k\ge 0}eM^ke \cong \prod_{k\ge 0}(M')^k = \widehat{T}(S').
	\end{equation}
	In other words, $\widehat{T}(S)$ is Morita equivalent to $\widehat{T}(S')$. Lastly, we claim that $\mathcal{P}(S, W)$ is Morita equivalent to $\mathcal{P}(S', W')$ when we put $W'$ to be the potential given by the image of $eWe$ under the isomorphism in \eqref{eq - Morita isomorphism equation}. Before we prove this claim, let us first introduce some notation. We write $\varepsilon_l^S$ and $\varepsilon_r^S$ for the permutation operators for $S$ and similarly $\varepsilon_l^{S'}$ and $\varepsilon_r^{S'}$ for the permutation operators for $S'$. These induces operators $\varepsilon_c^S$ and $\varepsilon_c^{S'}$ for $S$ and $S'$ respectively. Thus we can define $\partial^S$ and $\partial^{S'}$. To show the claim, it is enough to show that
	\begin{equation}\label{eq - jacobian ideal times e}
		e\mathcal{J}(S, W)e = \overline{\langle \partial_{ea^*e}^{S'}(eWe) \mid a\in M_\alpha, \alpha\in Q_1\rangle} \subseteq e\widehat{T}(S)e.
	\end{equation}
	Let $a^*\in M^*$. Then we have the following.
	\begin{equation*}
		\begin{aligned}
			e\partial_{a^*}^S(W)e &= e(\partial_{a^*}^S)^l(\varepsilon_c^S(W))e = e(\partial_{a^*}^S)^l(\varepsilon_c^S(W)e) = e(\partial_{a^*}^S)^l(e\varepsilon_c^S(W)e) = \\
			&= (\partial_{ea^*e}^S)^l(e\varepsilon_c^S(W)e) = (\partial_{ea^*e}^{S'})^l(e\varepsilon_c^S(W)e).
		\end{aligned}
	\end{equation*}
	The last equality holds since $\b_{S'} = \b_S|_{e\widehat{T}(S)e}$. We are done if we show that
	\begin{equation*}
		e\varepsilon_c^S(W)e = \varepsilon_c^{S'}(eWe).
	\end{equation*}
	In other words, we need to show that
	\begin{equation}\label{eq - vareps^k = vareps^k}
		e(\varepsilon_l^S)^k(W)e = (\varepsilon_l^{S'})^k(eWe)
	\end{equation}
	for all $k\in \Z_{\ge 0}$. Immediately we have that \eqref{eq - vareps^k = vareps^k} is true for $k=0$. For all $k\ge 1$ we can use similar arguments as in Remark~\ref{remark - comapare to nquefack}. Let $k = 1$. Since $1 = \sum_{i\in Q_0, j\in \{1, 2, \dots, n_i\}} E_{i, j1}eE_{i, 1j}$ we can write $W = \sum_{i\in Q_0, j\in \{1, 2, \dots, n_i\}} WE_{i, j1}eE_{i, 1j} = \sum_{i\in Q_0, j\in \{1, 2, \dots, n_i\}} E_{i, j1}eWeE_{i, 1j}$. Now using the Casimir element $\sum_{k=1}^m x_k^* \otimes x_k$ on $M^*\otimes_D M$ as in \eqref{eq - Casimir elements of M M^*} we get
	\begin{equation*}
		\begin{aligned}
			e\varepsilon_l^S(W)e &= \sum_{k=1}^m e(\partial^S)^l_{x_i^*}(W)\otimes x_ie = \sum_{\substack{k\in \{1, 2, \dots, m\} \\ i\in Q_0 \\ j\in \{1, 2, \dots, n_i\}}} e(\partial^S)^l_{x_i^*}(E_{i, j1}eWeE_{i, 1j})\otimes x_ie = \\ 
			&= \sum_{\substack{k\in \{1, 2, \dots, m\} \\ i\in Q_0 \\ j\in \{1, 2, \dots, n_i\}}} (\partial^S)^l_{ex_i^*E_{i, j1}e}(eWe)\otimes eE_{i, 1j}x_ie = \varepsilon_l^{S'}(eWe).
		\end{aligned}
	\end{equation*}
	The last equality holds since
	\begin{equation*}
		\sum_{\substack{k\in \{1, 2, \dots, m\} \\ i\in Q_0 \\ j\in \{1, 2, \dots, n_i\}}} ex_i^*E_{i, j1}e \otimes eE_{i, 1j}x_ie
	\end{equation*}
	is a Casimir element in $eMe\otimes_{eDe}eM^*e$. This proves the equality in \eqref{eq - jacobian ideal times e}.
\end{proof}

\subsection{Higher Preprojective Algebras}
In this subsection we will review the (complete) higher preprojective algebra.

For the rest of this subsection, assume that $\Lambda$ is a $\K$-algebra of global dimension $d$.

\begin{mydef}\cite{Iyama2003maximal}
	The $d$-Auslander-Reiten translations $\tau_d$ and $\tau_d^-$ are defined by
	\begin{equation*}
		\begin{aligned}
			\tau_d&=D\mathrm{Ext}^d_\Lambda(-, \Lambda): \mathrm{mod}\Lambda \to \mathrm{mod}\Lambda, \\
			\tau_d^-&=\mathrm{Ext}^d_{\Lambda^{op}}(D-, \Lambda): \mathrm{mod}\Lambda \to \mathrm{mod}\Lambda.
		\end{aligned}
	\end{equation*}
\end{mydef}

\begin{mydef}\cite[2.11]{IO13Stablecateofhigherpreproj}\label{Definition - preprojective algebra BGL perspective}
	\space 
	\begin{enumerate}
		\item The $(d+1)$-preprojective algebra of $\Lambda$ is defined by
		\begin{equation}\label{Definition - eq preprojective algebra decomposition}
			\Pi_{d+1}(\Lambda) = \bigoplus_{i=0}^\infty \Hom_\Lambda(\Lambda, \tau^{-i}_d\Lambda)
		\end{equation}
		with multiplication
		\begin{equation*}
			\begin{aligned}
				\Hom_\Lambda(\Lambda, \tau^{-i}_d\Lambda)\times \Hom_\Lambda(\Lambda, \tau^{-j}_d\Lambda)&\rightarrow \Hom_\Lambda(\Lambda, \tau^{-(i+j)}_d\Lambda), \\
				(u, v)&\mapsto uv = (\tau^{-i}(v)\circ u: \Lambda\rightarrow \tau^{-(i+j)}\Lambda).
			\end{aligned}
		\end{equation*}
		\item The complete $(d+1)$-preprojective algebra of $\Lambda$ is defined as the completion of $\Pi_{d+1}(\Lambda)$, i.e.,
		\begin{equation}\label{Definition - eq complete preprojective algebra decomposition}
			\widehat{\Pi}_{d+1}(\Lambda) = \prod_{i=0}^\infty \Hom_\Lambda(\Lambda, \tau^{-i}_d\Lambda).
		\end{equation}
	\end{enumerate}
\end{mydef}

\begin{myrem}\label{remark - preprojective grading}
	There is a natural $\Z$-grading on $\Pi_{d+1}(\Lambda)$ given by the composition in Definition~\ref{Definition - eq preprojective algebra decomposition}. Moreover, $\widehat{\Pi}_{d+1}(\Lambda)$ is the completion of $\Pi_{d+1}(\Lambda)$ with respect to this grading.
\end{myrem}

\subsection{Product of Species and Higher Preprojective Algebra}\label{subsection - product of species}
We now present the explicit construction of the preprojective algebra $\Pi(S)$ of a species $S$ introduced by \cite{Dlab_1980}.

\begin{mydef}\cite{Dlab_1980}\label{Definition - preprojective algebra of a species}
	Let $S$ be an acyclic $\K$-species.
	\begin{enumerate}
		\item The double quiver $\overline{Q}$ is defined to be $\overline{Q}_0=Q_0$ and $\overline{Q}_1 = Q_1 \cup Q_1^*$ where
		\begin{equation*}
		Q_1^* = \{\alpha^*: j\rightarrow i\mid \alpha:i\rightarrow j\in Q_1 \}.
		\end{equation*}
		\item Let $\overline{S}$ be the species over $\overline{Q}$ where $\overline{D_i}=D_i$ for all $i\in \overline{Q}_0$ and $\overline{M}_\alpha = M_\alpha$ and $\overline{M}_{\alpha^*} = M_\alpha^*$ for all $\alpha\in Q_1$.
		\item For each $\alpha\in \overline{Q}_1$ let $c_\alpha$ be the Casimir element associated to $\b$ of $\overline{M}_\alpha\otimes_{D_{s(\alpha)}} \overline{M}_{\alpha^*}$, i.e. $c_\alpha = \sum_{a\in \underline{\alpha}} a\otimes a^*$. Define
		\begin{equation*}
		c_S = \sum_{\alpha\in \overline{Q}_1}\mathrm{sgn}(\alpha)c_\alpha,
		\end{equation*}
		where
		\begin{equation*}
		\mathrm{sgn}(\alpha) = \begin{cases}
		1, & \alpha\in Q_1 \\
		-1, & \mbox{else}.
		\end{cases}
		\end{equation*}
		We define the preprojective algebra of $S$ as $\Pi(S)=T(\overline{S})/\langle c_S\rangle$.
	\end{enumerate}
\end{mydef}

We will now present an explicit description $\Pi_3(T(S^1)\otimes_\K T(S^2))$ (see Proposition~\ref{prop - preprojective algebra of tensor species}), where $S^1$ and $S^2$ are two species, using the following definitions of $Q^1\tilde{\otimes}Q^2$, $S(S^1, S^2)$ and $W(S^1, S^2)$.

\begin{mydef}
	For finite quivers $Q^1$ and $Q^2$ we define $Q^1\tilde{\otimes} Q^2$ as the quiver given by
	\begin{equation*}
		\begin{aligned}
			(Q^1\tilde{\otimes} Q^2)_0 &= Q^1_0\times Q^2_0, \\
	  		(Q^1\tilde{\otimes} Q^2)_1 &= (Q^1_0\times Q^2_1) \coprod (Q^1_1\times Q^2_0)\coprod (Q^{1*}_1\times Q^{2*}_1).
		\end{aligned}
	\end{equation*}
	We extend the definition of source and target, $s$ and $t$, to be the identity on $Q_0$ for any given quiver $Q$, and define
	\begin{equation*}
		s, t: (Q^1\tilde{\otimes} Q^2)_1 \to (Q^1\tilde{\otimes} Q^2)_0
	\end{equation*}
	by $s(\alpha_1, \alpha_2) = (s(\alpha_1), s(\alpha_2))$ and $t(\alpha_1, \alpha_2) = (t(\alpha_1), t(\alpha_2))$.
\end{mydef}

\begin{mydef}\label{definition - S(S^1, S^2)}
	Let $S^i$ be an acyclic species for each $i\in \{1, 2\}$. 
	\begin{enumerate}
		\item Define $S(S^1, S^2) = (D, M)$ as the species over $Q^1\tilde{\otimes}Q^2$ given by
		\begin{equation*}
			\begin{aligned}
				D &= D^1\otimes_\K D^2, \\
				M &= M^1\otimes_\K D^2\oplus D^1\otimes_\K M^2\oplus {M^1}^*\otimes_\K {M^2}^*.
			\end{aligned}
		\end{equation*}
		Using $\t_1$ and $\t_2$ we identify
		\begin{equation*}
			M^* = {M^1}^*\otimes_\K D^2 \oplus D^1\otimes_\K {M^2}^* \oplus M^1\otimes_\K M^2.
		\end{equation*}
		We need to define
		\begin{equation*}
			\b_{S(S^1, S^2)}: M\otimes_{D^1\otimes_\K D^2} M^* \oplus M^*\otimes_{D^1\otimes_\K D^2} M \to D^1\otimes_\K D^2.
		\end{equation*}
		By Remark~\ref{remark - b determined by t} we define $\b_{S(S^1, S^2)}$ via the morphism
		\begin{equation*}
			\t := \t_1\otimes \t_2: D = D^1\otimes_\K D^2 \to \K.
		\end{equation*}
		In other words, $\b_{S(S^1, S^2)}$ is defined such that
		\begin{equation*}
			\b_{S(S^1, S^2)}((a\otimes b)\otimes (c\otimes d)) = \begin{cases}
				\b_{S^1}(a\otimes c)\otimes bd, & \text{if } a\in M^1, b\in D^2, c\in {M^1}^*, d\in D^2 \\
				\b_{S^1}(a\otimes c)\otimes bd, & \text{if } a\in {M^1}^*, b\in  D^2, c\in M^1, d\in D^2 \\
				ac\otimes \b_{S^2}(b\otimes d), & \text{if } a\in D^1, b\in M^2, c\in D^1, d\in {M^2}^* \\
				ac\otimes \b_{S^2}(b\otimes d), & \text{if } a\in D^1, b\in {M^2}^*, c\in D^1, d\in M^2 \\
				\b_{S^1}(a\otimes c)\otimes \b_{S^2}(b\otimes d), & \text{if } a\in M^1, b\in M^2, c\in {M^1}^*, d\in {M^2}^* \\
				\b_{S^1}(a\otimes c)\otimes \b_{S^2}(b\otimes d), & \text{if } a\in {M^1}^*, b\in {M^2}^*, c\in M^1, d\in M^2, \\
				0, & \text{otherwise.}
			\end{cases}
		\end{equation*}
		\item We define $W(S^1, S^2)\in T(S(S^1, S^2))$ as
		\begin{equation}\label{eq - potential for tensor species}
			\begin{aligned}
				W(S^1, S^2) =& \sum_{\substack{(\alpha, \beta)\in Q_1^1\times Q_1^2 \\ (a, b')\in \underline{\alpha}\times \overline{\beta}}} (a\otimes e_{s(\beta)})\otimes(a^*\otimes {b'}^*)\otimes(e_{t(\alpha)}\otimes b') + \\
				&-\sum_{\substack{(\alpha, \beta)\in Q_1^1\times Q_1^2 \\ (a', b)\in \overline{\alpha}\times \underline{\beta}}} (e_{s(\alpha)}\otimes b)\otimes({a'}^*\otimes b^*)\otimes(a'\otimes e_{t(\beta)}) = \\
				=& W_1 - W_2.
			\end{aligned}
		\end{equation}
		It is indeed a potential, i.e. $W(S^1, S^2)\in \mathcal{Z}_{D^1\otimes_\K D^2}(T(S^1, S^2))$, since $c_\alpha\in \mathcal{Z}_{D^1}(T(\overline{S^1}))$ and $c_\beta\in \mathcal{Z}_{D^2}(T(\overline{S^2}))$ for any arrows $\alpha\in Q_1^1$ and $\beta\in Q_1^2$ which is a consequence of \eqref{eq - casimir elements equations ass. to b}.
	\end{enumerate}
\end{mydef}

\begin{myrem}
	Since we have assumed that $\K$ is a perfect field, we have that $D^1\otimes_\K D^2$ is semisimple, which implies that $S(S^1, S^2)$ is a species. But $S(S^1, S^2)$ is not necessarily a $\K$-species even if $S^1$ and $S^2$ are $\K$-species, see \cite[Remark 11.6]{soderberg2022preprojective}. For another example, take $D_1 = \C$ and $D_2 = \C$ over $\R$. Their product then becomes $\C\otimes_\R \C \cong \C\times \C$ which is not a division $\K$-algebra. In practice we may replace $S(S^1, S^2)$ with a $\K$-species using Proposition~\ref{proposition - morita equivalent k-species}. Explanations and examples are presented in Section~\ref{Section - Jasso-muro plus examples}.
\end{myrem}

In the spirit of the above remark we note the following.

\begin{myprop}\label{proposition - product is division alg}
	Let $S^1$ and $S^2$ be two species. If $D^1_i\otimes_\K D^2_j$ is a division $\K$-algebra for all $i\in Q^1_0$ and $j\in Q^2_0$, then $S(S^1, S^2)$ is a $\K$-species. In particular, if $D^1 = \K^{|Q^1_0|}$ and $D^2 = \K^{|Q^2_0|}$, i.e. in the case when $T(S^1)$ and $T(S^2)$ are given as path algebras over $\K$, then $S(S^1, S^2)$ is a $\K$-species.
\end{myprop}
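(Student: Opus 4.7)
The plan is a direct verification of the definition of a $\K$-species from Definition~\ref{Definition - Species}. By Definition~\ref{definition - S(S^1, S^2)} the underlying semisimple algebra of $S(S^1, S^2)$ decomposes as $D = D^1 \otimes_\K D^2 = \bigoplus_{(i,j) \in Q_0^1 \times Q_0^2} D^1_i \otimes_\K D^2_j$, so the algebra attached to the vertex $(i,j) \in (Q^1 \tilde{\otimes} Q^2)_0$ is precisely $D^1_i \otimes_\K D^2_j$. The fact that $S(S^1, S^2)$ is already a species in the sense of Definition~\ref{Definition - Species} is handled in the remark preceding this proposition, using that $\K$ is perfect. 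So the only additional conditions to check are that each vertex algebra is a division $\K$-algebra and that there is a common central subfield.

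For the first point, being a division $\K$-algebra is precisely the stated hypothesis. For the second, I would take $\K$ itself as the common central subfield: the centrality assumption $\mathcal{Z}_\K(D^r_k) = D^r_k$ imposed on each $S^r$ in Definition~\ref{Definition - Species} says that $\K$ lies in the centre of each $D^r_k$, so the canonical embedding $\K \hookrightarrow D^1_i \otimes_\K D^2_j$ sending $\lambda$ to $\lambda \otimes 1 = 1 \otimes \lambda$ lands in the centre of the tensor product. Thus $\K$ is a common central subfield of all the division algebras $D^1_i \otimes_\K D^2_j$, and $S(S^1, S^2)$ satisfies every clause of a $\K$-species.

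For the "in particular" part, the hypothesis $D^1 = \K^{|Q_0^1|}$ forces $D^1_i \cong \K$ for every $i \in Q_0^1$, and likewise $D^2_j \cong \K$; then $D^1_i \otimes_\K D^2_j \cong \K$ is trivially a division $\K$-algebra, so the general statement applies. There is no substantive obstacle here — the argument is a routine unpacking of definitions. The real content of the proposition is conceptual rather than technical: it identifies the situations in which the construction $S(S^1, S^2)$ already lives in the more restrictive category of $\K$-species without one needing to pass through the Morita-equivalent reduction of Proposition~\ref{proposition - morita equivalent k-species}.
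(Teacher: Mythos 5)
Your proposal is correct and is exactly the argument the paper intends: the paper's own proof is the one-line ``Follows from Definition~\ref{Definition - Species}'', and you have simply unpacked that — the vertex algebras of $S(S^1, S^2)$ are the $D^1_i\otimes_\K D^2_j$, which are division $\K$-algebras by hypothesis with $\K$ central, and the ``in particular'' clause is the case $D^1_i\otimes_\K D^2_j\cong\K$. No gaps; your version just makes explicit what the paper leaves to the reader.
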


\begin{proof}
	Follows from Definition~\ref{Definition - Species}.
\end{proof}

\begin{myprop}\label{prop - preprojective algebra of tensor species}
	Let $S^i$ be acyclic $\K$-species for $i\in \{1, 2\}$. Let $\Lambda = T(S^1)\otimes_\K T(S^2)$. Then $\gldim(\Lambda) = 2$, $\Pi_3(\Lambda) \cong \mathcal{P}(S(S^1, S^2), W(S^1, S^2))$, and $\widehat{\Pi}_3(\Lambda) \cong \widehat{\mathcal{P}}(S(S^1, S^2), W(S^1, S^2))$.
\end{myprop}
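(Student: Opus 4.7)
The plan is to reduce to the author's previous work \cite{soderberg2022preprojective}, where $\Pi_3(\Lambda)$ was described as the tensor algebra of a species modulo explicit relations, and then to verify that the Jacobian ideal $\mathcal{J}(S(S^1,S^2),W(S^1,S^2))$ coincides with those relations. The complete statement $\widehat{\Pi}_3(\Lambda) \cong \widehat{\mathcal{P}}(S(S^1,S^2),W(S^1,S^2))$ will then follow from the graded version by passing to completions with respect to the natural gradings (Remark~\ref{remark - preprojective grading}). The bound $\gldim\Lambda\le 2$ is immediate: $\K$ perfect forces $D^1\otimes_\K D^2$ semisimple, so the standard estimate $\gldim(A\otimes_\K B)\le\gldim A+\gldim B$ applies to the hereditary factors $T(S^i)$, with equality for nontrivial acyclic $S^i$.

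Next I would identify the degree-zero and degree-one pieces of $\Pi_3(\Lambda)$. Using a Künneth-type argument together with the Dlab-Ringel identification $\mathrm{Ext}^1_{T(S^i)}(DT(S^i),T(S^i))\cong (M^i)^*$ as $D^i$-bimodules, one obtains
\[
  \mathrm{Ext}^2_\Lambda(D\Lambda,\Lambda)\ \cong\ (M^1)^*\otimes_\K (M^2)^*.
\]
This matches the ``diagonal'' component $(M^1)^*\otimes_\K(M^2)^*$ of the arrow bimodule of $S(S^1,S^2)$, while the ``horizontal'' summand $M^1\otimes_\K D^2$ and the ``vertical'' summand $D^1\otimes_\K M^2$ together with the usual commutativity relations $m^1\otimes m^2 = m^2\otimes m^1$ encode $\Lambda$ itself in degree zero.

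The main step is to match the Jacobian relations to \textbf{(a)} the commutativity relations presenting $\Lambda$ and \textbf{(b)} the Dlab-Ringel--type mesh relations mirroring the element $c_S$ of Definition~\ref{Definition - preprojective algebra of a species}. Splitting the partial derivatives of $W=W_1-W_2$ by type of dual, a derivative along a horizontal dual $a^*\otimes e_{s(\beta)}$ (for $a\in\underline{\alpha}$, $\alpha\in Q^1_1$) receives contributions only from $W_1$ and collapses via the Casimir identities \eqref{eq - casimir elements equations ass. to b} to the commutativity relation between the vertical arrow $a\otimes e_{s(\beta)}$ and the horizontal arrow $e_{t(\alpha)}\otimes b'$ routed through the diagonal arrow $a^*\otimes {b'}^*$; derivatives along a vertical dual $e_{s(\alpha)}\otimes b^*$ behave symmetrically, producing the other half of the commutativity relations from $W_2$. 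Finally, a derivative along a diagonal dual $a\otimes b$ collects contributions from \emph{both} $W_1$ and $W_2$: the minus sign in $W_1-W_2$ matches $\sgn(\alpha^*)=-1$ in the definition of $c_S$, and the two contributions assemble into the required mesh relation $\sum_\gamma\sgn(\gamma)c_\gamma$ at each vertex. I expect this last case to be the main obstacle: one must carefully apply the cyclic permutation $\varepsilon_c$ before differentiating, keep the signs straight, and invoke basis independence of the Casimir elements (Remark~\ref{remark - comapare to nquefack}) to see that summing over all admissible $(a,b)$ recovers the full mesh sum at each $(i,j)\in(Q^1\tilde{\otimes}Q^2)_0$.
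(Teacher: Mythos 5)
Your overall strategy is the same as the paper's: quote the presentation $\Pi_3(\Lambda)\cong T(S(S^1,S^2))/\langle R\rangle$ from \cite{soderberg2022preprojective}, with $R$ consisting of the commutators $[m^1\otimes 1_{D^2},1_{D^1}\otimes m^2]$ and the two families of mesh relations $c_{S^1}\otimes_\K M^2_1$ and $M^1_1\otimes_\K c_{S^2}$, and then show that the cyclic derivatives of $W=W_1-W_2$ generate exactly $\langle R\rangle$. However, your dictionary between derivatives and relations is inverted, and as stated the matching cannot work. Every term of $W$ is a $3$-cycle consisting of one $Q^1$-direction arrow (in $M^1\otimes_\K D^2$), one $Q^2$-direction arrow (in $D^1\otimes_\K M^2$) and one diagonal arrow (in ${M^1}^*\otimes_\K{M^2}^*$); the cyclic derivative with respect to a given arrow type always produces length-two paths in the \emph{complementary} two types. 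Since the commutators $[m^1\otimes 1,1\otimes m^2]$ involve only the $Q^1$- and $Q^2$-direction arrows, they can only arise from derivatives along the diagonal duals $x\otimes y\in M^1\otimes_\K M^2$; and since the mesh relations $M^1_1\otimes_\K c_{S^2}$ and $c_{S^1}\otimes_\K M^2_1$ are paths passing through a diagonal arrow, they can only arise from derivatives along the duals $x^*\otimes e_i$ and $e_j\otimes y^*$ of the $Q^1$- and $Q^2$-direction arrows. This is exactly what the paper computes: $\partial_{\sum_i x^*\otimes e_i}(W)=-x^*\otimes c_{S^2}$ and $\partial_{x\otimes y}(W)=[x\otimes 1,1\otimes y]$ up to sign. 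You assert the opposite assignment, and moreover describe the outcome of the diagonal derivative as ``the mesh relation $\sum_\gamma\sgn(\gamma)c_\gamma$ at each vertex,'' i.e.\ the single preprojective relation $c_{S(S^1,S^2)}$ of Definition~\ref{Definition - preprojective algebra of a species} --- but that is not among the relations in $R$ at all; the correct mesh relations are the one-directional elements $c_{S^1}\otimes_\K M^2_1$ and $M^1_1\otimes_\K c_{S^2}$, indexed by arrows of the other factor. Your own phrase ``commutativity relation \ldots routed through the diagonal arrow'' already signals the confusion, since a path through a diagonal arrow cannot equal a commutator of original arrows.

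Two smaller points. The completion statement needs more than ``passing to completions'': one must equip $T(S(S^1,S^2))$ with the grading in which the diagonal arrows have degree $1$, observe that the degree-$0$ subquiver is acyclic so each graded piece is finite dimensional, and use that $R$ is homogeneous to identify $\overline{\langle R\rangle}$ with $\prod_i\langle R\rangle_i$; without this the passage from $\Pi_3$ to $\widehat\Pi_3$ is not justified. Your treatment of $\gldim\Lambda=2$ and of $\mathrm{Ext}^2_\Lambda(D\Lambda,\Lambda)\cong (M^1)^*\otimes_\K(M^2)^*$ is fine.
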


\begin{proof}
	By \cite[Corollary 11.7]{soderberg2022preprojective} we have that
	\begin{equation*}
		\Pi_3(\Lambda) \cong T(S(S^1, S^2))/\langle R \rangle
	\end{equation*}
	where
	\begin{equation}\label{eq - definition of R}
		R = \{ c_{S^1}\otimes_\K M^2_1, M^1_1\otimes_\K c_{S^2}, [m^1\otimes 1_{D^2}, 1_{D^1}\otimes m^2]\mid m^1\in M^1_0, m^2\in M^2_0\}.
	\end{equation}
	Here $M_0^i$ and $M_1^i$ denote the degree $0$ respectively degree $1$ part of $M^i$ which is induced by the grading in Remark \ref{remark - preprojective grading} and
	\begin{equation*}
		c_{S^i} = \sum_{\alpha\in Q_1^i}\sgn(\alpha)c_\alpha
	\end{equation*}
	is the Casimir element of $T(\overline{S^i})$. The set $c_{S^1}\otimes_\K M_1^2$ is interpreted as a subset of $M$ via the isomorphism
	\begin{equation*}
		\begin{aligned}
			c_{S^1}\otimes_\K M_1^2 \subseteq (M^1_0\otimes_{D^1}M_1^1 \oplus M_1^1\otimes_{D^1}M_0^1)\otimes_\K M^2_1 &\cong \\
			\cong(M_0^1\otimes_\K D^2)\otimes_{D^1\otimes_\K D^2} (M_1^1\otimes_\K M_1^2) &\oplus (M_1^1\otimes_\K M_1^2)\otimes_{D^1\otimes_\K D^2}(M_0^1\otimes_\K D^2) \\
			(a\otimes b + c\otimes d)\otimes e &\mapsto (a\otimes 1_{D^2})\otimes (b\otimes e) \oplus (c\otimes e)\otimes (d\otimes 1_{D^2}).
		\end{aligned}
	\end{equation*}
	Similarly, the set $M_1^1\otimes_\K c_{S^2}$ is interpreted as a subset of $M$. 

	We are done if we show that $\mathcal{J}(S(S^1, S^2), W(S^1, S^2)) = \langle R\rangle$. Let us first compute $\partial_{x^*\otimes e_i}(W(S^1, S^2))$ for $x^*\in {M_\alpha^1}^*$ and $e_i\in D^2_i$. By \eqref{eq - casimir elements equations ass. to b} we have
	\begin{equation}\label{eq - tensor species eq 1}
		\begin{aligned}
			\partial_{x^*\otimes e_i}(W_1) &= \partial_{x^*\otimes e_i}^l(\varepsilon_cW_1) = \partial_{x^*\otimes e_i}^l(W_1) = \\
			&= \sum_{\substack{(\alpha, \beta)\in Q_1^1\times Q_1^2 \\ (a, b')\in \underline{\alpha}\times \overline{\beta}}} \b_{S(S^1, S^2)}((x^*\otimes e_i) \otimes (a\otimes e_{s(\beta)}))(a^*\otimes {b'}^*)\otimes(e_{t(\alpha)}\otimes b') = \\
			&= \sum_{\substack{(\alpha, \beta)\in Q_1^1\times Q_1^2 \\ s(\beta) = i \\ (a, b')\in \underline{\alpha}\times \overline{\beta}}} (\b_{S^1}(x^*\otimes a)a^*\otimes {b'}^*)\otimes(e_{t(\alpha)}\otimes b') = \\
			&= \sum_{\substack{\beta\in Q_1^2 \\ s(\beta) = i \\ b'\in \overline{\beta}}} (x^*\otimes {b'}^*)\otimes(e_{t(\alpha)}\otimes b') = \sum_{\substack{\beta\in Q_1^2 \\ s(\beta) = i}}x^*\otimes c_{\beta^*}.
		\end{aligned}
	\end{equation}
	For the other part of the potential, also using \eqref{eq - casimir elements equations ass. to b}, we get
	\begin{equation}\label{eq - tensor species eq 2}
		\begin{aligned}
		 	\partial_{x^*\otimes e_i}(W_2) &= \partial^r_{x^*\otimes e_i}(\varepsilon_cW_2) = \partial^r_{x^*\otimes e_i}(W_2) = \\
		 	&= \sum_{\substack{(\alpha, \beta)\in Q_1^1\times Q_1^2 \\ (a', b)\in \overline{\alpha}\times \underline{\beta}}} (e_{s(\alpha)}\otimes b)\otimes({a'}^*\otimes b^*)\b_{S(S^1, S^2)}((a'\otimes e_{t(\beta)})\otimes (x^*\otimes e_i)) = \\
		 	&= \sum_{\substack{(\alpha, \beta)\in Q_1^1\times Q_1^2 \\ t(\beta) = i \\ (a', b)\in \overline{\alpha}\times \underline{\beta}}} (e_{s(\alpha)}\otimes b)\otimes({a'}^*\b_{S^1}(a'\otimes x^*)\otimes b^*) = \sum_{\substack{\beta\in Q_1^2 \\ t(\beta) = i}} x^*\otimes c_\beta
		 \end{aligned}
	\end{equation}
	Thus combining \eqref{eq - potential for tensor species}, \eqref{eq - tensor species eq 1} and \eqref{eq - tensor species eq 2} yields
	\begin{equation*}
		\partial_{\sum_{i\in Q_0^2}x^*\otimes e_i}(W) = -x^*\otimes c_{S^2}. 
	\end{equation*}
	Similarly, one can show that $c_{S^1}\otimes y^*\in \mathcal{J}(S(S^1, S^2), W(S^1, S^2))$ for all $y^*\in {M^2}^*$.

	Let us now show that the commutator relations are in $\mathcal{J}(S(S^1, S^2), W(S^1, S^2))$. Let $x\in M_\alpha$ and $y\in M_\beta$ for some $\alpha\in Q_1^1$ and $\beta\in Q_1^2$. We claim that $\partial_{x\otimes y}(W) = (x\otimes e_{t(\beta)}) \otimes (e_{s(\alpha)}\otimes y) - (e_{t(\alpha)}\otimes y) \otimes (x\otimes e_{s(\beta)})$. First note that $\partial_{x\otimes y}(W(S^1, S^2)) = \partial^l_{x\otimes y}(\varepsilon_c(W(S^1, S^2))) = \partial^l_{x\otimes y}(\varepsilon_l(W(S^1, S^2)))$. We will first compute $\partial^l_{x\otimes y}(\varepsilon_l(W_1))$. Using \eqref{eq - casimir elements equations ass. to b} we have
	\begin{equation*}
		\begin{aligned}
			\varepsilon_l(W_1) &= \sum_{\substack{(\alpha, \beta)\in Q_1^1\times Q_1^2 \\ (a, a', b')\in \underline{\alpha}\times \overline{\alpha}\times \overline{\beta}}} \b_{S(S^1, S^2)}(((a')^*\otimes e_{s(\beta)}) \otimes (a\otimes e_{s(\beta)}))(a^*\otimes {b'}^*)\otimes(e_{t(\alpha)}\otimes b') \otimes (a'\otimes e_{s(\beta)}) = \\
			&= \sum_{\substack{(\alpha, \beta)\in Q_1^1\times Q_1^2 \\ (a', b')\in \overline{\alpha}\times \overline{\beta}}} ({a'}^*\otimes {b'}^*)\otimes(e_{t(\alpha)}\otimes b') \otimes (a'\otimes e_{s(\beta)}).
		\end{aligned}
	\end{equation*}
	Now we see that
	\begin{equation*}
		\begin{aligned}
			\partial^l_{x\otimes y}(\varepsilon_lW_1) &= \sum_{\substack{(\alpha, \beta)\in Q_1^1\times Q_1^2 \\ (a', b')\in \overline{\alpha}\times \overline{\beta}}} \b_{S(S^1, S^2)}((x\otimes y)\otimes ({a'}^*\otimes {b'}^*))(e_{t(\alpha)}\otimes b') \otimes (a'\otimes e_{s(\beta)}) = \\
			&= \sum_{\substack{(\alpha, \beta)\in Q_1^1\times Q_1^2 \\ (a', b')\in \overline{\alpha}\times \overline{\beta}}} (e_{t(\alpha)}\otimes \b_{S^2}(y\otimes {b'}^*)b') \otimes (\b_{S^1}(x\otimes {a'}^*)a'\otimes e_{s(\beta)}) = \\
			&= (e_{t(\alpha)}\otimes y)\otimes (x\otimes e_{s(\beta)}).
		\end{aligned}
	\end{equation*}
	The last equality holds due to \eqref{eq - casimir elements equations ass. to b}. Similarly, one computes that $\partial_{x\otimes y}(W_2) = (x\otimes e_{t(\beta)})\otimes (e_{s(\alpha)}\otimes y)$. Thus $[x\otimes 1_{D^2}, 1_{D^1}\otimes y]\in \mathcal{J}(S(S^1, S^2), W(S^1, W^2))$. Hence $\langle R\rangle\subseteq \mathcal{J}(S(S^1, S^2), W(S^1, S^2))$. We get the other inclusion, i.e. $\mathcal{J}(S(S^1, S^2), W(S^1, S^2))\subseteq \langle R\rangle$, by noting that $M^*$ is generated by elements $x^*\otimes 1_{D^2}$, $1_{D^1}\otimes y^*$ and $x\otimes y$ for $x\in M^1$, $y\in M^2$, $x^*\in {M^1}^*$ and $y^*\in {M^2}^*$.

	To obtain the second statement we show that $\widehat{\Pi}_3(\Lambda) \cong \widehat{T}(S(S^1, S^2))/\overline{\langle R \rangle}$. Equip $T(S(S^1, S^2))$ with the induced grading coming from $\Pi_3(\Lambda)$. This in turn yields a grading on the quiver where arrows in ${Q^1_1}^*\times {Q^2_1}^*$ are of degree $1$ and the rest of the arrows are of degree $0$. Note that the degree $0$ part of $Q^1\tilde{\otimes}Q^2$ is acyclic since $Q^1$ and $Q^2$ are acyclic. An immediate consequence is that $\dim_\K T(S(S^1, S^2))_i<\infty$, and thus $\widehat{T}(S(S^1, S^2)) \cong \prod_{i=1}^\infty T(S(S^1, S^2))_i$. Via this isomorphism we have that $\overline{\langle R\rangle}$ corresponds to $\prod_{i=1}^\infty \langle R\rangle_i$ because $R$ consists of homogeneous elements. Now $\Pi_3(\Lambda)_i = \frac{T(S(S^1, S^2))_i}{\langle R\rangle_i}$ gives the claim.
\end{proof}

\section{Mutation}\label{Section - mutation}
In this section we first recall the general theory of mutation in a cluster category. Then we define mutation for species with potential and prove part one of the main theorem of this paper (Theorem~\ref{Theorem A}).

\subsection{Mutation of Cluster Tilting Objects}
We say that a $\K$-linear triangulated category $\mathcal{C}$ is $2$-Calabi--Yau if it is hom-finite, i.e. each morphisms space is finite dimensional over $\K$, and there exists a functorial isomorphism
\begin{equation*}
	\mathrm{Hom}_\mathcal{C}(X, Y) \cong D\mathrm{Hom}_\mathcal{C}(Y, X[2])
\end{equation*}
for any $X, Y\in \mathcal{C}$. We will review results regarding mutations of self-injective cluster tilting objects in $\mathcal{C}$. In this case we say that $T\in \mathcal{C}$ is cluster tilting if
\begin{equation*}
	\mathrm{add}(T) = \{X\in \mathcal{C} \mid \mathrm{Ext}_\mathcal{C}^1(T, X) = 0 \}.
\end{equation*}
Throughout this section, assume that $\mathcal{C}$ is a hom-finite Krull-Schmidt $2$-Calabi--Yau category. We define $\mathrm{rad}_\mathcal{C}(X, Y) = \mathrm{rad}(\mathrm{Hom}_\mathcal{C}(X, Y))$, i.e. the radical of $\mathrm{Hom}_\mathcal{C}(X, Y)$.

\begin{mydef}\cite[Definition 4.3]{HerschendOsamu2011quiverwithpotential}
	We say that a cluster tilting object $T\in \mathcal{C}$ is self-injective if $\mathrm{End}_\mathcal{C}(T)$ is a finite dimensional self-injective algebra.
\end{mydef}

\begin{myprop}\cite[Proposition 3.6]{IO13Stablecateofhigherpreproj}\label{Prop - selfinjectie T=T[2]}
	Let $T = \bigoplus_{i\in I} T_i\in \mathcal{C}$ be a basic cluster tilting object with indecomposable summands $T_i$ and for some index set $I$.
	\begin{enumerate}
		\item $T$ is self-injective if and only if $T\cong T[2]$.
		\item In this case we define a permutation $\sigma: I\to I$ by $T_i[2]\cong T_{\sigma(i)}$. Then $\sigma$ gives the Nakayama permutation of $\mathrm{End}_\mathcal{C}(T)$.
	\end{enumerate}
\end{myprop}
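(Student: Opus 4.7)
Set $\Lambda=\mathrm{End}_\mathcal{C}(T)$ with $e_i\in\Lambda$ the primitive idempotent corresponding to $T_i$. The whole argument hinges on the $\Lambda$-bimodule identity
\begin{equation*}
\mathrm{Hom}_\mathcal{C}(T,T[2])\cong D\Lambda,
\end{equation*}
obtained by setting $X=Y=T$ in the $2$-Calabi--Yau isomorphism, together with the fact that $F=\mathrm{Hom}_\mathcal{C}(T,-)$ restricts to an equivalence $\mathrm{add}(T)\xrightarrow{\sim}\mathrm{proj}(\Lambda)$ sending $T_i$ to $\Lambda e_i$.

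For the easy direction of part (1), $T\cong T[2]\Rightarrow\Lambda$ self-injective: any isomorphism $\phi\colon T\xrightarrow{\sim}T[2]$ yields a left-module iso $\Lambda\cong\mathrm{Hom}_\mathcal{C}(T,T[2])$ by post-composition, which combined with the fundamental identity gives $\Lambda\cong D\Lambda$ as left modules, i.e.\ $\Lambda$ is self-injective.

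For the reverse, assume $\Lambda$ is self-injective. Since $[2]$ is an autoequivalence and $T$ is basic, $T[2]=\bigoplus_iT_i[2]$ is a basic decomposition into indecomposables. The fundamental identity gives $F(T_i[2])\cong D(e_i\Lambda)$, an indecomposable injective $\Lambda$-module, which by self-injectivity is also an indecomposable projective, say $\Lambda e_{\sigma(i)}$. This defines an injective map $\sigma\colon I\to I$ which is bijective because $|I|<\infty$ (from $\Lambda$ finite-dimensional). The main obstacle is to upgrade the projective-module isomorphism $F(T_i[2])\cong\Lambda e_{\sigma(i)}\cong F(T_{\sigma(i)})$ to an actual iso $T_i[2]\cong T_{\sigma(i)}$ in $\mathcal{C}$, since a priori we do not know $T_i[2]\in\mathrm{add}(T)$ and $F$ has only been established fully faithful on $\mathrm{add}(T)$. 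My plan is to produce a candidate $\phi_i\colon T_{\sigma(i)}\to T_i[2]$ from fully-faithfulness on $\mathrm{add}(T)$, a companion $\psi_i\colon T_i[2]\to T_{\sigma(i)}$ from the symmetric identity $\mathrm{Hom}_\mathcal{C}(T[-2],T)\cong D\Lambda$, and verify that $\phi_i\psi_i$ and $\psi_i\phi_i$ are identities by completing each composition to a triangle in $\mathcal{C}$ and using cluster-tilting to show that the cones vanish (any map into or out of $T_i[2]$ coming from the cone must factor through $\mathrm{add}(T)$, where $F$ is faithful).

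For part (2), once the isomorphisms $T_i[2]\cong T_{\sigma(i)}$ are established, applying $F$ and the fundamental identity yields
\begin{equation*}
\Lambda e_{\sigma(i)}\cong F(T_{\sigma(i)})\cong F(T_i[2])\cong D(e_i\Lambda),
\end{equation*}
which is precisely the defining condition for $\sigma$ to be the Nakayama permutation of $\Lambda$.
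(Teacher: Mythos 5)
The paper does not actually prove this proposition: it quotes it from [IO13, Proposition~3.6] and only records, as justification for part (2), the identity $\mathrm{Hom}_\mathcal{C}(T_i,T)\cong D\mathrm{Hom}_\mathcal{C}(T,T_i[2])$ --- which is exactly the identity your argument for (2) rests on, so that part matches the paper's remark. Your strategy for (1) is the standard one and is sound: the easy direction and the construction of the injection $\sigma$ are fine. Two comments on the step you flag as the main obstacle. First, the lift $\phi_i\colon T_{\sigma(i)}\to T_i[2]$ does not come from fully-faithfulness on $\mathrm{add}(T)$ (whose objects are not known to include $T_i[2]$) but from the projectivization isomorphism $\mathrm{Hom}_\mathcal{C}(T',X)\cong\mathrm{Hom}_\Lambda(FT',FX)$, valid for $T'\in\mathrm{add}(T)$ and \emph{arbitrary} $X\in\mathcal{C}$ because $FT'$ is projective; this produces $\phi_i$ with $F(\phi_i)$ an isomorphism. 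Second, your cone argument needs a correction, after which it closes without the companion map $\psi_i$. Complete $\phi_i$ to a triangle $C[-1]\xrightarrow{u} T_{\sigma(i)}\xrightarrow{\phi_i}T_i[2]\to C$; the long exact sequence for $F$ together with $\mathrm{Ext}^1_\mathcal{C}(T,T_{\sigma(i)})=0$ gives $\mathrm{Hom}_\mathcal{C}(T,C)=0$. Since every object of $\mathcal{C}$ admits a triangle $T_1\to T_0\to C\to T_1[1]$ with $T_0,T_1\in\mathrm{add}(T)$, the vanishing of $\mathrm{Hom}_\mathcal{C}(T_0,C)$ forces $C$ to be a summand of $T_1[1]$, i.e.\ $C\in\mathrm{add}(T[1])$ --- not $\mathrm{add}(T)$ as your parenthetical suggests. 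Then $C[-1]\in\mathrm{add}(T)$, and $F(\phi_i)\circ F(u)=F(\phi_i\circ u)=0$ with $F(\phi_i)$ injective gives $F(u)=0$, hence $u=0$ by faithfulness of $F$ on $\mathrm{add}(T)$. The triangle therefore splits, $T_i[2]\cong T_{\sigma(i)}\oplus C$, and indecomposability of $T_i[2]$ forces $C=0$. With that repair the proposal is a complete proof.
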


Note that (2) follows from $\mathcal{C}$ being a $2$-Calabi-Yau category as
\begin{equation*}
	\mathrm{Hom}_\mathcal{C}(T_i, T) \cong D\mathrm{Hom}_\mathcal{C}(T, T_i[2]).
\end{equation*}

Let us now recall the definition of cluster tilting mutation. We follow \cite[Section 4.2]{HerschendOsamu2011quiverwithpotential}, which relies on \cite{IyamaYoshino2008Mutation}, mainly \cite[Definition 2.5, Theorem 5.1, Theorem 5.3]{IyamaYoshino2008Mutation}. Let $T\in \mathcal{C}$ be a basic cluster tilting object, and let $T = U\oplus V$ be a decomposition. We take triangles, known as exchange triangles,
\begin{equation*}
	U\xrightarrow{f}V'\xrightarrow{g} \lsup{*}U\to U[1] \quad \text{and}\quad U^*\xrightarrow{f'} V''\xrightarrow{g'} U\to U^*[1]
\end{equation*}
where $f$ and $f'$ are minimal left $(\mathrm{add}V)$-approximations and $g$ and $g'$ are minimal right $(\mathrm{add}V)$-approximations. Let
\begin{equation*}
	\mu_U^+(T) = U^*\oplus V\quad \text{and}\quad \mu_U^-(T) = \lsup{*}U\oplus V.
\end{equation*}
Notice that we do not assume that $U$ indecomposable.

\begin{myprop}\cite[Proposition 4.5]{HerschendOsamu2011quiverwithpotential}, \cite{IyamaYoshino2008Mutation}$\space$ 
	\begin{enumerate}
		\item $\mu_U^+(T)$ and $\mu_U^-(T)$ are basic cluster tilting objects in $\mathcal{C}$.
		\item If $U$ is indecomposable, then $\mu_U^+(T)\cong \mu_U^-(T)$, and is denoted by $\mu_U(T)$.
	\end{enumerate}
\end{myprop}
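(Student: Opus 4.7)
The plan is to verify the two defining properties of a cluster tilting object for $\mu_U^+(T) = U^* \oplus V$; the argument for $\mu_U^-(T) = {}^*U \oplus V$ is entirely symmetric, using the exchange triangle $U \to V' \to {}^*U \to U[1]$ in place of $U^* \to V'' \to U \to U^*[1]$. The whole proof is a direct application of the Iyama--Yoshino mutation machinery \cite{IyamaYoshino2008Mutation}, with the 2-Calabi--Yau duality supplying the symmetry that cuts the work in half.

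For rigidity of $\mu_U^+(T)$, I must check three $\mathrm{Ext}^1$-vanishings, since $\mathrm{Ext}^1(V, V) = 0$ is inherited from $T$. To show $\mathrm{Ext}^1(V, U^*) = 0$ I apply $\mathrm{Hom}_\mathcal{C}(V, -)$ to the exchange triangle $U^* \to V'' \to U \to U^*[1]$: since $V'' \to U$ is a right $\mathrm{add}(V)$-approximation, the induced map $\mathrm{Hom}_\mathcal{C}(V, V'') \to \mathrm{Hom}_\mathcal{C}(V, U)$ is surjective, and the long exact sequence embeds $\mathrm{Ext}^1(V, U^*)$ into $\mathrm{Ext}^1(V, V'') = 0$. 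The dual vanishing $\mathrm{Ext}^1(U^*, V) = 0$ is automatic from the 2-Calabi--Yau isomorphism $\mathrm{Ext}^1(U^*, V) \cong D\mathrm{Ext}^1(V, U^*)$. Finally $\mathrm{Ext}^1(U^*, U^*) = 0$ comes from applying $\mathrm{Hom}_\mathcal{C}(U^*, -)$ to the same triangle and combining the previous vanishings with the left-approximation property of $f'$.

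The main obstacle is promoting rigidity to the full cluster tilting condition, i.e.\ showing that any $X \in \mathcal{C}$ with $\mathrm{Ext}^1_\mathcal{C}(\mu_U^+(T), X) = 0$ already lies in $\mathrm{add}(\mu_U^+(T))$. The strategy is to start from a right $\mathrm{add}(T)$-approximation of $X$, which exists since $T$ is cluster tilting, and splice it with the exchange triangle to produce a right $\mathrm{add}(\mu_U^+(T))$-approximation of $X$; the $\mathrm{Ext}^1$-vanishing hypothesis then forces this approximation to split off $X$ as a direct summand. This diagram chase is the technical heart of \cite[Theorem 5.3]{IyamaYoshino2008Mutation}, and is where essentially all the real work of the proposition resides.

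For part (2), assume $U$ indecomposable, so that $V$ is an almost complete basic cluster tilting object. The uniqueness part of the Iyama--Yoshino theory asserts that such a $V$ admits exactly two non-isomorphic indecomposable complements; denote the one different from $U$ by $U^\natural$. By part (1) both $U^* \oplus V$ and ${}^*U \oplus V$ are basic cluster tilting, so $U^*$ and ${}^*U$ are indecomposable complements of $V$. Neither can be isomorphic to $U$, for otherwise the corresponding exchange triangle would split and the approximation $V'' \to U$ (respectively $V' \to {}^*U$) would become a split epimorphism, contradicting $U \notin \mathrm{add}(V)$. Hence $U^* \cong U^\natural \cong {}^*U$, justifying the common notation $\mu_U(T)$.
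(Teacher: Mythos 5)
This proposition is quoted in the paper purely as a citation of \cite[Proposition 4.5]{HerschendOsamu2011quiverwithpotential} and \cite{IyamaYoshino2008Mutation}; the paper supplies no proof of its own, so there is nothing internal to compare your argument against. Measured on its own terms, your write-up is a correct outline of the standard Iyama--Yoshino argument, but you should be aware that it does not actually prove the hard part: the passage from ``$U^*\oplus V$ is rigid'' to ``$U^*\oplus V$ is cluster tilting'' is exactly the content of \cite[Theorem 5.3]{IyamaYoshino2008Mutation}, and you explicitly delegate it there (``this diagram chase is the technical heart\ldots''). The same goes for the two-complements theorem used in part (2). So what you have written is an honest expansion of the citation rather than an independent proof; that is consistent with how the paper treats the statement, but it should be presented as such.

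Two smaller points. First, in the rigidity computation the vanishing $\mathrm{Ext}^1_\mathcal{C}(U^*,U^*)=0$ does not follow from ``the left-approximation property of $f'$'' as you assert. The argument that works is: given $\phi\colon U^*\to U^*[1]$, the vanishing $\mathrm{Ext}^1_\mathcal{C}(U^*,V'')=0$ lets you write $\phi=w\circ\psi$ with $w\colon U\to U^*[1]$ the connecting morphism and $\psi\colon U^*\to U$; then $\mathrm{Ext}^1_\mathcal{C}(U,U)=0$ (rigidity of $T$, read off the long exact sequence obtained from $\mathrm{Hom}_\mathcal{C}(-,U)$) forces $\psi$ to factor as $\rho\circ f'$ with $\rho\colon V''\to U$, and finally $w\circ\rho\in\mathrm{Ext}^1_\mathcal{C}(V'',U^*)=0$ kills $\phi$. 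Second, you never address why $\mu_U^+(T)$ is \emph{basic}, i.e.\ why $\mathrm{add}(U^*)\cap\mathrm{add}(V)=0$ and $U^*$ is itself basic; this follows from the minimality of the approximations but deserves a sentence. Your argument in part (2) that $U^*\not\cong U$ is fine once you make explicit that the splitting of the exchange triangle is forced by the connecting morphism lying in $\mathrm{Ext}^1_\mathcal{C}(U,U)=0$.
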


For a cluster-tilting object $T\in \mathcal{C}$ we define the quiver $Q_T$ of $T$ by:
\begin{enumerate}
	\item $(Q_T)_0 = I$,
	\item $(Q_T)_1 = \{\alpha: i\to j \mid i, j\in I, \mathrm{rad}_\mathcal{C}(T_i, T_j) / \mathrm{rad}_\mathcal{C}^2(T_i, T_j) \neq 0 \}$.
\end{enumerate}

The following general result shows that if $\mathrm{End}_\mathcal{C}(\mu_{T_k}(T))$ is given by a species with potential, its semisimple part is the same as $\mathrm{End}_\mathcal{C}(T)$.

\begin{myprop}\label{proposition - end mod rad equivalence}
	Let $T = \bigoplus_{i\in I}T_i\in \mathcal{C}$ be a basic cluster tilting object with indecomposable summands $T_i$. Assume that there are no loops at vertex $k\in I$ in the quiver of $T$ and in the quiver of $\mu_{T_k}(T)$. If
	\begin{equation}\label{eq - exhancge triangle end iso}
		T_k\xrightarrow{f_k} U_k\xrightarrow{g_k} T_k^*\xrightarrow{\delta_k} T_k[1]
	\end{equation}
	is an exchange triangle in $\mathcal{C}$ then
	\begin{equation*}
		\phi: \mathrm{End}_\mathcal{C} (T_k)/\mathrm{rad}(\mathrm{End}_\mathcal{C} (T_k)) \cong \mathrm{End}_\mathcal{C} (T_k^*)/\mathrm{rad}(\mathrm{End}_\mathcal{C} (T_k^*)).
	\end{equation*}
\end{myprop}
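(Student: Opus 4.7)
The plan is first to observe that the no-loops hypotheses force both endomorphism algebras to be division algebras, and then to produce a canonical isomorphism between them directly from the exchange triangle.

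First I would note that $\End_\mathcal{C}(T_k)$ is a finite-dimensional local $\K$-algebra because $T_k$ is indecomposable in the hom-finite Krull--Schmidt category $\mathcal{C}$, so its Jacobson radical coincides with $\rad_\mathcal{C}(T_k, T_k)$. The no-loops hypothesis at $k$ in $Q_T$ reads $\rad_\mathcal{C}(T_k, T_k) = \rad_\mathcal{C}^2(T_k, T_k)$, so Nakayama forces the radical to vanish and makes $\End_\mathcal{C}(T_k)$ a division algebra. The same reasoning applied to $Q_{\mu_{T_k}(T)}$ shows $\End_\mathcal{C}(T_k^*)$ is a division algebra. Thus the quotients in the statement coincide with the algebras themselves, and it suffices to build an isomorphism $\End_\mathcal{C}(T_k) \cong \End_\mathcal{C}(T_k^*)$.

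Given $\alpha \in \End_\mathcal{C}(T_k)$, I would construct $\phi(\alpha) \in \End_\mathcal{C}(T_k^*)$ by lifting through the exchange triangle. Since $U_k \in \add V$ (where $V = T/T_k$) and $f_k$ is a left $\add V$-approximation, the composite $f_k \circ \alpha : T_k \to U_k$ factors as $h \circ f_k$ for some $h \in \End_\mathcal{C}(U_k)$. Applying the triangulated axiom (TR3) to the resulting commutative square produces a morphism of triangles whose third component is a map $\alpha^* : T_k^* \to T_k^*$, which I would take as $\phi(\alpha)$.

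The crux of the argument, and the step I expect to be the main obstacle, is showing that $\alpha^*$ is uniquely determined; once this is in hand, all of the algebraic structure of $\phi$ follows formally. If $\alpha_1^*, \alpha_2^*$ are two candidate lifts, their difference satisfies $\delta_k \circ (\alpha_1^* - \alpha_2^*) = 0$ and so, by the long exact sequence obtained from $\Hom_\mathcal{C}(T_k^*, -)$ applied to the triangle, factors through $g_k : U_k \to T_k^*$. Because $\mu_{T_k}(T) = T_k^* \oplus V$ is basic, the indecomposable $T_k^*$ is not a summand of $V$, hence not a summand of $U_k \in \add V$; so any endomorphism of $T_k^*$ factoring through $U_k$ is non-invertible and thus lies in $\rad\End_\mathcal{C}(T_k^*) = 0$, forcing $\alpha_1^* = \alpha_2^*$.

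With uniqueness established, $\phi$ is automatically $\K$-linear and multiplicative (since composing two lifts gives a lift of the composition) and sends the identity to the identity, so it is a unital $\K$-algebra homomorphism and, being non-zero between division algebras, injective. Finally I would run the symmetric construction from the rotated exchange triangle $T_k^*[-1] \to T_k \xrightarrow{f_k} U_k \xrightarrow{g_k} T_k^*$, using that $g_k$ is a right $\add V$-approximation, to obtain a homomorphism $\psi : \End_\mathcal{C}(T_k^*) \to \End_\mathcal{C}(T_k)$; the same uniqueness argument yields $\psi \circ \phi = \mathrm{id}$, completing the proof that $\phi$ is an isomorphism.
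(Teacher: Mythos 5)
Your opening step is wrong, and it is load-bearing for everything that follows. The absence of a loop at $k$ in $Q_T$ means $\mathrm{rad}_\mathcal{C}(T_k,T_k)=\mathrm{rad}_\mathcal{C}^2(T_k,T_k)$, where $\mathrm{rad}_\mathcal{C}^2$ consists of sums of composites $T_k\to X\to T_k$ of radical maps through \emph{other} objects of $\mathrm{add}(T)$. This is not the square of the Jacobson radical of the local ring $\mathrm{End}_\mathcal{C}(T_k)$, so Nakayama (or nilpotency of the radical) does not apply and you cannot conclude $\mathrm{rad}(\mathrm{End}_\mathcal{C}(T_k))=0$. A concrete counterexample: for $\Lambda=\Pi(A_3)$ realised as $\mathrm{End}_\mathcal{C}(T)$ of a cluster tilting object in a $2$-Calabi--Yau category, the quiver (the double quiver of $A_3$) has no loops, yet $e_2\Lambda e_2$ is two-dimensional local and not a division algebra. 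Consequently your identification of the quotients with the algebras themselves fails; so does the uniqueness of the lift $\alpha^*$ (two lifts differ by a map factoring through $U_k$, which is merely \emph{radical}, not zero, since $T_k^*\notin\mathrm{add}(U_k)$); so does the injectivity argument via ``nonzero map of division algebras''; and so does the uniqueness step underlying $\psi\circ\phi=\mathrm{id}$.

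The correct use of the no-loop hypothesis is the one in the paper: every radical endomorphism of $T_k$ factors through $\mathrm{add}(T/T_k)$ and hence through the left approximation $f_k$, which (together with $\mathrm{Ext}^1_\mathcal{C}(U_k,T_k)=0$) shows that applying $\mathrm{Hom}_\mathcal{C}(-,T_k[1])$ to the exchange triangle identifies $\mathrm{End}_\mathcal{C}(T_k[1])/\mathrm{rad}$ with $\mathrm{Hom}_\mathcal{C}(T_k^*,T_k[1])$; dually, the no-loop hypothesis for $\mu_{T_k}(T)$ and the right approximation $g_k$ identify $\mathrm{End}_\mathcal{C}(T_k^*)/\mathrm{rad}$ with the same space. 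Your lifting construction can be salvaged, but only as a map \emph{between the quotients}: it is well defined modulo radical precisely because the ambiguity factors through $U_k$, and it then agrees with the paper's $\phi$, which is characterised by $h[1]\circ\delta_k=\delta_k\circ\phi(h)$. As written, however, the proposal does not establish the proposition.
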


\begin{proof}
	We apply $\mathrm{Hom}_\mathcal{C}(- ,T_k[1])$ to the triangle \eqref{eq - exhancge triangle end iso} to get an exact sequence
	\begin{equation*}
		\cdots \to \mathrm{Hom}_\mathcal{C}(U_k[1], T_k[1])\to \mathrm{End}_\mathcal{C}(T_k[1]) \to \mathrm{Hom}_\mathcal{C}(T_k^*, T_k[1]) \to \mathrm{Hom}_\mathcal{C}(U_k, T_k[1]) \to \cdots.
	\end{equation*}
	Note that $\mathrm{Hom}_\mathcal{C}(U_k, T_k[1])\cong \mathrm{Ext}^1_\mathcal{C}(U_k, T_k) = 0$ since $T$ is a cluster tilting object. Since $Q_T$ has no loop at $k$ and $f_k$ is a left $\mathrm{add}(T/T_k)$-approximation, we get that the sequence
	\begin{equation*}
		\mathrm{Hom}_\mathcal{C}(U_k[1], T_k[1])\to \mathrm{rad}(\mathrm{End}_\mathcal{C}(T_k[1]))\to 0
	\end{equation*}
	is exact. Combining these two results yields
	\begin{equation*}
		\mathrm{End}_\mathcal{C} (T_k[1])/\mathrm{rad}(\mathrm{End}_\mathcal{C} (T_k[1])) \cong \mathrm{Hom}_\mathcal{C}(T_k^*, T_k[1]).
	\end{equation*}
	
	Similarly, if we apply $\mathrm{Hom}_\mathcal{C}(T_k^*, -)$ to the triangle \eqref{eq - exhancge triangle end iso} we get an exact sequence
	\begin{equation*}
		\cdots \to \mathrm{Hom}_\mathcal{C}(T_k^*, U_k) \to \mathrm{End}_\mathcal{C}(T_k^*) \to \mathrm{Hom}_\mathcal{C}(T_k^*, T_k[1]) \to \mathrm{Hom}_\mathcal{C}(T_k^*, U_k[1]) \to \cdots.
	\end{equation*}
	Again, we have that $\mathrm{Hom}_\mathcal{C}(T_k^*, U_k[1]) = \mathrm{Ext}^1_\mathcal{C}(T_k^*, U_k) = 0$ since $\mu_{T_k}^-(T)$ is a basic cluster tilting object in $\mathcal{C}$. Since $Q_{\mu_{T_k}(T)}$ has no loop at $k$ and $g_k$ is a right $\mathrm{add}(T/T_k)$-approximation the sequence
	\begin{equation*}
		\mathrm{Hom}_\mathcal{C}(T_k^*, U_k) \to \mathrm{rad}(\mathrm{End}_\mathcal{C}(T_k^*))\to 0
	\end{equation*}
	is exact and therefore
	\begin{equation*}
		\mathrm{End}_\mathcal{C} (T_k^*)/\mathrm{rad}(\mathrm{End}_\mathcal{C} (T_k^*)) \cong \mathrm{Hom}_\mathcal{C}(T_k^*, T_k[1]).
	\end{equation*}
	Hence
	\begin{equation}\label{eq - end isomorphism}
		\mathrm{End}_\mathcal{C} (T_k[1])/\mathrm{rad}(\mathrm{End}_\mathcal{C} (T_k[1])) \cong \mathrm{End}_\mathcal{C} (T_k^*)/\mathrm{rad}(\mathrm{End}_\mathcal{C} (T_k^*))
	\end{equation}
	and using that $[1]$ is a fully-faithful functor we get the result. The morphism $\phi$ is defined by composing $[1]$ with the isomorphism \eqref{eq - end isomorphism}. To see that $\phi$ is indeed a ring homomorphism we observe the following. If $h\in \mathrm{End}_\mathcal{C} (T_k)/\mathrm{rad}(\mathrm{End}_\mathcal{C} (T_k))$ and $h^*\in \mathrm{End}_\mathcal{C} (T_k^*)/\mathrm{rad}(\mathrm{End}_\mathcal{C} (T_k^*))$, then $h[1]\circ \delta_k = \delta_k \circ h$ if and only if $\phi(h) = h^*$. Thus
	\begin{equation*}
		(h_1 \circ h_2)[1]\circ \delta_k = h_1[1]\circ h_2[1]\circ \delta_k = h_1[1]\circ \delta_k \circ  \phi(h_2) = \delta_k \circ  \phi(h_1)\circ \phi(h_2),
	\end{equation*}
	where $h_1, h_2\in \mathrm{End}_\mathcal{C} (T_k)/\mathrm{rad}(\mathrm{End}_\mathcal{C} (T_k))$. In other words, $\phi(h_1\circ h_2) = \phi(h_1)\circ \phi(h_2)$.
\end{proof}

\begin{myprop}\cite[Proposition 4.6]{HerschendOsamu2011quiverwithpotential}\label{Prop - U=U[2] implies selfinjective}
	Let $T$ be a self-injective cluster tilting object in $\mathcal{C}$. If $U\cong U[2]$, then $\mu_U^+(T)$ and $\mu_U^-(T)$ are self-injective cluster tilting objects in $\mathcal{C}$.
\end{myprop}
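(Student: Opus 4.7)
By Proposition~\ref{Prop - selfinjectie T=T[2]}, a basic cluster tilting object $X \in \mathcal{C}$ is self-injective if and only if $X \cong X[2]$, so the plan is to verify $\mu_U^{\pm}(T) \cong \mu_U^{\pm}(T)[2]$. Starting from $T = U \oplus V$, the self-injectivity of $T$ gives $U \oplus V \cong U[2] \oplus V[2]$; combined with the hypothesis $U \cong U[2]$ and the Krull--Schmidt property of $\mathcal{C}$, this forces $V \cong V[2]$, and hence $\mathrm{add}(V) = \mathrm{add}(V[2])$. The task therefore reduces to showing $U^* \cong U^*[2]$ and $\lsup{*}U \cong \lsup{*}U[2]$.

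For $U^*$, apply the auto-equivalence $[2]$ to the exchange triangle
\begin{equation*}
U^* \xrightarrow{f'} V'' \xrightarrow{g'} U \xrightarrow{\delta} U^*[1],
\end{equation*}
obtaining a distinguished triangle
\begin{equation*}
U^*[2] \xrightarrow{f'[2]} V''[2] \xrightarrow{g'[2]} U[2] \xrightarrow{\delta[2]} U^*[3].
\end{equation*}
Fix an isomorphism $h \colon U[2] \xrightarrow{\sim} U$. Since $[2]$ is an auto-equivalence and $\mathrm{add}(V[2]) = \mathrm{add}(V)$, the composite $h \circ g'[2] \colon V''[2] \to U$ is again a minimal right $\mathrm{add}(V)$-approximation of $U$, as is the original $g'$.

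Minimal right $\mathrm{add}(V)$-approximations of $U$ are unique up to isomorphism, so there exists an isomorphism $\phi \colon V''[2] \xrightarrow{\sim} V''$ satisfying $g' \circ \phi = h \circ g'[2]$. Using this to identify the middle term and target, the shifted triangle becomes isomorphic (as a distinguished triangle) to one with middle arrow $g'$, and the standard uniqueness of completion of a morphism to a distinguished triangle then produces an isomorphism of triangles extending the identity on $V''$ and $U$. Reading off the first objects yields $U^*[2] \cong U^*$.

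The proof that $\lsup{*}U \cong \lsup{*}U[2]$ is entirely analogous, using the exchange triangle $U \xrightarrow{f} V' \xrightarrow{g} \lsup{*}U \to U[1]$ and the uniqueness of minimal left $\mathrm{add}(V)$-approximations of $U$. Combining with $V \cong V[2]$ gives $\mu_U^{\pm}(T)[2] \cong \mu_U^{\pm}(T)$, and another application of Proposition~\ref{Prop - selfinjectie T=T[2]} finishes the argument. The only delicate point is the transition from uniqueness of minimal approximations to genuine isomorphism of distinguished triangles; everything else is formal once $V \cong V[2]$ has been extracted from the Krull--Schmidt decomposition.
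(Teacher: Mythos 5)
Your argument is correct: the paper itself gives no proof of this statement (it is quoted from \cite[Proposition 4.6]{HerschendOsamu2011quiverwithpotential}), and your reasoning --- cancelling $U\cong U[2]$ from $T\cong T[2]$ via Krull--Schmidt to get $V\cong V[2]$, then transporting the exchange triangles along $[2]$ and using uniqueness of minimal $\mathrm{add}(V)$-approximations together with the triangulated five lemma to conclude $U^*\cong U^*[2]$ and $\lsup{*}U\cong \lsup{*}U[2]$ --- is essentially the standard argument given in that reference. The one point worth making explicit is that Proposition~\ref{Prop - selfinjectie T=T[2]} applies to $\mu_U^{\pm}(T)$ because these are already known to be basic cluster tilting objects by the preceding proposition.
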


\begin{myprop}\cite[Proposition 4.8]{HerschendOsamu2011quiverwithpotential}\label{prop - no arrows mu + and mu -}
	If there are no arrows between vertices corresponding to $T_1, T_2, \dots, T_m$ in the quiver $Q_T$, then we have $\mu_U^+(T)\cong \mu_U^-(T) \cong \mu_{T_m}\circ \mu_{T_{m-1}}\circ \cdots \circ \mu_{T_1}(T)$, where $U = \bigoplus_{i = 1}^mT_i$.
\end{myprop}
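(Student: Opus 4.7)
The plan is to show that both $\mu_U^+(T)$ and $\mu_U^-(T)$, as well as the sequential mutation $\mu_{T_m}\circ\cdots\circ\mu_{T_1}(T)$, coincide with $\bigoplus_{i=1}^m T_i^* \oplus V$, where each $T_i^*$ is defined by the exchange triangle $T_i^* \to V_i'' \to T_i \to T_i^*[1]$ with $V_i''\to T_i$ the minimal right $\mathrm{add}(V)$-approximation of $T_i$.

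First I would use that $U = \bigoplus_{i=1}^m T_i$ is a direct sum of indecomposables: the minimal right (respectively left) $\mathrm{add}(V)$-approximation of $U$ splits as a direct sum of the individual minimal approximations $V_i''\to T_i$ (respectively $T_i\to V_i'$). Consequently the exchange triangles defining $\mu_U^\pm(T)$ decompose into indecomposable pieces, giving $\mu_U^-(T) \cong \bigoplus U_i^* \oplus V$ and $\mu_U^+(T) \cong \bigoplus \lsup{*}U_i \oplus V$. Since each $T_i$ is indecomposable, the statement $\mu_X^+\cong \mu_X^-$ for indecomposable $X$ recalled above yields $U_i^*\cong \lsup{*}U_i$; denote this common object by $T_i^*$. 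Thus $\mu_U^+(T)\cong\mu_U^-(T) = \bigoplus T_i^* \oplus V$. Moreover, by the no-arrows hypothesis, the multiplicity of $T_j$ (for $j\ne i$ in $\{1,\ldots,m\}$) in the minimal right $\mathrm{add}(T/T_i)$-approximation of $T_i$, which equals the number of arrows $j\to i$ in $Q_T$, vanishes. Hence the $\mathrm{add}(T/T_i)$- and $\mathrm{add}(V)$-approximations of $T_i$ coincide, and $\mu_{T_i}(T) = T_i^*\oplus (T/T_i)$ with the same $T_i^*$.

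It remains to identify the sequential mutation with $\bigoplus T_i^* \oplus V$. Setting $T^{(k)} := \mu_{T_k}\circ\cdots\circ\mu_{T_1}(T)$, I would prove by induction on $k$ that $T^{(k)} = \bigoplus_{i\le k} T_i^* \oplus \bigoplus_{i>k} T_i \oplus V$. The inductive step reduces to showing that the minimal right $\mathrm{add}(T^{(k)}/T_{k+1})$-approximation of $T_{k+1}$ is still $V_{k+1}''\to T_{k+1}$; equivalently, no arrow of $Q_{T^{(k)}}$ points into $T_{k+1}$ from any $T_i^*$ ($i\le k$) or from any $T_j$ ($j>k+1$). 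For the $T_i^*$ case, applying $\mathrm{Hom}_\mathcal{C}(-,T_{k+1})$ to the triangle $T_i^*\to V_i''\to T_i\to T_i^*[1]$ and using $\mathrm{Ext}^1_\mathcal{C}(T_i,T_{k+1})=0$ (since $T$ is cluster tilting), every morphism $T_i^*\to T_{k+1}$ factors through $V_i''\in \mathrm{add}(V)\subseteq\mathrm{add}(T^{(k)})$ as the composite of two non-isomorphisms between non-isomorphic indecomposable summands of $T^{(k)}$, and thus belongs to $\mathrm{rad}^2_{\mathrm{End}(T^{(k)})}(T_i^*,T_{k+1})$.

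For the $T_j$ case with $j>k+1$, any morphism $T_j\to T_{k+1}$ lies in $\mathrm{rad}^2_{\mathrm{End}(T)}(T_j,T_{k+1})$ by hypothesis and so factors through some indecomposable summand $T_l$ of $T$. If $T_l\notin\{T_1,\ldots,T_k\}$ the factorization already runs through a summand of $T^{(k)}$; otherwise $T_l = T_s$ for some $s\le k$, and using $\mathrm{Ext}^1_\mathcal{C}(T_j,T_s^*) = 0$ (valid because both $T_j$ and $T_s^*$ lie in the cluster tilting object $T^{(s)}$), one lifts the component $T_j\to T_s$ along $g_s\colon V_s''\to T_s$ to a morphism $T_j\to V_s''$, yielding a factorization $T_j\to V_s''\to T_{k+1}$ through $V_s''\in\mathrm{add}(V)$. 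The hard part is precisely this rerouting step — verifying that factorizations through already-mutated summands can be replaced by factorizations through $\mathrm{add}(V)$ — so that the absence of arrows into $T_{k+1}$ persists throughout the sequence of mutations. Once this is settled, the induction closes and $T^{(m)} = \bigoplus T_i^*\oplus V = \mu_U^-(T)\cong\mu_U^+(T)$, as desired.
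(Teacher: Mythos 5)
The paper does not prove this proposition at all --- it is quoted verbatim from \cite[Proposition 4.8]{HerschendOsamu2011quiverwithpotential} --- so there is no in-paper argument to compare against. Your proof is essentially the standard argument from that reference (decompose the approximations of $U$ into the approximations of its indecomposable summands, use the no-arrow hypothesis to identify the $\mathrm{add}(V)$- and $\mathrm{add}(T/T_i)$-approximations, then induct on the sequential mutations by checking that no arrows into the next vertex are created), and the substance is correct, including the key rerouting step for morphisms $T_j\to T_{k+1}$ that a priori factor through an already-mutated summand $T_s$: lifting along $V_s''\to T_s$ using $\mathrm{Ext}^1_\mathcal{C}(T_j,T_s^*)=0$ is exactly the right move, and both resulting factors are radical because $V_s''\in\mathrm{add}(V)$ shares no summand with $U$.

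Two small points. First, with the paper's conventions you have the signs backwards: $\mu_U^+(T)=U^*\oplus V$ comes from the triangle $U^*\to V''\to U\to U^*[1]$ and $\mu_U^-(T)=\lsup{*}U\oplus V$ from $U\to V'\to \lsup{*}U\to U[1]$, the opposite of what you wrote. Second, your appeal to ``$\mu_X^+\cong\mu_X^-$ for indecomposable $X$'' to get $U_i^*\cong\lsup{*}U_i$ is slightly out of order: that statement concerns exchange triangles built from $\mathrm{add}(T/T_i)$-approximations, whereas your $U_i^*$ and $\lsup{*}U_i$ are defined via $\mathrm{add}(V)$-approximations, so you must first invoke the coincidence of the two approximations (which you prove in the following sentence) before the indecomposable case applies. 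Neither point affects the validity of the argument.
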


\subsection{Mutation of Species with Potential}
Throughout the rest of this article we will assume that all species are $\K$-species. Let $(S, W)$ be a species with potential over a quiver $Q$, with no loops or $2$-cycles at $k$. Assume that $W$ does not start at vertex $k$, i.e. $e_kW = 0 = We_k$ where $e_k\in D_k$ is the identity element, we define a semi-mutation and mutation as in \cite{Nquefack2012PotentialsJacobian}. Note that we can always assume this since $Q$ has no loops, at $k$.

\begin{mydef}\cite[Definition 7.1]{Nquefack2012PotentialsJacobian}
	Assume that $Q$ has no loops or $2$-cycles. The semi-mutation of $(S, W)$ at $k$ is again a species with potential $\mu_k(S, W) = (S', W')$ over a quiver $Q'$ obtained by:
	\begin{enumerate}
		\item Adding a new arrow $[\alpha\beta]$ for all $\alpha, \beta\in Q_1$ such that $s(\alpha) = k = t(\beta)$.
		\item Replacing $\alpha: k\to i$ and $\beta: j\to k$ with arrows $\alpha^*: i\to k$ and $\beta^*: k\to j$ respectively.
	\end{enumerate}
	We get $S'$ from $S$ by keeping $D$ and modifying $M$ by:
	\begin{enumerate}
		\item Setting $M_{[\alpha\beta]} = M_\alpha\otimes_{D_k}M_\beta$.
		\item Setting $M_{\alpha^*} = (M_\alpha)^*$ and $M_{\beta^*} = (M_\beta)^*$.
	\end{enumerate}
	We let $W' = [W] + \Delta$ where:
	\begin{enumerate}
		\item $[W]$ is obtained from $W$ by substituting $m_\alpha \otimes m_\beta\in M_\alpha\otimes_{D_k} M_\beta$ appearing in $W$ with $[m_\alpha \otimes m_\beta]\in M_{[\alpha\beta]}$ for all $m_\alpha\in M_\alpha$ and $m_\beta\in M_\beta$ such that $s(\alpha) = k = t(\beta)$.
		\item Let
		\begin{equation*}
			c_{\alpha, \beta} = \sum_{(a, b)\in \overline{\alpha}\times\underline{\beta}} a^* [a\otimes b] b^*.
		\end{equation*}
		and define
		\begin{equation*}
			\Delta = \sum_{\substack{\alpha, \beta\in Q_1 \\ s(\alpha) = k = t(\beta)}} c_{\alpha,\beta}.
		\end{equation*}
	\end{enumerate} 
\end{mydef}

\begin{myrem}
	The mutation of a species with potential at $k$ is given as the reduced part of $\mu_k(S, W)$. We will not give any details about the reduction process in this paper. The important detail is that the mutation and the semi-mutation at $k$ have isomorphic Jacobian algebras. We only need the Jacobian algebras in this paper, and therefore it is enough to consider the semi-mutation. We refer the reader to \cite{Nquefack2012PotentialsJacobian} for more details.
\end{myrem}

We keep $\t: D\to \K$, which gives Casimir elements for $\mu_k(S, W)$. For the species $\mu_k(S, W)$ we keep the same bases $\underline{\alpha}$ and $\overline{\alpha}$ when $\alpha\in Q_1$ is not adjacent to $k$. Moreover, if $\alpha: k\to i$ and $\beta: j\to k$ we choose the bases $\underline{\alpha^*}$ and $\overline{\alpha^*}$ for $\alpha^*$, the bases $\underline{\beta^*}$ and $\overline{\beta^*}$ for $\beta^*$, and finally for $[\alpha\beta]$ we choose the bases
\begin{equation*}
	\begin{aligned}
		\underline{[\alpha\beta]} &= \{a\otimes b \mid a\in \underline{\alpha}, b\in \underline{\beta} \}, \\
		\overline{[\alpha\beta]} &= \{a\otimes b \mid a\in \overline{\alpha}, b\in \overline{\beta} \}.
	\end{aligned}
\end{equation*}

\begin{myex}
	Let $Q$ be the quiver
	\begin{equation*}
		\begin{tikzcd}
			1 \arrow[r] & 2 \arrow[r] & 3 \arrow[r] & 4. \\
		\end{tikzcd}
	\end{equation*}
	Let $S$ be the $\R$-species over $Q$ of Dynkin type $F_4$ given by 
	\begin{equation*}
		\C \xrightarrow{\lsub{2}\C_1}\C \xrightarrow{\lsub{3}\C_2}\R \xrightarrow{\lsub{4}\R_3} \R.
	\end{equation*}
	We can view $S$ as a species with potential by choosing the potential $W = 0$. The mutated species with potential $\mu_3(S, W) = (S', W')$ is given by
	\begin{equation*}
		Q': \begin{tikzcd}
			& & 3 \arrow[dl] \\
			1 \arrow[r] & 2 \arrow[rr] & & 4 \arrow[lu]
		\end{tikzcd}, \qquad S' :\begin{tikzcd}
			& & \R \arrow[ld, "\lsub{2}\C_3"'] \\
			\C \arrow[r, "\lsub{2}\C_1"] & \C \arrow[rr, "\lsub{4}\C_2"] & &  \R \arrow[lu, "\lsub{3}\R_4"']
		\end{tikzcd}
	\end{equation*}
	where we have made the identifications
	\begin{equation*}
		\begin{aligned}
			\lsub{3}\C_2^* &\cong \C = \lsub{2}\C_3, \quad (1^*\mapsto 1, i^*\mapsto -i) \\
			\lsub{4}\R_3^* &\cong \R = \lsub{3}\R_4, \quad (f\mapsto f(1)) \\
			\lsub{4}\R_3\otimes_\R \lsub{3}\C_2 &\cong \C = \lsub{4}\C_2, \quad (r\otimes s\mapsto rs)
		\end{aligned}
	\end{equation*}
	and the potential is given as $W' = \lsub{3}1_4\otimes \lsub{4}1_2\otimes \lsub{2}1_3$.
\end{myex}

\subsection{Relation Between Mutation of Cluster Tilting Objects and of Species with Potentials}\label{subsection - relation}
In this section we will prove the following theorem, which is a generalisation of \cite[Theorem 5.2]{BIRS2011Mutation}. We need the following assumption. Let $T\in \mathcal{C}$ be a basic cluster tilting object. Following \cite[see Section 4]{BIRS2011Mutation} we say that $T$ satisfies the vanishing condition at $k$ if
\begin{equation*}
	\mathrm{Hom}_\Lambda(\mathrm{Ext}^1_\Lambda(D\Lambda, S_k), S_k) = 0
\end{equation*}
for $\Lambda = \mathrm{End}_\mathcal{C}(T)^{op}$ and the simple $\Lambda$-module $S_k$.

A vertex $k\in Q_0$ is mutable if the following holds:
\begin{enumerate}
	\item $k$ is not contained in loops or $2$-cycles in $Q$.
	\item The isomorphism $\Phi: \mathrm{End}_\mathcal{C}(T)^{op} \cong \mathcal{P}(S, W)$ satisfies the mutation compatibility condition at $k$ defined in Definition~\ref{definition - mut condition at k}.
\end{enumerate}

\begin{mythm}\label{Theorem A}
	Let $\mathcal{C}$ be a $2$-Calabi--Yau triangulated category and $T = \bigoplus_{i = 1}^n T_i\in \mathcal{C}$ a basic cluster tilting object with indecomposable summands $T_i$, that satisfies the vanishing condition at $i$ for all $i\in Q_0$. If $\mathrm{End}_\mathcal{C}(T)^{op}\cong \mathcal{P}(S, W)$ for a reduced species with potential $(S, W)$ and $k\in Q_0$ is mutable, then $\mathrm{End}_\mathcal{C}(\mu_{T_k}(T))^{op}\cong \mathcal{P}(\mu_k(S, W))$.
\end{mythm}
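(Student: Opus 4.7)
The plan is to mirror the strategy of \cite[Theorem 5.2]{BIRS2011Mutation} in the species framework, using the cyclic derivative machinery of \cite{Nquefack2012PotentialsJacobian}. First I would analyze the exchange triangle at $T_k$,
\begin{equation*}
T_k \xrightarrow{f} U_k \xrightarrow{g} T_k^* \to T_k[1],
\end{equation*}
which exists because $k$ has no loops. Decomposing $U_k$ into summands among the $T_i$ for $\alpha: k \to i$ in $Q_1$, and using the given isomorphism $\Phi: \mathrm{End}_\mathcal{C}(T)^{op} \cong \mathcal{P}(S, W)$, the components of $f$ (respectively $g$) can be identified with the images of the basis elements in $\underline{\alpha}$ (respectively $\overline{\alpha}$) for arrows $\alpha$ adjacent to $k$. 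The vanishing condition at $k$ will be used at the end to ensure no additional relations appear beyond those coming from $W'$.

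Next I would construct a candidate $\K$-algebra homomorphism $\Psi: \widehat{T}(S') \to \mathrm{End}_\mathcal{C}(\mu_{T_k}(T))^{op}$, where $S'$ is the species underlying $\mu_k(S, W)$. On arrows not adjacent to $k$, let $\Psi$ agree with $\Phi$. On reversed arrows $\alpha^*: i \to k$ (from $\alpha: k \to i$ in $Q_1$), define $\Psi$ using the duality $\t_l^{-1}$ from Remark~\ref{remark - dualising condition} applied to the components of the exchange triangle. On composite arrows $[\alpha\beta]: s(\beta) \to t(\alpha)$, set $\Psi([a\otimes b]) = \Phi(a)\Phi(b)$, viewed inside the mutated endomorphism algebra. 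Extend multiplicatively and by $\mathcal{J}_{\widehat{T}(S')}$-adic continuity. The key verification is that $\Psi$ annihilates $\mathcal{J}(\mu_k(S, W)) = \overline{\langle \partial_\xi(W')\rangle}$. Splitting $W' = [W] + \Delta$, the derivatives along arrows disjoint from $k$ reproduce the original relations $\partial(W) = 0$, the derivatives along $\alpha^*$ and $\beta^*$ produce Casimir-theoretic identities that reduce to the exchange-triangle relation $gf = 0$, and the derivatives along the new arrows $[\alpha\beta]$ correspond under $\Psi$ to the relations $\partial_{a\otimes b}(W) = 0$ in the original Jacobian algebra.

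To conclude, I would show that $\Psi$ descends to an isomorphism on $\mathcal{P}(\mu_k(S, W))$. Proposition~\ref{proposition - end mod rad equivalence} identifies the semisimple quotients across the mutation, and surjectivity of the induced map follows because the components of $f$ and $g$ together with the unchanged arrows span the degree-one part of the radical at $k$ in $\mu_{T_k}(T)$. Using the induced grading on $\widehat{T}(S')$ and the Nakayama lemma one reduces injectivity to checking that no extra relations are forced on $\mathrm{End}_\mathcal{C}(\mu_{T_k}(T))^{op}$, which is precisely where the vanishing condition $\mathrm{Hom}_\Lambda(\mathrm{Ext}^1_\Lambda(D\Lambda, S_k), S_k) = 0$ is invoked, exactly in the role it plays in \cite[Section 4]{BIRS2011Mutation}.

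The main obstacle is the Casimir bookkeeping in the derivative calculation: the quiver-with-potential proof in \cite{BIRS2011Mutation} uses elementary combinatorial manipulation of paths, whereas here each arrow sits in a bimodule over possibly noncommutative division algebras. The duality identities $\sum_a \b(x \otimes a^*)\, a = x$ and the symmetry $\t(\b(x \otimes x^*)) = \t(\b(x^* \otimes x))$ must be applied carefully both to show that $\Psi([a \otimes b])$ is independent of the Casimir basis chosen for $M_{[\alpha\beta]}$ and to show that the image of $\partial_{a^*}(\Delta)$ equals the composition $gf$ coming from the exchange triangle. The mutation compatibility condition at $k$ from Definition~\ref{definition - mut condition at k} is what guarantees this identification on the nose rather than only up to a unit in $D_k$.
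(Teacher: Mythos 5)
Your outline captures the right general shape (follow \cite{BIRS2011Mutation}, use the exchange triangles at $T_k$, keep track of Casimir elements), but it leaves the genuinely hard step unargued and misplaces the role of the mutation compatibility condition. The paper does not verify $\ker\Psi=\mathcal{J}(\mu_k(S,W))$ by a direct ``no extra relations are forced'' argument; the entire content of the proof is the equivalence in Theorem~\ref{Theorem - iso and weak 2} between the existence of the isomorphism and the existence of weak $2$-almost split sequences \eqref{eq - phi 2 almost split sequence} in $\mathrm{add}(T)$, followed by a case-by-case construction of those sequences for $\mu_{T_k}(T)$ (Lemma~\ref{Lemma - Main}: $i=k$, $i$ not adjacent to $k$, an arrow $i\to k$, an arrow $k\to i$). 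Establishing these sequences requires the five exactness statements of Lemma~\ref{Lemma - 5 conditions}, which is exactly where the vanishing condition enters --- via \cite[Theorem 4.9]{BIRS2011Mutation} it yields the factorisation $f_{i1}=g_i h_i$ through the exchange triangles and the vanishing of $\mathrm{Ext}^1_{\mathcal{C}}(T_k^*,T_i)$. Your appeal to ``the induced grading and the Nakayama lemma'' does not substitute for this: the inclusion $\ker\Psi\subseteq\mathcal{J}(\mu_k(S,W))$ is precisely the pseudo-kernel property of the middle map in the $2$-almost split sequence for $\mu_{T_k}(T)$, and proving it forces you into the same diagram chases with $f_{i0}',f_{i0}'',f_{i1}',f_{i1}''$ and the approximation properties of $g_k,h_k$ that occupy the bulk of Lemma~\ref{Lemma - Main}. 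Your statement that the derivatives along $\alpha^*,\beta^*$ reduce to ``$gf=0$'' is also not the relevant relation; the new relations at $k$ encode the composite $f_{k2}\circ f_{k0}$ of the connecting morphisms across the two exchange triangles, which is why (iii) of Lemma~\ref{Lemma - 5 conditions} is needed.

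Separately, the mutation compatibility condition (Definition~\ref{definition - mut condition at k}) is not a device for removing a unit ambiguity in $D_k$. It is what makes your homomorphism $\Psi$ well defined in the first place: to extend $\Psi$ from the arrows to an algebra map out of $\widehat{T}(S')$ via Lemma~\ref{lemma - bijection sets algebra and pairs}, you must specify $\Psi|_{D_k}\colon D_k\to\mathrm{End}_\mathcal{C}(T_k^*)^{op}$ so that the assignments $g_k$ and $h_k$ on $M_{\alpha^*}$ and $M_{\beta^*}$ become $D$-bimodule morphisms, i.e.\ so that $\Phi(\tilde\lambda)\circ g_k=g_k\circ q_\lambda$ and the analogous identity for $h_k$ hold. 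For $D_k=\K$ this is automatic (Remark~\ref{remark - (Q, W) satisfy mut condition}), which is why the issue is invisible in the quiver case, but for a general division algebra $D_k$ it is an honest hypothesis, and without it your map $\Psi$ does not exist.
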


\begin{myrem}
	Note that if $\mathrm{End}_\mathcal{C}(T)^{op}$ is a self-injective algebra, then it automatically satisfies the vanishing condition at $i$ for all $1\le i\le n$.
\end{myrem}

\begin{myrem}
	It follows from Remark~\ref{remark - mutation comp involutative} that $k\in Q_0$ is mutable for $\mu_k(S, W)$.
\end{myrem}

To prove Theorem~\ref{Theorem A} we will use the same construction as in \cite{BIRS2011Mutation}, but we have to make modification in some places to compensate for the more rich structure of species compared to quivers. We will cite \cite{BIRS2011Mutation} where we have used the same arguments but formulated for species. We will skip some arguments where it follows by exactly the same arguments. The main difference in the proof is that we require that $\Phi$ satisfies the mutation compatibility condition at $k$. All quivers with potential automatically satisfy the mutation compatibility condition due to Remark~\ref{remark - (Q, W) satisfy mut condition}.

\begin{mylemma}\label{Lemma - kernel ideal rewritten}\cite[Lemma 3.2]{BIRS2011Mutation}
	If $(S, W)$ be a species with potential. We have
	\begin{equation*}
		\mathcal{J}(S, W) = \sum_{\substack{\beta \in Q_1 \\ b\in \underline{\beta}}} \widehat{T}(S)\cdot\partial_{b^*}(W) + \mathcal{J}(S, W)\cdot M.
	\end{equation*}
\end{mylemma}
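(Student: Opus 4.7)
The inclusion $\supseteq$ is immediate: each $\widehat{T}(S)\cdot\partial_{b^*}(W)\subseteq\mathcal{J}(S,W)$ by the very definition of the ideal, and $\mathcal{J}(S,W)\cdot M\subseteq\mathcal{J}(S,W)$ since $\mathcal{J}(S,W)$ is a two-sided ideal. For the reverse inclusion, the remark just before the lemma tells me that $\mathcal{J}(S,W)$ is the closed two-sided ideal generated by $\{\partial_{b^*}(W)\mid b\in\underline{\beta},\,\beta\in Q_1\}$, so it is enough to show that every generator $u\cdot\partial_{b^*}(W)\cdot v$ with $u,v\in\widehat{T}(S)$ lies in the right-hand side and then argue that closure preserves this membership.

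The key tool is the identity $\partial_\xi(W)\cdot d=\partial_{\xi d}(W)$ for all $\xi\in M^*$ and $d\in D$. Starting from $\partial_\xi(W)=\partial^l_\xi(\varepsilon_c(W))$, I use that $\b$ is a $D$-bimodule map together with the fact that $\varepsilon_c$ preserves $\mathcal{Z}_D(\widehat{T}(S)_{\geq 2})$: for $X\in M^k$ written as $x_1\otimes\cdots\otimes x_k$ one has $\partial^l_\xi(X)\cdot d=\partial^l_\xi(X\cdot d)$, and for $X=\varepsilon_c(W)$ centrality gives $Xd=dX$, so $\partial^l_\xi(dX)=\partial^l_{\xi d}(X)$, proving the identity.

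With this in hand, decompose $v=v_D+v_{\geq 1}$ with $v_D\in D$ and $v_{\geq 1}\in\widehat{T}(S)_{\geq 1}$. For the $D$-part, the identity yields $u\,\partial_{b^*}(W)\,v_D=u\,\partial_{b^*v_D}(W)$; since by \eqref{eq - M alpha basis presentation} the dual set $\{a^*\mid a\in\underline{\beta}\}$ is a left $D_{s(\beta)}$-module basis of $M_\beta^*$ (via the isomorphism $\t_r$ of Remark~\ref{remark - dualising condition}), I expand $b^*v_D=\sum_{a\in\underline{\beta}}d_a\,a^*$ with $d_a\in D_{s(\beta)}$, obtaining $u\,\partial_{b^*}(W)\,v_D=\sum_a (ud_a)\,\partial_{a^*}(W)\in\sum\widehat{T}(S)\,\partial_{a^*}(W)$. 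For the $v_{\geq 1}$-part, \eqref{eq - M alpha basis presentation} also furnishes a finite left $D$-module basis $\{a'\mid a'\in\overline{\alpha},\,\alpha\in Q_1\}$ of $M$; extending this decomposition to the completion gives $v_{\geq 1}=\sum_{\alpha,a'}y_{\alpha,a'}\cdot a'$ with $y_{\alpha,a'}\in\widehat{T}(S)$. Then $u\,\partial_{b^*}(W)\,v_{\geq 1}=\sum_{\alpha,a'}\bigl(u\,\partial_{b^*}(W)\,y_{\alpha,a'}\bigr)\cdot a'$ lies in $\mathcal{J}(S,W)\cdot M$ since each bracket belongs to the ideal.

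The main obstacle I foresee is the closure argument: a general element of $\mathcal{J}(S,W)$ is only a limit of finite sums of generators, so one must verify that the right-hand side is closed in the $\mathcal{J}_{\widehat{T}(S)}$-adic topology of \eqref{eq - closure of a set}. Here $\widehat{T}(S)\,\partial_{b^*}(W)$ is closed as the image of the complete space $\widehat{T}(S)$ under right multiplication by a fixed element, and $\mathcal{J}(S,W)\cdot M$ is closed because $M$ is finite-dimensional over $\K$, so this is a finite sum of right-translates of the closed ideal $\mathcal{J}(S,W)$; a finite sum of such closed subspaces in $\widehat{T}(S)$ remains closed, and the decomposition of generators then passes to limits to give the required inclusion.
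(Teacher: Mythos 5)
Your algebraic core is sound and coincides with the first step of the paper's proof: the paper also begins by establishing, for the \emph{uncompleted} ideal $I=\langle\partial_{b^*}(W)\rangle$, the identity $I=\sum_{b,\beta}\widehat{T}(S)\cdot\partial_{b^*}(W)+I\cdot M$, using exactly the splitting $\widehat{T}(S)=D\oplus\widehat{T}(S)_{\ge 1}$ and $\widehat{T}(S)_{\ge 1}=\widehat{T}(S)\cdot M$; your verification of the identity $\partial_{b^*}(W)v_D=\partial_{b^*v_D}(W)$ and the re-expansion of $b^*v_D$ in the left $D_{s(\beta)}$-basis $\overline{\beta^*}$ of $M_\beta^*$ supplies detail the paper leaves implicit, and is correct. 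Where you genuinely diverge is in handling the closure. The paper never shows the right-hand side is closed: it takes $y\in\overline{I}$, writes $y=\sum_{l\ge 0}y_l$ as a telescoping series with $y_l\in I\cap\mathcal{J}_{\widehat{T}(S)}^l$, applies the decomposition of $I$ to each $y_l$, and re-sums the coefficients of each $\partial_{b^*}(W)$ and of each basis element of $M$ separately. You instead try to prove directly that the right-hand side is a closed subspace.

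The gap is in that last step. Both closedness claims you invoke rest on general principles that are false for complete topological modules: the continuous image of a complete space need not be closed, and a finite sum of closed subspaces need not be closed (already in Banach spaces the sum of two closed subspaces can fail to be closed). As written, ``$\widehat{T}(S)\cdot\partial_{b^*}(W)$ is closed because it is the image of a complete space'' and ``a finite sum of such closed subspaces remains closed'' are therefore not proofs. The claims do happen to hold here, but for a reason you have not supplied: since each $M^k$ is finite dimensional over $\K$, the algebra $\widehat{T}(S)=\prod_{k\ge 0}M^k$ with the $\mathcal{J}_{\widehat{T}(S)}$-adic (= product) topology is \emph{linearly compact}, and linearly compact subspaces of a Hausdorff linearly topologized space are closed, are preserved by continuous linear images, and are preserved by finite sums. (Alternatively, for a $\K$-species one can check that right multiplication by $\partial_{b^*}(W)\in e_{s(\beta)}\widehat{T}(S)e_{t(\beta)}$ is injective and shifts the order filtration by a fixed amount on $\widehat{T}(S)e_{s(\beta)}$, because its lowest homogeneous component generates a free left $D_{s(\beta)}$-module; this makes it a topological embedding with complete, hence closed, image, and similarly for right multiplication by basis elements of $M$.) Without one of these arguments your proof is incomplete; with either of them it becomes a valid, genuinely different route from the paper's term-by-term resummation.
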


\begin{proof}
	Define
	\begin{equation*}
		I = \langle \partial_{b^*}(W) \mid b\in \underline{\beta}, \beta\in Q_1 \rangle.
	\end{equation*}
	By definition, we have that $\overline{I} = \mathcal{J}(S, W)$. Let $y\in \mathcal{J}(S, W)$. Naturally we have that
	\begin{equation}\label{eq - split of ideal I}
		\begin{aligned}
			I &= \sum_{\substack{b\in \underline{\beta} \\ \beta\in Q_1}} \widehat{T}(S)\cdot \partial_{b^*}(W)\cdot D + \widehat{T}(S)\cdot I\cdot \widehat{T}(S)_{\ge 1} = \\
			&= \sum_{\substack{b\in \underline{\beta} \\ \beta\in Q_1}} \widehat{T}(S)\cdot \partial_{b^*}(W) + I\cdot M.
		\end{aligned}
	\end{equation}
	According to the definition of the closure of $I$ we can write $y = x_l + x_l'$, where $x_l\in I$ and $x_l'\in \mathcal{J}_{\widehat{T}(S)}^l$ for any $l>0$. Let us define $y_l = x_{l+1} - x_l = x_l' - x_{l+1}'$. If we define $y_0 = x_1$ we get that $\sum_{i=0}^{l}y_i = x_{l+1} = y - x_{l+1}'$. Taking the limit we see that $y = \sum_{i\ge 0}y_i$. Due to \eqref{eq - split of ideal I} we have
	\begin{equation*}
		y_l \in \sum_{\substack{b\in \underline{\beta} \\ \beta\in Q_1}}y_{l, b}'\partial_{b^*}(W) + I\cdot M
	\end{equation*}
	where $y_{l, b}'\in \widehat{T}(S)$. Thus taking the infinite sum over $l$ we get
	\begin{equation*}
		y = \sum_{\substack{b\in \underline{\beta} \\ \beta\in Q_1}}\left(\sum_{l\ge 0}y_{l, b}'\right)\partial_{b^*}(W) + \overline{I}\cdot M
	\end{equation*}
	by using the definition of the closure of $I$. Noting that $\sum_{l\ge 0}y_{l, b}'\in \widehat{T}(S)$ by definition of $\widehat{T}(S)$ we are done.
\end{proof}

\begin{mydef}\cite[Definition 4.4]{BIRS2011Mutation}
	Let $T\in \mathcal{C}$ an object.
	\begin{enumerate}
		\item We call a complex
		\begin{equation*}
			U_1 \xrightarrow{f_1}U_0\xrightarrow{f_0}X \to 0
		\end{equation*}
		in $\mathrm{add}(T)$ a right $2$-almost split sequence if
		\begin{equation*}
			\mathrm{Hom}_\mathcal{C}(T, U_1) \xrightarrow{f_1\circ -} \mathrm{Hom}_\mathcal{C}(T, U_0) \xrightarrow{f_0\circ -} \mathrm{rad}_\mathcal{C}(T, X) \to 0
		\end{equation*}
		is exact. In other words, $f_0$ is right almost split in $\mathrm{add}(T)$ and $f_1$ is a pseudo-kernel of $f_0$ in $\mathrm{add}(T)$. 
		\item Dually, we call a complex
		\begin{equation*}
			0\to X \xrightarrow{f_2}U_1\xrightarrow{f_1} U_0
		\end{equation*}
		in $\mathrm{add}(T)$ a left $2$-almost split sequence if
		\begin{equation*}
			\mathrm{Hom}_\mathcal{C}(U_0, T)\xrightarrow{-\circ f_1}\mathrm{Hom}_\mathcal{C}(U_1, T)\xrightarrow{-\circ f_2}\mathrm{rad}_\mathcal{C}(X, T)\to 0
		\end{equation*}
		is exact. In other words, $f_2$ is left almost split and $f_1$ is a pseudo-cokernel of $f_2$ in $\mathrm{add}(T)$.
		\item We call a complex
		\begin{equation*}
			0\to X\xrightarrow{f_2}U_1\xrightarrow{f_1}U_0\xrightarrow{f_0}X\to 0
		\end{equation*}
		in $\mathrm{add}(T)$ a weak $2$-almost split sequence if $U_1\xrightarrow{f_1}U_1\xrightarrow{f_0}X\to 0$ is right $2$-almost split and $0\to X\xrightarrow{f_2}U_1\xrightarrow{f_1}U_1$ is left $2$-almost split.
	\end{enumerate}
\end{mydef}

\begin{mydefprop}\label{lemma - complex and derivation}
	Let $(S, W)$ be a species with potential.
	\begin{enumerate}
		\item $\partial_{b^*, a^*}(W) := \partial^r_{a^*}(\partial_{b^*}(W)) = \partial^l_{b^*}(\partial_{a^*}(W))$.
		\item $\partial_{\underline{\alpha^*}}(W)\overline{\alpha} := \sum_{a\in \overline{\alpha}} \partial_{a^*}(W)a = W$.
		\item $\underline{\beta}\partial_{\overline{\beta^*}}(W) := \sum_{b\in \underline{\beta}} b\partial_{b^*}(W) = W$.
	\end{enumerate}
	Furthermore, we also introduce the notation that
	\begin{equation*}
		\partial_{\overline{\beta^*}, \underline{\alpha^*}}W: P_{s(\beta)}^{|\underline{\beta}|} \to P_{t(\alpha)}^{|\overline{\alpha}|}
	\end{equation*}
	is defined as the matrix $\partial_{\overline{\beta^*}, \underline{\alpha^*}}W := (\partial_{b^*, a^*}W)_{a^*\in \underline{\alpha^*}, b^* \in \overline{\beta^*}}$. This notation allows us to write
	\begin{equation*}
		\partial_{\overline{\beta^*}, \underline{\alpha^*}}W = \partial^r_{\underline{\alpha^*}}(\partial_{\overline{\beta^*}}(W)) = \partial^l_{\overline{\beta^*}}(\partial_{\underline{\alpha^*}}(W)).
	\end{equation*}
\end{mydefprop}

\begin{proof}
	\begin{enumerate}
		\item Follows by \cite[Proposition 3.2]{Nquefack2012PotentialsJacobian}.
		\item[(2)-(3)] Follows from \eqref{eq - casimir elements equations ass. to b}.
	\end{enumerate}
\end{proof}

One immediate consequence is the following lemma.

\begin{mylemma}\label{lemma - complex and derivation 2}
	Let $(S, W)$ be a species with potential.
	\begin{enumerate}
		\item $\partial_{\overline{\beta^*}, \underline{\alpha^*}}(W)\overline{\alpha}:= \sum_{a\in \overline{\alpha}, b\in \underline{\beta}} \partial_{b^*, a^*}(W)a = \partial_{\overline{\beta^*}}(W)$.
		\item $\underline{\beta}\partial_{\overline{\beta^*}, \underline{\alpha^*}}(W):= \sum_{a\in \overline{\alpha}, b\in \underline{\beta}} b\partial_{b^*, a^*}(W) = \partial_{\underline{\alpha^*}}(W)$.
	\end{enumerate}
\end{mylemma}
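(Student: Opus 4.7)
The proof proceeds by applying a one-sided derivative to the Casimir-type identities of Definition/Proposition~\ref{lemma - complex and derivation}(2) and (3). The key observation driving everything is that the operators $\partial^l_\xi$ and $\partial^r_\xi$ act on a single tensor factor of a monomial in $\widehat{T}(S)$---the leftmost and rightmost, respectively---so $\partial^l_\xi$ commutes with right multiplication by any element, and dually $\partial^r_\xi$ commutes with left multiplication, provided the argument lies in $\widehat{T}(S)_{\ge 1}$ so that the relevant extreme factor exists.

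For identity (1), I would apply $\partial^l_{b^*}$ to both sides of the identity $\sum_{a\in\overline{\alpha}}\partial_{a^*}(W)\,a = W$ furnished by Definition/Proposition~\ref{lemma - complex and derivation}(2). Since $W \in \mathcal{Z}_D(\widehat{T}(S)_{\ge 2})$, each summand $\partial_{a^*}(W)$ lies in $\widehat{T}(S)_{\ge 1}$, so the commutation observation applies and yields
\[
\partial^l_{b^*}\bigl(\partial_{a^*}(W)\,a\bigr) \;=\; \partial^l_{b^*}\bigl(\partial_{a^*}(W)\bigr)\cdot a \;=\; \partial_{b^*, a^*}(W)\cdot a,
\]
where the last equality is the identification $\partial^l_{b^*}\circ\partial_{a^*} = \partial_{b^*, a^*}$ from Definition/Proposition~\ref{lemma - complex and derivation}(1). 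Summing over $a\in\overline{\alpha}$ and pulling $\partial^l_{b^*}$ outside of the finite sum gives $\sum_{a} \partial_{b^*,a^*}(W)\,a$ on the left and $\partial^l_{b^*}(W)$ on the right. Reading this for each $b\in\underline{\beta}$ recovers the claimed equality of vectors, with the $b$-th entry of the right-hand side being the component $\partial_{b^*}(W)$ of $\partial_{\overline{\beta^*}}(W)$.

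Identity (2) is obtained by the symmetric argument: apply $\partial^r_{a^*}$ to both sides of $\sum_{b\in\underline{\beta}} b\,\partial_{b^*}(W) = W$ from Definition/Proposition~\ref{lemma - complex and derivation}(3), commute $\partial^r_{a^*}$ past the left factor $b$ using the same observation, and then rewrite $\partial^r_{a^*}\bigl(\partial_{b^*}(W)\bigr) = \partial_{b^*, a^*}(W)$ by the other clause of Definition/Proposition~\ref{lemma - complex and derivation}(1). The only point demanding care is the commutation of each one-sided derivative past multiplication on the opposite side, which I expect to be the main obstacle; however, this can be verified directly on monomials by unpacking the definitions of $\partial^l$ and $\partial^r$ as extracting the leftmost or rightmost factor.
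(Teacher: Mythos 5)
Your argument conflates the cyclic derivative $\partial_\xi$ with the one-sided derivatives $\partial^l_\xi$, $\partial^r_\xi$, and this is exactly the distinction on which the lemma turns. The identities of Definition/Proposition~\ref{lemma - complex and derivation}(2)--(3) that you differentiate are instances of the Casimir reconstruction \eqref{eq - casimir elements equations ass. to b}, so the derivative appearing in them must be the one-sided one: $\sum_{a\in\overline{\alpha}}\partial^r_{a^*}(v)\,a=v$ and $\sum_{b\in\underline{\beta}}b\,\partial^l_{b^*}(v)=v$ for $v\in\widehat{T}(S)_{\ge 1}$. Read with the cyclic derivative they would assert $\varepsilon_c(W)=W$, which fails (for $W$ homogeneous of degree $d+1$ that is a sum of $d+1$ rotations of $W$). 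Consequently, after applying $\partial^l_{b^*}$ and commuting it past the right factor $a$ (that step is fine), what you actually obtain is
\begin{equation*}
\sum_{a\in\overline{\alpha}}\partial^l_{b^*}\bigl(\partial^r_{a^*}(W)\bigr)\,a=\partial^l_{b^*}(W),
\end{equation*}
which is not the statement of the lemma: the summands are not $\partial_{b^*,a^*}(W)=\partial^l_{b^*}(\partial_{a^*}(W))$, whose inner derivative is cyclic by Definition/Proposition~\ref{lemma - complex and derivation}(1), and the right-hand side is not the cyclic derivative $\partial_{b^*}(W)=\partial^l_{b^*}(\varepsilon_c(W))$, which is what the lemma must produce for it to yield generators of $\mathcal{J}(S,W)$ in Theorem~\ref{Theorem - iso and weak 2}. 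Your two silent identifications (inner $\partial_{a^*}$ versus $\partial^r_{a^*}$, and outer $\partial^l_{b^*}(W)$ versus $\partial_{b^*}(W)$) do not cancel.

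The intended argument (the paper states the lemma without proof, as an immediate consequence of Definition/Proposition~\ref{lemma - complex and derivation}) differentiates nothing: take the \emph{other} expression $\partial_{b^*,a^*}(W)=\partial^r_{a^*}(\partial_{b^*}(W))$ from item (1) and apply the Casimir reconstruction to the element $v=\partial_{b^*}(W)\in\widehat{T}(S)_{\ge 1}$ itself, giving $\sum_{a\in\overline{\alpha}}\partial^r_{a^*}(\partial_{b^*}(W))\,a=\partial_{b^*}(W)$; dually for (2), apply $\sum_b b\,\partial^l_{b^*}(v)=v$ to $v=\partial_{a^*}(W)$. The cyclic-symmetry input that your proof needs but never supplies is precisely the equality of the two expressions in Definition/Proposition~\ref{lemma - complex and derivation}(1) (equivalently Lemma~\ref{lemma - epsilon property}); switching between them is the one step that cannot be skipped.
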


Let $(S, W)$ be a species with potential with no loops and no $2$-cycles at $k$ and let $(S', W') = \mu_k(S, W)$. If $\alpha,\beta\in Q_1$ such that $t(\alpha) = s(\beta) = k$, then we define
\begin{equation*}
	[\overline{\beta}\underline{\alpha}] := (\partial_{b, a}W')_{a\in \underline{\alpha}, b \in \overline{\beta}} = (\partial_{b, a}\Delta)_{a\in \underline{\alpha}, b \in \overline{\beta}} = ([b\otimes a])_{a\in \underline{\alpha}, b \in \overline{\beta}}.
\end{equation*}

We denote the category of finitely generated projective $\widehat{T}(S)$-modules by $\mathrm{proj}\widehat{T}(S)$. Note that any $\K$-linear functor $\Phi: \mathrm{proj}\widehat{T}(S) \to \mathcal{C}$ can be viewed as a $\K$-linear functor $\Phi: \mathrm{proj}\widehat{T}(S)^{op}\to \mathcal{C}^{op}$. Let $P = \bigoplus_{i\in Q_0} P_i\in \mathrm{proj}\widehat{T}(S)$ and $T = \Phi(P)$ where $P_i = \widehat{T}(S)e_i$. The functor $\Phi$ is determined on morphisms by the induced ring morphism $\Phi: \mathrm{End}_{\mathrm{proj}(\widehat{T}(S))}(P)^{op} \to \mathrm{End}_\mathcal{C}(T)^{op}$. Since $\mathrm{End}_{\mathrm{proj}(\widehat{T}(S))}(P)^{op}\cong \widehat{T}(S)$, this information is encoded in a ring morphism $\Phi: \widehat{T}(S)\to \mathrm{End}_\mathcal{C}(T)^{op}$. We will switch freely between these perspectives. Furthermore, we have the following interpretation.

\begin{mylemma}\label{lemma - bijection sets algebra and pairs}
	Assume $T = \bigoplus_{i\in Q_0}T_i$ is basic where all $T_i$ are indecomposable. There is a bijection between the sets of
	\begin{enumerate}
		\item algebra morphisms $\Phi: \widehat{T}(S)\to \End_\mathcal{C}(T)^{op}$ such that $e_i\mapsto 1_{T_i}$, and $\Phi(\rad(\widehat{T}(S)))\subseteq \rad(\mathrm{End}_\mathcal{C}(T)^{op})$.
		\item pairs $(\Phi_0, \Phi_1)$, where
		\begin{enumerate}
			\item $\Phi_0: D\to \End_\mathcal{C}(T)^{op}$ such that $\Phi_0|_{D_i}: D_i\to \End_\mathcal{C}(T_i)^{op}$ is an algebra morphism,
			\item $\Phi_1: M\to \rad_\mathcal{C}(T, T)$ such that $\Phi_1|_{M_\alpha}: M_\alpha \to \rad_\mathcal{C}(T_{t(\alpha)}, T_{s(\alpha)})$ is a $D$-bimodule morphism.
		\end{enumerate}
	\end{enumerate}
\end{mylemma}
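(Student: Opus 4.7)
The plan is to derive the bijection from the universal property of the complete tensor algebra $\widehat{T}(S)$ together with the nilpotence of the Jacobson radical of $\End_\mathcal{C}(T)^{op}$.

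First I would build the forward direction. Given $\Phi$ as in (1), set $\Phi_0 := \Phi|_D$ and $\Phi_1 := \Phi|_M$, using that $D$ and $M$ embed canonically in $\widehat{T}(S)$. Because the $e_i$ are orthogonal idempotents summing to $1$, the requirement $\Phi_0(e_i) = 1_{T_i}$ splits $\Phi_0$ into algebra morphisms $\Phi_0|_{D_i}\colon D_i \to 1_{T_i}\End_\mathcal{C}(T)^{op}1_{T_i} = \End_\mathcal{C}(T_i)^{op}$, establishing (a). For an arrow $\alpha\colon i\to j$ and $m\in M_\alpha$, the identity $m = e_jme_i$ places $\Phi_1(m)$ in the appropriate idempotent-sliced summand of $\End_\mathcal{C}(T)^{op}$, which in $\mathcal{C}$ corresponds to $\Hom_\mathcal{C}(T_{t(\alpha)}, T_{s(\alpha)})$. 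Since $M \subseteq \widehat{T}(S)_{\ge 1} = \rad(\widehat{T}(S))$ and $\rad(\End_\mathcal{C}(T)^{op}) = \rad_\mathcal{C}(T,T)$ (which holds because $T$ is basic with indecomposable summands in a hom-finite Krull--Schmidt category, so $\End_\mathcal{C}(T)^{op}$ is semiperfect), the hypothesis on $\Phi$ forces $\Phi_1(m)\in \rad_\mathcal{C}(T_{t(\alpha)}, T_{s(\alpha)})$, giving (b). The $D$-bimodule structure on $\Phi_1$ is immediate from $\Phi$ being an algebra morphism.

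Next I would construct the inverse. The universal property of the (uncompleted) tensor algebra $T(S) = T(D, M)$ produces from a pair $(\Phi_0, \Phi_1)$ satisfying (a) and (b) a unique algebra morphism $\Psi\colon T(S) \to \End_\mathcal{C}(T)^{op}$ restricting to $\Phi_0$ on $D$ and to $\Phi_1$ on $M$. The condition $\Phi_1(M) \subseteq \rad(\End_\mathcal{C}(T)^{op})$ implies $\Psi(M^{\otimes_D k}) \subseteq \rad(\End_\mathcal{C}(T)^{op})^k$. Since $\mathcal{C}$ is hom-finite and $T$ has finitely many indecomposable summands, $\End_\mathcal{C}(T)^{op}$ is a finite-dimensional $\K$-algebra and so its radical is nilpotent. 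Hence there is $N$ with $\Psi(M^{\otimes_D k}) = 0$ for all $k\ge N$, so $\Psi$ factors through $T(S)/T(S)_{\ge N} \cong \widehat{T}(S)/\widehat{T}(S)_{\ge N}$ and thereby determines a unique algebra morphism $\Phi\colon \widehat{T}(S) \to \End_\mathcal{C}(T)^{op}$ sending $e_i \mapsto 1_{T_i}$ and $\widehat{T}(S)_{\ge 1} = \rad(\widehat{T}(S))$ into $\rad(\End_\mathcal{C}(T)^{op})$.

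Finally I would confirm that the two constructions invert each other. One direction is immediate: $\Phi$ built from $(\Phi_0, \Phi_1)$ restricts on $D$ and $M$ to exactly the pair one started with. In the other direction, given $\Phi$ as in (1), the morphism rebuilt from its restrictions $(\Phi|_D, \Phi|_M)$ agrees with $\Phi$ on $D$ and $M$, hence on all of $T(S)\subseteq \widehat{T}(S)$ by the uniqueness in the universal property of $T(S)$, and hence on $\widehat{T}(S)$ by the nilpotence argument above. The only real technical point is the extension from $T(S)$ to $\widehat{T}(S)$, but this is handled entirely by the finite-dimensionality of $\End_\mathcal{C}(T)^{op}$, so no serious obstacle is expected.
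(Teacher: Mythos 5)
Your proposal is correct and follows essentially the same route as the paper's proof: restrict $\Phi$ to $D$ and $M$ for one direction, and for the other use the universal property of the tensor algebra $T(S)$ together with hom-finiteness of $\mathcal{C}$ (so that $\Phi(M^{\otimes N})=0$ for some $N$) to extend to $\widehat{T}(S)$. The extra details you supply on the idempotent splitting, the identification $\rad(\End_\mathcal{C}(T)^{op})=\rad_\mathcal{C}(T,T)$, and the mutual-inverse check are all consistent with, and slightly more explicit than, the argument in the paper.
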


\begin{proof}
	If we have $\Phi: \widehat{T}(S)\to \End_\mathcal{C}(T)^{op}$, then we define $\Phi_0 = \Phi|_D$ and $\Phi_1 = \Phi|_M$. They are naturally a $D$-algebra morphism and $D$-bimodule morphism respectively. By assumption $\Phi_1(M) \subseteq \rad_\mathcal{C}(T, T)$.

	For the other direction, we let $(\Phi_0, \Phi_1)$ be a pair with the given assumptions. We can define $\Phi: T(S)\to \End_\mathcal{C}(T)^{op}$ by setting $\Phi|_D = \Phi_0$ and $\Phi|_M = \Phi_1$ and extend it on the whole tensor algebra $T(S)$ by the property of tensor algebras.  Since $\mathcal{C}$ is hom-finite, and $\Phi(M)\subseteq \rad_\mathcal{C}(T, T)$, we have that $\Phi(M^{\otimes N}) = 0$ for some $N\in \Z_{\ge 0}$. Thus we can extend $\Phi$ to $\widehat{T}(S)$ by setting
	\begin{equation*}
		\Phi(\sum_{i\ge 0}m_i) = \sum_{i\ge 0}\Phi(m_i) = \sum_{i=1}^N\Phi(m_i)
	\end{equation*}
	where $m_i\in M^{\otimes i}$. This completes the proof.
\end{proof}

\begin{myrem}
	Note that in (2) the condition $\Phi(\rad(\widehat{T}(S)))\subseteq \rad(\mathrm{End}_\mathcal{C}(T)^{op})$ is automatic if $\Phi$ is surjective.
\end{myrem}

As abelian groups $\Hom_{\mathrm{proj}\widehat{T}(S)}(P_i, P_j) = \Hom_{\mathrm{proj}\widehat{T}(S)}(P_i, P_j)^{op}$ and $\Hom_\mathcal{C}(T_i, T_j) = \Hom_\mathcal{C}(T_i, T_j)^{op}$. We write the superscript $op$ to indicate that we consider these together with the $D_i$-$D_j$-bimodule structures defined via the inclusions $D_i\to \End_{\mathrm{proj}\widehat{T}(S)}(P_i)^{op}$ and $\Phi_0|_{D_i}: D_i\to \End_\mathcal{C}(T_i)^{op}$ respectively for $i\in Q_0$. In particular, we identity $D_i$-$D_j$-bimodules $\Hom_{\mathrm{proj}(\widehat{T}(S))}(P_i, P_j)^{op} = e_i\widehat{T}(S)e_j$.

To avoid confusion we will use ``$\circ$'' for the composition in $\mathrm{proj}\widehat{T}(S)$ and $\mathcal{C}$ and ``$\cdot$'' for composition in $\mathrm{proj}\widehat{T}(S)^{op}$ and $\mathcal{C}^{op}$. We also interpret $\mathrm{Hom}_\mathcal{C}(X, Y)^{op}$ to be equal to $\mathrm{Hom}_\mathcal{C}(X, Y)$ as abelian groups.

\begin{mythm}\label{Theorem - iso and weak 2}\cite[Theorem 4.5]{BIRS2011Mutation}
	Let $(S, W)$ be a species with potential. Assume that $\Phi: \mathrm{proj}\widehat{T}(S)\to \mathcal{C}$ is a $\K$-linear functor and let $T = \bigoplus_{i\in Q_0}T_i$, where $T_i = \Phi(P_i)$ is indecomposable. Also assume that
	\begin{equation*}
		\dim_\K \mathrm{End}_\mathcal{C}(T_i)/\mathrm{rad}(\mathrm{End}_\mathcal{C}(T_i)) = \dim_\K D_i
	\end{equation*}
	for all $i\in Q_0$. Then the following are equivalent:
	\begin{enumerate}
		\item $\Phi$ induces an isomorphism $\mathcal{P}(S, W)\cong \mathrm{End}_\mathcal{C}(T)^{op}$.
		\item For all $i\in Q_0$ we have the following weak $2$-almost split sequence
		\begin{equation}\label{eq - phi 2 almost split sequence}
			0\to T_i\xrightarrow{\Phi((\underline{\beta})_\beta)} \bigoplus_{\substack{\beta\in Q_1 \\ t(\beta) = i}}T_{s(\beta)}^{|\underline{\beta}|} \xrightarrow{\Phi((\partial_{\overline{\beta^*}, \underline{\alpha^*}}W)_{\alpha, \beta})} \bigoplus_{\substack{\alpha\in Q_1 \\ s(\alpha) = i}}T_{t(\alpha)}^{|\overline{\alpha}|}\xrightarrow{\Phi((\overline{\alpha})_\alpha)} T_i\to 0
		\end{equation}
		in $\mathrm{add}(T)$.
	\end{enumerate}
\end{mythm}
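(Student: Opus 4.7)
The plan is to mirror the proof of \cite[Theorem 4.5]{BIRS2011Mutation}, modifying the bookkeeping to account for the bimodule structure of a species. Under the isomorphism $\End_\mathcal{C}(T)^{op} \cong \mathcal{P}(S, W)$, I identify $\Hom_\mathcal{C}(T_j, T_k) \cong e_j \mathcal{P}(S, W) e_k$, so that post-composition by a morphism corresponding to $a \in e_k \mathcal{P}(S, W) e_l$ becomes right multiplication by $a$. Applying $\Hom_\mathcal{C}(T, -)$ and $\Hom_\mathcal{C}(-, T)$ to \eqref{eq - phi 2 almost split sequence} then translates the weak $2$-almost split conditions into exactness statements about free modules over $\mathcal{P}(S, W)$, and it suffices to verify these.

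For $(1) \Rightarrow (2)$, the compositions of consecutive maps in \eqref{eq - phi 2 almost split sequence} compute, via Lemma~\ref{lemma - complex and derivation 2}, the entries $\partial_{\underline{\alpha^*}}(W)$ and $\partial_{\overline{\beta^*}}(W)$ respectively, both of which vanish in $\mathcal{P}(S, W)$; hence the sequence is a complex after applying $\Phi$. For right exactness at $\rad_\mathcal{C}(T, T_i)$, using that $\overline{\alpha}$ is a left $D_{t(\alpha)}$-basis of $M_\alpha$, the image of $\Hom_\mathcal{C}(T, U_0) \to \Hom_\mathcal{C}(T, T_i)$ equals $\mathcal{P}(S, W) \cdot M e_i$, which coincides with $\rad(\mathcal{P}(S, W)) e_i$; combined with the dimension hypothesis $\dim_\K \End_\mathcal{C}(T_i)/\rad(\End_\mathcal{C}(T_i)) = \dim_\K D_i$, this matches $\rad_\mathcal{C}(T, T_i)$. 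For exactness at the middle term, given a tuple $(x_{\alpha, a})$ with $\sum_{\alpha, a} x_{\alpha, a}\, a \in \mathcal{J}(S, W) e_i$, I lift to $\widehat{T}(S)$ and apply Lemma~\ref{Lemma - kernel ideal rewritten} to rewrite the lift as a $\widehat{T}(S)$-linear combination of the $\partial_{b^*}(W)$ with $t(\beta) = i$ modulo $\mathcal{J}(S, W) \cdot M$. Since $\partial_{b^*}(W) = \sum_{\alpha, a} \partial_{b^*, a^*}(W)\, a$ by Definition/Proposition~\ref{lemma - complex and derivation}, this expresses $(x_{\alpha, a})$ as the image of the preceding map modulo a tail of strictly higher $\mathcal{J}_{\widehat{T}(S)}$-adic order. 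Iterating and passing to the limit in the $\mathcal{J}_{\widehat{T}(S)}$-adic topology yields the required preimage. The left $2$-almost split property is handled symmetrically, using Casimir decompositions on the other side.

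For $(2) \Rightarrow (1)$, the pair $(\Phi|_D, \Phi|_M)$ assembles into a ring morphism $\widehat{T}(S) \to \End_\mathcal{C}(T)^{op}$ via Lemma~\ref{lemma - bijection sets algebra and pairs}. The complex condition forces each $\partial_{a^*}(W)$ to lie in $\ker \Phi$, so $\mathcal{J}(S, W) \subseteq \ker \Phi$. Conversely, the surjectivity and middle exactness in \eqref{eq - phi 2 almost split sequence}, combined with the dimension hypothesis, force the induced map $\mathcal{P}(S, W) \to \End_\mathcal{C}(T)^{op}$ to be surjective with trivial kernel on each slice $e_j \mathcal{P}(S, W) e_i$, hence an isomorphism.

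The hard part is the middle-exactness step in $(1) \Rightarrow (2)$: the iterative rewrite demands careful control of the Casimir pairings $\b_S$ together with the bases $\underline{\alpha}$ and $\overline{\alpha}$. Each inductive step that in the quiver setting of \cite{BIRS2011Mutation} amounts to picking a single arrow is here a sum over such a basis, and the identities \eqref{eq - casimir elements equations ass. to b} for $\b_S$ are the glue that keeps these choices coherent across iterations so that the $\mathcal{J}_{\widehat{T}(S)}$-adic limit makes sense.
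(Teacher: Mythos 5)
Your overall strategy is the paper's: reduce both conditions to exactness statements obtained by applying $\Hom_\mathcal{C}(T,-)$ and $\Hom_\mathcal{C}(-,T)$ to \eqref{eq - phi 2 almost split sequence}, prove the complex condition from Lemma~\ref{lemma - complex and derivation 2}, get exactness at $\rad_\mathcal{C}(T,T_i)$ from the basis property of $\overline{\alpha}$ together with the dimension hypothesis, and attack middle exactness with Lemma~\ref{Lemma - kernel ideal rewritten}. Two remarks, one minor and one substantive. The minor one: in the middle-exactness step of $(1)\Rightarrow(2)$ your iteration and adic limit are unnecessary. After applying Lemma~\ref{Lemma - kernel ideal rewritten} and then $\partial^r_{a^*}$, the error term $p_a-\sum_b q_b\partial_{b^*,a^*}W$ lies in $\mathcal{J}(S,W)$ itself (the right derivative carries $\mathcal{J}(S,W)\cdot M$ into $\mathcal{J}(S,W)$), not merely in a higher power of the radical; since $\mathcal{J}(S,W)=\ker\Phi$ in this direction, the error vanishes after a single application of $\Phi$ and no limit is needed.

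The substantive gap is in $(2)\Rightarrow(1)$, where you assert that surjectivity, middle exactness and the dimension hypothesis ``force \dots\ trivial kernel on each slice.'' This is the delicate half of the theorem and it does not follow without an argument. What must be shown is $\ker\Phi\subseteq\mathcal{J}(S,W)$, and middle exactness only yields one layer of this: for $p\in\ker\Phi\cap\widehat{T}(S)_{\ge 1}e_i$ the Casimir identity \eqref{eq - casimir elements equations ass. to b} gives $p=\sum_{\alpha,a\in\overline{\alpha}}(\partial^r_{a^*}p)a$, the vanishing of $\Phi(p)$ shows the tuple $(\Phi(\partial^r_{a^*}p))_{\alpha,a}$ is annihilated by composition with $\Phi((\overline{\alpha})_\alpha)$, and middle exactness then produces elements $q_b$ with $p-\sum_b q_b\partial_{b^*}W\in\ker\Phi\cdot M$. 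This gives only $\ker\Phi\subseteq\mathcal{J}(S,W)+\ker\Phi\cdot M$; one must iterate to obtain $\ker\Phi\subseteq\mathcal{J}(S,W)+\ker\Phi\cdot M^l$ for every $l\ge 0$ and then invoke the description \eqref{eq - closure of a set} of the closure, together with the fact that $\mathcal{J}(S,W)$ is closed by definition, to conclude $\ker\Phi\subseteq\mathcal{J}(S,W)$. Ironically, the iteration-and-limit device you deployed in $(1)\Rightarrow(2)$, where it was not needed, is precisely the missing ingredient here; without it, ``trivial kernel'' is an assertion rather than a consequence of the hypotheses.
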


\begin{proof}
	It is enough to show that the following are equivalent:
	\begin{enumerate}[label=(\alph*)]
		\item $\Phi$ induces a surjection $\Phi: \widehat{T}(S)\to \mathrm{End}_\mathcal{C}(T)^{op}$ with $\ker \Phi = \mathcal{J}(S, W)$.
		\item For all $i\in Q_0$ the complex
		\begin{equation*}
			\bigoplus_{\substack{\beta\in Q_1 \\ t(\beta) = i}}T_{s(\beta)}^{|\underline{\beta}|} \xrightarrow{\Phi((\partial_{\overline{\beta^*}, \underline{\alpha^*}}W)_{\alpha, \beta})} \bigoplus_{\substack{\alpha\in Q_1 \\ s(\alpha) = i}}T_{t(\alpha)}^{|\overline{\alpha}|}\xrightarrow{\Phi((\overline{\alpha})_\alpha)} T_i\to 0
		\end{equation*}
		is a right $2$-almost split sequence in $\mathrm{add}(T)$, i.e. it induces an exact sequence
		\begin{equation}\label{eq - a->b sequence}
			\mathrm{Hom}_\mathcal{C}(T, \bigoplus_{\substack{\beta\in Q_1 \\ t(\beta) = i}}T_{s(\beta)}^{|\underline{\beta}|}) \xrightarrow{\Phi((\partial_{\overline{\beta^*}, \underline{\alpha^*}}W)_{\alpha, \beta})\circ -} \mathrm{Hom}_\mathcal{C}(T, \bigoplus_{\substack{\alpha\in Q_1 \\ s(\alpha) = i}}T_{t(\alpha)}^{|\overline{\alpha}|}) \xrightarrow{\Phi((\overline{\alpha})_\alpha)\circ -} \mathrm{rad}_\mathcal{C}(T, T_i)\to 0.
		\end{equation}
		\item For all $i\in Q_0$ the complex
		\begin{equation*}
			0\to T_i\xrightarrow{\Phi((\underline{\beta})_\beta)} \bigoplus_{\substack{\beta\in Q_1 \\ t(\beta) = i}}T_{s(\beta)}^{|\underline{\beta}|} \xrightarrow{\Phi((\partial_{\overline{\beta^*}, \underline{\alpha^*}}W)_{\alpha, \beta})} \bigoplus_{\substack{\alpha\in Q_1 \\ s(\alpha) = i}}T_{t(\alpha)}^{|\overline{\alpha}|}
		\end{equation*}
		is a left $2$-almost sequence in $\mathrm{add}(T)$, i.e. it induces an exact sequence
		\begin{equation*}
			\mathrm{Hom}_\mathcal{C}(\bigoplus_{\substack{\alpha\in Q_1 \\ s(\alpha) = i}}T_{t(\alpha)}^{|\overline{\alpha}|}, T)\xrightarrow{-\circ \Phi((\partial_{\overline{\beta^*}, \underline{\alpha^*}}W)_{\alpha, \beta})} \mathrm{Hom}_\mathcal{C}(\bigoplus_{\substack{\beta\in Q_1 \\ t(\beta) = i}}T_{s(\beta)}^{|\underline{\beta}|}, T) \xrightarrow{-\circ \Phi((\underline{\beta^*})_\beta)} \mathrm{rad}_\mathcal{C}(T_i, T)\to 0.
		\end{equation*}
	\end{enumerate}
	We only prove that (a)$\iff$(b) since (a)$\iff$(c) is similar due to (1) in Definition/Proposition~\ref{lemma - complex and derivation}.
	
	(a)$\implies$(b): The sequence in \eqref{eq - a->b sequence} is a complex by Lemma~\ref{lemma - complex and derivation 2}. We only need to show exactness at $\mathrm{Hom}_\mathcal{C}(T, \bigoplus_{\substack{\alpha\in Q_1 \\ s(\alpha) = i}}T_{t(\alpha)}^{|\overline{\alpha}|})$ since exactness at $\mathrm{rad}_\mathcal{C}(T, T_i)$ follows from
	\begin{equation*}
		\Phi(\langle M\rangle) = \Phi(\mathrm{rad}(\widehat{T}(S))) = \mathrm{rad}(\mathrm{End}_\mathcal{C}(T)).
	\end{equation*}
	Assume that $(p_{\overline{\alpha}})_\alpha\in \bigoplus_{\substack{\alpha\in Q_1 \\ s(\alpha) = i}}(\widehat{T}(S)e_{t(\alpha)})^{|\overline{\alpha}|}$, where $p_{\overline{\alpha}} = (p_a)_{a\in \overline{\alpha}}$, such that
	\begin{equation*}
		\sum_{\substack{\alpha\in Q_1 \\ s(\alpha) = i \\ a\in \overline{\alpha}}} p_aa \in \ker \Phi = \mathcal{J}(S, W),
	\end{equation*}
	i.e. $\Phi(p_{\overline{\alpha}})$ is an arbitrary element in $\ker \Phi((\overline{\alpha}))\circ -$. By Lemma~\ref{Lemma - kernel ideal rewritten} there exist $q_b\in \widehat{T}(S)$ such that
	\begin{equation*}
		\sum_{\substack{\alpha\in Q_1 \\ s(\alpha) = i \\ a\in \overline{\alpha}}} p_aa - \sum_{\substack{\beta\in Q_1 \\ t(\beta) = i \\ b\in \underline{\beta}}} q_b\partial_{b^*}W \in \mathcal{J}(S, W)\cdot M.
	\end{equation*}
	Applying $\partial^r_{a^*}$ yields
	\begin{equation*}
		p_a - \sum_{\substack{\beta\in Q_1 \\ t(\beta) = i \\ b\in \underline{\beta}}} q_b\partial_{b^*, a^*}W \in \mathcal{J}(S, W).
	\end{equation*}
	Thus
	\begin{equation*}
		(\Phi(p_{\overline{\alpha}}))_\alpha = \left(\sum_{\substack{\beta\in Q_1 \\ t(\beta) = i \\ b\in \underline{\beta}}} \Phi(\partial_{b^*, \underline{\alpha^*}}W)\circ \Phi(q_b)\right)_\alpha
	\end{equation*}
	shows exactness at $\mathrm{Hom}_\mathcal{C}(T, \bigoplus_{\substack{\alpha\in Q_1 \\ s(\alpha) = i}}T_{t(\alpha)}^{|\overline{\alpha}|})$.
	
	(b)$\implies$(a): First we argue why $\Phi$ is surjective. Exactness at $\mathrm{rad}_\mathcal{C}(T, T_i)$ in \eqref{eq - a->b sequence} implies that
	\begin{equation*}
	 	\mathrm{rad}(\mathrm{End}_\mathcal{C}(T)^{op}) = \Phi(\mathrm{rad}(\widehat{T}(S))).
	\end{equation*}
	By the assumption that $\dim_\K \mathrm{End}_\mathcal{C}(T_i)^{op}/\mathrm{rad}(\mathrm{End}_\mathcal{C}(T_i)^{op}) = \dim_\K D_i$ for all $i\in Q_0$ we also have that $\Phi$ induces an isomorphism
	\begin{equation*}
		\widehat{T}(S) / \mathrm{rad}(\widehat{T}(S))\cong \mathrm{End}_\mathcal{C}(T)^{op} / \mathrm{rad}(\mathrm{End}_\mathcal{C}(T)^{op})
	\end{equation*}
	Therefore we can conclude that $\Phi$ is surjective. Note also $\ker\Phi\subseteq \mathrm{rad}(\widehat{T}(S)) = \widehat{T}(S)_{\ge 1}$.

	It is left to show that $\ker \Phi = \mathcal{J}(S, W)$. Since \ref{eq - a->b sequence} is exact we have that
	\begin{equation*}
		\Phi(\partial_{b^*}(W)) = \sum_{\substack{\alpha\in Q_1 \\ s(\alpha) = i \\ a\in \overline{a}}} \Phi(\partial_{b^*,a^*}(W) a) = \sum_{\substack{\alpha\in Q_1 \\ s(\alpha) = i \\ a\in \overline{a}}} \Phi(a)\circ \Phi(\partial_{b^*,a^*}(W)) = 0,
	\end{equation*}
	for all $b\in \overline{\beta}$. Since $\ker \Phi$ is closed $\mathcal{J}(S, W)\subseteq \ker \Phi$.
	
	Before we show that $\ker \Phi\subseteq \mathcal{J}(S, W)$ we will first prove that for any $p\in \ker \Phi$ there exists $p'\in \mathcal{J}(S, W)$ such that $p - p'\in \ker \Phi\cdot M$. Without loss of generality, we can assume that $p\in \widehat{T}(S)_{\ge 1}e_i$. Since
	\begin{equation*}
		(\Phi(\overline{\alpha}))_\alpha = \Phi(p)\circ (\Phi(\partial^r_{\underline{\alpha^*}} p))_\alpha = 0
	\end{equation*}
	holds, we know that $(\Phi(\partial^r_{\underline{\alpha^*}}p))_\alpha$ factors through $(\Phi(\partial_{\overline{\beta^*}, \underline{\alpha^*}} W))_{\alpha, \beta}$. Thus there exists $q_{\beta} = (q_b)_{b\in \underline{\beta}}$ with $q_b\in \widehat{T}(S)_{\ge 1}e_{s(\beta)}$ such that
	\begin{equation*}
		(\Phi(\partial^r_{\underline{\alpha^*}}p))_\alpha = (\Phi(\partial_{\overline{\beta^*}, \underline{\alpha^*}} W))_{\alpha, \beta}\circ (\Phi(q_{\underline{\beta}}))_\beta.
	\end{equation*}
	Now
	\begin{equation*}
		\partial^r_{a^*} p - \sum_{\substack{\beta\in Q_1 \\ t(\beta) = i \\ b\in \underline{\beta}}} q_b \partial_{b^*, a^*} W\in \ker \Phi.
	\end{equation*}
	Hence
	\begin{equation*}
		p - \sum_{\substack{\beta\in Q_1 \\ b\in \underline{\beta}}} q_b \partial_{b^*} W = \sum_{\substack{\alpha\in Q_1 \\ s(\alpha) = i \\ a\in \overline{a}}} \left(\partial^r_{a^*} p - \sum_{\substack{\beta\in Q_1 \\ t(\beta) = i \\ b\in \underline{\beta}}} q_b \partial_{b^*, a^*} W\right)a\in \ker\Phi\cdot M.
	\end{equation*}
	Thus $p' = \sum_{\substack{\beta\in Q_1 \\ b\in \underline{\beta}}} q_b \partial_{b^*} W$ satisfies the desired condition.
	
	Consequently we have $\ker \Phi \subseteq \mathcal{J}(S, W) + \ker\Phi\cdot M$. Using this fact iteratively we get
	\begin{equation*}
		\ker \Phi\subseteq \mathcal{J}(S, W) + \ker \Phi\cdot M \subseteq \cdots \subseteq \mathcal{J}(S, W) + \ker\Phi\cdot M^l
	\end{equation*}
	for any $l\ge 0$. Now it follows that $\ker\Phi\subseteq \overline{\mathcal{J}(S, W)}$ by \eqref{eq - closure of a set}.
\end{proof}

Recall the conditions in Theorem~\ref{Theorem A}, i.e. $T = \bigoplus_{i=1}^nT_i$ is a cluster tilting object in $\mathcal{C}$ and $T$ satisfies the vanishing condition at all $i\in Q_0$. Moreover $(S, W)$ is a reduced species with potential and $k\in Q_0$ is mutable, i.e. it is not contained in any loops or $2$-cycles in $Q$ and that $T$ satisfies the mutation compatibility condition at $k$ (see Definition~\ref{definition - mut condition at k}). Let $(S', W') = \mu_{k}(S, W)$ over $Q'$ and $T' = \mu_{T_k}(T) = \bigoplus_{i=1}^nT_i'$. Let $\Phi:\mathrm{proj}\widehat{T}(S)\to \mathcal{C}$ be a functor as in Theorem~\ref{Theorem - iso and weak 2}. In this setting, Theorem~\ref{Theorem A} follows from the following theorem, by Theorem~\ref{Theorem - iso and weak 2}.

\begin{mythm}\label{Theorem - theorem implies theorem A}\cite[Theorem 5.6]{BIRS2011Mutation}
	There exists a $\K$-linear functor $\Phi': \mathrm{proj}\widehat{T}(S')\to \mathcal{C}$ such that the sequence
	\begin{equation}\label{eq - phi' 2 almost split}
		0\to T_i'\xrightarrow{\Phi'((\underline{\beta})_\beta)} \bigoplus_{\substack{\beta\in Q'_1 \\ t(\beta) = i}}(T_{s(\beta)}')^{|\underline{\beta}|} \xrightarrow{\Phi'((\partial_{\overline{\beta^*}, \underline{\alpha^*}}W')_{\alpha, \beta})} \bigoplus_{\substack{\alpha\in Q'_1 \\ s(\alpha) = i}}(T_{t(\alpha)}')^{|\overline{\alpha}|}\xrightarrow{\Phi'((\overline{\alpha})_\alpha)} T_i'\to 0
	\end{equation}
	is a weak $2$-almost split sequence in $\mathrm{add}(\mu_{T_k}(T))$ for all $i\in Q_0'$, where $T_j' = \Phi'(D_j)$, and $P_j = \widehat{T}(S')e_j$.
\end{mythm}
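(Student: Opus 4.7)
The plan is to mirror the proof of \cite[Theorem 5.6]{BIRS2011Mutation}, adapted to keep track of the bimodule structure intrinsic to species. Since the sequence \eqref{eq - phi 2 almost split sequence} at $i = k$ is weak $2$-almost split and $\mathcal{C}$ is $2$-Calabi--Yau, it decomposes as a pair of exchange triangles
\begin{equation*}
T_k \xrightarrow{\Phi((\underline{\beta})_\beta)} \bigoplus_{\beta: t(\beta)=k} T_{s(\beta)}^{|\underline{\beta}|} \xrightarrow{g_k} T_k^*, \qquad T_k^* \xrightarrow{f_k} \bigoplus_{\alpha: s(\alpha)=k} T_{t(\alpha)}^{|\overline{\alpha}|} \xrightarrow{\Phi((\overline{\alpha})_\alpha)} T_k,
\end{equation*}
where $T_k^*$ is precisely the new indecomposable summand of $\mu_{T_k}(T)$. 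I set $T_i' = T_i$ for $i \neq k$ and $T_k' = T_k^*$, and declare $\Phi'(P_i) = T_i'$ on objects.

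On arrows of $Q'$ I would define $\Phi'$ piecewise via Lemma~\ref{lemma - bijection sets algebra and pairs}: for arrows already belonging to $Q$ (those not incident to $k$) use $\Phi$; for $\beta^*: k \to s(\beta)$ take the component of $g_k$ indexed by the chosen basis of $M_\beta$; for $\alpha^*: t(\alpha) \to k$ take the component of $f_k$ dually; and for the composed arrows $[a \otimes b] \in M_{[\alpha\beta]}$ set $\Phi'([a \otimes b]) = \Phi(a) \circ \Phi(b)$, i.e.\ the factorisation through $T_k$ of the original path. The semisimple part $\Phi_0'$ on $D$ agrees with $\Phi_0$ away from $k$, and at $k$ is adjusted using the isomorphism $\End_\mathcal{C}(T_k^*)/\rad(\End_\mathcal{C}(T_k^*)) \cong \End_\mathcal{C}(T_k)/\rad(\End_\mathcal{C}(T_k))$ from Proposition~\ref{proposition - end mod rad equivalence}.

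The heart of the proof is to verify that \eqref{eq - phi' 2 almost split} is weak $2$-almost split at every $i \in Q_0'$. For $i = k$, the sequence is a cyclic rearrangement of the two triangles above and is automatic. For $i$ not adjacent to $k$ in $Q$, no new arrows appear at $i$ and the sequence essentially coincides with \eqref{eq - phi 2 almost split sequence} for $(S,W)$. The substantive case is $i$ a neighbour of $k$: one must splice the original $2$-almost split sequence at $i$ with the exchange triangle at $k$, matching the entries $\Phi'(\partial_{b^*, a^*} W') = \Phi'([b \otimes a]) = \Phi(b) \circ \Phi(a)$ coming from the cycle part $\Delta$ of $W'$ with the composition $f_k \circ g_k$ on each summand, while the remaining entries $\partial_{b^*, a^*}[W]$ reduce to the old $\partial_{b^*, a^*} W$.

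The main obstacle is precisely this neighbour case. In the quiver setting of \cite{BIRS2011Mutation} the verification reduces to tracking paths, but here one must keep the $D_k$-bilinearity straight, repeatedly invoking the Casimir identities \eqref{eq - casimir elements equations ass. to b} to recognise that the pseudo-kernel and pseudo-cokernel relations encoded by the exchange triangle genuinely match the matrices $\partial_{\overline{\beta^*}, \underline{\alpha^*}} W'$. The vanishing condition at $i$ is used to kill the obstruction group of the form $\Hom_\mathcal{C}(-, T_k[1])$ that arises when lifting maps across the triangle, while the mutation compatibility condition at $k$ (Definition~\ref{definition - mut condition at k}) ensures that the chosen dual bases of $M_\alpha$ and $M_\beta$ are compatible with the splittings of $g_k$ and $f_k$, so that the signs and coefficients match those in $\Delta$. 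Once \eqref{eq - phi' 2 almost split} is established at every $i$, Theorem~\ref{Theorem - iso and weak 2} applied to $\Phi'$ yields $\End_\mathcal{C}(\mu_{T_k}T)^{op} \cong \mathcal{P}(\mu_k(S, W))$, establishing Theorem~\ref{Theorem A}.
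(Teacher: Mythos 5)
Your overall strategy is the same as the paper's: define $\Phi'$ piecewise via Lemma~\ref{lemma - bijection sets algebra and pairs} (old data away from $k$, the two approximation maps of the exchange triangles on $M_{\alpha^*}$ and $M_{\beta^*}$, composition through $T_k$ on $M_{[\alpha\beta]}$), and then verify the weak $2$-almost split property case by case ($i=k$, $i$ not adjacent to $k$, $i$ a neighbour of $k$), splicing the exchange triangles at $k$ into the old sequence at $i$ in the neighbour case. Up to a harmless slip in the composition order on $M_{[\alpha\beta]}$ (for $x\in M_\alpha$, $y\in M_\beta$ one needs $\Phi(y)\circ\Phi(x)$, since $\Phi(x)\colon T_{t(\alpha)}\to T_k$ and $\Phi(y)\colon T_k\to T_{s(\beta)}$), this matches the paper.

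There is, however, one genuine gap, and it sits exactly at the point where the species case differs from the quiver case of \cite{BIRS2011Mutation}: the definition of $\Phi'$ on $D_k$. You propose to ``adjust'' $\Phi_0$ at $k$ using the isomorphism $\End_\mathcal{C}(T_k^*)/\rad(\End_\mathcal{C}(T_k^*))\cong\End_\mathcal{C}(T_k)/\rad(\End_\mathcal{C}(T_k))$ of Proposition~\ref{proposition - end mod rad equivalence}. That proposition only identifies the semisimple quotients; to apply Lemma~\ref{lemma - bijection sets algebra and pairs} you need an actual $\K$-algebra morphism $q_-\colon D_k\to\End_\mathcal{C}(T_k^*)^{op}$, and moreover one that intertwines, through $h_k$ and $g_k$, with the induced endomorphisms $\Phi(\hat\lambda)$ of $U_{k1}$ and $\Phi(\tilde\lambda)$ of $U_{k0}$ from Lemma~\ref{lemma - lambda tilde and lambda hat} --- otherwise $\Phi'|_{M_{\alpha^*}}$ and $\Phi'|_{M_{\beta^*}}$ are only one-sided module maps and not $D$-bimodule morphisms, so $\Phi'$ is not well defined. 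The existence of such a compatible lift is precisely the mutation compatibility condition of Definition~\ref{definition - mut condition at k}; it is a hypothesis of the theorem (the mutability of $k$), automatic when $D_k=\K$ but not in general, and it cannot be extracted from Proposition~\ref{proposition - end mod rad equivalence}. Relatedly, you assign the mutation compatibility condition the role of matching ``signs and coefficients'' in $\Delta$ during the neighbour-case splicing; that is not where it is used --- the identification of the entries of $\partial_{\overline{\beta^*},\underline{\alpha^*}}W'$ coming from $\Delta$ with the compositions through $T_k$ follows from the Casimir identities \eqref{eq - casimir elements equations ass. to b} and the definition of $\Phi'$ alone. Repositioning the condition to the construction of $\Phi'$ on $D_k$, and replacing the appeal to Proposition~\ref{proposition - end mod rad equivalence} by it, closes the gap; the remaining verifications then proceed as you outline, using the exactness statements of Lemma~\ref{Lemma - 5 conditions} (iv)--(v).
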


Before we define $\Phi'$ we need the following lemma.

\begin{mylemma}\label{Lemma - 5 conditions}\cite[Lemma 5.7]{BIRS2011Mutation}
	If $T$ satisfies the vanishing condition at $i$ for all $i\in Q_0$, then the following holds:
	\begin{enumerate}[label=(\roman*)]
		\item For any $i\in Q_0$, we have a weak $2$-almost split sequence
		\begin{equation}\label{eq - weak 2 almost split at i}
			0\to T_i\xrightarrow{\Phi((\underline{\beta})_\beta)}\bigoplus_{\substack{\beta\in Q_1 \\ t(\beta) = i}}T_{s(\beta)}^{|\underline{\beta}|} \xrightarrow{\Phi((\partial_{(\overline{\beta^*}, \underline{\alpha^*})} (W))_{\alpha, \beta})} \bigoplus_{\substack{\alpha\in Q_1 \\ s(\alpha) = i}}T_{t(\alpha)}^{|\overline{\alpha}|}\xrightarrow{\Phi((\overline{\alpha})_\alpha)} T_i\to 0
		\end{equation}
		in $\mathrm{add}(T)$, which we denote by
		\begin{equation*}
			0\to T_i\xrightarrow{f_{i2}}U_{i1}\xrightarrow{f_{i1}}U_{i0}\xrightarrow{f_{i0}}T_i\to 0.
		\end{equation*}
		\item There exist exchange triangles
		\begin{equation*}
			T_i\xrightarrow{f_{i2}}U_{i1}\xrightarrow{h_i}T_i^*\to T_i[1] \qquad \text{and} \qquad T_i^*\xrightarrow{g_i}U_{i0}\xrightarrow{f_{i0}}T_i\to T_i^*[1]
		\end{equation*}
		in $\mathcal{C}$ such that $f_{i1} = g_ih_i$.
		\item We have the following weak $2$-almost split sequence in $\mathrm{add}(\mu_{T_k}(T))$.
		\begin{equation*}
			0\to T_k^*\xrightarrow{g_k}U_{k0}\xrightarrow{f_{k2}f_{k0}}U_{k1}\xrightarrow{h_k}T_k^*\to 0
		\end{equation*}
		\item For any $k\not= i\in Q_0$, we have that $T_k\not\in (\mathrm{add}(U_{i1}) \cap \mathrm{add}(U_{i0}))$ and the following sequences are exact.
		\begin{equation*}
			\begin{aligned}
				\mathrm{Hom}_{\mathcal{C}}(T_k^*, U_{i1}) &\xrightarrow{f_{i1}\circ -}\mathrm{Hom}_{\mathcal{C}}(T_k^*, U_{i0})\xrightarrow{f_{i0}\circ -}\mathrm{Hom}{_\mathcal{C}}(T_k^*, T_i) \\
				\mathrm{Hom}_{\mathcal{C}}(U_{i0}, T_k^*) &\xrightarrow{f_{i1}\circ -}\mathrm{Hom}_{\mathcal{C}}(U_{i1}, T_k^*)\xrightarrow{f_{i2}\circ -}\mathrm{Hom}{_\mathcal{C}}(T_i, T_k^*)
			\end{aligned}
		\end{equation*}
		\item The sequences
		\begin{equation*}
			\begin{aligned}
				\mathrm{Hom}_\mathcal{C}(\mu_{T_k}(T), U_{i1}) \xrightarrow{f_{i1}\circ-}\mathrm{Hom}_\mathcal{C}(\mu_{T_k}(T), U_{i0}) \xrightarrow{f_{i0}\circ-}\mathrm{Hom}_\mathcal{C}(\mu_{T_k}(T), T_i) \\
				\mathrm{Hom}_\mathcal{C}(U_{i0}, \mu_{T_k}(T)) \xrightarrow{-\circ f_{i1}}\mathrm{Hom}_\mathcal{C}(U_{i1}, \mu_{T_k}(T)) \xrightarrow{-\circ f_{i2}}\mathrm{Hom}_\mathcal{C}(T_i, \mu_{T_k}(T))
			\end{aligned}
		\end{equation*}
		are exact.
	\end{enumerate}
\end{mylemma}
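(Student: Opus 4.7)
The plan is to establish the five clauses in order, each one feeding into the next. For (i) I apply Theorem~\ref{Theorem - iso and weak 2} directly to the functor $\Phi$ realising $\mathrm{End}_\mathcal{C}(T)^{op}\cong \mathcal{P}(S,W)$; this produces the asserted weak $2$-almost split sequence, whose three non-trivial maps I relabel as $f_{i2},f_{i1},f_{i0}$ for compactness. For (ii), since $f_{i2}$ is a minimal left $\mathrm{add}(T/T_i)$-approximation and $f_{i0}$ is a minimal right $\mathrm{add}(T/T_i)$-approximation — exactly the content of being left, respectively right, $2$-almost split in $\mathrm{add}(T)$ on the outer positions — completing them to triangles in $\mathcal{C}$ produces the two exchange triangles of (ii) by definition, with common third term $T_i^*$. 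The factorisation $f_{i1}=g_i\circ h_i$ is then forced: $f_{i1}\circ f_{i2}=0$ makes $f_{i1}$ factor through $h_i$ as a map $T_i^*\to U_{i0}$, and minimality identifies this factor with $g_i$; the octahedral axiom applied to $f_{i2}$ and $h_i$ confirms coherence with the second exchange triangle.

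For (iii) I specialise (ii) at $i=k$ and splice the two exchange triangles along $T_k$. The cancellations $f_{k0}\circ g_k=0$ and $h_k\circ f_{k2}=0$ built into the exchange triangles make the four-term sequence in (iii) a complex. To upgrade it to a weak $2$-almost split sequence in $\mathrm{add}(\mu_{T_k}(T))$, I apply $\mathrm{Hom}_\mathcal{C}(-,\mu_{T_k}(T))$ and $\mathrm{Hom}_\mathcal{C}(\mu_{T_k}(T),-)$ and deduce the required exactness from clauses (iv) and (v) together with the exactness of the original sequence in (i) at $T_k$, using the long exact sequences coming from the two exchange triangles. For (iv), the first assertion is combinatorial: if $T_k$ were a summand of both $U_{i1}$ and $U_{i0}$ for some $i\neq k$, the explicit description of the maps in (i) would exhibit arrows $i\to k$ and $k\to i$ in $Q$, giving a $2$-cycle through $k$ and contradicting the assumption that $k$ is not contained in any $2$-cycle. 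For the Hom-exactness with respect to $T_k^*$, I apply $\mathrm{Hom}_\mathcal{C}(T_k^*,-)$ and $\mathrm{Hom}_\mathcal{C}(-,T_k^*)$ to \eqref{eq - weak 2 almost split at i}, splice with the long exact sequences from the exchange triangles for $k$, and use the vanishing condition $\mathrm{Hom}_\Lambda(\mathrm{Ext}^1_\Lambda(D\Lambda,S_k),S_k)=0$ to kill the obstruction term.

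Finally (v) follows by decomposing $\mu_{T_k}(T)=T_k^*\oplus(T/T_k)$: the $T/T_k$-summand contributes exactness by (i), since $T/T_k\in\mathrm{add}(T)$ and the original sequence is $2$-almost split in $\mathrm{add}(T)$; the $T_k^*$-summand contributes exactness by (iv) in the case $i\neq k$ and by (iii) in the case $i=k$. The main obstacle will be the Hom-exactness half of (iv): the combinatorial assertion is immediate, but translating the module-theoretic vanishing condition into triangulated-category exactness requires identifying the obstruction group. Concretely, one has to recognise $T_k^*$ as representing (up to shift) $\tau^{-1}S_k$ in $\mathcal{C}$, so that $\mathrm{Hom}_\mathcal{C}(T_k^*,-)$ evaluated on the cohomology of \eqref{eq - weak 2 almost split at i} realises $\mathrm{Ext}^1_\Lambda(D\Lambda,S_k)$, whose Hom into $S_k$ is precisely what the vanishing condition kills.
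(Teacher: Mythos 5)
Your outline follows the paper's order, and (i), the combinatorial half of (iv), and (v) match the paper's argument. But there are two genuine gaps, both concerning where the vanishing condition actually does its work.

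The main gap is in (ii). From $f_{i1}\circ f_{i2}=0$ you do get a factorisation $f_{i1}=u\circ h_i$ for some $u:T_i^*\to U_{i0}$, but the claim that ``minimality identifies this factor with $g_i$'' is exactly the hard point, not a formality: a priori $u$ need not be a (minimal) left $\mathrm{add}(T/T_i)$-approximation of $T_i^*$. Chasing the left $2$-almost split property, the discrepancy between $u$ and $g_i$ is measured by maps factoring through the connecting morphism $T_i^*\to T_i[1]$, i.e.\ by groups of the form $\mathrm{Hom}_\mathcal{C}(T_i[1],X)$ with $X\in\mathrm{add}(T/T_i)$, and it is precisely here that the vanishing condition $\mathrm{Hom}_\Lambda(\mathrm{Ext}^1_\Lambda(D\Lambda,S_i),S_i)=0$ enters; this is the content of \cite[Theorem 4.9]{BIRS2011Mutation}, which the paper invokes together with the hypothesis that $W$ is reduced (so that $f_{i1}$ is a radical map) and that $Q$ has no loops at $i$. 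Your appeal to the octahedral axiom does not close this gap. Relatedly, you then spend the vanishing condition in (iv), where it is not needed: the paper proves the Hom-exactness in (iv) by showing $\mathrm{Ext}^1_\mathcal{C}(T_k^*,T_i)=0$ for $i\neq k$ directly --- apply $\mathrm{Hom}_\mathcal{C}(T_i,-)$ to the exchange triangle $T_k^*\to U_{k0}\to T_k\to T_k^*[1]$, use that $f_{k0}$ is a right $\mathrm{add}(T/T_k)$-approximation and that $\mathrm{Ext}^1_\mathcal{C}(T_i,U_{k0})=0$, then dualise with the $2$-Calabi--Yau property --- so your proposed detour through identifying $T_k^*$ with $\tau^{-1}S_k$ and $\mathrm{Ext}^1_\Lambda(D\Lambda,S_k)$ is both undeveloped and unnecessary.

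A second, smaller problem is (iii). You propose to deduce it from (iv) and (v), but those clauses are statements about indices $i\neq k$ and about the sequences ending at $T_i$, not about the spliced complex $T_k^*\to U_{k0}\to U_{k1}\to T_k^*$; as written the argument is circular in order and does not apply to the case it needs to cover. What is actually required is the general fact that the mutated cluster tilting object $\mu_{T_k}(T)$ admits a weak $2$-almost split sequence ending at $T_k^*$, assembled from the two exchange triangles; the paper obtains this by citing \cite[Theorem 4.8]{BIRS2011Mutation} and \cite[Theorem 3.10]{IyamaYoshino2008Mutation} rather than by deducing it from (iv) and (v).
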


\begin{proof}
	\begin{enumerate}[label=(\roman*)]
		\item Follows from Theorem~\ref{Theorem - iso and weak 2}.
		\item Recall that $W$ is assumed to be reduced, which in turn implies that $f_{i1}\in \mathrm{rad}_\mathcal{C}(U_{i1}, U_{i0})$. Since $Q$ has no loops and $T$ satisfies the vanishing condition at $i$, the claim follows from \cite[Theorem 4.9]{BIRS2011Mutation}. 
		\item Follows from \cite[Theorem 4.8]{BIRS2011Mutation} and also from \cite[Theorem 3.10]{IyamaYoshino2008Mutation}.
		\item We only prove that the former sequence is exact since the proof for the latter sequence to be exact is similar. Since no $2$-cycles starts at the vertex $k$ we have that $T_k\not\in (\mathrm{add}(U_{i1}) \cap \mathrm{add}(U_{i0}))$. 

		By (i) and (ii) we have the following commutative diagram.
		\begin{equation*}
			\begin{tikzcd}
				T_i \arrow[r, "f_{i2}"] & U_{i1} \arrow[rr, "f_{i1}"] \arrow[dr, "h_i"] & & U_{i0} \arrow[r, "f_{i0}"] & T_i \\
				& & T_i^* \arrow[ur, "g_i"]
			\end{tikzcd}
		\end{equation*}
		Applying $\mathrm{Hom}_{\mathcal{C}}(T_k^*, -)$ yields the following diagram.
		\begin{equation}\label{eq - exactness of hom sequence lemma}
			\begin{tikzcd}
				\mathrm{Hom}_{\mathcal{C}}(T_k^*, U_{i1}) \arrow[rr, "f_{i1}\circ -"] \arrow[dr, "h_i\circ -"] & & \mathrm{Hom}_{\mathcal{C}}(T_k^*, U_{i0}) \arrow[r, "f_{i0}\circ -"] & \mathrm{Hom}_{\mathcal{C}}(T_k^*, T_i) \\
				& \mathrm{Hom}_{\mathcal{C}}(T_k^*, T_i^*) \arrow[ur, "g_i\circ -"] \arrow[dr] \\
				& & \mathrm{Ext}^1_{\mathcal{C}}(T_k^*, T_i)
			\end{tikzcd}
		\end{equation}
		Here $\im(h_i\circ -)$ is equal to the kernel of
		\begin{equation*}
			\mathrm{Hom}_{\mathcal{C}}(T_k^*, T_i^*)\to \mathrm{Ext}^1_{\mathcal{C}}(T_k^*, T_i)
		\end{equation*}
		by (ii), i.e. follows from that $T_i\xrightarrow{f_{i2}}U_{i1}\xrightarrow{h_i}T_i^*\to T_i[1]$ is a triangle. We need to show that the top row of \eqref{eq - exactness of hom sequence lemma} is exact. Using (ii) again, that $T_i^*\xrightarrow{g_i}U_{i0}\xrightarrow{f_{i0}}T_i\to T_i^*[1]$ is an exchange triangle we immediately have that $\im(g_i\circ-) = \ker(f_{i0}\circ-)$. It is enough to show that $\mathrm{Ext}^1_{\mathcal{C}}(T_k^*, T_i) = 0$. Indeed, if $\mathrm{Ext}^1_{\mathcal{C}}(T_k^*, T_i)=0$ then $h_i\circ -$ is an epimorphism. Now $\im(f_{i1}\circ-) = \im(g_i\circ h_i\circ-) = \im(g_i\circ -) = \ker(f_{i0}\circ -)$, hence the top row of \eqref{eq - exactness of hom sequence lemma} is exact. To show that $\mathrm{Ext}^1_{\mathcal{C}}(T_k^*, T_i)=0$ we consider the following exchange triangle
		\begin{equation*}
			T_k^* \to U_{k0}\to T_k\to T_k^*[1]
		\end{equation*}
		and apply $\mathrm{Hom}_\mathcal{C}(T_i, -)$ to get the exact sequence
		\begin{equation*}
			\mathrm{Hom}_\mathcal{C}(T_i, T_k^*)\to \mathrm{Hom}_\mathcal{C}(T_i, U_{k0}) \xrightarrow{f_{k0}\circ -} \mathrm{Hom}_\mathcal{C}(T_i, T_k) \to \mathrm{Ext}^1_\mathcal{C}(T_i, T_k^*)\to \mathrm{Ext}^1_\mathcal{C}(T_i, U_{k0}).
		\end{equation*}
		Now $\mathrm{Ext}^1_\mathcal{C}(T_i, U_{k0}) = 0$ since $T_i, U_{k0}\in \mathrm{add}(T)$ and $\mathrm{Hom}_\mathcal{C}(T_i, f_{k0})$ is an epimorphism since $f_{k0}$ is a right $\mathrm{add}(T/T_k)$-approximation. Thus $\mathrm{Ext}^1_\mathcal{C}(T_i, T_k^*) = 0$. Using that $\mathcal{C}$ is $2$-Calabi--Yau yields $\mathrm{Ext}^1_\mathcal{C}(T_k^*, T_i) = 0$.
		\item As in the proof of (iv), to prove that
		\begin{equation}\label{eq - exact sequence form triangle (iv)}
			\mathrm{Hom}_\mathcal{C}(T/T_k, U_{i1}) \xrightarrow{f_{i1}\circ -}\mathrm{Hom}_\mathcal{C}(T/T_k, U_{i0}) \xrightarrow{f_{i0}\circ -}\mathrm{Hom}_\mathcal{C}(T/T_k, T_i)
		\end{equation}
		is exact, it is enough to prove that
		\begin{equation*}
			\mathrm{Hom}_\mathcal{C}(T/T_k, U_{i1}) \xrightarrow{h_i\circ -}\mathrm{Hom}_\mathcal{C}(T/T_k, T_i^*)
		\end{equation*}
		is an epimorphism. This directly follows from $h_i$ being a left $\mathrm{add}(T/T_k)$-approximation. By combining \eqref{eq - exact sequence form triangle (iv)} and (iv) we get that
		\begin{equation}
			\mathrm{Hom}_\mathcal{C}(\mu_{T_k}(T), U_{i1}) \xrightarrow{f_{i1}}\mathrm{Hom}_\mathcal{C}(\mu_{T_k}(T), U_{i0}) \xrightarrow{f_{i0}}\mathrm{Hom}_\mathcal{C}(\mu_{T_k}(T), T_i)
		\end{equation}
		is exact.

		The argument for showing that
		\begin{equation*}
			\mathrm{Hom}_\mathcal{C}(U_{i0}, \mu_{T_k}(T)) \xrightarrow{f_{i1}}\mathrm{Hom}_\mathcal{C}(U_{i1}, \mu_{T_k}(T)) \xrightarrow{f_{i2}}\mathrm{Hom}_\mathcal{C}(T_i, \mu_{T_k}(T))
		\end{equation*}
		is exact is dual to the above argument.
	\end{enumerate}
\end{proof}

We now proceed to define $\Phi': \mathrm{proj}\widehat{T}(S')\to \mathcal{C}$ using Lemma~\ref{lemma - bijection sets algebra and pairs}. We do this in several steps.
\begin{enumerate}
	\item We define $\Phi'$ on objects by setting $\Phi'(P_i) = T_i$ for $i\not= k$ and $\Phi'(P_k) = T_k^*$.
	\item We let $\Phi'$ coincide with $\Phi$ on $D_i\subseteq \mathrm{Hom}_{\mathrm{proj}\widehat{T}(S')}(P_i, P_i)^{op}$ whenever $i\neq k$. Similarly on $M_\alpha\subseteq \mathrm{Hom}_{\mathrm{proj}\widehat{T}(S')}(P_j, P_i)^{op}$ where $\alpha: i\to j\in Q_1$, with $i\neq k$ and $j\neq k$ we also let $\Phi'$ coincide with $\Phi$.
\end{enumerate}

To determine $\Phi'$ it remains to define $\Phi'$ on $D_k\subseteq \mathrm{Hom}_{\mathrm{proj}\widehat{T}(S')}(P_k, P_k)^{op}$ and, for arrows $\alpha:k\to i$ and $\beta: j\to k$, on $M_{\alpha^*}\subseteq \mathrm{Hom}_{\mathrm{proj}\widehat{T}(S')}(P_k, P_i)^{op}$, $M_{\beta^*} \subseteq \mathrm{Hom}_{\mathrm{proj}\widehat{T}(S')}(P_j, P_k)^{op}$ and $M_{[\alpha\beta]} \subseteq \mathrm{Hom}_{\mathrm{proj}\widehat{T}(S')}(P_i, P_j)^{op}$.

\begin{enumerate}
	\item[(3)] On $M_{[\alpha\beta]} = M_\alpha \otimes_{D_k}M_\beta$ we set $\Phi'([x\otimes y]) = \Phi(y)\circ \Phi(x)$. This ensures that for $i\neq k$ not adjacent to $k$, \eqref{eq - phi 2 almost split sequence} and \eqref{eq - phi' 2 almost split} are the same sequence.
\end{enumerate}

For the final step we need the following lemma.

\begin{mylemma}\label{lemma - lambda tilde and lambda hat}
	For any $\lambda\in D_k$ there exist unique elements
	\begin{equation*}
		\tilde{\lambda}\in \bigoplus_{\substack{\alpha\in Q_1 \\ s(\alpha)=k}} \Mat_{\dim_{D_{t(\alpha)}} M_\alpha}\left(D_{t(\alpha)}\right), \quad \hat{\lambda}\in \bigoplus_{\substack{\beta\in Q_1 \\ t(\beta)=k}} \Mat_{\dim_{D_{s(\beta)}} M_\beta} \left(D_{s(\beta)}\right)
	\end{equation*}
	such that
	\begin{equation*}
		\lambda(\underline{\alpha^*})_{\alpha\in Q_1, s(\alpha) = k} = (\underline{\alpha^*})_{\alpha\in Q_1, s(\alpha) = k} \tilde{\lambda} \qquad (\overline{\beta^*})_{\beta\in Q_1, t(\beta) = k} \lambda = \hat{\lambda} (\overline{\beta^*})_{\beta\in Q_1, t(\beta) = k}.
	\end{equation*}
\end{mylemma}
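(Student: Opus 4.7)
The statement is essentially a bookkeeping fact about basis expansions, and the strategy is to build $\tilde{\lambda}$ and $\hat{\lambda}$ block by block using the direct sum decompositions recorded in \eqref{eq - M alpha basis presentation} applied to the arrows $\alpha^{*}$ and $\beta^{*}$ of the mutated species $\mu_{k}(S,W)$.

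For $\tilde{\lambda}$ I would fix $\alpha\in Q_{1}$ with $s(\alpha)=k$ and set $i=t(\alpha)$. In $\mu_{k}(S,W)$ the corresponding arrow $\alpha^{*}\colon i\to k$ has $s(\alpha^{*})=i$, and by construction $M_{\alpha^{*}}$ is a $D_{k}$-$D_{i}$-bimodule with chosen right basis $\underline{\alpha^{*}}$. Applying \eqref{eq - M alpha basis presentation} to $\alpha^{*}$ in $\mu_{k}(S,W)$ yields the right $D_{i}$-decomposition
\begin{equation*}
    M_{\alpha^{*}} \;=\; \bigoplus_{a^{*}\in \underline{\alpha^{*}}} a^{*}\, D_{i}.
\end{equation*}
Since the left $D_{k}$-action commutes with the right $D_{i}$-action, left multiplication by $\lambda\in D_{k}$ is right $D_{i}$-linear; hence each $\lambda a^{*}\in M_{\alpha^{*}}$ admits a unique expansion $\lambda a^{*} = \sum_{b^{*}\in \underline{\alpha^{*}}} b^{*}\, c^{(\alpha)}_{b^{*},a^{*}}$ with $c^{(\alpha)}_{b^{*},a^{*}}\in D_{i}$. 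Reading these coefficients as a square matrix of size $|\underline{\alpha^{*}}|=\dim_{D_{t(\alpha)}}M_{\alpha}$ over $D_{t(\alpha)}$ gives the $\alpha$-block of $\tilde{\lambda}$, and taking the direct sum over all such $\alpha$ defines $\tilde{\lambda}$. Uniqueness of $\tilde{\lambda}$ is immediate from uniqueness of expansions in the direct sum.

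The construction of $\hat{\lambda}$ is the mirror argument. For $\beta\in Q_{1}$ with $t(\beta)=k$ the arrow $\beta^{*}\colon k\to j$ in $Q'$ has $t(\beta^{*})=j=s(\beta)$, and the left-decomposition half of \eqref{eq - M alpha basis presentation}, applied to $\beta^{*}$, gives
\begin{equation*}
    M_{\beta^{*}} \;=\; \bigoplus_{b^{*}\in \overline{\beta^{*}}} D_{j}\, b^{*}.
\end{equation*}
Right multiplication by $\lambda$ is left $D_{j}$-linear, so each $b^{*}\lambda\in M_{\beta^{*}}$ expands uniquely as $\sum_{b'^{*}\in \overline{\beta^{*}}} \hat{c}^{(\beta)}_{b'^{*},b^{*}}\, b'^{*}$ with coefficients in $D_{j}$, and these furnish the $\beta$-block of $\hat{\lambda}$.

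There is no real obstacle here: the whole argument is that left multiplication by $\lambda$ is a morphism of right (respectively left) modules over the appropriate $D_{i}$ (respectively $D_{j}$), and so is faithfully represented by a matrix in the chosen basis. The only genuine points to verify are the cardinality identifications $|\underline{\alpha^{*}}|=|\overline{\alpha}|=\dim_{D_{t(\alpha)}}M_{\alpha}$ and $|\overline{\beta^{*}}|=|\underline{\beta}|=\dim_{D_{s(\beta)}}M_{\beta}$, both of which are recorded just after \eqref{eq - M alpha basis presentation}; these match the matrix sizes demanded by the lemma.
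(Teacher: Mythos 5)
Your proposal is correct and follows essentially the same route as the paper's proof: both arguments observe that left multiplication by $\lambda$ on $M_{\alpha^*}$ is right $D_{t(\alpha)}$-linear and read off the matrix $\tilde{\lambda}$ from the unique expansion in the basis $\underline{\alpha^*}$ coming from the decomposition $M_{\alpha^*} = \bigoplus_{a^*\in\underline{\alpha^*}} a^* D_{t(\alpha)}$, with $\hat{\lambda}$ handled by the mirror argument. Your additional remarks on the cardinality identifications $|\underline{\alpha^*}| = |\overline{\alpha}| = \dim_{D_{t(\alpha)}} M_\alpha$ are a harmless elaboration of what the paper leaves implicit.
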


\begin{proof}
	We only show it for $\tilde{\lambda}$ since the argument for $\hat{\lambda}$ is similar. From the definition of $\underline{\alpha^*}$ we have that
	\begin{equation*}
		M_{\alpha^*} = \bigoplus_{a^*\in \underline{\alpha^*}} a^* D_{t(\alpha)}.
	\end{equation*}
	Since $\lambda\cdot -: M_{\alpha^*} \to M_{\alpha^*}$ is a $D_{t(\alpha)}$-module morphism, the existence and uniqueness of $\tilde{\lambda}$ follows from the fact that $\underline{\alpha^*}$ is a $D_{t(\alpha)}$-basis of $M_{\alpha^*}$.
\end{proof}

\begin{enumerate}
	\item[(4)] Using Lemma~\ref{Lemma - 5 conditions} and Lemma~\ref{lemma - lambda tilde and lambda hat}, we define
	\begin{equation}\label{eq - phi 1}
		\Phi'((\underline{\alpha^*})_{\alpha\in Q_1, s(\alpha) = k}) = g_k\in \mathrm{Hom}_\mathcal{C}(T_k^*, \bigoplus_{\substack{\alpha\in Q_1 \\ s(\alpha)=k}} T_{t(\alpha)}^{|\overline{\alpha}|})^{op}
	\end{equation}
	and we extend linearly with respect to $\bigoplus_{\substack{\alpha\in Q_1 \\ s(\alpha)=k}} D_{t(\alpha)}$, in particular
	\begin{equation*}
		\Phi'((\underline{\alpha^*})_{\alpha\in Q_1, s(\alpha) = k} \tilde{\lambda}) = \Phi'((\underline{\alpha^*})_{\alpha\in Q_1, s(\alpha) = k})\circ \Phi(\tilde{\lambda}).
	\end{equation*}
	Similarly we define
	\begin{equation}\label{eq - phi 2}
		\Phi'((\overline{\beta^*})_{\beta\in Q_1, t(\beta) = k}) = h_k\in \mathrm{Hom}_\mathcal{C}(\bigoplus_{\substack{\beta\in Q_1 \\ t(\beta)=k}} T_{s(\beta)}^{|\underline{\beta}|}, T_k^*)^{op}
	\end{equation}
	and we extend linearly with respect to $\bigoplus_{\substack{\beta\in Q_1 \\ t(\beta)=k}} D_{s(\beta)}$.
\end{enumerate}

What remains is to define $\Phi':D_k\to \mathrm{End}_\mathcal{C}(T_k^*)^{op}$ so that
\begin{equation*}
	\Phi': M_{\alpha^*} \to \mathrm{Hom}_\mathcal{C}(T_k^*, T_i)^{op} \quad \Phi': M_{\beta^*} \to \mathrm{Hom}_\mathcal{C}(T_j, T_k^*)^{op},
\end{equation*}
defined via \eqref{eq - phi 1} and \eqref{eq - phi 2} respectively, are $D$-bimodule morphisms. This will determine $\Phi$ uniquely by Lemma~\ref{lemma - bijection sets algebra and pairs}. Note that these are automatically $D_i$-module respectively $D_j$-module morphisms. Moreover, $\mathrm{End}_\mathcal{C}(T_k^*)^{op}/\rad(\mathrm{End}_\mathcal{C}(T_k^*)^{op})$ is isomorphic to $D_k$ by Proposition~\ref{proposition - end mod rad equivalence}. However, to obtain a compatible morphism $D_k\to \mathrm{End}_\mathcal{C}(T_k^*)^{op}$ we need the following assumption on $\Phi$.

\begin{mydef}\label{definition - mut condition at k}
	We say that $T$, or $\Phi$, satisfies the mutation compatibility condition at $k$ if we can define a $\K$-algebra morphism $q_-: D_k\to \mathrm{End}_\mathcal{C}(T_k^*)^{op}$ such that
	\begin{equation}\label{eq - mut cond diagram}
		\begin{tikzcd}
			T_k \arrow[r, "f_{k2}"] \arrow[ddd, "\Phi(\lambda)"] & U_{k1} \arrow[ddd, "\Phi(\hat{\lambda})"] \arrow[rr, "f_{k1}"] \arrow[rd, "h_k"] & & U_{k0} \arrow[r, "f_{k0}"] \arrow[ddd, "\Phi(\tilde{\lambda})"] & T_k \arrow[lld, dotted] \arrow[ddd, "\Phi(\lambda)"] \\
			& & T_k^* \arrow[ru, "g_k"] \arrow[llu, dotted] \arrow[ddd, "q_\lambda"{pos=0.3}] \\ \\
			T_k \arrow[r, "f_{k2}"] & U_{k1} \arrow[rr, "f_{k1}\qquad"] \arrow[rd, "h_k"] & & U_{k0} \arrow[r, "f_{k0}"] & T_k \arrow[lld, dotted] \\
			& & T_k^* \arrow[ru, "g_k"] \arrow[llu, dotted]
		\end{tikzcd}
	\end{equation}
	commutes for all $\lambda\in D_k$, where the dotted arrows are the connecting morphisms in the exchange triangles. Here $\Phi(\hat{\lambda})$ is interpreted as applying $\Phi$ on each matrix element of $\hat{\lambda}$. Thus $\Phi(\hat{\lambda})\in \mathrm{End}_\mathcal{C}(U_{k1})$. Similarly we have that $\Phi(\tilde{\lambda})\in \mathrm{End}_\mathcal{C}(U_{k0})$.
\end{mydef}

\begin{myrem}\label{remark - mutation comp involutative}
	Note that if $T$ satisfies the mutation compatibility condition at $k$ then
	\begin{equation}
		\begin{tikzcd}
			T_k^* \arrow[r, "g_k"] \arrow[ddd, "q_\lambda"] & U_{k0} \arrow[ddd, "\Phi(\tilde{\lambda})"] \arrow[rr, "f_{k2}\circ f_{k0}"] \arrow[rd, "f_{k0}"] & & U_{k1} \arrow[r, "h_k"] \arrow[ddd, "\Phi(\hat{\lambda})"] & T_k \arrow[lld, dotted] \arrow[ddd, "q_\lambda"] \\
			& & T_k \arrow[ru, "f_{k2}"] \arrow[llu, dotted] \arrow[ddd, "\Phi(\lambda)"{pos=0.3}] \\ \\
			T_k^* \arrow[r, "g_k"] & U_{k0} \arrow[rr, "f_{k2}\circ f_{k0}\qquad\quad"] \arrow[rd, "f_{k0}"] & & U_{k1} \arrow[r, "h_k"] & T_k \arrow[lld, dotted] \\
			& & T_k \arrow[ru, "f_{k2}"] \arrow[llu, dotted]
		\end{tikzcd}
	\end{equation}
	automatically commutes for all $\lambda\in D_k$.
\end{myrem}

Note that the left most and right most squares in \eqref{eq - mut cond diagram} commute by Lemma~\ref{lemma - lambda tilde and lambda hat}.

\begin{myrem}\label{remark - (Q, W) satisfy mut condition}
	If $D_k = \K$, then we can take $q_-$ to be the canonical inclusion $\K \to \mathrm{End}_\mathcal{C}(T_k^*)$ and so the mutation compatibility condition at $k$ is automatically satisfied in this case. In particular this holds if $(S, W) = (Q, W)$ is a quiver with potential.
\end{myrem}

\begin{mylemma}
	There is a $\K$-linear functor $\Phi': \mathrm{proj}\widehat{T}(S')\to \mathcal{C}$ agreeing with the definition in the steps (1)-(4) above if and only if $T$ satisfies the mutation compatibility condition at $k$.
\end{mylemma}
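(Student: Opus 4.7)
The plan is to apply Lemma~\ref{lemma - bijection sets algebra and pairs}, which reduces the existence of a functor $\Phi'$ to a list of compatibility checks on the pair $(\Phi'_0, \Phi'_1)$. Steps (1)--(4) already determine every piece of this data except $\Phi'_0|_{D_k}$. A short computation shows that $\Phi'$ is well-defined on $M_{[\alpha\beta]} = M_\alpha \otimes_{D_k} M_\beta$: the required $D_k$-balancing reduces to $\Phi(y)\circ\Phi(x\lambda) = \Phi(\lambda y)\circ\Phi(x)$, which is immediate from the functoriality of $\Phi$. Moreover, $\Phi'|_{M_{\alpha^*}}$ is right $D_i$-linear by construction (it is defined via $g_k$ and extended by right $D_i$-linearity), and $\Phi'|_{M_{\beta^*}}$ is left $D_j$-linear for analogous reasons.

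What remains to be verified is therefore: first, the existence of a $\K$-algebra morphism $q : D_k \to \mathrm{End}_\mathcal{C}(T_k^*)^{op}$ to serve as $\Phi'_0|_{D_k}$, and second, that with this choice $\Phi'_1|_{M_{\alpha^*}}$ is left $D_k$-linear and $\Phi'_1|_{M_{\beta^*}}$ is right $D_k$-linear. Because $\Phi'$ must be a ring morphism into $\mathrm{End}_\mathcal{C}(T')^{op}$, so that $\Phi'(xy) = \Phi'(y)\circ\Phi'(x)$ in $\mathcal{C}$, the left $D_k$-linearity of $\Phi'_1|_{M_{\alpha^*}}$ evaluated on the tuple $(\underline{\alpha^*})$ and then transformed via Lemma~\ref{lemma - lambda tilde and lambda hat} (which rewrites $\lambda(\underline{\alpha^*})$ as $(\underline{\alpha^*})\tilde{\lambda}$) becomes precisely the commutativity of the square whose horizontal edges are $g_k$ and whose vertical edges are $q_\lambda$ and $\Phi(\tilde{\lambda})$. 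Similarly the right $D_k$-action on $(\overline{\beta^*})$ gives the square with $h_k, q_\lambda, \Phi(\hat{\lambda})$. These are precisely the two middle squares of diagram~\eqref{eq - mut cond diagram}, while the two outer squares (with $f_{k2}$ and $f_{k0}$) commute automatically from the analogous identities $\lambda(\underline{\beta}) = (\underline{\beta})\hat{\lambda}$ and $(\overline{\alpha})\lambda = \tilde{\lambda}(\overline{\alpha})$, which follow from Lemma~\ref{lemma - lambda tilde and lambda hat} by a direct duality argument using the pairing $\b$ together with the functoriality of $\Phi$.

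With this reformulation both implications follow. For the backwards direction, given $q$ making the diagram commute, setting $\Phi'_0|_{D_k} := q$ makes the middle squares deliver the required $D_k$-linearity of $\Phi'_1$ on $M_{\alpha^*}$ and $M_{\beta^*}$, and Lemma~\ref{lemma - bijection sets algebra and pairs} then assembles $\Phi'$. For the forwards direction, $q := \Phi'_0|_{D_k}$ is automatically a $\K$-algebra morphism, and the bimodule properties of $\Phi'_1$ force the middle squares to commute; combined with the always-commuting outer squares, this yields the full diagram. The main obstacle to anticipate is bookkeeping: keeping the row/column conventions for the tuples $(\underline{\alpha^*}), (\overline{\beta^*})$ and the reversal of composition under the op consistent when passing between $\widehat{T}(S')$, $\mathrm{proj}\widehat{T}(S')$, and the image in $\mathcal{C}$. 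Once these conventions are fixed, the correspondence between the two middle squares and the two $D_k$-linearity conditions is forced by Lemma~\ref{lemma - lambda tilde and lambda hat} and the algebra morphism property of $\Phi'$.
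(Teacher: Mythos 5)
Your proposal is correct and follows essentially the same route as the paper: reduce to the pair $(\Phi'_0,\Phi'_1)$ via Lemma~\ref{lemma - bijection sets algebra and pairs}, set $\Phi'_0|_{D_k}=q_-$ (resp.\ $q_\lambda=\Phi'(\lambda)$), and observe via Lemma~\ref{lemma - lambda tilde and lambda hat} that the $D_k$-linearity of $\Phi'$ on $M_{\alpha^*}$ and $M_{\beta^*}$ is exactly the commutativity of the two middle squares of \eqref{eq - mut cond diagram}, the outer squares being automatic. The extra checks you include (balancedness on $M_{[\alpha\beta]}$, the duality argument for the outer squares) are points the paper treats implicitly or in the remark following Definition~\ref{definition - mut condition at k}, and are consistent with it.
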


\begin{proof}
	If $T$ satisfies the mutation compatibility condition at $k$, then we define $\Phi'(\lambda) = q_\lambda$. Since $\Phi(\tilde{\lambda})\circ g_k = g_k \circ q_\lambda$ we get $\Phi'(\tilde{\lambda}) \circ g_k = g_k\circ \Phi'(\lambda)$. Now $\Phi'((\underline{\alpha^*})_{\alpha\in Q_1, s(\alpha) = k}) = g_k$ yields
	\begin{equation}\label{eq - mut cond comm diagram computation}
		\begin{aligned}
			\Phi'(\lambda (\underline{a^*})_{\alpha\in Q_1, s(\alpha) = k}) &= \Phi'((\underline{a^*})_{\alpha\in Q_1, s(\alpha) = k} \tilde{\lambda}) = \Phi'(\tilde{\lambda})\circ \Phi'((\underline{a^*})_{\alpha\in Q_1, s(\alpha) = k}) = \\
			&= \Phi'((\underline{a^*})_{\alpha\in Q_1, s(\alpha) = k})\Phi'(\lambda),
		\end{aligned}	
	\end{equation}
	which implies that $\Phi': M_{\alpha^*} \to \mathrm{Hom}_\mathcal{C}(T_k^*, T_i)^{op}$ is a $D$-bimodule morphism. Similarly $\Phi': M_{\beta^*}\to \Hom_\mathcal{C}(T_j, T_k^*)^{op}$ is a $D$-bimodule morphism. Together with the above steps (1)-(4) this defines $\Phi'$ on $D$ as an $\K$-algebra morphism and on $M'$ as a compatible $D$-bimodule morphism, which extends uniquely to a well-defined $\K$-linear functor $\Phi': \mathrm{proj}\widehat{T}(S')\to \mathcal{C}$.

	On the other hand, assuming $\Phi': \mathrm{proj}\widehat{T}(S')\to \mathcal{C}$ is a $\K$-linear functor agreeing with the definition above, we can define $q_\lambda = \Phi'(\lambda)$ and the commutativity of the diagram \eqref{eq - mut cond diagram} follows from the same computation as in \eqref{eq - mut cond comm diagram computation}. Thus $T$ satisfies the mutation compatibility condition at $k$.
\end{proof}

We now continue under the assumption that $\Phi'$ is well-defined and given as in steps (1)-(4). To ease up the notation we omit writing $\Phi$ and $\Phi'$. Theorem~\ref{Theorem - theorem implies theorem A} follows from the following lemma, by calculating the sequence \eqref{eq - phi' 2 almost split} in four distinct cases.

\begin{mylemma}\label{Lemma - Main}\cite[Lemma 5.9, Lemma 5.10, Lemma 5.11]{BIRS2011Mutation}
	Let $i\in Q_0$. The following holds:
	\begin{enumerate}
		\item We have the following weak $2$-almost split sequences in $\mathrm{add}(\mu_{T_k}(T))$
		\begin{equation}\label{eq - Lemma mutation 1}
			0\to T^*_k \xrightarrow{(\underline{\beta^*})_\beta}\bigoplus_{\beta\in Q_1, t(\beta^*) = k}T_{s(\beta^*)}^{|\underline{\beta^*}|}\xrightarrow{([\overline{\beta}\underline{\alpha}])_{(\alpha, \beta)}} \bigoplus_{\alpha\in Q_1, s(\alpha^*) = k} T_{t(\alpha^*)}^{|\overline{\alpha^*}|} \xrightarrow{(\overline{\alpha^*})_\alpha} T^*_k\to 0.
		\end{equation}
		\item If $i\neq k$ and $i$ is not adjacent to $k$, then
		\begin{equation}
			0\to T_i\xrightarrow{(\underline{\beta})_\beta}\bigoplus_{\substack{\beta\in Q_1 \\ t(\beta) = i}}T_{s(\beta)}^{|\underline{\beta}|} \xrightarrow{(\partial_{(\overline{\beta^*}, \underline{\alpha^*})} (W'))_{\alpha, \beta}} \bigoplus_{\substack{\alpha\in Q_1 \\ s(\alpha) = i}}T_{t(\alpha)}^{|\overline{\alpha}|}\xrightarrow{(\overline{\alpha})_\alpha} T_i\to 0
		\end{equation}
		is a weak $2$-almost split sequence.
		\item If $i\neq k$ and there is an arrow $\beta: i\to k$ in $Q_1$, then
		\begin{equation}\label{eq - Lemma mutation 2}
			0\to T_i \xrightarrow{f_3} \begin{matrix}
				(T^*_k)^{|\underline{\beta^*}|} \\ \\ \oplus \left(\displaystyle\bigoplus_{\substack{\delta\in Q_1 \\ t(\delta)=i}} T_{s(\delta)}^{|\underline{\delta}|}\right)
			\end{matrix}\xrightarrow{f_2}\begin{matrix}
				\left(\displaystyle\bigoplus_{\substack{\alpha \in Q_1 \\ s(\alpha) = k}} T_{t(\alpha)}^{\dim_{D_{t(\alpha)}}M_\alpha\otimes_{D_k}M_\beta} \right) \\ \oplus \left(\displaystyle\bigoplus_{\substack{\gamma\in Q_1 \\ s(\gamma) = i \\ t(\gamma)\neq k}} T_{t(\gamma)}^{|\overline{\gamma}|}\right)
			\end{matrix} \xrightarrow{f_1} T_i \to 0
		\end{equation}
		is a weak $2$-almost split sequence, where
		\begin{equation*}
			\begin{aligned}
				f_1 &= \begin{bmatrix}
					(\overline{[\alpha\beta]})_\alpha & (\overline{\gamma})_\gamma
				\end{bmatrix}, \\
				f_2 &= \begin{bmatrix}
					((\underline{\alpha^*})^{\oplus |\overline{\beta}|})_\alpha & (\partial_{(\overline{\delta^*}, \underline{[\alpha\beta]}^*)} [W])_{\alpha, \delta} \\
					0 & (\partial_{(\overline{\delta^*}, \underline{\gamma^*})} [W])_{(\gamma, \delta)}
				\end{bmatrix}, \\
				f_3 &= \begin{bmatrix}
					\underline{\beta^*} \\
					(\underline{\delta})_\delta
				\end{bmatrix}.
			\end{aligned}
		\end{equation*}
		\item If $i\neq k$ and there is an arrow $\alpha: k\to i$ in $Q_1$, then
		\begin{equation*}
			0\to T_i \xrightarrow{f_3} \begin{matrix}
				\left(\displaystyle\bigoplus_{\substack{\beta\in Q_1 \\ t(\beta) = k}} T_{s(\beta)}^{\dim_{D_{s(\beta)}}M_\alpha\otimes_{D_k}M_\beta} \right) \\ \oplus \left(\displaystyle\bigoplus_{\substack{\delta\in Q_1 \\ t(\delta)=i \\ s(\delta)\neq k}} T_{s(\delta)}^{|\underline{\delta}|}\right)
			\end{matrix}\xrightarrow{f_2}\begin{matrix}
				(T_k^*)^{|\overline{\alpha^*}|} \\ \\ \oplus \left(\displaystyle\bigoplus_{\substack{\gamma\in Q_1 \\ s(\gamma) = i}} T_{t(\gamma)}^{|\overline{\gamma}|}\right)
			\end{matrix} \xrightarrow{f_1} T_i \to 0
		\end{equation*}
		is a weak $2$-almost split sequence, where
		\begin{equation*}
			\begin{aligned}
				f_1 &= \begin{bmatrix}
					\overline{\alpha^*} & (\overline{\gamma})_\gamma
				\end{bmatrix}, \\
				f_2 &= \begin{bmatrix}
					((\overline{\beta^*})^{\oplus |\underline{\alpha}|})_\beta & 0 \\
					(\partial_{(\overline{[\alpha\beta]^*}, \underline{\gamma^*})} [W])_{\gamma, \beta} & (\partial_{(\overline{\delta^*}, \underline{\gamma^*})} [W])_{(\gamma, \delta)}
				\end{bmatrix}, \\
				f_3 &= \begin{bmatrix}
					(\underline{[\alpha\beta]})_\beta \\
					(\underline{\delta})_\delta
				\end{bmatrix}.
			\end{aligned}
		\end{equation*}
	\end{enumerate}
\end{mylemma}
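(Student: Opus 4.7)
The plan is to handle the four parts separately, leveraging Lemma~\ref{Lemma - 5 conditions} together with the explicit formulas for $W' = [W] + \Delta$. Part (1) follows by unwinding the definition of $\Phi'$: steps (1)--(4) of the construction force $\Phi'((\underline{\alpha^*})_\alpha) = g_k$, $\Phi'((\overline{\beta^*})_\beta) = h_k$, and $\Phi'([b \otimes a]) = \Phi(a)\circ\Phi(b)$. A direct computation from $\Delta = \sum c_{\alpha,\beta}$ gives $\partial_{b,a}(\Delta) = [b \otimes a]$, whose image under $\Phi'$ equals $\Phi(a)\circ\Phi(b)$; these entries are precisely those of $f_{k2} f_{k0}$ when $U_{k0}$ and $U_{k1}$ are decomposed along $\overline{\alpha}$ and $\underline{\beta}$. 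Hence the sequence in (1) coincides with the weak $2$-almost split sequence of Lemma~\ref{Lemma - 5 conditions}(iii).

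For part (2), since $i$ is not adjacent to $k$, the arrows of $Q'$ incident to $i$ are exactly those of $Q$ incident to $i$, and the summands agree with those of \eqref{eq - weak 2 almost split at i}. For $\alpha,\beta$ incident to $i$, every term of $W$ contributing to $\partial_{\overline{\beta^*},\underline{\alpha^*}}(W)$ whose path traverses $k$ is recorded in $[W]$ with the subpath $\alpha'\beta'$ (with $s(\alpha') = k = t(\beta')$) replaced by $[\alpha'\beta']$. Combined with the identity $\Phi'([x\otimes y]) = \Phi(y)\circ\Phi(x)$, this yields $\Phi'(\partial_{\overline{\beta^*},\underline{\alpha^*}}(W')) = \Phi(\partial_{\overline{\beta^*},\underline{\alpha^*}}(W))$, so the sequence in (2) is the image of \eqref{eq - weak 2 almost split at i} under $\Phi$. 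Lemma~\ref{Lemma - 5 conditions}(i) makes it weak $2$-almost split in $\mathrm{add}(T)$, and Lemma~\ref{Lemma - 5 conditions}(v) lifts this to $\mathrm{add}(\mu_{T_k}(T))$.

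Parts (3) and (4) are dual under $Q \leftrightarrow Q^{\mathrm{op}}$; I treat (3). The strategy is to splice the weak $2$-almost split sequence for $T_i$ in $\mathrm{add}(T)$ from Lemma~\ref{Lemma - 5 conditions}(i) with the exchange triangle $T_k^* \xrightarrow{g_k} U_{k0} \xrightarrow{f_{k0}} T_k \to T_k^*[1]$ of Lemma~\ref{Lemma - 5 conditions}(ii). Decompose $U_{i0} = T_k^{|\overline{\beta}|} \oplus V$, where $V = \bigoplus_{\gamma: s(\gamma)=i,\, t(\gamma)\neq k} T_{t(\gamma)}^{|\overline{\gamma}|}$. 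The $\beta$-component of $f_{i0}$ factors through $f_{k0}^{|\overline{\beta}|}$, and the resulting composition gives the block $(\overline{[\alpha\beta]})_\alpha$ of $f_1$; the remaining block $(\overline{\gamma})_\gamma$ is unchanged. Similarly, the $\beta$-component of $f_{i1}: U_{i1} \to T_k^{|\overline{\beta}|}$ lifts through $f_{k0}^{|\overline{\beta}|}$ (enabled by Lemma~\ref{Lemma - 5 conditions}(v) and the absence of loops at $k$), producing the top-right block of $f_2$, while the top-left block $((\underline{\alpha^*})^{\oplus |\overline{\beta}|})_\alpha$ comes from $g_k^{|\overline{\beta}|}$. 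The new $T_k^*$-summand of the middle-left is inserted as the pseudo-kernel continuation of $g_k^{|\overline{\beta}|}$, with $\underline{\beta^*}$ supplying the corresponding component of $f_3$ and $(\underline{\delta})_\delta$ the unchanged part. Exactness in $\mathrm{add}(\mu_{T_k}(T))$ then combines Lemma~\ref{Lemma - 5 conditions}(iv) (for the $T_k^*$-rows) with Lemma~\ref{Lemma - 5 conditions}(v) (for the remaining rows).

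The hard part will be the matrix identification of $f_2$ in (3) and (4), specifically verifying that the block $(\partial_{\overline{\delta^*}, \underline{[\alpha\beta]^*}}[W])_{\alpha,\delta}$ under $\Phi'$ coincides with the composition arising from lifting the $\beta$-component of $f_{i1}$ through $f_{k0}^{|\overline{\beta}|}$. A term $\mu\alpha\beta\nu$ of $W$ passing through $k$ rewrites in $[W]$ as $\mu[\alpha\beta]\nu$, and its derivatives at $\delta^* \in \underline{\delta^*}$ and at $[\alpha\beta]^*$ must be matched against the factorisations in $\mathcal{C}$ produced by the lift; the bookkeeping requires care with the $D$-bimodule structures and the distinction between the bases $\underline{\alpha}$ and $\overline{\alpha}$. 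Once the matrix identifications are settled, the weak $2$-almost split property itself reduces to routine applications of Lemma~\ref{Lemma - 5 conditions}(iv)--(v), following the template established in parts (1) and (2).
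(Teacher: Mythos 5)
Your overall architecture matches the paper's: part (1) is identified with the sequence of Lemma~\ref{Lemma - 5 conditions}(iii) via the definition of $\Phi'$, part (2) is the unchanged sequence using $\Phi'([x\otimes y])=\Phi(y)\circ\Phi(x)$, and parts (3)--(4) are obtained by splicing the exchange triangle for $T_k$ into the decomposition $U_{i0}=T_k^{|\overline{\beta}|}\oplus U_{i0}''$ with a lift $t$ of the $\beta$-component of $f_{i1}$ through $f_{k0}^{\oplus|\overline{\beta}|}$. The block identifications you describe for $f_1,f_2,f_3$ are the right ones, and verifying that \eqref{eq - Lemma mutation 2} is a complex does come down to the derivative identities $\partial_{\underline{[\alpha\beta]^*}}\Delta+\partial_{\underline{[\alpha\beta]^*}}[W]=\partial_{\underline{[\alpha\beta]^*}}W'=0$ etc., as you anticipate.

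However, you have misplaced where the real difficulty lies, and the step you dismiss as routine contains a genuine gap. You claim that once the matrices are identified, the weak $2$-almost split property ``reduces to routine applications of Lemma~\ref{Lemma - 5 conditions}(iv)--(v).'' Those two items only control Hom-sequences built from the \emph{original} maps $f_{i0},f_{i1},f_{i2}$; they say nothing about almost-splitness relative to the new summand $(T_k^*)^{|\underline{\beta^*}|}$ that has been inserted into the middle term, nor about factoring radical morphisms $T_i\to T_k^*$ through $f_3$. The paper needs an additional sub-lemma (Lemma~\ref{lemma - s is bijective}): the map $-\circ\Phi'(\underline{\beta^*})$ induces a bijection $\mathrm{Hom}_\mathcal{C}((T_k^*)^{|\overline{\beta}|},T_k^*)/\mathrm{rad}\to\mathrm{rad}_\mathcal{C}(T_i,T_k^*)/\mathrm{rad}^2_{\mathrm{add}(\mu_{T_k}(T))}(T_i,T_k^*)$, proved from the minimality of $h_k$ in the sequence of (iii) and the $D_i$-$D_k$-bimodule identification of the radical quotient with $M_{\beta^*}$. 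This bijectivity is used twice: to show that any $p\in\mathrm{rad}_\mathcal{C}(T_i,T_k^*)$ can be corrected modulo $\mathrm{rad}^2$ so as to factor through $f_3$ (exactness at $\mathrm{rad}_\mathcal{C}(T_i,\mu_{T_k}(T))$), and to show that the $T_k^*$-component $p_1''$ of a morphism killed by precomposition with $f_3$ lies in the radical, so that left-almost-splitness of $g_k$ applies. Neither conclusion follows from (iv)--(v), so your plan as stated would stall precisely at the exactness verification; by contrast, the block bookkeeping for $f_2$ that you flag as the hard part is comparatively mechanical.
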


\begin{proof}
	(1): Note that $\Phi'$ is defined so that the complex \eqref{eq - Lemma mutation 1} is precisely
	\begin{equation*}
		T_k^*\xrightarrow{g_k}U_{k0}\xrightarrow{f_{k2}\circ f_{k0}}U_{k1}\xrightarrow{h_k}T_k^*,
	\end{equation*}
	which is a weak $2$-almost split sequence by (iii) in Lemma \ref{Lemma - 5 conditions}.

	(2): Note that $k$ is not adjacent to $i$ implies that we have that $T_k\not\in \mathrm{add}(U_{i0}\oplus U_{i1})$. Therefore, since \eqref{eq - weak 2 almost split at i} is a weak $2$-almost split sequence in $\mathrm{add}(T)$, the sequence
	\begin{equation}
		0\to T_i\xrightarrow{(\underline{\beta})_\beta}\bigoplus_{\substack{\beta\in Q_1 \\ t(\beta) = i}}T_{s(\beta)}^{|\underline{\beta}|} \xrightarrow{(\partial_{(\overline{\beta^*}, \underline{\alpha^*})} ([W]))_{\alpha, \beta}} \bigoplus_{\substack{\alpha\in Q_1 \\ s(\alpha) = i}}T_{t(\alpha)}^{|\overline{\alpha}|}\xrightarrow{(\overline{\alpha})_\alpha} T_i\to 0
	\end{equation}
	is a weak $2$-almost split sequence in $\mathrm{add}(\mu_{T_k}(T))$. Now we are done by noting that
	\begin{equation*}
		(\partial_{(\overline{\beta^*}, \underline{\alpha^*})} (W'))_{\alpha, \beta} = (\partial_{(\overline{\beta^*}, \underline{\alpha^*})} ([W]))_{\alpha, \beta}
	\end{equation*}
	if $s(\alpha), t(\alpha), s(\beta), t(\beta) \neq k$ where $\alpha, \beta\in Q_1$.
	
	(3): First we prove that \eqref{eq - Lemma mutation 2} is a complex. Let us compute $f_1\circ f_2$.
	\begin{equation*}
		\begin{aligned}
			f_1\circ f_2 &= \begin{bmatrix}
				(\overline{[\alpha\beta]})_\alpha & (\overline{\gamma})_\gamma
			\end{bmatrix}\begin{bmatrix}
				((\underline{\alpha^*})^{\oplus |\overline{\beta}|})_\alpha & (\partial_{(\overline{\delta^*}, \underline{[\alpha\beta]}^*)} [W])_{\alpha, \delta} \\
				0 & (\partial_{(\overline{\delta^*}, \underline{\gamma^*})} [W])_{(\gamma, \delta)}
			\end{bmatrix} = \\
			&=\begin{bmatrix}
				\partial_{\overline{\beta}} \Delta & (\partial_{\overline{\delta^*}} [W])_{\delta}
			\end{bmatrix} = \\ 
			&=\begin{bmatrix}
				\partial_{\overline{\beta}} W' & (\partial_{\overline{\delta^*}} W')_{\delta}
			\end{bmatrix} = 0
		\end{aligned}
	\end{equation*}
	Let us compute $f_2\circ f_3$.
	\begin{equation*}
		\begin{aligned}
			f_2\circ f_3 &= \begin{bmatrix}
				((\underline{\alpha^*})^{\oplus |\overline{\beta}|})_\alpha & (\partial_{(\overline{\delta^*}, \underline{[\alpha\beta]}^*)} [W])_{\alpha, \delta} \\
				0 & (\partial_{(\overline{\delta^*}, \underline{\gamma^*})} [W])_{(\gamma, \delta)}
			\end{bmatrix}\begin{bmatrix}
				\underline{\beta^*} \\
				(\underline{\delta})_\delta
			\end{bmatrix} = \\
			&= \begin{bmatrix}
				(\partial_{\underline{[\alpha\beta]^*}} \Delta)_\alpha + (\partial_{\underline{[\alpha\beta]^*}} [W])_\alpha \\
				(\partial_{\underline{\gamma^*}} [W])_\gamma
			\end{bmatrix} = \\
			&= \begin{bmatrix}
				(\partial_{\underline{[\alpha\beta]^*}} W')_\alpha \\
				(\partial_{\underline{\gamma^*}} W')_\gamma
			\end{bmatrix} = 0
		\end{aligned}
	\end{equation*}
	Thus \eqref{eq - Lemma mutation 2} is a complex.

	Next we will show that \eqref{eq - Lemma mutation 2} is indeed a weak $2$-almost split sequence. Let us first rewrite \eqref{eq - Lemma mutation 2} using exchange triangles to ease up notation. Recall that
	\begin{equation*}
		0\to T_i\xrightarrow{f_{i2}}U_{i1}\xrightarrow{f_{i1}}U_{i0}\xrightarrow{f_{i0}}T_i\to 0
	\end{equation*}
	is a weak $2$-almost split sequence in $\mathrm{add}(T)$. Write $U_{i0} = T_k^{|\overline{\beta}|}\oplus U_{i0}''$ with $T_k\not\in \mathrm{add}(U_{i0}'')$ and
	\begin{equation*}
		\begin{aligned}
			f_{i0} = \begin{bmatrix}
				f_{i0}' & f_{i0}''
			\end{bmatrix}: U_{i0}=T_k^{|\overline{\beta}|}\oplus U_{i0}''\to T_i \\
			f_{i1} = \begin{bmatrix}
				f_{i1}' \\
				f_{i1}''
			\end{bmatrix}: U_{i1} \to U_{i0} = T_k^{|\overline{\beta}|}\oplus U_{i0}''.
		\end{aligned}
	\end{equation*}
	We can now write \eqref{eq - Lemma mutation 2} as
	\begin{equation*}
		T_i \xrightarrow{\begin{bmatrix}
			s \\
			f_{i2}
		\end{bmatrix}} (T_k^*)^{|\overline{\beta}|}\oplus U_{i1} \xrightarrow{\begin{bmatrix}
			g_{k}^{\oplus |\overline{\beta}|} & t \\
			0 & f_{i1}''
		\end{bmatrix}} U_{k0}^{|\overline{\beta}|}\oplus U_{i0}''\xrightarrow{\begin{bmatrix}
			f_{i0}' \circ f_{k0}^{\oplus |\overline{\beta}|} & f_{i0}''
		\end{bmatrix}} T_i
	\end{equation*}
	where $f_{k0}^{\oplus |\overline{\beta}|}\circ t = f_{i1}': U_{i1}\to T_k^{|\overline{\beta}|}$.

	To show that \eqref{eq - Lemma mutation 2} is a weak $2$-almost split sequence means that we have to show that the sequences
	\begin{equation}\label{eq - main lemma show 2 weak 1}
		\mathrm{Hom}_\mathcal{C}(\mu_{T_k}(T), (T_k^*)^{|\overline{\beta}|}\oplus U_{i1}) \to \mathrm{Hom}_\mathcal{C}(\mu_{T_k}(T), U_{k0}^{|\overline{\beta}|}\oplus U_{i0}'')\to \mathrm{rad}_\mathcal{C}(\mu_{T_k}(T), T_i)\to 0
	\end{equation}
	and
	\begin{equation}\label{eq - main lemma show 2 weak 2}
		\mathrm{Hom}_\mathcal{C}(U_{k0}^{|\overline{\beta}|}\oplus U_{i0}'', \mu_{T_k}(T))\to \mathrm{Hom}_\mathcal{C}((T_k^*)^{|\overline{\beta}|}\oplus U_{i1}, \mu_{T_k}(T))\to \mathrm{rad}_\mathcal{C}(T_i, \mu_{T_k}(T))\to 0
	\end{equation}
	are exact.

	We start with the former sequence \eqref{eq - main lemma show 2 weak 1}. Note that $\mathrm{rad}_\mathcal{C}(\mu_{T_k}(T), T_i) = \mathrm{rad}_\mathcal{C}(T/T_k, T_i)\oplus \mathrm{rad}_\mathcal{C}(T_k^*, T_i)$. First we let $p\in \mathrm{rad}_\mathcal{C}(T/T_k, T_i)$. Using that $f_{i0}$ is right almost split in $\add(T)$ yields a morphism $\begin{bmatrix}
		p_1 \\ 
		p_2
	\end{bmatrix}\in \mathrm{Hom}_\mathcal{C}(T/T_k, T_k^{|\overline{\beta}|}\oplus U_{i0}'')$ such that
	\begin{equation*}
		f_{i0}'\circ p_1 + f_{i0}''\circ p_2 = p.
	\end{equation*}
	Now since $f_{k0}$ is right almost split in $\add(T)$ there is a morphism $q\in \mathrm{Hom}_\mathcal{C}(U_{k0}^{|\overline{\beta}|}, T_k^{|\overline{\beta}|})$ such that $f_{k0}^{\oplus |\overline{\beta}|}\circ q = p_2$. Therefore
	\begin{equation*}
		\begin{bmatrix}
			f_{i0}'\circ f_{k0}^{\oplus |\overline{\beta}|} & f_{i0}''
		\end{bmatrix}\circ \begin{bmatrix}
			q & 0 \\
			0 & p_2
		\end{bmatrix} = p.
	\end{equation*}
	In other words, $p$ factors through $\begin{bmatrix}
		f_{i0}' \circ f_{k0}^{\oplus |\overline{\beta}|} & f_{i0}''
	\end{bmatrix}$. The above argument is summarised in the following diagram.
	\begin{equation*}
		\begin{tikzcd}
			& & & T/T_k \arrow[dd, "p"] \arrow[llldd, "q", bend right=40, swap] \arrow[lldd, "p_1", bend right=40, swap] \arrow[dl, "p_2", swap] \\
			& & U_{i0}'' \arrow[dr, "f_{i0}''"] \\
			U_{k0}^{|\overline{\beta}|} \arrow[r, "f_{k0}^{\oplus |\overline{\beta}|}"] & T_k^{|\overline{\beta}|} \arrow[rr, "f_{i0}'"] & & T_i
		\end{tikzcd}
	\end{equation*}

	On the other hand, if $p\in \mathrm{rad}_\mathcal{C}(T_k^*, T_i)$ then using that $g_k$ is a left $\mathrm{add}(T/T_k)$-approximation there is a morphism $p'\in \mathrm{Hom}_\mathcal{C}(U_{k0}, T_i)$ such that $p'\circ g_k = p$. Since $\beta: i\to k$ there is no arrow $k\to i$, and so $T_i\not\in \add(U_{k0})$. Thus $p'\in \mathrm{rad}_\mathcal{C}(U_{k0}, T_i)$ and $p'$ factors through $\begin{bmatrix}
		f_{i0}' \circ f_{k0}^{\oplus |\overline{\beta}|} & f_{i0}''
	\end{bmatrix}$ using the above argument. This show exactness at $\mathrm{rad}_\mathcal{C}(\mu_{T_k}(T), T_i)$.

	Now we will show exactness at $\mathrm{Hom}_\mathcal{C}(\mu_{T_k}(T), U_{k0}^{|\overline{\beta}|}\oplus U_{i0}'')$. Let $\begin{bmatrix}
		p_1 \\
		p_2
	\end{bmatrix}$ be a morphism such that $\begin{bmatrix}
		f_{i0}' \circ f_{k0}^{\oplus |\overline{\beta}|} & f_{i0}''
	\end{bmatrix}\circ \begin{bmatrix}
		p_1 \\
		p_2
	\end{bmatrix} = 0$. By (v) in Lemma~\ref{Lemma - 5 conditions} there exists a morphism $q\in \mathrm{Hom}_\mathcal{C}(\mu_{T_k}(T), U_{i1})$ such that
	\begin{equation*}
		\begin{bmatrix}
			f_{i1}' \\
			f_{i1}''
		\end{bmatrix}\circ q = \begin{bmatrix}
			f_{k0}^{\oplus |\overline{\beta}|} & 0 \\
			0 & 1
		\end{bmatrix}\circ \begin{bmatrix}
			p_1 \\
			p_2
		\end{bmatrix}.
	\end{equation*}
	Now note that $f_{k0}^{\oplus |\overline{\beta}|}\circ p_1 = f_{i1}'\circ q = f_{k0}^{\oplus |\overline{\beta}|}\circ t\circ q$. Therefore $f_{k0}^{\oplus |\overline{\beta}|}\circ (p_1 - t\circ q) = 0$. By (ii) in Lemma~\ref{Lemma - 5 conditions} the following sequence
	\begin{equation*}
		\mathrm{Hom}_\mathcal{C}(\mu_{T_k}(T), T_k^*) \xrightarrow{g_k\circ -} \mathrm{Hom}_\mathcal{C}(\mu_{T_k}(T), U_{k0}) \xrightarrow{f_{k0}\circ -} \mathrm{Hom}_\mathcal{C}(\mu_{T_k}(T), T_k)
	\end{equation*}
	is exact, which together with $f_{k0}^{\oplus |\overline{\beta}|}\circ (p_1 - t\circ q) = 0$ implies that there is a morphism $r\in \mathrm{Hom}_\mathcal{C}(\mu_{T_k}(T), (T_k^*)^{|\overline{\beta}|})$ such that $g_k^{\oplus |\overline{\beta}|}\circ r = p_1 - t\circ q$. Rearranging the terms we get $p_1 = g_k^{\oplus |\overline{\beta}|}\circ r + t\circ q$. Hence
	\begin{equation*}
		\begin{bmatrix}
			p_1 \\
			p_2
		\end{bmatrix} = \begin{bmatrix}
			g_k^{\oplus |\overline{\beta}|} & t \\
			0 & f_{i1}''
		\end{bmatrix}\circ \begin{bmatrix}
			r \\
			q
		\end{bmatrix}.
	\end{equation*}
	This is summarised in the following diagram.
	\begin{equation*}
		\begin{tikzcd}
			(T_k^*)^{|\overline{\beta}|} \arrow[r, "g_k^{\oplus |\overline{\beta}|}"] & U_{k0}^{|\overline{\beta}|} \arrow[rr, "f_{k0}^{\oplus |\overline{\beta}|}"] & & T_k^{|\overline{\beta}|} \arrow[dr, "f_{i0}'"] \\
			& & & & T_i \\
			U_{i1} \arrow[ruu, "t"] \arrow[rrruu, "f_{i1}'"] \arrow[rrr, "f_{i1}''"] & & & U_{i1}'' \arrow[ru, "f_{i0}''"] \\
			& & \mu_{T_k}(T) \arrow[ru, "p_2"] \arrow[uuul, "p_1", crossing over, bend left] \arrow[llu, "q"] \arrow[lluuu, "r", out=200, in=230, looseness = 1.5]
		\end{tikzcd}
	\end{equation*}
	
	Before we prove that \eqref{eq - main lemma show 2 weak 2} we need the following lemma.

	\begin{mylemma}\label{lemma - s is bijective}
		The morphism
		\begin{equation}\label{eq - beta map}
			-\circ s = -\circ \Phi'(\underline{\beta^*}) : \mathrm{Hom}_\mathcal{C}((T^*_k)^{|\overline{\beta}|}, T_k^*) / \mathrm{rad}_\mathcal{C}((T^*_k)^{|\overline{\beta}|}, T_k^*) \to \mathrm{rad}_\mathcal{C}(T_i, T_k^*) / \mathrm{rad}^2_{\mathrm{add}(\mu_{T_k}(T))}(T_i, T_k^*)
		\end{equation}
		is bijective.
	\end{mylemma}

	\begin{proof}
		Since $h_k$ is minimal right almost split in $\mathrm{add}_{\mu_k}(T)$ by (iii) in Lemma~\ref{Lemma - 5 conditions}, we have that
		\begin{equation}\label{eq - s map bij equation}
			\mathrm{rad}_\mathcal{C}(T_i, T_k^*) / \mathrm{rad}^2_{\mathrm{add}(\mu_{T_k}(T))}(T_i, T_k^*)
		\end{equation}
		has the $D_i$-basis $\{\Phi'(b^*) \mid b^*\in \overline{\beta^*}\}$, by using similar arguments as in the proof of (2)$\implies$(1) in Theorem~\ref{Theorem - iso and weak 2}. Thus the composition
		\begin{equation*}
			M_{\beta^*}\xrightarrow{\Phi'}\rad_\mathcal{C}(T_i, T_k^*)^{op} \to \rad_\mathcal{C}(T_i, T_k^*)^{op}/\rad^2_{\mathrm{add}(\mu_{T_k}(T))}(T_i, T_k^*)^{op}
		\end{equation*}
		is an isomorphism of $D_i$-$D_k$-bimodules. Hence, \eqref{eq - s map bij equation} has the $D_k$-basis $\{\Phi'(b^*) \mid b^*\in \underline{\beta^*}\}$. This proves that \eqref{eq - beta map} is bijective.
	\end{proof}

	Now we are ready to prove that \eqref{eq - main lemma show 2 weak 2} is exact. We begin by showing exactness at $\mathrm{rad}_\mathcal{C}(T_i, \mu_{T_k}(T)) = \rad_\mathcal{C}(T_i, T/T_k)\oplus \rad_\mathcal{C}(T_i, T_k^*)$. First let $p\in \mathrm{rad}_\mathcal{C}(T_i, T/T_k)$. Since $f_{i2}$ is left almost split in $\mathrm{add}(T)$ and thus $p$ factors via $U_{i1}$. Now let $p\in \mathrm{rad}_\mathcal{C}(T_i, T_k^*)$. By Lemma~\ref{lemma - s is bijective} there is morphism $p_1\in \mathrm{Hom}_\mathcal{C}((T^*_k)^{|\overline{\beta}|}, T_k^*)$ such that $p - p_1\circ s\in \mathrm{rad}^2_{\add(\mu_{T_k}(T))}(T_i, T_k^*)$. Using that $h_k$ is right almost split we have a morphism $q\in \mathrm{rad}_\mathcal{C}(T_i, U_{k0})$ such that
	\begin{equation*}
		\begin{tikzcd}[column sep=2cm]
			T_i \arrow[r, "p - p_1\circ s"] \arrow[dr, "q"] & T_k^* \\
			& U_{k0} \arrow[u, "h_k", swap]
		\end{tikzcd}
	\end{equation*}
	commutes. The morphism $q$ lies in the radical since, if $q$ was a split monomorphism, $h_k$ being minimal right almost split, would contradict $h_k\circ q\in \mathrm{rad}^2_{\add(\mu_{T_k}(T))}(T_i, T_k^*)$. Now using that $f_{i2}$ is left almost split in $\mathrm{add}(T)$ we get a morphism $r\in \mathrm{Hom}_\mathcal{C}(U_{i1}, U_{k0})$ such that $q = r\circ f_{i2}$. Combining everything we see that
	\begin{equation*}
		p = \begin{bmatrix}
			p_1 & h_k\circ r
		\end{bmatrix}\circ \begin{bmatrix}
			s \\
			f_{i2}
		\end{bmatrix},
	\end{equation*}
	which shows exactness at $\mathrm{rad}_\mathcal{C}(T_i, \mu_{T_k}(T))$. The argument is summarised in the following diagram.
	\begin{equation*}
		\begin{tikzcd}
			& & (T_k^*)^{|\overline{\beta}|} \arrow[dddl, "p_1", out=-50, in=40] \\
			T_i \arrow[rr, "f_{i2}"] \arrow[rd, "q"] \arrow[ddr, "p", bend right] \arrow[rru, "s"] & & U_{i1} \arrow[dl, "r"] \\
			& U_{k0} \arrow[d, "h_k"] \\
			& T_k^*
		\end{tikzcd}
	\end{equation*}

	It is left to show exactness at $\mathrm{Hom}_\mathcal{C}((T_k^*)^{|\overline{\beta}|}\oplus U_{i1}, \mu_{T_k}(T))$. Let $p = \begin{bmatrix}
		p_1 & p_2
	\end{bmatrix}\in \mathrm{Hom}_\mathcal{C}((T_k^*)^{|\overline{\beta}|}\oplus U_{i1}, \mu_{T_k}(T))$ such that $\begin{bmatrix}
		p_1 & p_2
	\end{bmatrix}\circ \begin{bmatrix}
		s \\
		f_{i2}
	\end{bmatrix} = 0$. Let us introduce the notation
	\begin{equation*}
		p_1 = \begin{bmatrix}
			p_1' \\
			p_1''
		\end{bmatrix}: (T_k^*)^{|\overline{\beta}|} \to T/T_k\oplus T_k^*
	\end{equation*}
	and
	\begin{equation*}
		p_2 = \begin{bmatrix}
			p_2' \\
			p_2''
		\end{bmatrix}: U_{i1} \to T/T_k\oplus T_k^*.
	\end{equation*}
	Since $g_k$ is left almost split in $\mathrm{add}(\mu_{T_k}(T))$ by (ii) in Lemma~\ref{Lemma - 5 conditions}, it suffices to show that $p_1\in \mathrm{rad}_\mathcal{C}((T_k^*)^{|\overline{\beta}|}, \mu_{T_k}(T))$ to obtain a morphism $q\in \mathrm{Hom}_\mathcal{C}(U_{k0}^{|\overline{\beta}|}, T/T_k)$ such that $p_1 = q\circ g_k^{\oplus |\overline{\beta}|}$. We immediately have that $p_1'\in \mathrm{rad}_\mathcal{C}((T_k^*)^{|\overline{\beta}|}, T/T_k)$ since $T_k^*\not\in \mathrm{add}(T/T_k)$. The assumption on $p$ gives us $p_1''\circ s = - p_2''\circ f_{i2}$. We have that $p_2''\in \mathrm{rad}_\mathcal{C}(U_{i1}, T_k^*)$ since $T_k^*\not\in \mathrm{add}(U_{i1})$. Therefore $p_1''\in\mathrm{rad}_\mathcal{C}((T_k^*)^{|\overline{\beta}|}, T_k^*)$ by Lemma~\ref{lemma - s is bijective}.

	Now note that
	\begin{equation*}
		\begin{aligned}
			(p_2 - q\circ t)\circ f_{i2} &= p_2\circ f_{i2} - q\circ t\circ f_{i2} = \\
			&= p_2\circ f_{i2} + q\circ g_k^{\oplus |\overline{\beta}|}\circ s = \\
			&= p_2\circ f_{i2} + p_1\circ s = 0.
		\end{aligned}
	\end{equation*}
	Thus by exactness of the latter sequence in (v) in Lemma~\ref{Lemma - 5 conditions} there exists $\begin{bmatrix}
		q_1 & q_2
	\end{bmatrix}\in \mathrm{Hom}_\mathcal{C}(T_k^{|\overline{\beta}|}\oplus U_{i0}'', \mu_{T_k}(T))$ such that
	\begin{equation*}
		p_2 - q\circ t = \begin{bmatrix}
			q_1 & q_2
		\end{bmatrix}\circ \begin{bmatrix}
			f_{i1}' \\
			f_{i1}''
		\end{bmatrix}.
	\end{equation*}
	Recalling that $f_{k0}^{\oplus |\overline{\beta}|}\circ t = f_{i1}'$ we can rewrite the above expression as
	\begin{equation*}
		p_2 = (q + q_1\circ f_{k0}^{\oplus |\overline{\beta}|})\circ t + q_2\circ f_{i1}''.
	\end{equation*}
	Hence
	\begin{equation*}
		\begin{bmatrix}
			p_1 & p_2
		\end{bmatrix} = \begin{bmatrix}
			q + q_1\circ f_{k0}^{\oplus |\overline{\beta}|} & q_2
		\end{bmatrix}\circ \begin{bmatrix}
			g_k^{\oplus |\overline{\beta}|} & t \\
			0 & f_{i1}''
		\end{bmatrix},
	\end{equation*}
	and therefore we have shown exactness at $\mathrm{Hom}_\mathcal{C}((T_k^*)^{|\overline{\beta}|}\oplus U_{i1}, \mu_{T_k}(T))$. We summarise the above argument in the following diagram.
	\begin{equation*}
		\begin{tikzcd}[column sep=2cm, row sep=1cm]
			& (T_k^*)^{|\overline{\beta}|} \arrow[rr, "g_k^{\oplus |\overline{\beta}|}"] & & U_{k0}^{|\overline{\beta}|} \\
			T_i \arrow[ru, "s"] \arrow[r, "f_{i2}"] \arrow[ddr, "0"] & U_{i1} \arrow[dd, "p_2"] \arrow[rru, "t"] \arrow[rr, "f_{i1}''"] \arrow[dr, "f_{i1}'"] & & U_{i0}'' \arrow[ddll, "q_2", bend left] \\
			& & T_k^{|\overline{\beta}|} \arrow[dl, "q_1"] \\
			& \mu_{T_k}(T) \arrow[from=uuu, "p_1", bend right, crossing over, swap] \arrow[from=uuurr, "q", crossing over, in=-30, out=-30]
		\end{tikzcd}
	\end{equation*}
	This completes the proof of (2).

	(4): This proof is dual to the proof of (3).
\end{proof}

\section{Self-Injective Species with Potential and Nakayama Automorphism}\label{Section - nakayama automorphism}
In this section we introduce self-injective species with potentials. For a given self-injective species with potential its Jacobian algebra will be finite-dimensional and self-injective and therefore admits a Nakayama automorphism. Under certain restrictions we give a description of the Nakayama automorphism after mutating along orbits of the Nakayama permutation which is explicitly given as the second part of the main theorem of this article (Theorem~\ref{Theorem B}). The theorem is divided into two parts. The first part shows that if some vertex is mutable, then every vertex along its Nakayama permutation orbit is mutable. The second part computes the Nakayama automorphism.

\subsection{Self-Injective Species with Potential}
In this subsection we generalise various results of \cite{HerschendOsamu2011quiverwithpotential}. The following definition is rewritten version of \cite[Definition 3.6]{HerschendOsamu2011quiverwithpotential} for species with potentials.

\begin{mydef}
	Let $(S, W)$ be a species with potential. 
	\begin{enumerate}
		\item We say that $(S, W)$ is finite dimensional if the Jacobian algebra $\mathcal{P}(S, W)$ is a finite dimensional algebra.
		\item We say that $(S, W)$ is self-injective if the Jacobian algebra $\mathcal{P}(S, W)$ is a finite dimensional self-injective algebra.
	\end{enumerate}
\end{mydef}

Let $(S, W)$ be a species with potential and set $P_i = \mathcal{P}(S, W)e_i$. Consider the following complexes
\begin{align}\label{eq - Exact except at one place sequences 1}
	P_i \xrightarrow{(\underline{\beta})_\beta} \bigoplus_{\substack{\beta\in Q_1 \\ t(\beta) = i}} P_{s(\beta)}^{|\underline{\beta}|} \xrightarrow{(\partial_{(\overline{\beta^*}, \underline{\alpha^*})} W)_{\alpha, \beta}} \bigoplus_{\substack{\alpha\in Q_1 \\ s(\alpha) = i}} P_{t(\alpha)}^{|\overline{\alpha}|} \xrightarrow{(\overline{\alpha})_\alpha} P_i \to &S_i\to 0 \\ \label{eq - Exact except at one place sequences 2}
	P_i^\dagger \xrightarrow{(\overline{\alpha})_\alpha} \bigoplus_{\substack{\alpha\in Q_1 \\ s(\alpha) = i}} (P_{t(\alpha)}^\dagger)^{|\overline{\alpha}|} \xrightarrow{(\partial_{(\overline{\beta^*}, \underline{\alpha^*})} W)_{\beta, \alpha}} \bigoplus_{\substack{\beta\in Q_1 \\ t(\beta) = i}} (P_{s(\beta)}^\dagger)^{|\underline{\beta}|} \xrightarrow{(\underline{\beta})_\beta} P_i^\dagger \to &S_i\to 0
\end{align}
of left $\mathcal{P}(S, W)$-modules and right $\mathcal{P}(S, W)$-modules respectively, where $(-)^\dagger = \mathrm{Hom}_{\mathcal{P}(S, W)}(-, \mathcal{P}(S, W))$.

It is straightforward to check that \eqref{eq - Exact except at one place sequences 1} is exact at $S_i$ and $P_i$. To show exactness at $\bigoplus_{\substack{\alpha\in Q_1 \\ s(\alpha) = i}} P_{t(\alpha)}^{|\overline{\alpha}|}$ we do the following. Let $x\in \widehat{T}(S)$ and assume that $x\in \ker((\overline{\alpha})_\alpha)$. This means that $x(\overline{\alpha})_\alpha\in \mathcal{J}(S, W)$, and by Lemma~\ref{Lemma - kernel ideal rewritten} we have
\begin{equation*}
	x(\overline{\alpha})_\alpha = \sum_{\substack{\beta\in Q_1 \\ b\in \underline{\beta}}}x_b\partial_{b^*}(W) + y
\end{equation*}
where $x_b\in \widehat{T}(S)e_{s(\beta)}$ and $y\in \mathcal{J}(S, W)M$. Now applying $\partial^r_{\underline{\alpha^*}}$ and using Lemma~\ref{lemma - complex and derivation 2} yields
\begin{equation*}
	x = \sum_{\substack{\alpha, \beta\in Q_1 \\ (a, b)\in \overline{\alpha}\times \underline{\beta}}}x_b\partial_{b^*, a^*}(W) + \partial^r_{\underline{\alpha^*}}(y).
\end{equation*}
By noting that $\partial^r_{\underline{\alpha^*}}(y)\in \mathcal{J}(S, W)$ we can conclude that $\sum_{\substack{\beta\in Q_1 \\ t(\beta) = i \\ b\in \underline{\beta}}} x_b$ maps to $x$ via $(\partial_{(\overline{\beta^*}, \underline{\alpha^*})} W)_{\alpha, \beta}$, which proves that $\eqref{eq - Exact except at one place sequences 1}$ is exact at $\bigoplus_{\substack{\alpha\in Q_1 \\ s(\alpha) = i}} P_{t(\alpha)}^{|\overline{\alpha}|}$. In other words, it is exact except possibly at $\bigoplus_{\substack{\beta\in Q_1 \\ t(\beta) = i}} P_{s(\beta)}^{|\underline{\beta}|}$. Similarly, \eqref{eq - Exact except at one place sequences 2} is exact except possibly at $\bigoplus_{\substack{\alpha\in Q_1 \\ s(\alpha) = i}} (P_{t(\alpha)}^\dagger)^{|\overline{\alpha}|}$.

The following proposition, which is a generalisation of \cite[Theorem 3.7]{HerschendOsamu2011quiverwithpotential}, shows that exactness of the sequences \ref{eq - Exact except at one place sequences 1} and \ref{eq - Exact except at one place sequences 2} correlates to $(S, W)$ being self-injective.

\begin{myprop}
	Let $(S, W)$ be a species with potential. The following are equivalent:
	\begin{enumerate}
		\item $(S, W)$ is self-injective.
		\item $(S, W)$ is finite dimensional and \ref{eq - Exact except at one place sequences 1} is exact for all $i\in Q_0$.
		\item $(S, W)$ is finite dimensional and \ref{eq - Exact except at one place sequences 2} is exact for all $i\in Q_0$.
	\end{enumerate}
\end{myprop}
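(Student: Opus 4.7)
Set $\Lambda = \mathcal{P}(S, W)$ and $(-)^\dagger = \Hom_\Lambda(-, \Lambda)$. The plan is to reduce the three-way equivalence to $(2) \Leftrightarrow (3)$ via a duality argument and $(1) \Leftrightarrow (2)$ via standard self-injective homological algebra.

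First I would establish the duality between the two sequences. Applying $(-)^\dagger$ to sequence~\eqref{eq - Exact except at one place sequences 1} yields a complex of projective right $\Lambda$-modules; under the canonical identifications $P_j^\dagger \cong e_j \Lambda$, this complex is isomorphic to sequence~\eqref{eq - Exact except at one place sequences 2} truncated before the final $S_i$. The outer maps $(\underline{\beta})_\beta$ and $(\overline{\alpha})_\alpha$ act by right multiplication on left modules, so they dualise to left multiplication by the same matrices of basis elements on right modules, which matches the corresponding maps in sequence~\eqref{eq - Exact except at one place sequences 2}. The middle matrix $(\partial_{\overline{\beta^*}, \underline{\alpha^*}}W)_{\alpha,\beta}$ transposes to $(\partial_{\overline{\beta^*}, \underline{\alpha^*}}W)_{\beta,\alpha}$, thanks to the symmetry $\partial_{b^*, a^*}W = \partial^r_{a^*}(\partial_{b^*}W) = \partial^l_{b^*}(\partial_{a^*}W)$ in Definition/Proposition~\ref{lemma - complex and derivation}(1). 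Since all non-$S_i$ terms are projective, exactness of sequence~\eqref{eq - Exact except at one place sequences 1} at its remaining position corresponds to exactness of sequence~\eqref{eq - Exact except at one place sequences 2} at its own remaining position, giving $(2) \Leftrightarrow (3)$.

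For $(1) \Rightarrow (2)$, I would assume $\Lambda$ is finite dimensional and self-injective, so each indecomposable projective $P_i$ is also an indecomposable injective with simple socle $S_{\sigma(i)}$ for the Nakayama permutation $\sigma$. The right half of sequence~\eqref{eq - Exact except at one place sequences 1} is the minimal projective presentation of $S_i$, while the left half $0 \to P_i \to \bigoplus P_{s(\beta)}^{|\underline{\beta}|}$ arises as the Nakayama dual of the minimal projective presentation of $S_{\sigma(i)}$ as a right module; the middle derivative map glues them to produce the full exact sequence. For $(2) \Rightarrow (1)$, the exactness yields a 4-term exact sequence of projectives $0 \to P_i \to P_1' \to P_2' \to P_i \to S_i \to 0$, and applying $(-)^\dagger$ (using the duality of the first step) shows that $\mathrm{Ext}^k_\Lambda(S_i, \Lambda)$ is concentrated in a single degree and is a simple right module; dualising via $D = \Hom_\K(-, \K)$ then identifies the indecomposable injective envelope $I(S_i) = DP_i^\dagger$ with a projective, proving self-injectivity.

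The main obstacle is the duality verification: tracking the bimodule structure on $P_j^\dagger \cong e_j\Lambda$ together with the bases $\underline{\alpha}, \overline{\alpha}$ to see that the transpose of the middle map in sequence~\eqref{eq - Exact except at one place sequences 1} coincides with the middle map in sequence~\eqref{eq - Exact except at one place sequences 2}. The remaining homological manipulations are then routine for self-injective algebras.
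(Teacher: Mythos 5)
Your plan — first prove $(2)\Leftrightarrow(3)$ by dualising the complex, then handle $(1)\Leftrightarrow(2)$ — diverges from the paper's proof, and the first step contains a genuine gap. Applying $(-)^\dagger$ to a complex of finitely generated projectives does not preserve exactness: the failure of exactness of the dual complex is measured by $\mathrm{Ext}$-groups, and that is precisely the content of the proposition. Concretely, write the truncation of \eqref{eq - Exact except at one place sequences 1} as $X_3\to X_2\to X_1\to X_0$ with $X_3=X_0=P_i$, $X_2=\bigoplus_\beta P_{s(\beta)}^{|\underline{\beta}|}$ and $X_1=\bigoplus_\alpha P_{t(\alpha)}^{|\overline{\alpha}|}$. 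Under the identification of the dual complex with the truncation of \eqref{eq - Exact except at one place sequences 2}, the open position $X_2$ of \eqref{eq - Exact except at one place sequences 1} is sent to $X_2^\dagger=\bigoplus_\beta (P_{s(\beta)}^\dagger)^{|\underline{\beta}|}$, where \eqref{eq - Exact except at one place sequences 2} is already known to be exact unconditionally; conversely, the open position $\bigoplus_\alpha (P_{t(\alpha)}^\dagger)^{|\overline{\alpha}|}$ of \eqref{eq - Exact except at one place sequences 2} is $X_1^\dagger$, the dual of a position where \eqref{eq - Exact except at one place sequences 1} is exact unconditionally. So the two ``remaining positions'' do not correspond under the duality, and even at genuinely dual positions exactness would not transfer. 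Indeed, since $X_2\to X_1\to X_0\to S_i\to 0$ is exact, the homology of the dual complex at $X_1^\dagger$ is exactly $\mathrm{Ext}^1_{\mathcal{P}(S,W)}(S_i,\mathcal{P}(S,W))$, which has no reason to vanish merely because the original complex is exact. The equivalence $(2)\Leftrightarrow(3)$ is true, but only because each side is equivalent to $(1)$ (via the left--right symmetry of self-injectivity for finite-dimensional algebras), not by a formal duality of complexes.

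This also indicates the repair, which is the paper's actual argument: since every module over the finite-dimensional algebra $\mathcal{P}(S,W)$ has finite length, self-injectivity is equivalent to $\mathrm{Ext}^1_{\mathcal{P}(S,W)}(S_i,\mathcal{P}(S,W))=0$ for all $i$, and this $\mathrm{Ext}$-group is the homology of \eqref{eq - Exact except at one place sequences 2} at its single open position; that gives $(1)\Leftrightarrow(3)$ directly. The mirror argument over $\mathcal{P}(S,W)^{op}$, using that \eqref{eq - Exact except at one place sequences 2} is, away from its open spot, an exact projective presentation of $S_i$ as a right module, gives $(1)\Leftrightarrow(2)$. Your $(1)\Rightarrow(2)$ step (``the middle derivative map glues them to produce the full exact sequence'') asserts exactness at $X_2$ rather than proving it, and your $(2)\Rightarrow(1)$ step uses injectivity of $P_i\to X_2$, which is not part of condition $(2)$ as stated; both issues disappear once the argument is routed through the $\mathrm{Ext}^1$-vanishing criterion. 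The one ingredient of your write-up that does survive, and is needed in either approach, is the verification that the maps of \eqref{eq - Exact except at one place sequences 2} are the $(-)^\dagger$-duals of those of \eqref{eq - Exact except at one place sequences 1}, using $\partial_{b^*,a^*}(W)=\partial^r_{a^*}(\partial_{b^*}(W))=\partial^l_{b^*}(\partial_{a^*}(W))$.
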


\begin{proof}
	We only show $(1)$ is equivalent to $(3)$, since the proof of $(1)$ being equivalent to $(2)$ is similar. The algebra $\mathcal{P}(S, W)$ being self-injective is equivalent to $\mathrm{Ext}^1_{\mathcal{P}(S, W)}(S_i, \mathcal{P}(S, W))=0$ for all $i\in Q_0$ since all objects in $\mathrm{mod}(\mathcal{P}(S, W))$ have finite length. This in turn is equivalent to that applying $\mathrm{Hom}_{\mathcal{P}(S, W)}(-, \mathcal{P}(S, W))$ to
	\begin{equation*}
		\bigoplus_{\substack{\beta\in Q_1 \\ t(\beta) = i}} P_{s(\beta)}^{|\underline{\beta}|} \xrightarrow{(\partial_{(\underline{\beta^*}, \overline{\alpha^*})})_{\beta, \alpha}} \bigoplus_{\substack{\alpha\in Q_1 \\ s(\alpha) = i}} P_{t(\alpha)}^{|\overline{\alpha}|} \xrightarrow{(\underline{\alpha})_\alpha} P_i
	\end{equation*}
	yields an exact sequence. In other words, $(1)$ is equivalent to \ref{eq - Exact except at one place sequences 2} being exact at $\bigoplus_{\substack{\alpha\in Q_1 \\ s(\alpha) = i}} (P_{t(\alpha)}^\dagger)^{|\overline{\alpha}|}$, which is equivalent to (3).
\end{proof}

\subsection{Mutation of self-injective species with potential and Nakayama Automorphism}
Let $(S, W)$ be a reduced species with potential, where $S = (D, M)$ has the quiver $Q$. We assume that $Q$ has no multiple arrows. This can always be achieved by changing the bimodule $M$. The Jacobian algebra $\mathcal{P}(S, W)$ is Frobenius, i.e. there exists an automorphism $\gamma: \mathcal{P}(S, W)\xrightarrow{\sim} \mathcal{P}(S, W)$, called the Nakayama automorphism of $\mathcal{P}(S, W)$, such that $\mathcal{P}(S, W)\cong D\mathcal{P}(S, W)_\gamma$ as $\mathcal{P}(S, W)$-bimodules. Here $D\mathcal{P}(S, W)_\gamma$ is the $\K$-dual of $\mathcal{P}(S, W)$ twisted by $\gamma$ on the right. Recall that every self-injective species with potential satisfies the vanishing condition at every vertex, which will be used implicitly. 

For a given $k\in Q_0$ we let $(k) = \{k, \sigma(k), \sigma^2(k), \dots\}$, where $\sigma$ is the Nakayama permutation (see Proposition~\ref{Prop - selfinjectie T=T[2]}). We want to mutate along the Nakayama orbit $(k)$ and show that we obtain another self-injective species with potential as well as describe its Nakayama automorphism. In each step along the way we want to apply Theorem~\ref{Theorem A}, which requires that the potential is reduced. Moreover, to obtain a self-injective species with potential we need to assure that there are no arrows between the vertices in $(k)$. This motivates the following definition.

\begin{mydef}\label{definition - sparse orbit}
	We say that an orbit $(k)$ of the Nakayama permutation is sparse if there are no arrows between the vertices $k, \sigma(k), \dots, \sigma^n(k) = k$, and for each $0\le i\le n-2$ the potential $\mu_{\sigma^i(k)}\circ \mu_{\sigma^{i-1}(k)} \circ \mu_k(S, W)$ is reduced. In this case we define
	\begin{equation*}
		\mu_{(k)}(S, W) = \mu_{\sigma^{n-1}(k)}\circ \mu_{\sigma^{n-2}(k)} \circ \mu_k(S, W).
	\end{equation*}
\end{mydef}

Note that if $k$ is fixed by the Nakayama permutation, then $(k)$ is automatically sparse.

\begin{mythm}\label{theorem - wedderburn-maltsev}(Wedderburn-Mal'tsev Theorem)\cite{mal1942representation,wedderburn1908hypercomplex}
	Let $\Lambda$ be a finite dimensional $\K$-algebra with radical $N$, and let the quotient $\K$-algebra $\Lambda/N$ be a separable $\K$-algebra. Then $\Lambda$ can be decomposed into a direct sum
	\begin{equation*}
		\Lambda = N\oplus D
	\end{equation*}
	for some semisimple subalgebra $D$, and if there exists another decomposition $\Lambda = N\oplus D'$, where $D'$ is a semisimple subalgebra, then there exists an inner automorphism of $\Lambda$ which maps $D$ onto $D'$.
\end{mythm}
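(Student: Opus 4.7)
The plan is to prove both parts by reducing to the vanishing of Hochschild cohomology of the separable $\K$-algebra $\Lambda/N$. Recall that a finite dimensional separable $\K$-algebra $A$ is projective as an $A$-bimodule, so $H^n(A,B) = 0$ for all $n\ge 1$ and every $A$-bimodule $B$. I would apply this with $A = \Lambda/N$ and with $B$ being $N$ or, more generally, the successive quotients $N^i/N^{i+1}$; these are canonical $A$-bimodules because the action of $N$ on each layer vanishes.

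For existence I would induct on the nilpotency index $r$ of $N$. The crucial case is $r=2$, i.e.\ $N^2=0$. Choose any $\K$-linear section $s:\Lambda/N\to \Lambda$ of the projection $\pi$ and form
\begin{equation*}
    \phi(\bar x\otimes \bar y)=s(\bar x)s(\bar y)-s(\bar x\bar y)\in N,
\end{equation*}
which is a Hochschild $2$-cocycle with values in the $(\Lambda/N)$-bimodule $N$. Separability gives $H^2(\Lambda/N,N)=0$, so $\phi=d\psi$ for some $\psi:\Lambda/N\to N$, and the corrected section $s-\psi$ is an algebra homomorphism whose image is the desired semisimple complement. For the general case apply the inductive hypothesis to $\Lambda/N^{r-1}$ to obtain $\bar D\subseteq \Lambda/N^{r-1}$, pull it back to $\Lambda_1\subseteq \Lambda$ whose Jacobson radical is $N^{r-1}$ (which squares to zero because $r\ge 2$), and apply the base case inside $\Lambda_1$.

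For uniqueness, two complements $D,D'$ correspond to algebra sections $s,s'$ of $\pi$, and their difference $f=s'-s$ takes values in $N$. Expanding $s'(\bar x\bar y)=s'(\bar x)s'(\bar y)$ with $s'=s+f$ shows that the map $\Lambda/N\to N/N^2$ induced by $f$ is a genuine derivation (the quadratic error $f(\bar x)f(\bar y)$ lies in $N^2$). Vanishing of $H^1(\Lambda/N,N/N^2)$ produces $n_1\in N$ such that conjugation by $1-n_1$ matches $s$ and $s'$ modulo $N^2$. Iterating this correction up the filtration $N\supset N^2\supset\cdots\supset N^r=0$ yields an element $n=n_1+n_2+\cdots\in N$ for which the inner automorphism $a\mapsto (1-n)^{-1}a(1-n)$ carries $D$ onto $D'$.

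The main obstacle is the nonlinearity just mentioned: the functional identity obeyed by $f$ is not a cocycle identity on the nose, it only becomes one after passage to each graded layer of the $N$-adic filtration. Managing the successive approximations so that the correcting elements $n_i$ actually converge (here the nilpotency of $N$ is essential) is the delicate bookkeeping step. Once arranged, everything is powered by the single cohomological input that separability of $\Lambda/N$ forces $H^{\ge 1}(\Lambda/N,-)=0$.
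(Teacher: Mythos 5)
The paper offers no proof of this statement to compare against: it is the classical Wedderburn--Mal'tsev theorem, quoted verbatim with citations to Wedderburn and Mal'cev and used purely as a black box (to arrange that the Nakayama automorphism restricts to an automorphism of the degree-zero part $D$). So the only question is whether your argument is sound on its own, and it is: what you describe is the standard modern proof via Hochschild cohomology. Separability of $\Lambda/N$ makes it projective as a bimodule over its enveloping algebra, hence $H^{n}(\Lambda/N,-)=0$ for $n\ge 1$; the obstruction to rectifying a linear section into an algebra section in the square-zero case is a $2$-cocycle (your computation of $(s-\psi)(\bar x)(s-\psi)(\bar y)$ works because the term $\psi(\bar x)\psi(\bar y)$ dies in $N^2=0$); the general existence follows by your induction on the nilpotency index, since the preimage $\Lambda_1$ of $\bar D$ has radical $N^{r-1}$ with $(N^{r-1})^2=0$; and conjugacy comes from killing the derivation induced by $s'-s$ in each layer $N^i/N^{i+1}$ with $H^1=0$ and multiplying the finitely many correcting factors $1-n_i$. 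Two small points you should make explicit in a write-up, though neither is a gap: the corrected section is automatically unital, because $(s-\psi)(\bar 1)$ is an idempotent congruent to $1$ modulo the nilpotent ideal $N$ and hence equals $1$; and in the conjugacy iteration the quadratic error at stage $i$ lies in $N^{2i}\subseteq N^{i+1}$, which is exactly why the induced map on the layer $N^i/N^{i+1}$ is an honest derivation, as you indicate.
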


By Theorem~\ref{theorem - wedderburn-maltsev} we can assume that $\gamma_0: D\xrightarrow{\sim}D$ since the Nakayama automorphism is unique up to some inner automorphism. Furthermore $(\gamma_0)|_{D_i}: D_i \to D_{\sigma(i)}$. We also assume the following.

\begin{enumerate}
	\item[(A)] $\gamma_1: M\to M$
	\item[(B)] $\gamma(W) = W$
\end{enumerate}

Note that there is an arrow $\alpha: i\to j$ if and only if there is an arrow $\sigma(i)\to \sigma(j)$, which we denote by $\sigma(\alpha)$. Condition (A) implies that
\begin{equation*}
	\gamma_1(\underline{\alpha}) = \underline{\sigma(\alpha)}A
\end{equation*}
where $A$ is some invertible matrix with coefficients in $\bigoplus_{t(\alpha) = k}D_{s(\alpha)}$.

The above conditions are satisfied when we study tensor products of tensor algebras of species with certain conditions in Section~\ref{Section - Jasso-muro plus examples}.

Following \cite{Pasquali2020selfinjective} we can construct a Nakayama automorphism of $\mathrm{End}_\mathcal{C}(T)$. Let $\varphi: T\xrightarrow{\sim}T[2]$ be an isomorphism. Then by \cite[Proposition 4.3]{Pasquali2020selfinjective}
\begin{equation*}
	\psi(f) = \varphi^{-1}\circ f[2]\circ \varphi, \quad f\in \mathrm{End}_\mathcal{C}(T),
\end{equation*}
is a Nakayama automorphism. In particular, $T_i[2]\cong T_{\sigma(i)}$ so we may assume $\varphi = \bigoplus_{i\in Q_0} \varphi_i$ where $\varphi_i: T_{\sigma(i)}\to T_i[2]$. We may further assume that $\psi$ corresponds to $\gamma$ via $\Phi$, i.e. $\psi\circ \Phi = \Phi \circ \gamma$. Note that $\psi(\Phi(\lambda)) = \Phi(\gamma_0(\lambda))$ for $\lambda\in D$.

\begin{mylemma}\label{lemma - tildevarphi_k}
	Let $T$ be as in Theorem~\ref{Theorem - iso and weak 2}. The exchange triangle
	\begin{equation*}
		T_k \xrightarrow{f_{k2}} U_{k1}\to T_k^* \to T_k[1]
	\end{equation*}
	for $T_k$ gives an exchange triangle
	\begin{equation}\label{eq - psi exchange triangle 1}
		T_{\sigma(k)} \xrightarrow{\psi(f_{k2})} U_{\sigma(k)1} \to T^*_{\sigma(k)} \to T_{\sigma(k)}[1]
	\end{equation}
	for $T_{\sigma(k)}$. Moreover, there is an isomorphism $\tilde{\varphi}_k: T^*_{\sigma(k)} \to T_k^*[2]$ such that
	\begin{equation*}
		\begin{tikzcd}
			T_{\sigma(k)} \ar[r, "\psi(f_{k2})"] \ar[d, "\varphi_k"] & U_{\sigma(k)1} \ar[r] \ar[d, "\varphi_{U_{k1}}"] & T_{\sigma(k)}^* \ar[r] \ar[d, "\tilde{\varphi}_k"] & T_k[1]  \ar[d, "\varphi_k{[1]}"] \\
			T_k[2] \ar[r, "f_{k2}{[2]}"] & U_{k1}[2] \ar[r, "h_k{[2]}"]  & T_k^*[2] \ar[r] & T_k[3]
		\end{tikzcd}
	\end{equation*}
	commutes, where $\varphi_{U_{k1}}$ is defined as applying $\varphi$ on each summand of $U_{\sigma(k)1}$.
\end{mylemma}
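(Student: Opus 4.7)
The plan is to derive the new exchange triangle from the original one at $T_k$ by transporting it via the shift functor $[2]$ and the isomorphism $\varphi: T \to T[2]$. First I would apply $[2]$ to the exchange triangle for $T_k$, producing the distinguished triangle $T_k[2] \xrightarrow{f_{k2}[2]} U_{k1}[2] \xrightarrow{h_k[2]} T_k^*[2] \to T_k[3]$; this serves as the bottom row of the diagram in the lemma.

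Next I would verify commutativity of the leftmost square, namely $\varphi_{U_{k1}} \circ \psi(f_{k2}) = f_{k2}[2] \circ \varphi_k$. Viewing $f_{k2}: T_k \to U_{k1}$ as an element of $\End_\mathcal{C}(T)$ via the inclusion and projection onto the relevant summands of $T$, this is immediate from the defining formula $\psi(f) = \varphi^{-1} \circ f[2] \circ \varphi$ restricted to the $T_{\sigma(k)}$ and $U_{\sigma(k)1}$ components, using that $\varphi = \bigoplus_i \varphi_i$. Once this square commutes, transporting the bottom distinguished triangle through the isomorphisms $\varphi_k$ and $\varphi_{U_{k1}}$ shows that $T_{\sigma(k)} \xrightarrow{\psi(f_{k2})} U_{\sigma(k)1} \to T^*_{\sigma(k)} \to T_{\sigma(k)}[1]$ is itself distinguished, and axiom TR3 supplies a morphism $\tilde{\varphi}_k: T^*_{\sigma(k)} \to T_k^*[2]$ making the whole diagram commute; the triangulated five lemma then forces $\tilde{\varphi}_k$ to be an isomorphism, since the remaining two vertical maps $\varphi_{U_{k1}}$ and $\varphi_k[1]$ are.

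It remains to confirm that this distinguished triangle is really the exchange triangle at $T_{\sigma(k)}$, i.e.\ that $\psi(f_{k2})$ is a minimal left $\add(T/T_{\sigma(k)})$-approximation. Because $\psi \circ \Phi = \Phi \circ \gamma$ and $\gamma_0(e_k) = e_{\sigma(k)}$, the automorphism $\psi$ maps the idempotent $e_k$ to $e_{\sigma(k)}$, so it induces bijections $\Hom_\mathcal{C}(T_i, T_j) \cong \Hom_\mathcal{C}(T_{\sigma(i)}, T_{\sigma(j)})$ on hom-spaces and identifies $U_{k1}$ with $U_{\sigma(k)1}$ as direct summands of $T$. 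The properties of being left minimal and of being a left $\add(T/T_k)$-approximation are both expressible purely in terms of $\End_\mathcal{C}(T)$ (via the standard lifting criterion for minimality and surjectivity of the restriction map on hom-spaces for approximation), hence are preserved by any algebra automorphism of $\End_\mathcal{C}(T)$. I expect this translation from the diagrammatic approximation properties into algebraic statements compatible with $\psi$ to be the main obstacle; once done, $\psi(f_{k2})$ is the required minimal left approximation and the cone $T^*_{\sigma(k)}$ coincides with the one produced by the exchange triangle construction at vertex $\sigma(k)$.
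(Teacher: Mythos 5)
Your proposal is correct and follows essentially the same route as the paper: commutativity of the left square from the definition of $\psi$, the observation that $\psi(f_{k2})$ is again a minimal left $\add(T/T_{\sigma(k)})$-approximation, and completion to an isomorphism of triangles via the triangulated axioms. The only cosmetic difference is that you justify the approximation property by invariance under the algebra automorphism $\psi$ of $\End_\mathcal{C}(T)$, whereas the paper applies the autoequivalence $[2]$ directly and uses $(T/T_k)[2]\cong T/T_{\sigma(k)}$; these amount to the same thing.
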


\begin{proof}
	By the definition of $\psi$ we have that
	\begin{equation*}
		\begin{tikzcd}
			T_{\sigma(k)} \ar[r, "\psi(f_{k2})"] \ar[d, "\varphi_k"] & U_{\sigma(k)1} \ar[d, "\varphi_{U_{k1}}"] \\
			T_k[2] \ar[r, "f_{k2}{[2]}"] & U_{k1}[2]
		\end{tikzcd}
	\end{equation*}
	commutes. Since $f_{k2}$ is a minimal left $\add(T/T_k)$-approximation we have that $f_{k2}[2]$ is a minimal left $\add((T/T_k)[2])$-approximation. Now by noting that $(T/T_k)[2] \cong T/T_{\sigma(k)}$, the morphism $\psi(f_{k2})$ is a minimal left $\add(T/T_{\sigma(k)})$-approximation. Thus \eqref{eq - psi exchange triangle 1} is an exchange triangle.

	Now we can complete
	\begin{equation*}
		\begin{tikzcd}
			T_{\sigma(k)} \ar[r, "\psi(f_{k2})"] \ar[d, "\varphi_k"] & U_{\sigma(k)1} \ar[r] \ar[d, "\varphi_{U_{k1}}"] & T_{\sigma(k)}^* \ar[r] & T_k[1]  \ar[d, "\varphi_k{[1]}"] \\
			T_k[2] \ar[r, "f_{k2}{[2]}"] & U_{k1}[2] \ar[r, "h_k{[2]}"]  & T_k^*[2] \ar[r] & T_k[3]
		\end{tikzcd}
	\end{equation*}
	into an isomorphism of triangles with some isomorphism $\tilde{\varphi}_k: T_{\sigma(k)}^* \to T_k^*[2]$ using the properties of triangles.
\end{proof}

\begin{mylemma}\label{lemma - nakayama permutation mutable}
	Let $(S, W)$ be a self-injective species with potential. Let $T$ and $\Phi$ be as in Theorem~\ref{Theorem - iso and weak 2}. If $T$ satisfies the mutation compatibility condition at $k$, then it also satisfies the mutation compatibility condition at $\sigma(k)$.
\end{mylemma}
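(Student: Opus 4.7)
The plan is to transport the given $\K$-algebra morphism $q_-: D_k \to \mathrm{End}_\mathcal{C}(T_k^*)^{op}$ to the desired $q'_-: D_{\sigma(k)} \to \mathrm{End}_\mathcal{C}(T_{\sigma(k)}^*)^{op}$ using the Nakayama automorphism $\psi$ of $\mathrm{End}_\mathcal{C}(T)$ together with the isomorphism $\tilde{\varphi}_k: T_{\sigma(k)}^* \to T_k^*[2]$ from Lemma~\ref{lemma - tildevarphi_k}. The same argument, applied also to the second exchange triangle $T_k^* \to U_{k0} \to T_k \to T_k^*[1]$, furnishes an analogous isomorphism $\varphi_{U_{k0}}: U_{\sigma(k)0} \to U_{k0}[2]$, so that the entire pair of exchange triangles at $\sigma(k)$ is identified, through $\varphi_k$, $\tilde{\varphi}_k$, $\varphi_{U_{k0}}$, $\varphi_{U_{k1}}$, with the $[2]$-shift of the pair at $k$.

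First I would set $q'_\lambda := \tilde{\varphi}_k^{-1} \circ q_{\gamma_0^{-1}(\lambda)}[2] \circ \tilde{\varphi}_k$ for $\lambda \in D_{\sigma(k)}$ and check that this is a $\K$-algebra morphism into $\mathrm{End}_\mathcal{C}(T_{\sigma(k)}^*)^{op}$. This reduces to the facts that $\gamma_0^{-1}$ restricts to a $\K$-algebra isomorphism $D_{\sigma(k)} \xrightarrow{\sim} D_k$, that $q_-$ is by hypothesis a $\K$-algebra morphism into the opposite, and that conjugation by $\tilde{\varphi}_k[2]$ defines a $\K$-algebra isomorphism $\mathrm{End}_\mathcal{C}(T_k^*)^{op} \xrightarrow{\sim} \mathrm{End}_\mathcal{C}(T_{\sigma(k)}^*)^{op}$.

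Next I would verify that $q'_-$ makes the compatibility diagram at $\sigma(k)$ commute. The strategy is to apply $[2]$ to the diagram at $k$ (which commutes by assumption) and then conjugate each vertical arrow by the appropriate component of the triangle isomorphism above. The outermost vertical arrows transform as
\[
\varphi_k^{-1} \circ \Phi(\gamma_0^{-1}(\lambda))[2] \circ \varphi_k = \psi(\Phi(\gamma_0^{-1}(\lambda))) = \Phi(\lambda),
\]
using the identity $\psi \circ \Phi = \Phi \circ \gamma$, and the vertical arrow on $T_{\sigma(k)}^*$ becomes $q'_\lambda$ by construction.

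The main obstacle is checking that the transported vertical arrows on $U_{\sigma(k)1}$ and $U_{\sigma(k)0}$ coincide with $\Phi(\hat{\lambda})$ and $\Phi(\tilde{\lambda})$, respectively. By applying $\psi$ entry-wise, the transported middle map equals $\Phi(\gamma(\hat{\mu}))$ with $\mu = \gamma_0^{-1}(\lambda)$, so it suffices to show $\gamma(\hat{\mu}) = \hat{\lambda}$. I would apply $\gamma$ to the defining relation $(\overline{\beta^*})_\beta \mu = \hat{\mu}(\overline{\beta^*})_\beta$ and, using condition (A) to control how $\gamma_1$ permutes the chosen bases via the invertible matrix $A$ (with the induced transformation on the dual bases $\overline{\beta^*}$), obtain the identity $(\overline{{\beta'}^*})_{\beta'} \lambda = \gamma(\hat{\mu})(\overline{{\beta'}^*})_{\beta'}$ after the re-indexing $\beta' = \sigma(\beta)$. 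The uniqueness statement in Lemma~\ref{lemma - lambda tilde and lambda hat} then forces $\gamma(\hat{\mu}) = \hat{\lambda}$, and the dual computation yields $\gamma(\tilde{\mu}) = \tilde{\lambda}$. The delicate point I expect to spend most effort on is the precise bookkeeping of how the matrix $A$ in condition (A) acts on the dual bases $\overline{\beta^*}$ and $\underline{\alpha^*}$ via $\b_S$, since everything else is formal consequence of functoriality of $[2]$ and of $\psi \circ \Phi = \Phi \circ \gamma$.
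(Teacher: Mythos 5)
Your proposal is correct and follows essentially the same route as the paper: define $q_{\gamma(\lambda)} = \tilde{\varphi}_k^{-1}\circ q_\lambda[2]\circ\tilde{\varphi}_k$, apply $[2]$ to the compatibility diagram at $k$, use $\psi\circ\Phi = \Phi\circ\gamma$, and identify the transported middle maps via condition (A) and the uniqueness in Lemma~\ref{lemma - lambda tilde and lambda hat}. Two small points of precision: the correct identity is $A\gamma(\hat{\mu})A^{-1} = \hat{\lambda}$ (resp.\ with $B$ for $\tilde{\lambda}$) rather than $\gamma(\hat{\mu}) = \hat{\lambda}$ on the nose, the conjugation being exactly what cancels against $\psi(f_{k2}) = \Phi(A)\circ f_{\sigma(k)2}$ when passing from the shifted triangle to the standard one; and the identification of $\psi(f_{k1})$ with $f_{\sigma(k)1}$ up to $\Phi(A)$ and $\Phi(B)$ is not purely formal but uses condition (B), i.e.\ $\gamma(W)=W$.
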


\begin{proof}
	Applying $[2]$ on \eqref{eq - mut cond diagram} yields the commutative diagram
	\begin{equation}\label{eq - mut cond diagram shifted 2}
		\begin{tikzcd}
			T_k[2] \arrow[r, "f_{k2}{[}2{]}"] \arrow[ddd, "\Phi(\lambda){[}2{]}"] & U_{k1}[2] \arrow[ddd, "\Phi(\hat{\lambda}){[}2{]}"] \arrow[rr, "f_{k1}{[}2{]}"] \arrow[rd, "h_k{[2]}"] & & U_{k0}[2] \arrow[r, "f_{k0}{[}2{]}"] \arrow[ddd, "\Phi(\tilde{\lambda}){[}2{]}"] & T_k[2] \arrow[lld, dotted] \arrow[ddd, "\Phi(\lambda){[}2{]}"] \\
			& & T_k^*[2] \arrow[ru, "g_k{[}2{]}"] \arrow[llu, dotted] \arrow[ddd, "q_\lambda{[}2{]}"{pos=0.3}] \\ \\
			T_k[2] \arrow[r, "f_{k2}{[}2{]}"] & U_{k1}[2] \arrow[rr, "f_{k1}{[}2{]}\qquad\quad"] \arrow[rd, "h_k{[}2{]}"] & & U_{k0}[2] \arrow[r, "f_{k0}{[}2{]}"] & T_k[2] \arrow[lld, dotted] \\
			& & T_k^*[2] \arrow[ru, "g_k{[}2{]}"] \arrow[llu, dotted]
		\end{tikzcd}
	\end{equation}

	Now using the definition of $\varphi$ and $\psi$ we have that the diagram
	\begin{equation}\label{eq - mut cond diagram varphi}
		\begin{tikzcd}
			T_{\sigma(k)} \arrow[r, "\psi(f_{k2})"] \arrow[ddd, "\psi(\Phi(\lambda))"] & U_{\sigma(k)1} \arrow[ddd, "\psi(\Phi(\hat{\lambda}))"] \arrow[rr, "\psi(f_{k1})"] & & U_{\sigma(k)0} \arrow[r, "\psi(f_{k0})"] \arrow[ddd, "\psi(\Phi(\tilde{\lambda}))"] & T_{\sigma(k)} \arrow[ddd, "\psi(\Phi(\lambda))"] \\
			& & \\ \\
			T_{\sigma(k)} \arrow[r, "\psi(f_{k2})"] & U_{\sigma(k)1} \arrow[rr, "\psi(f_{k1})"] & & U_{\sigma(k)0} \arrow[r, "\psi(f_{k0})"] & T_{\sigma(k)}
		\end{tikzcd}
	\end{equation}
	commutes. Now we will rewrite \eqref{eq - mut cond diagram varphi} such that it matches the mutation compatibility condition at $\sigma(k)$. First we note that
	\begin{equation*}
		\psi(f_{k2}) = \psi(\Phi((\underline{\alpha})_\alpha)) = \Phi(\gamma((\underline{\alpha})_\alpha)) = \Phi((\underline{\sigma(\alpha)})_\alpha A) = \Phi(A)\circ \Phi((\underline{\sigma(\alpha)})_\alpha) = \Phi(A)\circ f_{\sigma(k)2}
	\end{equation*}
	for some invertible matrix $A$ with coefficients in $\bigoplus_{\substack{\alpha\in Q_1 \\ t(\alpha) = k}} D_{s(\alpha)}$. Hence by \eqref{eq - mut cond diagram varphi} the diagram
	\begin{equation*}
		\begin{tikzcd}
			T_{\sigma(k)} \arrow[r, "f_{\sigma(k)2}"] \arrow[d, "\Phi(\gamma(\lambda))"] & U_{\sigma(k)1} \arrow[d, "\Phi(A\gamma(\hat{\lambda})A^{-1})"] \\
			T_{\sigma(k)} \arrow[r, "f_{\sigma(k)2}"] & U_{\sigma(k)1}
		\end{tikzcd}
	\end{equation*}
	commutes. Similarly, there is an invertible matrix $B$ with coefficients in $\bigoplus_{\substack{\beta\in Q_1 \\ s(\beta) = k}}D_{t(\beta)}$ such that the following diagram commutes.
	\begin{equation*}
		\begin{tikzcd}[column sep = 1.5cm]
			T_{\sigma(k)} \arrow[r, "f_{\sigma(k)2}"] \arrow[ddd, "\Phi(\gamma(\lambda))"] & U_{\sigma(k)1} \arrow[ddd, "\Phi(A\gamma(\hat{\lambda})A^{-1}))"] \arrow[rr, "\Phi(B)\circ f_{\sigma(k)1}\circ \Phi(A)"] & & U_{\sigma(k)0} \arrow[r, "f_{\sigma(k)0}"] \arrow[ddd, "\Phi(B^{-1}\gamma(\tilde{\lambda})B)"] & T_{\sigma(k)} \arrow[ddd, "\Phi(\gamma(\lambda))"] \\
			& & \\ \\
			T_{\sigma(k)} \arrow[r, "f_{\sigma(k)2}"] & U_{\sigma(k)1} \arrow[rr, "\Phi(B)\circ f_{\sigma(k)1}\circ \Phi(A)"] & & U_{\sigma(k)0} \arrow[r, "f_{\sigma(k)0}"] & T_{\sigma(k)}
		\end{tikzcd}
	\end{equation*}
	Now we claim the following:
	\begin{enumerate}
		\item $\Phi(B)\circ \psi(f_{k1})\circ \Phi(A) = f_{\sigma(k)1}$
		\item $\Phi(B\gamma(\tilde{\lambda})B^{-1})) = \Phi(\widetilde{\gamma(\lambda)})$
		\item $\Phi(A\gamma(\hat{\lambda})A^{-1})) = \Phi(\widehat{\gamma(\lambda)})$
	\end{enumerate}
	Given the above claim we have that
	\begin{equation*}
		\begin{tikzcd}[column sep = 1.5cm]
			T_{\sigma(k)} \arrow[r, "f_{\sigma(k)2}"] \arrow[ddd, "\Phi(\gamma(\lambda))"] & U_{\sigma(k)1} \arrow[ddd, "\Phi(\widehat{\gamma(\lambda)})"] \arrow[rr, "f_{\sigma(k)1}"] \arrow[dr, "h_{\sigma(k)}"] & & U_{\sigma(k)0} \arrow[r, "f_{\sigma(k)0}"] \arrow[ddd, "\Phi(\widetilde{\gamma(\lambda)})"] & T_{\sigma(k)} \arrow[ddd, "\Phi(\gamma(\lambda))"] \arrow[lld, dotted] \\
			& & T_{\sigma(k)}^* \arrow[ru, "g_{\sigma(k)}"] \arrow[llu, dotted] \\ \\
			T_{\sigma(k)} \arrow[r, "f_{\sigma(k)2}"] & U_{\sigma(k)1} \arrow[rr, "f_{\sigma(k)1}\qquad\quad"] \arrow[dr, "h_{\sigma(k)}"] & & U_{\sigma(k)0} \arrow[r, "f_{\sigma(k)0}"] & T_{\sigma(k)} \arrow[lld, dotted] \\
			& & T_{\sigma(k)}^* \arrow[ru, "g_{\sigma(k)}"] \arrow[llu, dotted] \arrow[from=uuu, "\tilde{\varphi}_{k}^{-1}\circ (q_\lambda{[2]})\circ\tilde{\varphi}_{k}"{pos=0.3}, crossing over]
		\end{tikzcd}
	\end{equation*}
	commutes, where $\tilde{\varphi}_k$ is given as in Lemma~\ref{lemma - tildevarphi_k}. Thus setting $q_{\gamma(\lambda)} = \tilde{\varphi}_{k}^{-1}\circ (q_\lambda{[2]})\circ\tilde{\varphi}_{k}$ shows that $T$ satisfies the mutation compatibility condition at $\sigma(k)$.

	Let us now prove the claims above. We only give the proof for (1) and (3) since the argument for (2) is similar as for (3).

	Proof of claim (1): Using that $\gamma(W) = W$ together with Lemma~\ref{lemma - complex and derivation} we have
	\begin{equation*}
		\begin{aligned}
			\underline{\sigma(\alpha)} \partial_{\overline{\sigma(\alpha)^*}, \underline{\sigma(\beta)^*}}(W)\overline{\sigma(\beta)} &= W = \gamma(W) = \gamma(\underline{\alpha})\gamma(\partial_{\overline{\alpha^*}}, \underline{\beta^*}(W))\gamma(\overline{\beta}) = \\
			&= \underline{\sigma(\alpha)}A\gamma(\partial_{\overline{\alpha^*}}, \underline{\beta^*}(W))B\overline{\sigma(\beta)} = \underline{\sigma(\alpha)}\gamma(A\partial_{\overline{\alpha^*}}, \underline{\beta^*}(W)B)\overline{\sigma(\beta)}.
		\end{aligned}
	\end{equation*}
	Thus $\gamma(A\partial_{\overline{\alpha^*}}, \underline{\beta^*}(W)B) = \partial_{\overline{\sigma(\alpha)^*}, \underline{\sigma(\beta)^*}}(W)$. The claim follows by applying $\Phi$.

	Proof of claim (3): First note that $(\underline{\sigma(\alpha)})_\alpha a = (\underline{\sigma(\alpha)})_\alpha a'$ implies that $a = a'$ since $(\underline{\sigma(\alpha)})_\alpha$ is a $D_{s(\sigma(\alpha))}$-basis of $\bigoplus_{\substack{\alpha\in Q_1 \\ t(\alpha) = k}}M_{\sigma(\alpha)}$. Now the claim follows from
	\begin{equation*}
		\begin{aligned}
			(\underline{\sigma(\alpha)})_\alpha\widetilde{\gamma_0(\lambda)}A &= \gamma_0(\lambda) (\underline{\sigma(\alpha)})_\alpha A = \gamma_0(\lambda)\gamma_1((\underline{\alpha})_\alpha) = \gamma(\lambda(\underline{\alpha})_\alpha) = \\
			&= \gamma((\underline{\alpha})_\alpha\tilde{\lambda}) = \gamma_1((\underline{\alpha})_\alpha)\gamma_0(\tilde{\lambda}) = (\underline{\sigma(\alpha)})_\alpha A\gamma_0(\tilde{\lambda}).
		\end{aligned}
	\end{equation*}
\end{proof}

\begin{myprop}\label{proposition - mutation along orbits self-injective}
	Let $(S, W)$ be a reduced self-injective species with potential with Nakayama automorphism as above. Assume that there exists a cluster tilting object $T\in \mathcal{C}$ such that $\mathrm{End}_\mathcal{C}(T)\cong \mathcal{P}(S, W)$. If $k\in Q_0$ is mutable and $(k)$ is a sparse orbit, then $\mu_{(k)}(S, W)$ is a self-injective species with potential.
\end{myprop}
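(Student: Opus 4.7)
The plan is to realise $\mathcal{P}(\mu_{(k)}(S,W))$ as $\End_\mathcal{C}(T')^{op}$ for a cluster tilting object $T'$ obtained by mutating $T$ along the orbit $(k)$, and to deduce self-injectivity of the Jacobian algebra from self-injectivity of $T'$ on the categorical side. Set $n := |(k)|$ and $U := \bigoplus_{i=0}^{n-1} T_{\sigma^i(k)}$. Since $(S,W)$ is self-injective, Proposition~\ref{Prop - selfinjectie T=T[2]} yields $T_j[2]\cong T_{\sigma(j)}$ for every $j$, so $[2]$ cyclically permutes the summands of $U$ and $U\cong U[2]$. Proposition~\ref{Prop - U=U[2] implies selfinjective} then says $T' := \mu_U^+(T)$ is a self-injective cluster tilting object, hence $\End_\mathcal{C}(T')^{op}$ is a finite-dimensional self-injective $\K$-algebra.

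Since $(k)$ is sparse, there are no arrows in $Q_T$ between the vertices of $(k)$, so Proposition~\ref{prop - no arrows mu + and mu -} expresses the simultaneous mutation as the iterated mutation
\[
T' \cong \mu_{T_{\sigma^{n-1}(k)}} \circ \cdots \circ \mu_{T_{\sigma(k)}} \circ \mu_{T_k}(T).
\]
I would then apply Theorem~\ref{Theorem A} inductively along this chain. Setting $T^{(0)} := T$, $(S^{(0)},W^{(0)}) := (S,W)$, and
\[
T^{(i+1)} := \mu_{T^{(i)}_{\sigma^i(k)}}(T^{(i)}), \qquad (S^{(i+1)},W^{(i+1)}) := \mu_{\sigma^i(k)}(S^{(i)},W^{(i)}),
\]
the base step is our hypothesis, and the inductive step at $\sigma^i(k)$ requires this vertex to be mutable in $(S^{(i)},W^{(i)})$. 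Reducedness of $(S^{(i)},W^{(i)})$ is built into the definition of a sparse orbit, and the absence of loops and $2$-cycles at $\sigma^i(k)$ follows from it. The mutation compatibility condition at $\sigma^i(k)$ for $T^{(i)}$ is derived from Lemma~\ref{lemma - nakayama permutation mutable}: this lemma gives the condition for the original $T$ at $\sigma^i(k)$, and because $\sigma^i(k)$ is not adjacent in $Q$ to any of $k,\sigma(k),\ldots,\sigma^{i-1}(k)$, the exchange triangle of $T_{\sigma^i(k)}$ in $\add(T)$ has middle terms in $\add(T/U)$, hence also in $\add(T^{(i)}/T_{\sigma^i(k)})$; it therefore simultaneously serves as the exchange triangle inside $\add(T^{(i)})$, and the morphism $q_-\colon D_{\sigma^i(k)}\to \End_\mathcal{C}(T^*_{\sigma^i(k)})^{op}$ built on $T$ transfers verbatim to $T^{(i)}$.

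After $n$ iterations of Theorem~\ref{Theorem A} one obtains $\End_\mathcal{C}(T')^{op} \cong \mathcal{P}(\mu_{(k)}(S,W))$, which combined with the self-injectivity of $\End_\mathcal{C}(T')^{op}$ from the first paragraph concludes the proof. The main obstacle is precisely the inductive transfer of the mutation compatibility condition to the partially mutated $T^{(i)}$: the sparse orbit hypothesis plays a double role in resolving it, both ensuring reducedness at each step (so Theorem~\ref{Theorem A} applies) and forcing $\mathrm{Ext}^1_\mathcal{C}(T_{\sigma^i(k)},T^*_{\sigma^j(k)})=0$ for $j<i$ (from $T^{(i)}$ being cluster tilting and $\sigma^i(k)$ not being adjacent to the mutated vertices), which is what guarantees that the relevant exchange triangles are unchanged.
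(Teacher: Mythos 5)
Your proposal is correct and follows essentially the same route as the paper: form $U=\bigoplus_{i}T_{\sigma^i(k)}$, use Proposition~\ref{Prop - selfinjectie T=T[2]} and Proposition~\ref{Prop - U=U[2] implies selfinjective} to get self-injectivity of $\mu_U^+(T)$, decompose it via Proposition~\ref{prop - no arrows mu + and mu -}, and apply Theorem~\ref{Theorem A} iteratively. The only difference is that you spell out the transfer of the mutation compatibility condition to the partially mutated objects $T^{(i)}$, whereas the paper handles the mutability of all vertices in the orbit by citing Lemma~\ref{lemma - nakayama permutation mutable}; your extra care here is a legitimate filling-in of that step rather than a divergence.
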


\begin{proof}
	Consider the summand $U$ of $T$ that corresponds to the vertices $k, \sigma(k), \dots, \sigma^n(k)$. By Proposition~\ref{Prop - selfinjectie T=T[2]} we have that $U\cong U[2]$. The fact that $\mu_{U}^+(T)$ is self-injective follows from Proposition~\ref{Prop - U=U[2] implies selfinjective}. By Proposition~\ref{prop - no arrows mu + and mu -}, the assumptions on $k$ gives us
	\begin{equation*}
		\mu_{U}^+(T)\cong \mu_{T_{\sigma^n(k)}} \circ \cdots \circ \mu_{T_{\sigma(k)}}\circ \mu_{T_k}(T).
	\end{equation*}
	Every vertex in $(k)$ is mutable by Lemma~\ref{lemma - nakayama permutation mutable}. Thus using Theorem~\ref{Theorem A} repeatedly yields that $\mathcal{P}(\mu_{(k)}(S, W))\cong \mathrm{End}_\mathcal{C}(\mu^+_{U}(T))$ is a finite-dimensional self-injective algebra.
\end{proof}

\begin{myrem}
	Note that if $(k)$ is not a sparse orbit we still have that $\mu_{U}^+(T)$ is self-injective. When applying Theorem~\ref{Theorem A} we need our species with potential to be reduced. Thus in every mutation step we need to reduce our resulting species with potential and therefore modify our potential. Since it is not clear which $2$-cycles we remove in the reduction process, we cannot predict how the species with potential looks like after mutation along $(k)$.
\end{myrem}

Let us now define the morphism $\mu_{(k)}(\gamma):\mathcal{P}(\mu_{(k)}(S, W))\to \mathcal{P}(\mu_{(k)}(S, W))$ in the following way:
\begin{equation*}
	\begin{aligned}
		\mu_{(k)}(\gamma)(\lambda) &= \gamma(\lambda), \quad &&\text{if }\lambda\in D, \\
		\mu_{(k)}(\gamma)(a) &= \gamma(a), \quad &&\text{if } s(\alpha), t(\alpha)\not\in (k), a\in M_\alpha \\
		\mu_{(k)}(\gamma)(b\otimes a) &= \gamma(b\otimes a), \quad &&\text{if } t(\alpha) = s(\beta)\in (k), b\otimes a \in M_{[\beta\alpha]}.
	\end{aligned}
\end{equation*}
We define $\mu_{(k)}(\gamma)$ on $M_{\alpha^*}$ if $s(\alpha)\in (k)$ or $t(\alpha)\in (k)$ such that
\begin{equation*}
	\sum_{a\in \underline{\alpha}} \gamma(a) \otimes \mu_{(k)}(\gamma)(a^*) \in M_{\sigma(\alpha)} \otimes_D M_{\sigma(\alpha)^*}
\end{equation*}
is a Casimir element. With this we can extend $\mu_{(k)}(\gamma)$ to an automorphism of $\mathcal{P}(\mu_{(k)}(S, W))$. In fact, it is straightforward to verify that $\mu_{(k)}(\gamma)$ is an automorphism satisfying conditions (A) and (B).

The goal of this section is to prove the following theorem.

\begin{mythm}\label{Theorem B}
	Let $\mathcal{C}$ be a $2$-Calabi--Yau triangulated category and $T = \bigoplus_{i = 1}^n T_i\in \mathcal{C}$ a basic cluster tilting object with indecomposable summands $T_i$ such that $\mathrm{End}_\mathcal{C}(T)\cong \mathcal{P}(S, W)$. Assume that $k\in Q_0$ is mutable in $Q$ and $(k)$ is a sparse orbit. If $\gamma$ is a Nakayama automorphism of $\mathcal{P}(S, W)$ satisfying the conditions (A) and (B) above, then $\mu_{(k)}(\gamma)$ defined above is a Nakayama automorphism of $\mathcal{P}(\mu_{(k)}(S, W))$.
\end{mythm}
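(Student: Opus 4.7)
The plan is to translate the statement into a cluster-tilting setting where the Nakayama automorphism arises from an isomorphism $T\cong T[2]$. Start by realising $\mathcal{P}(S,W)$ as $\End_\mathcal{C}(T)^{op}$ for some basic cluster-tilting object $T=\bigoplus_{i=1}^n T_i$ in a $2$-Calabi--Yau triangulated category $\mathcal{C}$ via an isomorphism $\Phi$. Self-injectivity and Proposition~\ref{Prop - selfinjectie T=T[2]} give $T\cong T[2]$, and following \cite{Pasquali2020selfinjective} fix $\varphi=\bigoplus_i\varphi_i:T\to T[2]$ so that the Nakayama automorphism $\psi(f)=\varphi^{-1}\circ f[2]\circ\varphi$ of $\End_\mathcal{C}(T)^{op}$ corresponds to $\gamma$ under $\Phi$. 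Setting $U=\bigoplus_{i\in(k)}T_i$ and $T'=\mu_U^+(T)$, sparsity of $(k)$ together with Proposition~\ref{prop - no arrows mu + and mu -}, Theorem~\ref{Theorem A} applied iteratively, and Lemma~\ref{lemma - nakayama permutation mutable} (which propagates mutability along $(k)$) produce an isomorphism $\Phi':\mathcal{P}(\mu_{(k)}(S,W))\xrightarrow{\sim}\End_\mathcal{C}(T')^{op}$; from $U\cong U[2]$ one gets $T'\cong T'[2]$.

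Next, build a specific isomorphism $\varphi'=\bigoplus_i\varphi'_i:T'\to T'[2]$: for $i\notin(k)$ take $\varphi'_i=\varphi_i$, and for $i\in(k)$ take $\varphi'_i=\tilde\varphi_i:T_{\sigma(i)}^*\to T_i^*[2]$ from Lemma~\ref{lemma - tildevarphi_k} (a short diagram chase confirms one may choose the same $\tilde\varphi_i$ compatibly for both exchange triangles at $i$, which is needed to treat $\alpha^*$ and $\beta^*$ simultaneously). Let $\psi'(f)=(\varphi')^{-1}\circ f[2]\circ\varphi'$; this is a Nakayama automorphism of $\End_\mathcal{C}(T')^{op}$. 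The theorem reduces to the identity $\psi'\circ\Phi'=\Phi'\circ\mu_{(k)}(\gamma)$, which by Lemma~\ref{lemma - bijection sets algebra and pairs} and continuity need only be checked on the semisimple part $D$ and the bimodule $M'$ of the mutated species.

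The verification splits into four cases: (a) on $D_j$ with $j\notin(k)$ and on $M_\alpha$ with $\alpha$ not incident to $(k)$, both $\Phi'$ and $\varphi'$ agree with $\Phi$ and $\varphi$ respectively, and the identity is inherited from $\psi\Phi=\Phi\gamma$; (b) on $D_j$ with $j\in(k)$, one needs $\tilde\varphi_j^{-1}\circ q_\lambda[2]\circ\tilde\varphi_j=q_{\gamma_0(\lambda)}$, which is exactly what was established in the proof of Lemma~\ref{lemma - nakayama permutation mutable}; (c) on arrows $[\alpha\beta]$ the formula $\Phi'([x\otimes y])=\Phi(y)\circ\Phi(x)$ together with ring-multiplicativity of $\psi'$ reduces to (a)--(b); (d) the substantive case is the new dual arrows $\alpha^*$ and $\beta^*$ incident to $(k)$, where $\Phi'((\underline{\alpha^*})_\alpha)=g_k$ and $\Phi'((\overline{\beta^*})_\beta)=h_k$, and one must match $\psi'(g_k)=\varphi_{U_{k0}}^{-1}\circ g_k[2]\circ\tilde\varphi_k$ (respectively $\psi'(h_k)=\tilde\varphi_k^{-1}\circ h_k[2]\circ\varphi_{U_{k1}}$) with $\Phi'(\mu_{(k)}(\gamma)(\underline{\alpha^*}))$ (respectively $\Phi'(\mu_{(k)}(\gamma)(\overline{\beta^*}))$) defined implicitly by the Casimir-element conditions.

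The main obstacle lies in case (d). The strategy is to pass both descriptions to the basis $\underline{\sigma(\alpha)}$ of $M_{\sigma(\alpha)}$ through the change-of-basis matrix $A$ determined by $\gamma_1(\underline{\alpha})=\underline{\sigma(\alpha)}A$. On the categorical side, arguments parallel to those of Lemma~\ref{lemma - nakayama permutation mutable} together with the commutative square of Lemma~\ref{lemma - tildevarphi_k} (and its analogue for the second exchange triangle at $k$) express $\psi'(g_k)$ and $\psi'(h_k)$ as $g_{\sigma(k)}$ and $h_{\sigma(k)}$ composed with matrices built from $A$ and the analogous matrix $B$ for the other set of arrows. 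On the algebraic side, the defining Casimir-element relation combined with \eqref{eq - casimir elements equations ass. to b} shows that $\mu_{(k)}(\gamma)$ transforms $\underline{\alpha^*}$ and $\overline{\beta^*}$ precisely by the contragredients of $A$ and $B$. Matching these two computations completes the verification and hence the proof.
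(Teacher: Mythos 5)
Your proposal follows essentially the same route as the paper: realise $\gamma$ as $\psi(f)=\varphi^{-1}\circ f[2]\circ\varphi$ for $\varphi:T\to T[2]$, define the new comparison isomorphism on $\mu_{(k)}(T)$ by keeping $\varphi_i$ for $i\notin(k)$ and taking the $\tilde\varphi_{k'}$ supplied by Lemma~\ref{lemma - tildevarphi_k} for $k'\in(k)$, and then identify the resulting Nakayama automorphism with $\mu_{(k)}(\gamma)$ via the isomorphism coming from Theorem~\ref{Theorem A}. Your write-up is in fact more explicit than the paper's (the paper compresses your cases (a)--(d) into the single sentence ``from the description of the isomorphism \ldots we get that $\tilde{\psi}$ corresponds to $\mu_{(k)}(\gamma)$''), and your flagged compatibility of $\tilde\varphi_{k'}$ with both exchange triangles is a genuine point the paper leaves implicit.
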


\begin{myrem}
	It is enough to prove the analogue version of Theorem~\ref{Theorem B} for $\mathrm{End}_\mathcal{C}(T)$ and $\mathrm{End}_\mathcal{C}(\mu_{(k)}(T))$.
\end{myrem}

\begin{proof}(Proof of Theorem~\ref{Theorem B})
	The Nakayama automorphism $\gamma$ will correspond to some Nakayama automorphism $\psi$ of $\mathrm{End}_\mathcal{C}(T)$, which in turn will be given in terms of a morphism $\varphi = \bigoplus_{i\in Q_0}\varphi_i: T\to T[2]$. We want to construct a morphism $\tilde{\varphi} = \bigoplus_{i\in Q_0}\tilde{\varphi_i}: \mu_{(k)}(T)\to \mu_{(k)}(T)[2]$. Let us define $\tilde{\varphi}_i = \varphi_i = \varphi|_{T_{\sigma(k)}}: T_{\sigma(k)}\to T_i[2]$ for $i\not\in (k)$. By applying Lemma~\ref{lemma - tildevarphi_k} for every $k'\in (k)$ we get an isomorphism $\tilde{\varphi}_{k'}: T_{\sigma(k')}^*\to T_{k'}^*[2]$ such that
	\begin{equation*}
		\begin{tikzcd}
			T_{\sigma(k')} \ar[r, "\psi(f_{k'2})"] \ar[d, "\varphi_{k'}"] & U_{\sigma(k')1} \ar[r, "\tilde{\psi}(h_{k'})"] \ar[d, "\varphi_{U_{k'1}}"] & T_{\sigma(k')}^* \ar[r] \ar[d, "\tilde{\varphi}_{k'}"] & T_{k'}[1]  \ar[d, "\varphi_{k'}{[1]}"] \\
			T_{k'}[2] \ar[r, "f_{k'2}{[2]}"] & U_{k'1}[2] \ar[r, "h_{k'}{[2]}"]  & T_{k'}^*[2] \ar[r] & T_{k'}[3]
		\end{tikzcd}
	\end{equation*}
	commutes for all $k'\in (k)$, where $\tilde{\psi}$ is the Nakayama automorphism defined by $\tilde{\varphi}$. Now from the description of the isomorphism $\mathrm{End}_\mathcal{C}(\mu_{(k)}(T))\cong \mathcal{P}(\mu_{(k)}(S, W))$ we get that $\tilde{\psi}$ corresponds to the Nakayama automorphism $\mu_{(k)}(\gamma)$.
\end{proof}

\begin{myrem}$\space$
	\begin{enumerate}
		\item Note that the Nakayama automorphism $\mu_{(k)}(\gamma)$ satisfies the conditions (A) and (B) above, and thus we can iterate the mutation process, as long as enough $2$-cycles can be removed.
		\item If $\mathcal{P}(S, W)$ is symmetric, i.e. it is self-injective with $\gamma = \id_{\mathcal{P}(S, W)}$, then $\gamma$ trivially satisfies (A) and (B). Applying Theorem~\ref{Theorem B} we set $\mu_{(k)}(\gamma) = \id_{\mathcal{P}(S, W)}$ as well, and so $\mu_{(k)}(S, W) = \mu_k(S, W)$ also has a symmetric Jacobian algebra.
	\end{enumerate}
\end{myrem}

\section{Application of the derived Auslander-Iyama correspondence}\label{Section - Jasso-muro plus examples}
In this section we will combine the results of \cite{soderberg2022preprojective,herschend2011n,amiot2008petites,amiotclustercategories} to provide new examples for the derived Auslander-Iyama correspondence \cite{jasso2023derived}. We explicitly compute some examples.

\begin{mythm}\cite[Theorem A]{jasso2023derived}\label{theorem - Jasso-Muro}
	Suppose that $\K$ is a perfect field and $d\ge 1$ an integer. There are bijective correspondences between the following:
	\begin{enumerate}
		\item Equivalence classes of pairs $(\mathcal{C}, T)$ consisting of,
		\begin{enumerate}
			\item an algebraic triangulated category $\mathcal{C}$ with finite-dimensional morphism spaces and split idempotents and
			\item a basic $d\Z$-cluster tilting object $T\in \mathcal{C}$, that is a $d$-cluster tilting object such that $\mathcal{C}(T, T[i]) = 0$ for all $i\not\in d\Z$.
		\end{enumerate}
		\item Equivalence classes of pairs $(\Pi, I)$ consisting of
		\begin{enumerate}
			\item a basic finite-dimensional self-injective algebra $\Pi$ that is twisted $(d+2)$-periodic and
			\item an invertible $\Pi$-bimodule $I$ such that $\Omega_{\Pi^e}^{d+2}(\Pi)\cong I$ in the stable category of $\Pi$-bimodules.
		\end{enumerate}
	\end{enumerate}
\end{mythm}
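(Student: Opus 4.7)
The plan is to exhibit explicit maps in both directions between the two classes of equivalence classes and show they are mutually inverse. The forward direction is the ``observation'' half of the correspondence, while the inverse direction---reconstructing a triangulated category from purely algebraic bimodule data---is the substantive content and the main obstacle.

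For the forward map, given $(\mathcal{C},T)$ I would set $\Pi = \End_\mathcal{C}(T)$ and $I = \Hom_\mathcal{C}(T,T[-d])$ with its natural $\Pi$-bimodule structure. Finiteness and basicness of $\Pi$ transfer directly from $T$ and the hom-finiteness of $\mathcal{C}$. The $d\Z$-cluster-tilting hypothesis, $\Hom_\mathcal{C}(T,T[i])=0$ unless $d\mid i$, together with the splitting of idempotents and the algebraic nature of $\mathcal{C}$, yields a Serre functor on $\mathrm{thick}(T)$ whose action on $T$ produces a Nakayama-type isomorphism $\Pi \cong D\Pi_\gamma$; this establishes self-injectivity of $\Pi$. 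To exhibit twisted $(d+2)$-periodicity, I would take a $d$-cluster-tilting resolution of $T$ in $\mathcal{C}$ of length $d+2$ (this uses that $T$ is in particular $d$-cluster-tilting), apply $\Hom_\mathcal{C}(T,-)$ to resolve $\Pi$ over $\Pi^e$, and identify the $(d+2)$-nd syzygy with $I$. Invertibility of $I$ follows from pairing with $\Hom_\mathcal{C}(T,T[d])$ via $2$-Calabi--Yau-type duality in the dg-enhancement.

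For the inverse map one cannot simply invert these formulas: from $(\Pi,I)$ one must construct a triangulated category. The natural device is a minimal $A_\infty$- or dg-enhancement. The plan is to endow $\Ext^*_\Pi(\Pi/\rad\Pi,\Pi/\rad\Pi)$ (or equivalently a suitable twisted-periodic Koszul-type model built from the $(d+2)$-periodic bimodule resolution of $\Pi$) with its minimal $A_\infty$-structure, use the datum of $I$ to splice the periodic resolution into an unbounded one, and take the perfect derived category of the resulting dg-category. One then takes $\mathcal{C}$ to be the idempotent completion of this perfect derived category and $T$ to be the image of $\Pi$; verifying the $d\Z$-cluster-tilting property of $T$ reduces to the periodicity of the chosen resolution.

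The main obstacle is showing that these two assignments are mutually inverse on equivalence classes. This reduces to an intrinsic-formality statement: any algebraic triangulated category enhancement producing the pair $(\Pi,I)$ is unique up to quasi-equivalence. The strategy is to measure obstructions in Hochschild cohomology: deformations and equivalences of $A_\infty$-structures lifting the given associative structure on $\Pi$ are controlled by $HH^{\geq 3}(\Pi,\Pi[-*])$ twisted by powers of $I$. The twisted $(d+2)$-periodicity of $\Pi$ makes these Hochschild groups themselves periodic, and I would combine this with the fact that, over a perfect field, the Wedderburn--Mal'tsev decomposition (Theorem~\ref{theorem - wedderburn-maltsev}) splits the semisimple part of $\Pi$ cleanly, killing the relevant obstructions in low degrees. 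This rigidity step is where perfectness of $\K$ enters essentially, and it is by far the hardest part of the argument; the remainder of the proof is then bookkeeping, checking that the two constructions intertwine and that equivalent input data produce equivalent output data on both sides.
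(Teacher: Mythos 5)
The first thing to note is that the paper does not prove this statement at all: it is quoted verbatim from \cite{jasso2023derived} (Theorem A there) and used as a black box, so there is no internal proof to compare your attempt against. What you have written is a sketch of the Jasso--Muro theorem itself, which is the main result of a substantial separate paper; it cannot be reproduced in a few paragraphs, and your proposal should be read as an outline rather than a proof.

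As an outline it does capture the correct overall shape of the actual argument: the forward direction $(\mathcal{C},T)\mapsto(\mathrm{End}_\mathcal{C}(T),\mathrm{Hom}_\mathcal{C}(T,T[-d]))$ is the easy half, the inverse requires building a dg (or $A_\infty$) enhancement out of the twisted-periodic bimodule resolution, and the crux is a uniqueness-of-enhancement statement whose obstructions live in Hochschild cohomology and are killed using twisted periodicity over a perfect field. But several individual steps would fail as written, and the decisive one is only asserted. First, your derivation of self-injectivity of $\Pi$ invokes a Serre functor on $\mathrm{thick}(T)$ and a ``Nakayama-type isomorphism''; the hypotheses of the theorem do not provide a Serre functor, and in the actual proof self-injectivity is extracted directly from the $d\Z$-cluster-tilting vanishing conditions. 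Second, invertibility of $I$ ``via $2$-Calabi--Yau-type duality'' imports a Calabi--Yau hypothesis that is not present in the statement. Third, and most importantly, the intrinsic-formality/rigidity step --- that any enhancement realizing $(\Pi,I)$ is unique up to quasi-equivalence --- is exactly the hard technical content of \cite{jasso2023derived}; it rests on a carefully constructed obstruction theory for Postnikov towers of $A_n$-structures, not merely on the observation that the Hochschild groups in degrees $\ge 3$ are periodic, and your paragraph gives no argument for why the relevant obstruction groups actually vanish. So the proposal correctly locates where the difficulty lies but does not close it, and in the context of this paper the honest course is simply to cite the result.
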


We compute non-trivial examples using $l$-homogeneous representation finite species.

We follow the construction in \cite{amiot2008petites, amiotclustercategories} (In particular Section~4 in \cite{amiotclustercategories}). Let $\Lambda$ be a finite dimensional algebra of global dimension at most $2$. We define $\Gamma$ to be the DG algebra $\Lambda\oplus D\Lambda[-3]$ with zero differential. We have a natural projection $\Gamma \to \Lambda$ which induces a restriction functor
\begin{equation}\label{eq - restriction functor}
	\mathcal{D}^b_\Lambda \to \mathcal{D}^b_\Gamma
\end{equation}
where $\mathcal{D}^b_\Lambda$ and $\mathcal{D}^b_\Gamma$ are the bounded derived categories of $\Lambda$ and $\Gamma$ respectively.

The Amiot cluster category is defined as
\begin{equation*}
	\mathcal{C}_\Lambda := \mathrm{thick}_{\mathcal{D}^b_\Gamma}(\Lambda)/\mathrm{perf}(\Gamma),
\end{equation*}
where $\mathrm{thick}_{\mathcal{D}^b_\Gamma}(\Lambda)$ is the smallest thick subcategory of $\mathcal{D}^b_\Gamma$ containing the image of $\Lambda$ through \eqref{eq - restriction functor} and $\mathrm{perf}(\Gamma)$ is the category containing all of the perfect complexes over $\Gamma$.

\begin{mylemma}\cite{amiot2008petites, amiotclustercategories}
	The restriction functor induces a functor $\pi:\mathcal{D}^b_\Lambda \to \mathcal{C}_\Lambda$ which commutes with the Serre functors of the two categories.
\end{mylemma}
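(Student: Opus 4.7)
The plan is to first establish that the restriction functor $\mathcal{D}^b_\Lambda \to \mathcal{D}^b_\Gamma$ factors through $\mathrm{thick}_{\mathcal{D}^b_\Gamma}(\Lambda)$, giving $\pi$ by composition with the Verdier quotient. This is essentially automatic since $\Lambda$ has finite global dimension, so every object of $\mathcal{D}^b_\Lambda$ is built from $\Lambda$ by finitely many shifts and cones, and the same therefore holds for its image in $\mathcal{D}^b_\Gamma$. To unpack the commutation statement, recall that the Serre functor on $\mathcal{D}^b_\Lambda$ is $\nu_\Lambda = - \otimes^L_\Lambda D\Lambda$, while $\mathcal{C}_\Lambda$ is $2$-Calabi--Yau so that $[2]$ serves as its Serre functor. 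The goal is thus to produce a natural isomorphism $\pi \circ \nu_\Lambda \cong [2] \circ \pi$.

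The central ingredient is the canonical short exact sequence of DG $\Gamma$-bimodules
\[
0 \to D\Lambda[-3] \to \Gamma \to \Lambda \to 0,
\]
arising directly from the decomposition $\Gamma = \Lambda \oplus D\Lambda[-3]$ as a graded bimodule with zero differential. Viewing $X \in \mathcal{D}^b_\Lambda$ as a DG $\Gamma$-module via restriction and applying $X \otimes^L_\Lambda -$ to the above produces a distinguished triangle
\[
X \otimes^L_\Lambda D\Lambda[-3] \to X \otimes^L_\Lambda \Gamma \to X \to X \otimes^L_\Lambda D\Lambda[-2]
\]
in $\mathcal{D}^b_\Gamma$. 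The middle term $X \otimes^L_\Lambda \Gamma$ is perfect over $\Gamma$: since $\gldim \Lambda \le 2$, the object $X$ admits a finite projective $\Lambda$-resolution, and its tensor product with $\Gamma$ is therefore a bounded complex of direct summands of finite direct sums of copies of $\Gamma$. Consequently this term vanishes in the Verdier quotient $\mathcal{C}_\Lambda$, and applying $\pi$ to the triangle yields an isomorphism $\pi(\nu_\Lambda X)[-3] \cong \pi(X)[-1]$, i.e.\ $\pi(\nu_\Lambda X) \cong \pi(X)[2]$, natural in $X$.

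The main obstacle will be the naturality bookkeeping. One must check that the defining sequence of $\Gamma$ genuinely lifts to DG bimodules so that tensoring with $X$ gives a triangle natural in $X$; that the vanishing of the middle term in $\mathcal{C}_\Lambda$ is compatible with morphisms between different choices of $X$; and that the resulting isomorphism $\pi \circ \nu_\Lambda \cong [2] \circ \pi$ coincides with the one canonically induced by the Serre duality data on both sides, rather than merely being an abstract identification of the underlying functors. Once this categorical hygiene is in place, the triangle argument closes the proof without further computation, and the result is precisely the commutation statement recorded in \cite{amiot2008petites, amiotclustercategories}.
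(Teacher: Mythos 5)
Your argument is correct and is essentially the proof from the cited sources \cite{amiot2008petites, amiotclustercategories}: the paper itself offers no proof of this lemma, and Amiot's own argument is exactly your bimodule triangle $D\Lambda[-3]\to\Gamma\to\Lambda$ tensored with $X$, whose middle term $X\otimes^L_\Lambda\Gamma$ is perfect (as $\gldim\Lambda\le 2$) and hence vanishes in the Verdier quotient, yielding the natural isomorphism $\pi(\nu_\Lambda X)\cong \pi(X)[2]$. No gaps.
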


\begin{mythm}\label{theorem - amiot}\cite{amiot2008petites, amiotclustercategories}
	If $\Lambda$ is $\tau_2$-finite, i.e. $\tau_2^{-i}\Lambda = 0$ for sufficiently large $i$, then:
	\begin{enumerate}
		\item The Amiot cluster category $\mathcal{C}_\Lambda$ is $2$-Calabi--Yau.
		\item $\pi\Lambda$ is a $2$-cluster tilting object in the Amiot cluster category $\mathcal{C}_\Lambda$. Moreover, the endomorphism algebra $\mathrm{End}_{\mathcal{C}_\Lambda}(\pi\Lambda)^{op}$ is isomorphic to $\Pi_3(\Lambda)$.
	\end{enumerate}
\end{mythm}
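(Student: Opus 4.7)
Since both assertions are attributed to Amiot, my plan is to set up the hypotheses feeding Amiot's general machinery and then identify the resulting endomorphism algebra with $\Pi_3(\Lambda)$ in the spirit of Keller's work on Calabi--Yau completions. The first step is to verify that the DG algebra $\Gamma = \Lambda\oplus D\Lambda[-3]$ is bimodule $3$-Calabi--Yau, i.e.\ $\mathrm{RHom}_{\Gamma^e}(\Gamma, \Gamma^e)\simeq \Gamma[-3]$ in $\mathcal{D}(\Gamma^e)$. Because $\gldim(\Lambda)\le 2$, the algebra $\Lambda$ is smooth over $\K$, and the shifted summand $D\Lambda[-3]$ sits in exactly the position needed for this self-duality; this is the homological content encoded in the definition of $\Gamma$.

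The second step is to invoke the $\tau_2$-finiteness hypothesis. It plays the role of Jacobi-finiteness and is precisely what is needed to guarantee that $\mathrm{perf}(\Gamma)\subseteq \mathrm{thick}_{\mathcal{D}^b_\Gamma}(\Lambda)$ has Hom-finite Verdier quotient. I would then apply the Amiot--Guo construction to obtain (1): $\mathcal{C}_\Lambda$ is Hom-finite, Krull--Schmidt, and admits a Serre functor isomorphic to $[2]$, hence is $2$-Calabi--Yau. For the cluster-tilting half of (2), the approximation triangles in $\mathrm{perf}(\Gamma)$ descend to exchange triangles in $\mathcal{C}_\Lambda$ exhibiting $\pi\Lambda$ as $2$-cluster tilting; this step is formal given the Calabi--Yau property and the identification $\mathrm{Ext}^1_{\mathcal{C}_\Lambda}(\pi\Lambda, \pi\Lambda) = 0$, which itself follows from $\gldim(\Lambda)\le 2$.

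The main obstacle will be to identify $\mathrm{End}_{\mathcal{C}_\Lambda}(\pi\Lambda)^{op}$ with $\Pi_3(\Lambda)$. My approach would be to filter the Hom space in the Verdier quotient by the number of times one uses the $D\Lambda[-3]$-summand of $\Gamma$, matching the $i$th piece with $\mathrm{Hom}_{\mathcal{D}^b_\Lambda}(\Lambda, \tau_2^{-i}\Lambda)$ via the identification $\tau_2^{-1}(-)\simeq -\otimes^{\mathbb{L}}_\Lambda \mathrm{Ext}^2_\Lambda(D\Lambda, \Lambda)$, which is valid since $\gldim(\Lambda)\le 2$. Summing these pieces over $i\ge 0$ recovers the decomposition of Definition~\ref{Definition - preprojective algebra BGL perspective}, and checking that composition in the Verdier quotient realises the multiplication of $\Pi_3(\Lambda)$ completes the proof.
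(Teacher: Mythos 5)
First, a remark on scope: the paper gives no proof of this theorem --- it is imported verbatim from Amiot's work --- so what you are reconstructing is Amiot's argument, not anything internal to this article. Your overall architecture (a Calabi--Yau property for $\Gamma$, $\tau_2$-finiteness to get Hom-finiteness of the Verdier quotient, and a graded decomposition of $\mathrm{Hom}_{\mathcal{C}_\Lambda}(\pi\Lambda,\pi\Lambda)$ matching $\bigoplus_{i\ge 0}\Hom_\Lambda(\Lambda,\tau_2^{-i}\Lambda)$) is the right one.

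The genuine gap is in your first step. The DG algebra used here, $\Gamma=\Lambda\oplus D\Lambda[-3]$ with zero differential, is the \emph{trivial extension}: it is proper (finite-dimensional total cohomology) but in general not homologically smooth, so the condition you propose to verify, $\mathrm{RHom}_{\Gamma^e}(\Gamma,\Gamma^e)\simeq\Gamma[-3]$, fails. Already for $\Lambda=\K$ one gets $\Gamma=\K[x]/(x^2)$ with $x$ in cohomological degree $3$, which is not smooth; smoothness of $\Lambda$ does not propagate to its trivial extension. The relevant duality is the proper one: $D\Gamma=D\Lambda\oplus\Lambda[3]\cong\Gamma[3]$ as $\Gamma$-bimodules, which yields a Serre-type isomorphism $D\mathrm{Hom}(X,Y)\cong\mathrm{Hom}(Y,X[3])$ for $X\in\mathrm{perf}(\Gamma)$ and $Y\in\mathrm{thick}_{\mathcal{D}^b_\Gamma}(\Lambda)$, and hence the $2$-Calabi--Yau property of the quotient $\mathrm{thick}_{\mathcal{D}^b_\Gamma}(\Lambda)/\mathrm{perf}(\Gamma)$. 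Note that this quotient goes in the opposite direction from the smooth Calabi--Yau-completion picture ($\mathrm{perf}$ modulo the finite-dimensional derived category) that your step 1 implicitly sets up; conflating the two is not cosmetic, since the smoothness-based argument gives you nothing here. A smaller but real gap: rigidity of $\pi\Lambda$ does not follow ``formally'' from $\gldim\Lambda\le 2$; one needs the vanishing of $\mathrm{Hom}_{\mathcal{D}^b_\Lambda}(\Lambda,\nu_2^{-i}\Lambda[p])$ for all $p>0$ and $i\ge 0$, where $\nu_2$ is the shifted derived Nakayama functor (this is Amiot's key lemma, and is also where $\tau_2^{-i}\Lambda$ rather than its derived analogue legitimately appears in your graded pieces), and maximality of $\pi\Lambda$ among rigid objects requires its own argument on top of rigidity.
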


\begin{myrem}
	When the algebra $\Lambda$ is $2$-representation finite, it is also $\tau_2$-finite, and therefore satisfies the condition in Theorem~\ref{theorem - amiot}. Moreover, if $\Pi_3(\Lambda)$ self-injective then by Proposition~\ref{Prop - selfinjectie T=T[2]} we have that
	\begin{equation*}
		(\pi\Lambda)[2] \cong \pi\Lambda.
	\end{equation*}
	Hence $\pi\Lambda$ is a $2\Z$-cluster tilting object in $\mathcal{C}_\Lambda$ and in turn fits into Theorem~\ref{theorem - Jasso-Muro}.
\end{myrem}

We can construct examples of $2$-representation finite $\K$-algebras by using algebras that fall in the following definition and using the corollary that follows.

\begin{mydef}\cite[Definition 1.2]{herschend2011n}
	We say that an $d$-representation finite algebra $\Lambda$ is $l$-homogeneous if $\tau_d^{l-1} D\Lambda \cong \Lambda$.
\end{mydef}

\begin{mycor}\cite[Corollary 1.5]{herschend2011n}\label{cor - A x B is n_1 + n_2 RF}
	Let $\mathbb{K}$ be a perfect field and $l$ a positive integer. If $\Lambda_i$ is $l$-homogeneous $d_i$-representation finite for each $i\in \{1, 2\}$, then $\Lambda_1\otimes_\K \Lambda_2$ is an $l$-homogeneous $(d_1 + d_2)$-representation-finite algebra with a $(d_1 + d_2)$-cluster tilting module $\bigoplus_{i=0}^{l-1}(\tau_{d_1}^{-i}\Lambda_1\otimes_\K \tau_{d_2}^{-i}\Lambda_2)$.
\end{mycor}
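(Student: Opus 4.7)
The plan is to leverage a Künneth-type formula for Ext over $\Lambda = \Lambda_1\otimes_\K \Lambda_2$. Since $\K$ is perfect and each $\Lambda_i$ is finite-dimensional, tensoring a projective resolution of $M_i\in \mathrm{mod}\,\Lambda_i$ produces a projective resolution of $M_1\otimes_\K M_2$ over $\Lambda$, giving
\begin{equation*}
\mathrm{Ext}^n_\Lambda(M_1\otimes_\K M_2,\, N_1\otimes_\K N_2) \cong \bigoplus_{p+q=n} \mathrm{Ext}^p_{\Lambda_1}(M_1, N_1)\otimes_\K \mathrm{Ext}^q_{\Lambda_2}(M_2, N_2).
\end{equation*}
In particular $\gldim(\Lambda) = d_1+d_2$. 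Combining this with $\tau_d = D\mathrm{Ext}^d(-, \Lambda)$, and using that each $\tau_{d_i}^{-j}\Lambda_i$ has projective dimension at most $d_i$ (as it lies in the $d_i$-cluster tilting subcategory of $\mathrm{mod}\,\Lambda_i$), I would derive the compatibility
\begin{equation*}
\tau_{d_1+d_2}^{-j}(\Lambda_1\otimes_\K \Lambda_2) \cong \tau_{d_1}^{-j}\Lambda_1\otimes_\K \tau_{d_2}^{-j}\Lambda_2, \quad 0\le j\le l-1,
\end{equation*}
which identifies the indecomposable summands of the proposed $M$ with iterates of the higher AR-translates of projectives.

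From here two of the three statements follow quickly. The $l$-homogeneity is immediate:
\begin{equation*}
\tau_{d_1+d_2}^{l-1}D\Lambda \cong \tau_{d_1}^{l-1}D\Lambda_1\otimes_\K \tau_{d_2}^{l-1}D\Lambda_2 \cong \Lambda_1\otimes_\K \Lambda_2 = \Lambda,
\end{equation*}
using the $l$-homogeneity of each factor. The self-Ext vanishing $\mathrm{Ext}^k_\Lambda(M, M) = 0$ for $1\le k \le d_1+d_2-1$ follows directly from the Künneth formula, since every summand in the resulting sum involves some $\mathrm{Ext}^p_{\Lambda_1}$ or $\mathrm{Ext}^q_{\Lambda_2}$ between the $\tau_{d_i}^{-j}\Lambda_i$ in an intermediate degree $0 < p < d_1$ or $0 < q < d_2$, all of which vanish by the $d_i$-cluster tilting hypothesis on each factor.

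The main obstacle is verifying the $(d_1+d_2)$-cluster tilting condition itself: that every $X\in \mathrm{mod}\,\Lambda$ with $\mathrm{Ext}^k_\Lambda(M, X) = 0$ for $1\le k \le d_1+d_2-1$ lies in $\add(M)$. The subtlety is that not every indecomposable $\Lambda$-module factors as a tensor product, so the Künneth formula does not apply directly to $X$. My approach would be to exploit the $l$-homogeneous structure of each factor to build $(d_1+d_2)$-almost split sequences for tensor-product modules by splicing $d_i$-almost split sequences from each factor, and then use these together with the Ext-vanishing hypothesis to show inductively that $X$ must be in $\add(M)$. This step essentially reproduces the argument of \cite{herschend2011n} and carries the genuine technical content; the remainder is Künneth bookkeeping.
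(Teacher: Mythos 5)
The paper offers no proof of this statement at all: it is quoted verbatim from Herschend--Iyama, where it is deduced from their main theorem identifying $l$-homogeneous $d$-representation finite algebras with twisted fractionally Calabi--Yau algebras of dimension $\tfrac{d(l-1)}{l}$, the corollary then following because global dimension and these CY-dimensions (sharing the denominator $l$) are additive under $\otimes_\K$. Your direct K\"unneth route is therefore a genuinely different strategy, but already its ``easy'' half has a hole. In the K\"unneth expansion of $\mathrm{Ext}^k_\Lambda(M,M)$ the bidegrees $(p,q)=(d_1,0)$ and $(0,d_2)$ occur (for $k=d_1$, resp.\ $k=d_2$, both of which lie in the range $1\le k\le d_1+d_2-1$), and neither factor there sits in an ``intermediate degree'', so the vanishing you invoke does not cover these terms. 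Indeed $\mathrm{Ext}^{d_2}_{\Lambda_2}(\tau_{d_2}^{-i}\Lambda_2,\tau_{d_2}^{-j}\Lambda_2)$ is Serre-dual to a Hom-space into $\tau_{d_2}^{-(i-1)}\Lambda_2$ and is typically nonzero when $j\le i-1$; it only dies after being tensored with $\mathrm{Hom}_{\Lambda_1}(\tau_{d_1}^{-i}\Lambda_1,\tau_{d_1}^{-j}\Lambda_1)$, which vanishes for $j<i$ by the directedness of the $\tau_{d_1}^{-}$-orbit of $\Lambda_1$. This interplay is precisely why the \emph{diagonal} sum $\bigoplus_i\tau_{d_1}^{-i}\Lambda_1\otimes_\K\tau_{d_2}^{-i}\Lambda_2$ is the correct module while the full rectangle of indices is not, so the step you wave through as K\"unneth bookkeeping is actually where the synchronisation of the two orbits --- the whole point of $l$-homogeneity --- enters.

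The larger gap is the maximality condition: you explicitly defer the proof that any $X$ with $\mathrm{Ext}^k_\Lambda(M,X)=0$ for $1\le k\le d_1+d_2-1$ lies in $\mathrm{add}(M)$ to ``reproducing the argument of the reference''. That is the entire difficulty, since K\"unneth says nothing about $\Lambda$-modules that do not factor as tensor products, and the splicing-of-almost-split-sequences strategy you sketch is neither how the cited paper proceeds (it avoids the issue altogether via the fractional Calabi--Yau characterisation) nor obviously workable without substantial further argument. As written, the proposal is an outline of the routine half of the statement together with an acknowledged IOU for the part that carries the mathematical content, so it cannot be accepted as a proof.
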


The following theorem by Dlab and Ringel describes all species that are representation finite.

\begin{mythm}\label{theorem - rep finite species is dynkin}\cite[Theorem B]{dlab1975algebras}
	An acyclic species $S$ is representation finite, i.e. the modules category $T(S)\mathrm{-mod}$ is representation finite, if and only if its diagram if a finite disjoint union of Dynkin diagrams.
\end{mythm}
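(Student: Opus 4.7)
The plan is to prove the result by translating it to a question about a quadratic form attached to $S$ and then invoking a Gabriel-style bijection between indecomposables and positive roots. Concretely, define the Tits form $q_S : \Z^{Q_0} \to \Z$ by
\begin{equation*}
    q_S(\underline{d}) = \sum_{i \in Q_0} \frac{\dim_\K D_i}{[D_i : \K]} d_i^2 - \sum_{\alpha \in Q_1} \frac{\dim_\K M_\alpha}{[\K : \K]} d_{s(\alpha)} d_{t(\alpha)},
\end{equation*}
normalised (using the centre fields of the $D_i$) so that for $S$ a $\K$-species it agrees with the Tits form of the associated valued graph. The first reduction is to replace $S$ by its basic $\K$-species using Proposition~\ref{proposition - morita equivalent k-species}, which preserves $T(S)\mathrm{-mod}$ up to equivalence and so does not change representation type or the underlying valued graph. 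From the Cartan–Killing classification, $q_S$ is positive definite precisely when the underlying valued graph is a disjoint union of Dynkin diagrams; this is the combinatorial input that will drive both implications.

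For the ``if'' direction I would adapt the Bernstein–Gelfand–Ponomarev reflection functors to the species setting, following Dlab–Ringel. For each sink (resp.\ source) $i$ of $Q$, one defines $\sigma_i^\pm: T(S)\mathrm{-mod} \to T(\sigma_i S)\mathrm{-mod}$ using the dualising pair $\{M, M^*; \b_S\}$ of Remark~\ref{remark - dualising condition}, and shows these functors send indecomposables other than the simple $S_i$ to indecomposables, with dimension vectors transforming by the simple reflection $s_i$ in the Weyl group of $q_S$. In the Dynkin case, iterated application of the Coxeter element on a fixed admissible ordering reaches every vertex of the Auslander–Reiten quiver after finitely many steps (since the Weyl group is finite and the $s_i$-orbit of any simple root is finite). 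A standard dimension argument, using that $q_S(\underline{\dim} X) = 1$ for every indecomposable $X$, then produces a bijection between isomorphism classes of indecomposables and positive roots of $q_S$; finiteness of the latter gives representation-finiteness.

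For the ``only if'' direction I would show the contrapositive: if the underlying valued graph is not a disjoint union of Dynkin diagrams, then it contains a connected component whose Tits form is either positive semidefinite but degenerate (Euclidean / extended Dynkin) or indefinite (wild). In either case one exhibits infinitely many non-isomorphic indecomposables. In the Euclidean case, construct a one-parameter family of regular indecomposables of minimal imaginary-root dimension (the species analogues of tubes over tame hereditary algebras). In the indefinite case, use a subdiagram with non-positive Tits form on which one can mimic the classical construction for the Kronecker-type or four-subspace-type situation to produce infinite discrete families.

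The main obstacle, and the part where there is real content beyond assembling inputs, is the construction and analysis of the species reflection functors: one has to verify exactness properties, the interaction with $\b_S$ and with dualising, and the compatibility with the transformation of dimension vectors by $s_i$, all of which require care because the $D_i$ are not just fields. Once those functors are in place and behave as expected, combined with the purely combinatorial classification of positive-definite (respectively non-positive-definite) Cartan matrices, both implications follow. In the write-up I would refer the reader to \cite{dlab1975algebras} for the detailed case analysis rather than reproducing it.
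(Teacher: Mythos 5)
The paper does not actually prove this statement: it is quoted from Dlab--Ringel \cite[Theorem B]{dlab1975algebras} and used as a black box, so there is no internal proof to compare against. Your outline --- reduce to a basic $\K$-species via Proposition~\ref{proposition - morita equivalent k-species}, attach the Tits form of the underlying valued graph, construct BGP-style reflection/Coxeter functors for species, match indecomposables with positive roots in the Dynkin case, and exhibit infinitely many indecomposables otherwise --- is precisely the strategy of the cited memoir, so in substance you are reconstructing the quoted proof rather than offering an alternative route.

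Two small points are worth tightening. First, your normalisation of $q_S$ as written is garbled (the factor $[\K:\K]$ is identically $1$); the correct weights are $f_i=\dim_\K D_i$ on the squares and $\dim_\K M_\alpha$ on the cross terms, which you implicitly concede by deferring to ``the Tits form of the associated valued graph.'' Second, and more substantively, in the ``only if'' direction a ``one-parameter family of regular indecomposables of minimal imaginary-root dimension'' need not be infinite when $\K$ is a finite field: the parametrising set is a projective line over a finite division ring and hence finite, while the paper's standing hypothesis is only that $\K$ is perfect. The robust argument --- and the one used in \cite{dlab1975algebras} --- is that for a Euclidean or wilder component the Coxeter transformation has infinite order on the root lattice, so the preprojective modules $\tau^{-n}P_i$, $n\ge 0$, already form an infinite family of pairwise non-isomorphic indecomposables (equivalently, each tube contains indecomposables of unbounded length). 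With that adjustment your sketch is a faithful account of the cited result.
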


From the article \cite{soderberg2022preprojective} we have a complete list of species with their tensor algebras being representation finite and $l$-homogeneous.

\begin{mycor}\label{cor - l homogeneous species}\cite[Corollary 3.7]{soderberg2022preprojective}
	Let $S$ be a representation finite species. Then $S$ is $l$-homogeneous if $Q$ is stable under its Nakayama permutation $\sigma$. Moreover, for the different cases, the integer $l$ is
	\begin{center}
		\begin{tabular}{c|c|c|c|c|c|c|c|c|c}
			$\Delta$ & $A_n$ & $B_n$ & $C_n$ & $D_n$ & $E_6$ & $E_7$ & $E_8$ & $F_4$ & $G_2$ \\\hline
			$l$ & $\frac{n+1}{2}$ & $n$ & $n$ & $n-1$ & $6$ & $9$ & $15$ & $6$ & $3$
		\end{tabular}
	\end{center}
\end{mycor}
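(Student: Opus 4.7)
The plan is to translate the statement into Auslander--Reiten theory for the tensor algebra $T(S)$. By Theorem~\ref{theorem - rep finite species is dynkin}, $T(S)$ is a representation finite hereditary $\K$-algebra whose AR quiver is determined by the underlying Dynkin diagram $\Delta$. For such algebras, the AR translation $\tau = D\mathrm{Ext}^1_{T(S)}(-, T(S))$ sends the $\tau$-orbit of each indecomposable projective $P_i = T(S)e_i$ through some number $l_i$ of steps to the corresponding injective $I_i = D(e_i T(S))$, in the sense that $\tau^{l_i - 1}(I_i) \cong P_i$ and no shorter power works. By the very definition of the Nakayama permutation $\sigma$ we have $I_i \cong P_{\sigma(i)}$, so $S$ is $l$-homogeneous in the sense of the paper precisely when $l_i = l$ for every $i$.

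First I would show that the hypothesis that $Q$ is stable under $\sigma$ forces the $l_i$ to be constant. The Nakayama permutation then lifts to a quiver automorphism of $Q$, and hence to an automorphism of the AR quiver that permutes the projective vertices among themselves. Since the Dynkin diagram $\Delta$ is connected and automorphisms of the translation quiver $\Z\Delta$ preserve $\tau$-orbit lengths, this symmetry immediately gives $l_i = l_j$ for all $i, j \in Q_0$. Equivalently, $\sigma$ may be identified with (a diagram automorphism composed with) the standard Nakayama automorphism of $\Z\Delta$, which acts as $\tau^{-(l-1)}$ shifted by the diagram automorphism.

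Next I would pin down the numerical value of $l$ by a total count of indecomposables. The number of indecomposable $T(S)$-modules equals the number of positive roots of the root system associated to $\Delta$, which equals $nh/2$ where $n = |Q_0|$ and $h$ is the Coxeter number of $\Delta$. Since the AR quiver decomposes into $n$ disjoint $\tau$-orbits of common length $l$, one gets $nl = nh/2$, so $l = h/2$. Substituting the Coxeter numbers ($h = n+1$ for $A_n$; $h = 2n$ for $B_n, C_n$; $h = 2n-2$ for $D_n$; $h = 12, 18, 30, 12, 6$ for $E_6, E_7, E_8, F_4, G_2$ respectively) reproduces the table in the corollary.

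The main obstacle will be the first step, namely a clean argument that stability of $\sigma$ forces equal orbit lengths. For some Dynkin types this is automatic because $\sigma$ is the identity and the AR quiver has only one $\tau$-orbit shape (e.g.\ $B_n, C_n, E_7, E_8, F_4, G_2, D_{2k}$), but for the types with non-trivial Nakayama permutation ($A_n$, $D_{2k+1}$, $E_6$) the orientation of $Q$ genuinely matters: certain orientations yield unequal orbit lengths, and exactly the orientations with $\sigma$-stable $Q$ exclude these pathological cases. Verifying this will likely require invoking the explicit shape of the AR quiver as the mesh category $\Z\Delta/\langle \tau^{h/2} \cdot \nu \rangle$ for the appropriate diagram automorphism $\nu$, but since the entire corollary is cited from \cite[Corollary 3.7]{soderberg2022preprojective} the bookkeeping has already been done there.
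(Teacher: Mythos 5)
The paper offers no argument for this corollary at all: it is imported verbatim from \cite[Corollary 3.7]{soderberg2022preprojective}, so there is no internal proof to compare against, and your proposal has to be judged as a free-standing reconstruction. Its skeleton is the natural one, and the counting step is correct: by Dlab--Ringel the indecomposable $T(S)$-modules are in bijection with the positive roots, of which there are $nh/2$; each of the $n$ $\tau$-orbits contains exactly one projective and one injective; so if all orbits have a common length $l$ then $nl = nh/2$, i.e. $l = h/2$, and substituting the Coxeter numbers reproduces the table exactly.

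The gap is where you suspect it, and the patch you sketch does not close it. A quiver automorphism of $Q$ induces an automorphism of the AR quiver commuting with $\tau$ and sending $P_i$ to $P_{\sigma(i)}$; this yields only $l_i = l_{\sigma(i)}$, i.e. constancy of orbit lengths along $\sigma$-orbits, not $l_i = l_j$ for all $i,j$ as you assert. It is vacuous precisely for the types $B_n, C_n, D_{2k}, E_7, E_8, F_4, G_2$ where $\sigma$ is trivial, and for $A_n, D_{2k+1}, E_6$ it only relates $i$ to $\sigma(i)$. The missing ingredient is the classical, orientation-independent relation $l_i + l_{\sigma(i)} = h$, where $\sigma$ is read as the diagram involution induced by $-w_0$ (test it on linearly oriented $A_3$: orbit lengths $1,2,3$ pair up to $h = 4$). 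Granting that relation, $\sigma$-stability gives $l_i = l_{\sigma(i)}$, hence $2l_i = h$ for every $i$ --- which proves the whole corollary and even makes the root count redundant. Without it, your remark that for the $-w_0 = 1$ types ``the AR quiver has only one $\tau$-orbit shape'' is an assertion of the conclusion rather than an argument. In short: right strategy and correct numerics, but the single lemma that does all the work, $l_i + l_{\sigma(i)} = h$, is neither stated nor proved, and the step you call ``immediate'' is a non sequitur.
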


\begin{myrem}
	By \cite[Lemma 3.3]{soderberg2022preprojective} we can view $\sigma$ as a diagram automorphism, and therefore can check if the orientation of the quiver $Q$ is preserved under the action of $\sigma$.
\end{myrem}

Let $\Lambda = T(S^1)\otimes_\K T(S^2)$ where $S^i$ is a $l$-homogeneous representation finite species for each $i\in \{1, 2\}$. By Corollary~\ref{cor - A x B is n_1 + n_2 RF} there is a $2$-cluster tilting $\Lambda$-module $M = \bigoplus_{i=0}^{l-1}(\tau^{-i}\Lambda_1\otimes_\K \tau^{-i}\Lambda_2)$. We set our $2$-Calabi--Yau triangulated category to $\mathcal{C} = \mathcal{C}_\Lambda$. Also let $(S, W)$ be as in Proposition~\ref{prop - preprojective algebra of tensor species}. Since $\mathrm{End}_\mathcal{C}(\pi\Lambda)^{op} = \Pi_3(\Lambda)$ we get a functor $\Phi: \mathrm{proj}\widehat{T}(S)\to \mathcal{C}$ such that $\Phi(P) = T = \pi\Lambda\in \mathcal{C}$, which induces an isomorphism $\mathcal{P}(S, W)\cong \mathrm{End}_\mathcal{C}(T)^{op}$.

\begin{mylemma}\label{lemma - tensor product is mutable}
	In the above setting $T$ satisfies the mutation compatibility condition at every vertex $k\in Q_0$.
\end{mylemma}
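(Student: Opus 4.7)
The plan is to exploit the fact that in the Amiot cluster category $\mathcal{C}_\Lambda$, every indecomposable summand $T_k = \pi(P_k)$ comes from an explicit indecomposable projective $\Lambda$-module $P_k = P_i^1 \otimes_\K P_j^2$ (where $k = (i,j) \in Q_0^1 \times Q_0^2 = Q_0$), and the entire weak $2$-almost split sequence and associated exchange triangles at $k$ arise by applying $\pi$ to a minimal resolution in $\mathcal{D}^b(\Lambda)$. Since $\pi$ is triangulated, the entire diagram \eqref{eq - mut cond diagram} can then be manipulated at the level of $\mathcal{D}^b(\Lambda)$.

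First I would identify $D_k = \End_\Lambda(P_k) = D_i^1 \otimes_\K D_j^2$ and observe that $\Phi|_{D_k}$ is the restriction of the natural action of $\End_\Lambda(P_k)$ on $\pi(P_k)$ induced by the triangulated functor $\pi$. In particular, $D_k$ acts functorially on every object in the image of $\pi$. Next I would exhibit the complex $U_{k1} \xrightarrow{f_{k1}} U_{k0} \xrightarrow{f_{k0}} T_k$ of the weak $2$-almost split sequence \eqref{eq - phi 2 almost split sequence} as $\pi$ applied to a truncation of the minimal projective resolution of the simple $\Lambda$-module $S_k$ (or equivalently the Iyama--Oppermann projective sequence associated with the $\tau_2$-orbit of $P_k$). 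The action of $\lambda \in D_k$ on $P_k = P_k^0$ lifts along the projective covers, and the minimality of the resolution together with Theorem~\ref{theorem - wedderburn-maltsev} makes this lift unique up to choices of semisimple parts; after these choices, the lifts coincide with $\Phi(\tilde\lambda)$ and $\Phi(\hat\lambda)$ via the matrix identifications in Lemma~\ref{lemma - lambda tilde and lambda hat}.

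Then I would realise $T_k^*$ as the image under $\pi$ of the intermediate term obtained by splitting this resolution into two short exact sequences; this is exactly what produces the two exchange triangles of (ii) in Lemma~\ref{Lemma - 5 conditions}, and both are $\pi$-images of honest distinguished triangles in $\mathcal{D}^b(\Lambda)$. By triangulated functoriality applied to the compatible triple $(\lambda, \hat\lambda, \tilde\lambda)$, there is a morphism on the third term of each triangle making the two adjacent squares commute, and applying $\pi$ defines $q_\lambda \in \End_\mathcal{C}(T_k^*)^{op}$. The fact that $\lambda \mapsto q_\lambda$ is a $\K$-algebra morphism follows because the assignment $\lambda \mapsto (\lambda, \hat\lambda, \tilde\lambda)$ on the resolution is multiplicative, and this multiplicativity transfers to $q_-$ via the uniqueness of completions of commuting squares modulo the summands that vanish after passing to $\mathcal{C}_\Lambda$.

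The hard part will be pinning down that $T_k^*$, the dotted connecting morphisms in \eqref{eq - mut cond diagram}, and the maps $g_k$ and $h_k$ really are $\pi$ of explicit morphisms in $\mathcal{D}^b(\Lambda)$; this forces a careful bookkeeping argument using the DG algebra $\Gamma = \Lambda \oplus D\Lambda[-3]$ and identifying the precise triangle in $\mathcal{D}^b_\Gamma$ whose image under the quotient to $\mathcal{C}_\Lambda$ yields the exchange triangle. Once this identification is nailed down, verifying that $q_-$ makes \eqref{eq - mut cond diagram} commute reduces to the compatibility of the natural $D_k$-action with $\pi$, which is automatic.
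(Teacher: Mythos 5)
Your overall strategy --- descending from $\mathcal{C}=\mathcal{C}_\Lambda$ to $\Lambda$ itself, where the middle object of the exchange triangles acquires genuine universal properties --- is the same as the paper's, and realising $T_k^*$ as the intermediate term obtained by splitting the four-term sequence into two short exact sequences is exactly the right idea. But as written the proposal has two genuine gaps.

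First, you defer the identification of the weak $2$-almost split sequence \eqref{eq - phi 2 almost split sequence} with an explicit complex over $\Lambda$ to ``careful bookkeeping'' with the DG algebra $\Gamma$; this identification is the substantive half of the proof and cannot be left open. The paper supplies it concretely: it writes down the almost split sequences starting at $P_i^1$ and $P_j^2$ over $T(S^1)$ and $T(S^2)$ and invokes \cite[Theorem 4.11]{pasquali2019tensor}, so that the $2$-almost split sequence starting at $P_i^1\otimes_\K P_j^2$ is the mapping cone \eqref{eq - mapping cone 2 almost split}, whose image under $\mathrm{mod}(\Lambda)\to\mathcal{C}_\Lambda$ is precisely \eqref{eq - phi 2 almost split sequence}. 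Note also that the relevant complex is not a truncation of the minimal projective resolution of the simple $\Lambda$-module $S_k$ (that resolution lives over $\mathcal{P}(S,W)$, cf.\ \eqref{eq - Exact except at one place sequences 1}); it is the $2$-almost split sequence $P_k\to U'_{k1}\to U'_{k0}\to\tau_2^-P_k$ in $\mathrm{mod}\,\Lambda$, whose final term is $\tau_2^-P_k$ rather than $P_k$.

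Second, constructing $q_\lambda$ by completing a morphism of distinguished triangles does not suffice: such completions are not unique, so neither the multiplicativity of $\lambda\mapsto q_\lambda$ nor the simultaneous commutativity of both halves of \eqref{eq - mut cond diagram} follows from ``triangulated functoriality'', and the appeal to uniqueness ``modulo the summands that vanish in $\mathcal{C}_\Lambda$'' is not an argument. The paper avoids this entirely by working in the abelian category, where $P_k^*=\coker(f'_{k2})=\ker(f'_{k0})$: the cokernel property yields a unique $q'_\lambda$ making the left half commute, the kernel property a unique $q''_\lambda$ for the right half, and the computation $g'_k\circ q''_\lambda\circ h'_k = \tilde{\lambda}\circ f'_{k1} = g'_k\circ q'_\lambda\circ h'_k$ combined with $h'_k$ being an epimorphism and $g'_k$ a monomorphism forces $q'_\lambda=q''_\lambda$; uniqueness then gives that $q_-$ is an algebra morphism for free. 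To salvage your version you would need to reproduce this mono/epi argument anyway, which in effect means carrying out the module-category computation the paper performs.
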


\begin{proof}
	We will prove this in two steps. First we will show that weak $2$-almost split sequences in $\mathcal{C}$ correspond to $2$-almost split sequences in $\add(M)\subseteq\mathrm{mod}(\Lambda)$. Then using the properties of $2$-almost split sequences we can show that $T$ satisfies the mutation compatibility condition at $k = (i, j)\in (Q^1\tilde{\otimes}Q^2)_0$.

	Note that if $S^1$ and $S^2$ are $1$-homogeneous, then $\Lambda$ is semisimple and $T$ automatically satisfies the mutation compatibility condition for every vertex. If this is not the case, then $P_i^1$ and $P_j^2$ are non-injective. The almost split sequences starting at $P_i^1$ and $P_j^2$ are
	\begin{equation*}
		P^1_i \xrightarrow{\begin{bmatrix}
			(\underline{\alpha})_\alpha \\
			(\underline{\alpha^*})_{\alpha^*}
		\end{bmatrix}} \bigoplus_{\substack{\alpha\in Q^1_1 \\ t(\alpha) = i}} (P^1_{s(\alpha)})^{|\underline{\alpha}|} \oplus \bigoplus_{\substack{\alpha^*\in (Q^1_1)^* \\ t(\alpha^*) = i}} \tau^-(P^1_{s(\alpha^*)})^{|\underline{\alpha^*}|} \xrightarrow{\begin{bmatrix}
			(\overline{\alpha^*})_\alpha & -(\overline{\alpha})_{\alpha^*}
		\end{bmatrix}} \tau^- (P^1_i)
	\end{equation*}
	and
	\begin{equation*}
		P^2_j \xrightarrow{\begin{bmatrix}
			(\underline{\beta})_\beta \\
			(\underline{\beta^*})_{\beta^*}
		\end{bmatrix}} \bigoplus_{\substack{\beta\in Q^2_1 \\ t(\beta) = j}} (P^2_{s(\beta)})^{|\underline{\beta}|} \oplus \bigoplus_{\substack{\beta^*\in (Q^2_1)^* \\ t(\beta^*) = j}} \tau^-(P^2_{s(\beta^*)})^{|\underline{\beta^*}|} \xrightarrow{\begin{bmatrix}
			(\overline{\beta^*})_\beta & -(\overline{\beta})_{\beta^*}
		\end{bmatrix}} \tau^- (P^2_j).
	\end{equation*}
	Applying \cite[Theorem 4.11]{pasquali2019tensor} yields the $2$-almost split sequence starting at $P_i^1\otimes_\K P_j^2$ given as the mapping cone of \eqref{eq - mapping cone 2 almost split}.
	\begin{equation}\label{eq - mapping cone 2 almost split}
		\begin{adjustbox}{max width = \textwidth}
			\begin{tikzcd}[ampersand replacement=\&, column sep = 2 cm, row sep=2 cm]
				P_i^1\otimes_\K P_j^2 \arrow[r, "\begin{bmatrix}
					(\underline{\alpha})_\alpha \otimes e_j \\
					e_i\otimes (\underline{\beta})_\beta
				\end{bmatrix}"] \arrow[d, "(\underline{\alpha^*}\otimes \underline{\beta^*})_{\alpha^*, \beta^*}"] \& \begin{matrix}
					\underset{\substack{\alpha\in Q^1 \\ t(\alpha) = i}}{\displaystyle\bigoplus} \left((P^1_{s(\alpha)})^{|\underline{\alpha}|}\otimes_\K P_j^2\right) \\ \oplus \displaystyle\bigoplus_{\substack{\beta\in Q^2 \\ t(\beta) = j}} \left(P_i^1\otimes_\K (P^2_{s(\beta)})^{|\underline{\beta}|}\right)
				\end{matrix} \arrow[r, "{\begin{bmatrix}
					e_{s(\alpha)} \otimes (\underline{\beta})_\beta & -(\underline{\alpha})_\alpha \otimes e_{s(\beta)} 
				\end{bmatrix}}"] \arrow[d, "{\begin{bmatrix}
					0 & (\underline{\alpha^*}\otimes \overline{\beta^*})_{\alpha^*, \beta} \\
					(\overline{\alpha^*}\otimes \underline{\beta^*})_{\alpha, \beta^*} & 0
				\end{bmatrix}}"] \& \underset{\substack{(\alpha, \beta)\in Q^1_1\times Q^2_1 \\ t(\alpha) = i, t(\beta) = i}}{\displaystyle\bigoplus} (P^1_{s(\alpha)})^{|\underline{\alpha}|} \otimes_\K (P^2_{s(\beta)})^{|\underline{\beta}|} \arrow[d, "(\overline{\alpha^*}\otimes \overline{\beta^*})_{\alpha, \beta}"] \\
				\underset{\substack{(\alpha^*, \beta^*)\in (Q^1_1)^*\times (Q^2_1)^* \\ t(\alpha^*) = i, t(\beta^*) = j}}{\displaystyle\bigoplus} \tau^-(P^1_{s(\alpha^*)})^{|\underline{\alpha^*}|} \otimes_\K \tau^-(P^2_{s(\beta^*)})^{|\underline{\beta^*}|} \arrow[r, "\begin{bmatrix}
					e_{s(\alpha^*)}\otimes (\overline{\beta})_{\beta^*} \\
					(\overline{\alpha})_{\alpha^*}\otimes e_{s(\beta^*)}
				\end{bmatrix}"] \& \begin{matrix}
					\underset{\substack{\alpha^*\in (Q^1)^* \\ t(\alpha^*) = i}}{\displaystyle\bigoplus} \left(\tau^-(P^1_{s(\alpha^*)})^{|\underline{\alpha^*}|}\otimes_\K \tau^-(P_j^2)\right) \\ \oplus \displaystyle\bigoplus_{\substack{\beta^*\in (Q^2)^* \\ t(\beta^*) = j}} \left(\tau^-(P_i^1)\otimes_\K (P^2_{s(\beta^*)})^{|\underline{\beta^*}|}\right)
				\end{matrix} \arrow[r, "{\begin{bmatrix}
					-(\overline{\alpha})_{\alpha^*}\otimes e_j & e_i\otimes (\overline{\beta})_{\beta^*}
				\end{bmatrix}}"] \& \tau^-(P_i^1) \otimes_\K \tau^-(P_j^2).
			\end{tikzcd}
		\end{adjustbox}
	\end{equation}
	Under the natural functor from $\mathrm{mod}(\Lambda) \to \mathcal{C}_\Lambda = \mathcal{C}$ the image of the mapping cone of \eqref{eq - mapping cone 2 almost split} coincides with the weak $2$-almost split sequence \eqref{eq - phi 2 almost split sequence}. This proves the first part.

	With the above result the mutation compatibility condition diagram \eqref{eq - mut cond diagram} corresponds to
	\begin{equation}
		\begin{tikzcd}
			P_k \arrow[r, "f'_{k2}"] \arrow[ddd, "\lambda"] & U'_{k1} \arrow[ddd, "\hat{\lambda}"] \arrow[rr, "f'_{k1}"] \arrow[rd, "h'_k"] & & U'_{k0} \arrow[r, "f'_{k0}"] \arrow[ddd, "\tilde{\lambda}"] & \tau_2^-P_k \arrow[ddd, "\lambda"] \\
			& & P_k^* \arrow[ru, "g'_k"] \arrow[ddd, bend left, "q'_\lambda"] \arrow[ddd, bend right, swap, "q''_\lambda"] \\ \\
			P_k \arrow[r, "f'_{k2}"] & U'_{k1} \arrow[rr, "f'_{k1}"] \arrow[rd, "h'_k"] & & U'_{k0} \arrow[r, "f'_{k0}"] & \tau_2^-P_k \\
			& & P_k^* \arrow[ru, "g'_k"]
		\end{tikzcd}
	\end{equation}
	where $q'_\lambda$ and $q''_\lambda$ are given as follows. Since $P_k^* = \coker(f'_{k2})$ we set $q'_\lambda$ to be the unique morphism such that the left side of the diagram commutes. Similarly, we use that $P_k^* = \ker(f'_{k0})$ to define $q'_\lambda$. It remains to show that $q'_\lambda = q''_\lambda$. We have
	\begin{equation*}
		g'_k\circ q''_\lambda \circ h'_k = g'_k\circ h'_k\circ \hat{\lambda} = f'_{k1}\circ \hat{\lambda} = \tilde{\lambda}\circ f'_{k1} = \tilde{\lambda} \circ g'_k\circ h'_k = g'_k\circ q'_\lambda\circ h'_k
	\end{equation*}
	and using that $h'_k$ and $g'_k$ are a epimorphism and an monomorphism respectively we are done.
\end{proof}

Combining Corollary~\ref{cor - A x B is n_1 + n_2 RF} and Corollary~\ref{cor - l homogeneous species} we can generate a lot of $2$-representation finite algebras given by species and relations, whose self-injective preprojective algebras fit in Theorem~\ref{theorem - Jasso-Muro}. By mutating along Nakayama orbits Proposition~\ref{proposition - mutation along orbits self-injective} ensures that the result will still be self-injective. Since mutating our species with potential does not change the cluster category, it will still satisfy the desired condition for Theorem~\ref{theorem - Jasso-Muro}, and therefore expanding the collection of examples for Theorem~\ref{theorem - Jasso-Muro}. Additionally, the Nakayama automorphism for each example explicitly given by using \cite[Theorem 5.3]{soderberg2022preprojective}, \cite[Proposition 10.11]{soderberg2022preprojective} and Theorem~\ref{Theorem B}. Let us now compute some explicit examples.

\begin{myex}\label{example - A times B}
	Let us choose the quivers
	\begin{equation*}
		Q^1: 1\xrightarrow{\alpha} 2 \xleftarrow{\beta} 3, \qquad Q^2: 1\xrightarrow{\gamma}2
	\end{equation*}
	and define the $\R$-species $S^1$ and $S^2$ over $Q^1$ and $Q^2$ respectively as
	\begin{equation*}
		S^1: \R\xrightarrow{\lsub{2}\R_1}\R\xleftarrow{\lsub{2}\R_3}\R, \qquad S^2: \C \xrightarrow{\lsub{2}\C_1}\R.
	\end{equation*}
	We let $e_1, e_2$ and $e_3$ be the idempotents for $S^1$ and $f_1$ and $f_2$ be the idempotents for $S^2$. Reading from \cite[Theorem 3.1]{soderberg2022preprojective} the Nakayama permutations for $S^1$ and $S^2$ are $\sigma^1(i) = 4-i$ and $\sigma^2(i) = i$ respectively. Both of $T(S^1)$ and $T(S^2)$ are $1$-representation finite by Theorem~\ref{theorem - rep finite species is dynkin} and $2$-homogeneous by Corollary~\ref{cor - l homogeneous species}. Applying Corollary~\ref{cor - A x B is n_1 + n_2 RF} yields the $2$-representation finite $\R$-algebra $T(S^1)\otimes_\R T(S^2)$. Proposition~\ref{prop - preprojective algebra of tensor species} tells us that $\Pi_3(T(S^1)\otimes_\R T(S^2))\cong \mathcal{P}(S(S^1, S^2), W(S^1, W^2))$, where
	\begin{equation*}
		\begin{tikzcd}[row sep = 1 cm, column sep = 2 cm]
			& \R \otimes_\R \C \arrow[r, "\lsub{2}\R_1 \otimes_\R \C"] \arrow[dd, "\R \otimes_\R \lsub{2}\C_1"] & \R \otimes_\R \C \arrow[dd, "\R \otimes_\R \lsub{2}\C_1"] & \R \otimes_\R \C \arrow[l, "\lsub{2}\R_3 \otimes_\R \C"'] \arrow[dd, "\R \otimes_\R \lsub{2}\C_1"] \\
			S(S^1, S^2): \\
			& \R \otimes_\R \R \arrow[r, "\lsub{2}\R_1 \otimes_\R \R"] & \R \otimes_\R \R \arrow[luu, "\lsub{2}\R_1^* \otimes_\R \lsub{2}\C_1^*"'] \arrow[ruu, "\lsub{2}\R_3^* \otimes_\R \lsub{2}\C_1^*"'] & \R \otimes_\R \R \arrow[l, "\lsub{2}\R_3 \otimes_\R \R"']
		\end{tikzcd}
	\end{equation*}
	is an $\R$-species over the quiver
	\begin{equation*}
		\begin{tikzcd}[row sep = 1 cm, column sep = 2 cm]
			& (1, 1) \arrow[r, "(\alpha{,} e)"] \arrow[dd, "(e_1{,} \gamma)"] & (2,1) \arrow[dd, "(e_2{,} \gamma)"] & (3,1) \arrow[l, "(\beta{,} e_1)"'] \arrow[dd, "(e_3{,} \gamma)"] \\ 
			Q^1\tilde\otimes Q^2: \\
			& (1,2) \arrow[r, "(\alpha{,} e_2)"] & (2,2) \arrow[luu, "(\alpha^* {,} \gamma^*)"'] \arrow[ruu, "(\beta^* {,} \gamma^*)"'] & (3,2) \arrow[l, "(\beta{,} e_2)"']
		\end{tikzcd}
	\end{equation*}
	and the potential $W(S^1, S^2)$ given by \eqref{eq - potential for tensor species}, i.e.
	\begin{equation*}
		\begin{aligned}
			W(S^1, S^2) =& \sum_{(a, c')\in \underline{\alpha}\times \overline{\gamma}} (a\otimes f_1)\otimes(a^*\otimes {c'}^*)\otimes(e_2\otimes c') + \\
			&+ \sum_{(b, c')\in \underline{\beta}\times \overline{\gamma}} (b\otimes f_1)\otimes(b^*\otimes {c'}^*)\otimes(e_2\otimes c') + \\
			&- \sum_{(a', c)\in \overline{\alpha}\times \underline{\gamma}} (e_1\otimes c)\otimes({a'}^*\otimes c^*)\otimes(a'\otimes f_2) + \\
			&- \sum_{(b', c)\in \overline{\beta}\times \underline{\gamma}} (e_3\otimes c)\otimes({b'}^*\otimes c^*)\otimes(b'\otimes f_2) = \\
			=& (\lsub{2}1_1\otimes f_1) \otimes (\lsub{2}1_1^*\otimes \lsub{2}1_1^*)\otimes (e_2\otimes \lsub{2}1_1) + (\lsub{2}1_1\otimes f_1) \otimes (\lsub{2}1_1^*\otimes \lsub{2}i_1^*)\otimes (e_2\otimes \lsub{2}i_1) + \\
			&+ (\lsub{2}1_3\otimes f_1) \otimes (\lsub{2}1_3^*\otimes \lsub{2}1_1^*)\otimes (e_2\otimes \lsub{2}1_1) + (\lsub{2}1_3\otimes f_1) \otimes (\lsub{2}1_3^*\otimes \lsub{2}i_1^*)\otimes (e_2\otimes \lsub{2}i_1) + \\
			&- (e_1 \otimes \lsub{2}1_1) \otimes (\lsub{2}1_1^*\otimes \lsub{2}1_1^*) \otimes (\lsub{2}1_1\otimes f_2) + \\
			&- (e_3 \otimes \lsub{2}1_1) \otimes (\lsub{2}1_3^*\otimes \lsub{2}1_1^*) \otimes (\lsub{2}1_3\otimes f_2).
		\end{aligned}
	\end{equation*}

	Before we start to mutate along a Nayakama orbit, we will first simplify our species $S(S^1, S^2)$. We use the fact that $\R^*\cong \R$, $\C^*\cong \C$, $\R \otimes_\R \C \cong \C$ and $\R\otimes_\R \R$, where the former two isomorphisms are given as
	\begin{equation*}
		\begin{aligned}
			\R &\xrightarrow{\sim} \R^*, \\
			1_\R &\mapsto 1_\R^*,
		\end{aligned}
	\end{equation*}
	and
	\begin{equation*}
		\begin{aligned}
			\C &\xrightarrow{\sim}\C^*, \\
			a+bi &\mapsto a1^* - bi^*.
		\end{aligned}
	\end{equation*}
	We also relabel the vertices in our quiver $Q^1\tilde{\otimes}Q^2$ such that our data becomes
	\begin{equation*}
		\begin{tikzcd}[row sep = 0.3 cm, column sep = 1 cm]
			& \C \arrow[r, "\lsub{2}\C_1"] \arrow[dd, "\lsub{4}\C_1"] & \C \arrow[dd, "\lsub{5}\C_2"] & \C \arrow[l, "\lsub{2}\C_3"'] \arrow[dd, "\lsub{6}\C_3"] \\
			S(S^1, S^2): \\
			& \R \arrow[r, "\lsub{5}\R_4"] & \R \arrow[luu, "\lsub{1}\C_5"'] \arrow[ruu, "\lsub{3}\C_5"'] & \R \arrow[l, "\lsub{5}\R_6"']
		\end{tikzcd}
	\end{equation*}
	\begin{equation*}
		\begin{tikzcd}[row sep = 0.3 cm, column sep = 1 cm]
			& 1 \arrow[r] \arrow[dd] & 2 \arrow[dd] & 3 \arrow[l] \arrow[dd] \\ 
			Q^1\tilde\otimes Q^2: \\
			& 4 \arrow[r] & 5 \arrow[luu] \arrow[ruu] & 6 \arrow[l]
		\end{tikzcd}
	\end{equation*}
	\begin{equation}\label{eq - potential in example at end 1}
		\begin{aligned}
			W(S^1, S^2) =& \lsub{2}1_1\otimes \lsub{1}1_5\otimes \lsub{5}1_2 - \lsub{2}1_1\otimes \lsub{1}i_5\otimes \lsub{5}i_2 + \\
			&+ \lsub{2}1_3\otimes \lsub{3}1_5\otimes \lsub{5}1_2 - \lsub{2}1_3\otimes \lsub{3}i_5\otimes \lsub{5}i_2 + \\
			&- \lsub{4}1_1\otimes \lsub{1}1_5\otimes \lsub{5}1_4 + \\
			&- \lsub{6}1_3\otimes \lsub{3}1_5\otimes \lsub{5}1_6.
		\end{aligned}
	\end{equation}
	The Nakayama permutation of $(Q^1\tilde{\otimes}Q^2)_0$ is given by
	\begin{equation*}
		\sigma = \left(\begin{matrix}
			1 & 2 & 3 & 4 & 5 & 6 \\
			3 & 2 & 1 & 6 & 5 & 4
		\end{matrix}\right)
	\end{equation*}
	which is a consequence of \cite[Proposition 10.11]{soderberg2022preprojective}. Using \cite[Theorem 5.3]{soderberg2022preprojective} together with \cite[Proposition 10.11]{soderberg2022preprojective} we can compute the Nakayama automorphism $\gamma$ of $\mathcal{P}(S(S^1, S^2), W(S^1, S^2))$ to be explicitly given as $\gamma(\lsub{i}1_j) = \lsub{\sigma(i)}1_{\sigma(j)}$. In particular, conditions (A) and (B) are satisfied.

	Now let us mutate along an orbit of the Nakayama permutation. Consider the Nakayama orbit $(2) = \{2\}$, which is sparse as it has one element. Recall that the semi-mutation at $2$ is only defined when $e_2W = We_2 = 0$, and thus we apply $\varepsilon_r$ on the first two rows of $W$ before we mutate. By Proposition~\ref{proposition - mutation along orbits self-injective} $\mu_{2}(S(S^1, S^2), W(S^1, S^2)) = (S', W')$ is a self-injective species with potential, where the species $S'$ is given as
	\begin{equation*}
		\begin{tikzcd}[row sep = 1 cm, column sep = 4 cm, bezier bounding box]
			\C \arrow[dd, "\lsub{4}\C_1"] \arrow[rdd, "{[}\lsub{5}\C_2\otimes_\C \lsub{2}\C_1{]}", bend left=15] & \C \arrow[l, "\lsub{1}\C_2"'] \arrow[r, "\lsub{3}\C_2"] & \C \arrow[dd, "\lsub{6}\C_3"] \arrow[ldd, "{[}\lsub{3}\C_5\otimes_\C \lsub{5}\C_2{]}", bend left=15] \\ \\
			\R \arrow[r, "\lsub{5}\R_4"] & \R \arrow[luu, "\lsub{1}\C_5", bend left=15] \arrow[ruu, "\lsub{3}\C_5", bend left=15] \arrow[uu, "\lsub{2}\C_5"] & \R \arrow[l, "\lsub{5}\R_6"']
		\end{tikzcd}
	\end{equation*}
	and its potential given as
	\begin{equation*}
		\begin{aligned}
			W' =& [\lsub{5}1_2\otimes \lsub{2}1_1]\otimes \lsub{1}1_5 + [\lsub{5}1_2\otimes \lsub{2}1_3]\otimes \lsub{3}1_5 - \lsub{4}1_1\otimes \lsub{1}1_5\otimes \lsub{5}1_4 - \lsub{6}1_3\otimes \lsub{3}1_5\otimes \lsub{5}1_6 + \\
			&+ \lsub{2}1_5\otimes [\lsub{5}1_2\otimes \lsub{2}1_1]\otimes \lsub{1}1_2 +  \lsub{2}i_5\otimes [\lsub{5}i_2\otimes \lsub{2}1_1]\otimes \lsub{1}1_2 + \\
			&+ \lsub{2}1_5\otimes [\lsub{5}1_2\otimes \lsub{2}1_3]\otimes \lsub{3}1_2 + \lsub{2}i_5\otimes [\lsub{5}i_2\otimes \lsub{2}1_3]\otimes \lsub{3}1_2.
		\end{aligned}
	\end{equation*}

	Now let us consider the Nakayama orbit $(5) = \{5 \}$. By Proposition~\ref{proposition - mutation along orbits self-injective} $\mu_{5}(S(S^1, S^2), W(S^1, S^2)) = (S', W')$ is a self-injective species with potential which is explicitly given as

	\begin{equation*}
		\begin{tikzcd}[row sep = 2 cm, column sep = 4 cm, bezier bounding box]
			\C \arrow[r, "\lsub{2}\C_1", bend left=15] \arrow[rd, "\lsub{5}\C_1"] \arrow[d, "\lsub{4}\C_1", bend left=15] & \C \arrow[l, "{[}\lsub{1}\C_5\otimes_\R \lsub{5}\C_2{]}", bend left=15] \arrow[r, "{[}\lsub{3}\C_5\otimes_\R \lsub{5}\C_2{]}", bend left=15] & \C \arrow[ld, "\lsub{5}\C_3"'] \arrow[l, "\lsub{2}\C_3", bend left=15] \arrow[d, "\lsub{6}\C_3", bend left=15] \\
			\R \arrow[u, "{[}\lsub{1}\C_5\otimes_\R \lsub{5}\R_4{]}", bend left=15] \arrow[urr, "{[}\lsub{3}\C_5\otimes_\R \lsub{5}\R_4{]}"', out=-40, in=0, looseness = 2] & \R \arrow[l, "\lsub{4}\R_5"'] \arrow[r, "\lsub{6}\R_5"] \arrow[u, "\lsub{2}\C_5"'] & \R \arrow[u, "{[}\lsub{3}\C_5\otimes_\R \lsub{5}\R_6{]}", bend left=15] \arrow[ull, "{[}\lsub{1}\C_5\otimes_\R \lsub{5}\R_6{]}", out=220, in=-180, looseness = 2, crossing over]
		\end{tikzcd}
	\end{equation*}
	with its potential 
	\begin{equation}\label{eq - potential example at end 2}
		\begin{aligned}
			W' =& \lsub{2}1_1\otimes [\lsub{1}1_5\otimes \lsub{5}1_2] - \lsub{2}1_1\otimes [\lsub{1}i_5\otimes \lsub{5}i_2] + \lsub{2}1_3\otimes [\lsub{3}1_5\otimes \lsub{5}1_2] - \lsub{2}1_3\otimes [\lsub{3}i_5\otimes \lsub{5}i_2] + \\
			&- \lsub{4}1_1\otimes [\lsub{1}1_5\otimes \lsub{5}1_4] - \lsub{6}1_3\otimes [\lsub{3}1_5\otimes \lsub{5}1_6] + \\
			&+ \lsub{5}1_1\otimes [\lsub{1}1_5\otimes \lsub{5}1_4]\otimes \lsub{4}1_5 + \lsub{5}1_1\otimes [\lsub{1}1_5\otimes \lsub{5}1_6]\otimes \lsub{6}1_5 + \lsub{5}1_1\otimes [\lsub{1}1_5\otimes \lsub{5}1_2]\otimes \lsub{2}1_5 + \\
			&+ \lsub{5}1_3\otimes [\lsub{3}1_5\otimes \lsub{5}1_2]\otimes \lsub{2}1_5 + \lsub{5}1_3\otimes [\lsub{3}1_5\otimes \lsub{5}1_4]\otimes \lsub{4}1_5 + \lsub{5}1_3\otimes [\lsub{3}1_5\otimes \lsub{5}1_6]\otimes \lsub{6}1_5.
		\end{aligned}
	\end{equation}

	In both of the examples above, from Theorem~\ref{Theorem B} we have that the Nakayama automorphism $\gamma'$ of $\mathcal{P}(S', W')$ is given by $\gamma'(\lsub{i}1_j) = \lsub{\sigma(i)}1_{\sigma(j)}$ and $\gamma([\lsub{i}1_2\otimes \lsub{2}1_j]) = [\lsub{\sigma(i)}1_k\otimes \lsub{k}1_{\sigma(j)}]$, where $k = 2$ or $k=5$ depending on which Nakayama orbit we mutated along.
\end{myex}

Let us partially deal with the case when we get a direct sum of division rings at each vertex instead of division rings when taking tensor products. We first introduce some notation and useful results. Let $M$ be a $\mathbb{L}$-vector space, where $\mathbb{L}$ is a Galois field extension of $\K$. We define $\lsub{\sigma}M$ as the vector space with the same additive structure as $M$ but twisting the $\mathbb{L}$-vector space structure with an automorphism $\sigma\in\mathrm{Gal}(\mathbb{L}/\K)$, i.e. $\lambda\cdot_{\lsub{\sigma}M}x = \sigma(\lambda)x$ for $\lambda\in \mathbb{L}$ and $x\in \lsub{\sigma}M$.

\begin{mylemma}\label{lemma - galois isomorphism}
	Let $\mathbb{L}$ be a Galois field extension of $\K$ and let $A$ and $M$ be $\mathbb{L}$-algebras. We have the following isomorphisms
	\begin{equation*}
		\begin{aligned}
			\phi_A: \mathbb{L}\otimes_\K A &\to \prod_{\sigma\in \mathrm{Gal}(\mathbb{L}/\K)} \lsub{\sigma}A, \\
			a\otimes b &\mapsto (a\cdot_{\lsub{\sigma}A}b \mid \sigma\in \mathrm{Gal}(\mathbb{L}/\K)), \\
			\phi_M: \mathbb{L}\otimes_\K M &\to \prod_{\sigma\in \mathrm{Gal}(\mathbb{L}/\K)} \lsub{\sigma}M, \\
			a\otimes b &\mapsto (a\cdot_{\lsub{\sigma}M}b \mid \sigma\in \mathrm{Gal}(\mathbb{L}/\K)),
		\end{aligned}
	\end{equation*}
	of $\K$-algebras. If, moreover, $M$ is an $A$-module, then these isomorphisms are compatible, meaning that if we take the natural $\mathbb{L}\otimes_\K A$-module structure on $\mathbb{L}\otimes_\K M$, then
	\begin{equation*}
		\phi_M((n\otimes a)(n'\otimes m)) = \phi_A(n\otimes a)\phi_M(n'\otimes m)
	\end{equation*}
	where $\prod_{\sigma\in \mathrm{Gal}(\mathbb{L}/\K)} \lsub{\sigma}M$ is assumed to have the natural $\prod_{\sigma\in \mathrm{Gal}(\mathbb{L}/\K)} \lsub{\sigma}A$-module structure.
\end{mylemma}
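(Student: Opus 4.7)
The plan is to deduce the statement from the classical decomposition $\mathbb{L}\otimes_\K\mathbb{L}\cong\prod_{\sigma\in\mathrm{Gal}(\mathbb{L}/\K)}\mathbb{L}$ for a finite Galois extension and then to base-change along $A$ and $M$ respectively. Implicitly the extension is taken to be finite, which is the case of interest for the paper (all algebras in play are finite dimensional over $\K$).

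First I would verify that $\phi_A$ and $\phi_M$ are well defined $\K$-linear maps, with $\phi_A$ multiplicative and $\phi_M$ compatible with the $\phi_A$-action. The $\K$-balanced property is immediate because every $\sigma\in\mathrm{Gal}(\mathbb{L}/\K)$ fixes $\K$, while multiplicativity of $\phi_A$ is a one-line check on pure tensors using only that $\mathbb{L}$ is central in $A$. The identity for $\phi_M$ is the same computation with the module action in place of the product. These checks are routine.

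Next I would establish bijectivity for $A=\mathbb{L}$ via the primitive element theorem: writing $\mathbb{L}=\K(\theta)$ with minimal polynomial $f(X)=\prod_\sigma(X-\sigma(\theta))$ and applying the Chinese remainder theorem yields
\begin{equation*}
\mathbb{L}\otimes_\K\mathbb{L}\cong \mathbb{L}[X]/(f(X))\cong \prod_\sigma \mathbb{L}[X]/(X-\sigma(\theta))\cong \prod_\sigma\mathbb{L},
\end{equation*}
and the composite sends $1\otimes\theta$ to $(\sigma(\theta))_\sigma$, matching $\phi_\mathbb{L}$. For a general $\mathbb{L}$-algebra $A$ I would tensor the above with $A$ over $\mathbb{L}$ (along the right factor of $\mathbb{L}\otimes_\K\mathbb{L}$) to obtain
\begin{equation*}
\mathbb{L}\otimes_\K A\cong (\mathbb{L}\otimes_\K\mathbb{L})\otimes_\mathbb{L} A\cong \prod_\sigma(\mathbb{L}\otimes_\mathbb{L}\lsub{\sigma}A)\cong\prod_\sigma \lsub{\sigma}A,
\end{equation*}
using finiteness of $\mathrm{Gal}(\mathbb{L}/\K)$ to commute the product with the tensor product. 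Chasing a pure tensor $a\otimes b$ through this chain reproduces $(\sigma(a)b)_\sigma$, so the composite coincides with $\phi_A$. Replacing $A$ by $M$ in the same argument gives the isomorphism $\phi_M$, and the compatibility with the module structure is automatic from the naturality of the chain in its right argument.

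The main obstacle, requiring genuine care, is identifying the $\sigma$-th factor with the twisted module $\lsub{\sigma}A$ rather than with $A$ in the base change. One must track that under $\mathbb{L}\otimes_\K\mathbb{L}\cong\prod_\sigma\mathbb{L}$ the \emph{right} $\mathbb{L}$-factor acts on the $\sigma$-th component via $\sigma$, because the image of $1\otimes\lambda$ in that component is $\sigma(\lambda)$. This is precisely what introduces the $\sigma$-twist in the $\mathbb{L}$-module structure once one tensors with $A$ over $\mathbb{L}$, and it is the point at which the definition $\lambda\cdot_{\lsub{\sigma}A}x=\sigma(\lambda)x$ is used.
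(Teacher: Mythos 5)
Your proposal is correct in substance but takes a genuinely different route from the paper at the key step. The paper disposes of bijectivity and $\K$-linearity of $\phi_A$ (and $\phi_M$) in one stroke by citing Bourbaki \cite[Proposition 8 at A.V.64]{bourbaki1981algebra}, and then only writes out the direct check that $\phi_A$ is multiplicative (the same pure-tensor computation you call routine); the compatibility with the module structure is likewise dismissed as immediate from the definitions. You instead give a self-contained proof of bijectivity: first the case $A=\mathbb{L}$ via the primitive element theorem and the Chinese remainder theorem, $\mathbb{L}\otimes_\K\mathbb{L}\cong\mathbb{L}[X]/(f)\cong\prod_\sigma\mathbb{L}$, and then base change $-\otimes_\mathbb{L}A$ along one tensor factor, using finiteness of $\mathrm{Gal}(\mathbb{L}/\K)$ to commute the product past the tensor. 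What your approach buys is transparency — the reader sees exactly where the Galois hypothesis and the finiteness of the extension enter, and where the twist $\lsub{\sigma}A$ comes from — at the cost of length and of the bookkeeping you rightly flag as the delicate point. One small caution there: with the most natural reading of your chain (left factor of $\mathbb{L}\otimes_\K\mathbb{L}$ as coefficients, $X\leftrightarrow 1\otimes\theta$), the composite sends $1\otimes\theta$ to $(\sigma(\theta))_\sigma$, whereas $\phi_{\mathbb{L}}(1\otimes\theta)=(\theta)_\sigma$ and it is $\phi_{\mathbb{L}}(\theta\otimes 1)$ that equals $(\sigma(\theta))_\sigma$; so your composite is a priori the map $a\otimes b\mapsto(a\sigma(b))_\sigma$ rather than $a\otimes b\mapsto(\sigma(a)b)_\sigma$. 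This is harmless — the two differ by an automorphism of the target (reindex the finite product by $\sigma\mapsto\sigma^{-1}$ and apply $\sigma$ componentwise) — but to literally recover $\phi_A$ as stated you should either swap which factor plays the role of $\K[\theta]$ or record this reindexing explicitly before base-changing, since the choice determines on which side the twist lands in $\lsub{\sigma}A$.
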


\begin{proof}
	Let $\phi = \phi_A$ or $\phi = \phi_M$.	The first part is proven by first using \cite[Proposition 8 at A.V.64]{bourbaki1981algebra} to get that $\phi$ is bijective and $\K$-linear. It is left to check that $\phi$ is a ring homomorphism. It follows from
	\begin{equation*}
		\begin{aligned}
			\phi(a\otimes b)\phi(c\otimes d) &= (\sigma(a)b \mid \sigma\in \mathrm{Gal}(\mathbb{L}/\K)) \cdot (\sigma(c)d \mid \sigma\in \mathrm{Gal}(\mathbb{L}/\K)) = \\
			&= (\sigma(a)\sigma(c)bd \mid \sigma\in \mathrm{Gal}(\mathbb{L}/\K)) = \\
			&= (\sigma(ac)bd \mid \sigma\in \mathrm{Gal}(\mathbb{L}/\K)) = \\
			&= \phi((a\otimes b)(c\otimes d)).
		\end{aligned}
	\end{equation*}

	The second part follows from the definition of $\phi_A$ and $\phi_M$.
\end{proof}

\begin{myrem}\label{remark - galois isomoprhism}
	Note that if $M$ is a right $\mathbb{L}$-vector space, then we could define a similar isomorphism
	\begin{equation*}
		\begin{aligned}
			\phi: M\otimes_\K \mathbb{L} &\to \prod_{\sigma\in \mathrm{Gal(\mathbb{L}/\K)}} M_\sigma, \\
			a\otimes b &\mapsto (a\cdot_{M_\sigma}b \mid \sigma\in \mathrm{Gal}(\mathbb{L}/\K)),
		\end{aligned}
	\end{equation*}
	where $M_\sigma$ is the $\mathbb{L}$-vector space twisted on the right with $\sigma$.
\end{myrem}

Let us now fix a Galois field extension $\K= F\subseteq G$, following the notation in \cite{dlab1975algebras}, and assume, from now on, that all species are of the form $S = (D_i, M_\alpha)_{i\in Q_0, \alpha\in Q_1}$ such that $D_i, M_\alpha\in \{F, G \}$. Let $S^1$ and $S^2$ be two species over the quivers $Q^1$ and $Q^2$ respectively. Locally at a vertex in $S(S^1, S^2)$ we have $D_i^1\otimes_\K D_j^2$. The assumptions on $S^1$ and $S^2$ ensure that either $D_i\subseteq D_j$ or $D_j\subseteq D_i$. Assume the former since the latter is similar. Naturally $D_j$ is a $D_i$-vector space and using Lemma~\ref{lemma - galois isomorphism} we get an isomorphism
\begin{equation*}
	\phi: D_i^1\otimes_\K D_j^2 \to \prod_{\sigma\in \mathrm{Gal}(D_i/\K)}\lsub{\sigma}D_j.
\end{equation*}
Any (right) $D_i^1\otimes_\K D_j^2$-module becomes a (right) $\prod_{\sigma\in \mathrm{Gal}(D_i/\K)}\lsub{\sigma}D_j$-module via $\phi$. Let us choose the idempotents $e_\sigma = (\delta_{\sigma\sigma'})_{\sigma'\mathrm{Gal}(D_i/\K)}\in \prod_{\sigma\in \mathrm{Gal}(D_i/\K)} \lsub{\sigma}D_j$, where
\begin{equation*}
	\delta_{\sigma\sigma'} = \begin{cases}
		1, & \text{if }\sigma = \sigma', \\
		0, & \text{otherwise}.
	\end{cases}
\end{equation*}
Note that here we have to fix an order of the elements in $\mathrm{Gal}(D_i/\K)$ in order for $e_\sigma$ to be well-defined. Using $e_\sigma$ we can construct a new species $S'$ such that $T(S')\cong T(S(S^1, S^2))$. It is defined by taking $S'$ to be equal to $S(S^1, S^2)$ everywhere except at and around the vertex for $D_i^1\otimes_\K D_j^2$ and at that vertex we make the following changes: We replace the vertex $D_i^1\otimes_\K D_j^2$ by vertices $\lsub{\sigma}D_j$ for $\sigma\in \mathrm{Gal}(D_i/\K)$ and define the arrows in $S'$ as $D_l\xrightarrow{e_\sigma M_\alpha}\lsub{\sigma}D_j$ for incoming arrows $D_l\xrightarrow{M_\alpha}D_i\otimes_\K D_j$ in $S(S^1, S^2)$ and similarly for outgoing arrows $D_i^1\otimes_\K D_j^2\xrightarrow{M_\beta}D_l'$ in $S(S^1, S^2)$ we define arrows in $S'$ by $\lsub{\sigma}D_j\xrightarrow{M_\beta e_\sigma} D_l'$ for all $\sigma\in \mathrm{Gal}(D_i/\K)$. Continuing this procedure for all vertices in $S(S^1, S^2)$ yields a $\K$-species, whose tensor algebra is isomorphic to $S(S^1, S^2)$, just as in the proof of Lemma~\ref{lemma - division algebra species}.

Table~\ref{table - tensor of species} is a table of species $S(S^1, S^2)$ to which we have applied the above procedure. It contains all species $S^1$ and $S^2$ with quivers of at most one arrow. The other cases can be computed by using the table locally. We have also assumed that $\mathrm{Gal}(G/\K) = 2$ since the more general case is given by a similar picture but with more arrows, and also to make our diagram smaller and more readable.

\begin{table}[!ht]
	\centering
	\begin{adjustbox}{width=\columnwidth,center}
		\begin{tabular}{c||c|c|c|c|c|c}
			& $F$ & $G$ & $F\xrightarrow{F}F$ & $F\xrightarrow{G} G$ & $G \xrightarrow{G}F$ & $G\xrightarrow{G}G$ \\ \hline \hline
			$F$ & $F$ & $G$ & $F\xrightarrow{F}F$ & $F\xrightarrow{G} G$ & $G \xrightarrow{G}F$ & $G\xrightarrow{G}G$ \\ \hline
			$G$ & $G$ & $G \quad G$ & $G\xrightarrow{G}G$ & \begin{tikzcd}
				& G \\
				G \arrow[ru, "G"] \arrow[rd, "\lsub{\sigma}G"] \\
				& \lsub{\sigma}G
			\end{tikzcd} & \begin{tikzcd}
				G \arrow[rd, "G"] \\
				& G \\
				G \arrow[ur, "\lsub{\sigma}G"]
			\end{tikzcd} & \begin{tikzcd}
				G \arrow[r, "G"] & G \\
				G \arrow[r, "\lsub{\sigma}G"] & \lsub{\sigma}G
			\end{tikzcd} \\ \hline
			$F\xrightarrow{F}F$ & $F\xrightarrow{F}F$ & $G\xrightarrow{G}G$ & \begin{tikzcd}
				F \arrow[r, "F"] \arrow[d, "F"] & F \arrow[d, "F"] \\
				F \arrow[r, "F"] & F 
			\end{tikzcd} & \begin{tikzcd}
				F \arrow[r, "G"] \arrow[d, "F"] & G \arrow[d, "G"] \\
				F \arrow[r, "G"] & G 
			\end{tikzcd} & \begin{tikzcd}
				G \arrow[r, "G"] \arrow[d, "G"] & F \arrow[d, "F"] \\
				G \arrow[r, "G"] & F
			\end{tikzcd} & \begin{tikzcd}
				G \arrow[r, "G"] \arrow[d, "G"] & G \arrow[d, "G"] \\
				G \arrow[r, "G"] & G 
			\end{tikzcd} \\ \hline
			$F\xrightarrow{G}G$ & $F\xrightarrow{G}G$ & \begin{tikzcd}
				& G \\
				G \arrow[ru, "G"] \arrow[rd, "\lsub{\sigma}G"] \\
				& G
			\end{tikzcd} & \begin{tikzcd}
				F \arrow[r, "F"] \arrow[d, "G"] & F \arrow[d, "G"] \\
				G \arrow[r, "G"] & G 
			\end{tikzcd} & \begin{tikzcd}
				F \arrow[rr, "G"] \arrow[dd, "G"] & & G \arrow[dl, "\lsub{\sigma}G"] \arrow[dd, "G"] \\
				& G \\
				G \arrow[ur, "\lsub{\sigma}G"] \arrow[rr, "G"] & & G
			\end{tikzcd} & \begin{tikzcd}
				G \arrow[rr, "G"] \arrow[dr, "\lsub{\sigma}G"] \arrow[dd, "G"] & & F \arrow[dd, "G"] \\
				& G \arrow[dr, "\lsub{\sigma}G"] \\
				G \arrow[rr, "G"] & & G
			\end{tikzcd} & \begin{tikzcd}
				G \arrow[rrr, "G"] \arrow[ddd, "G"] \arrow[ddr, "\lsub{\sigma}G"] & & & G \arrow[ddd, "G"] \arrow[ddl, "\lsub{\sigma}G"] \\ \\
				& G \arrow[r, "\lsub{\sigma}G"] & G \\
				G \arrow[rrr, "G"] & & & G
			\end{tikzcd} \\ \hline
			$G\xrightarrow{G}F$ & $G\xrightarrow{G}F$ & \begin{tikzcd}
				G \arrow[rd, "G"] \\
				& G \\
				G \arrow[ur, "\lsub{\sigma}G"]
			\end{tikzcd} & \begin{tikzcd}
				G \arrow[r, "G"] \arrow[d, "G"] & G \arrow[d, "G"] \\
				F \arrow[r, "F"] & F 
			\end{tikzcd} & \begin{tikzcd}
				G \arrow[rr, "G"] \arrow[dr, "\lsub{\sigma}G"] \arrow[dd, "G"] & & G \arrow[dd, "G"] \\
				& G \arrow[dr, "\lsub{\sigma}G"] \\
				F \arrow[rr, "G"] & & G
			\end{tikzcd} & \begin{tikzcd}
				G \arrow[rr, "G"] \arrow[dd, "G"] & & G \arrow[dd, "G"] \\
				& G \arrow[dl, "\lsub{\sigma}G"] \arrow[ur, "\lsub{\sigma}G"] \\
				G \arrow[rr, "G"] & & F
			\end{tikzcd} & \begin{tikzcd}
				G \arrow[rrr, "G"] \arrow[ddd, "G"] & & & G \arrow[ddd, "G"] \\
				& G \arrow[r, "\lsub{\sigma}G"] \arrow[ddl, "\lsub{\sigma}G"] & G \arrow[ddr, "\lsub{\sigma}G"] \\ \\
				G \arrow[rrr, "G"] & & & G
			\end{tikzcd} \\ \hline
			$G\xrightarrow{G}G$ & $G\xrightarrow{G}G$ & \begin{tikzcd}
				G \arrow[r, "G"] & G \\
				G \arrow[r, "\lsub{\sigma}G"] & G
			\end{tikzcd} & \begin{tikzcd}
				G \arrow[r, "G"] \arrow[d, "G"] & G \arrow[d, "G"] \\
				G \arrow[r, "G"] & G 
			\end{tikzcd} & \begin{tikzcd}
				G \arrow[rrr, "G"] \arrow[drr, "\lsub{\sigma}G"] \arrow[ddd, "G"] & & & G \arrow[ddd, "G"] \\
				& & G \arrow[d, "\lsub{\sigma}G"] \\
				& & G \\
				G \arrow[rrr, "G"] \arrow[rru, "\lsub{\sigma}G"] & & & G
			\end{tikzcd} & \begin{tikzcd}
				G \arrow[rrr, "G"] \arrow[ddd, "G"] & & & G \arrow[ddd, "G"] \\
				& G \arrow[d, "\lsub{\sigma}G"] \arrow[rru, "\lsub{\sigma}G"] \\
				& G \arrow[rrd, "\lsub{\sigma}G"] \\
				G \arrow[rrr, "G"] & & & G
			\end{tikzcd} & \begin{tikzcd}
				G \arrow[r, "G"] \arrow[d, "G"] & G \arrow[d, "G"] \\
				G \arrow[r, "G"] & G \\
				G \arrow[r, "\lsub{\sigma}G"] \arrow[d, "\lsub{\sigma}G"] & G \arrow[d, "\lsub{\sigma}G"] \\
				G \arrow[r, "\lsub{\sigma}G"] & G
			\end{tikzcd}
		\end{tabular}
	\end{adjustbox}
	\caption{Tensor products of $\K$-species with at most one arrow in their quivers.}
	\label{table - tensor of species}
\end{table}

Let us produce a field extension of $\Q$ of degree $3$. Consider the cyclotomic polynomial $p(x) = 1+x^2+x^3+x^4+x^5+x^6$. The polynomial $p$ is irreducible and generates a field extension $\Q[\xi]/\Q$ of degree $6$, where $\xi$ is the primitive $7^{th}$ root of unity. The Galois group is given as
\begin{equation*}
	\mathrm{Gal}(\Q[\xi]/\Q) = \{\xi\mapsto \xi^k \mid 1\le k\le 6 \}.
\end{equation*}
Note that the complex conjugation is given as the morphism $\xi\mapsto \xi^6$ which has order $2$. The element $\xi + \xi^{-1}$ is fixed by complex conjugation and no other morphism in the Galois group. Thus $\Q[\xi + \xi^{-1}]/\Q$ is a field extension of degree $3$ and its Galois group is
\begin{equation*}
	\mathrm{Gal}(\Q[\xi + \xi^{-1}]/\Q) = \{\xi + \xi^{-1} \mapsto \xi^k + \xi^{-k} \mid 1\le k\le 3 \}.
\end{equation*}
We can even describe $\Q[\xi + \xi^{-1}]$ explicitly as $\Q[\eta]$, where $\eta = \cos(\frac{2\pi}{7})$ with
\begin{equation*}
	\mathrm{Gal}(\Q[\eta]/\Q) = \{\sigma_k \mid 1\le k\le 3, \sigma_k(\cos(\frac{2\pi}{7})) = \cos(\frac{2k\pi}{7})\}.
\end{equation*}
Using that $\cos(\frac{4\pi}{7}) = 2\eta^2 - 1$ we can deduce that
\begin{equation*}
	\mathrm{Gal}(\Q[\eta]/\Q) = \{1, \sigma, \sigma^2 \mid \sigma(\eta) = 2\eta^2 - 1\}.
\end{equation*}

\begin{myex}\label{ex - G3 example tensor with itself}
	Let $S$ and $Q$ be
	\begin{equation*}
		\q\xrightarrow{\q_\alpha}\Q, \qquad 1\xrightarrow{\alpha} 2
	\end{equation*}
	respectively. By Proposition~\ref{prop - preprojective algebra of tensor species} $\Pi(T(S)\otimes_\Q T(S))\cong \mathcal{P}(S(S, S), W(S, S))$ where $S(S, S)$ and $Q\tilde{\otimes}Q$ are given as
	\begin{equation*}
		\begin{tikzcd}[column sep = 3 cm, row sep = 2 cm]
			\q\otimes_\Q \q \arrow[r, "\q_\alpha\otimes_\Q \q"] \arrow[d, "\q\otimes_\Q \q_\alpha"] & \Q \otimes_\Q \q \arrow[d, "\Q \otimes_\Q \q_\alpha"] \\
			\q\otimes_\Q \Q \arrow[r, "\q_\alpha\otimes_\Q \Q"] & \Q\otimes_\Q \Q \arrow[ul, "\q_\alpha^*\otimes_\Q \q_\alpha^*", swap]
		\end{tikzcd}, \qquad \begin{tikzcd}
			(1,1) \arrow[r] \arrow[d] & (2,1) \arrow[d] \\
			(1,2) \arrow[r] & (2,2) \arrow[lu]
		\end{tikzcd}.
	\end{equation*}
	Identifying $\q\otimes_\Q \Q \cong \q \cong \Q\otimes_\Q \q$ and using Table~\ref{table - tensor of species} as well as Proposition~\ref{proposition - morita equivalent k-species} we get that $\Pi(T(S)\otimes_\Q T(S))\cong \mathcal{P}(S', W')$ where $S'$ is given as
	\begin{equation*}
		\begin{tikzcd}[column sep=2 cm, row sep = 1 cm]
			\q \arrow[rr, "\lsub{2}\q_1"] \arrow[ddddrrr, "\lsub{6}\q_1", bend right=25, swap] & & \q \arrow[ddd, "\lsub{5}\q_2", swap] & & \q \arrow[ll, "\lsub{2}(\lsub{\sigma}\q)_3", swap] \arrow[ddddl, "\lsub{6}\q_3"] \\
			& & & \q \arrow[ul, "\lsub{2}(\lsub{\sigma^2}\q)_4", swap] \\ \\
			& & \Q \arrow[uuull, "\lsub{1}(\q^*)_5", swap] \arrow[uuurr, "\lsub{3}(\q^*)_5", swap, bend right=5] \arrow[uur, "\lsub{4}(\q^*)_5"] \\
			& & & \q \arrow[ul, "\lsub{5}\q_6", swap] \arrow[from=uuu, "\lsub{6}\q_4", crossing over]
		\end{tikzcd}
	\end{equation*}
	over the quiver
	\begin{equation*}
		\begin{tikzcd}[column sep=2 cm, row sep = 1 cm]
			1 \arrow[rr] \arrow[ddddrrr, bend right=25, swap] & & 2 \arrow[ddd, swap] & & 3 \arrow[ll, swap] \arrow[ddddl] \\
			& & & 4 \arrow[ul, swap] \\ \\
			& & 5 \arrow[uuull, swap] \arrow[uuurr, swap, bend right=5] \arrow[uur] \\
			& & & 6 \arrow[ul, swap] \arrow[from=uuu, crossing over]
		\end{tikzcd}
	\end{equation*}
	and the potential given as
	\begin{equation*}
		\begin{aligned}
			W' =& \sum_{i\in \{1, 3, 4\}} \lsub{2}1_i\otimes (\lsub{i}1^*_5 \otimes \lsub{5}1_2 + \lsub{i}\eta^*_5 \otimes \lsub{5}\eta_2 + \lsub{i}(\eta^2)^*_5 \otimes \lsub{5}\eta^2_2) + \\
			&- \sum_{i\in \{1, 3, 4\}} \lsub{6}1_i\otimes (\lsub{i}1^*_5 \otimes \lsub{5}1_6 + \lsub{i}\eta^*_5 \otimes \lsub{5}\eta_6 + \lsub{i}(\eta^2)^*_5 \otimes \lsub{5}\eta^2_6)
		\end{aligned}
	\end{equation*}
	By \cite[Theorem 5.3]{soderberg2022preprojective} and \cite[Proposition 10.11]{soderberg2022preprojective} the Nakayama automorphism of $\mathcal{P}(S(S, S), W(S, S))$ is $\gamma = \id_{\mathcal{P}(S', W')}$, i.e. $\mathcal{P}(S(S^1, S^2), W(S^1, S^2))$ is symmetric. Thus by Theorem~\ref{Theorem B} the Nakayama automorphism will still be given as the identity morphism when mutating at vertex $1$. Mutating at vertex $1$ yields the species with potential given by
	\begin{equation*}
		\begin{tikzcd}[column sep=3 cm, row sep = 1 cm]
			\q \arrow[dddrr, "\lsub{5}\q_1", swap] & & \q \arrow[ll, "\lsub{1}(\q^*)_2", swap] \arrow[ddd, "\lsub{5}\q_2", bend left=15] & & \q \arrow[ll, "\lsub{2}(\lsub{\sigma}\q)_3", swap] \arrow[ddddl, "\lsub{6}\q_3"] \\
			& & & \q \arrow[ul, "\lsub{2}(\lsub{\sigma^2}\q)_4", swap] \\ \\
			& & \Q \arrow[uuurr, "\lsub{3}(\q^*)_5", swap, bend right=5] \arrow[uur, "\lsub{4}(\q^*)_5"] \arrow[uuu, "{[}\lsub{2}\q_1\otimes_{\q}\lsub{1}(\q^*)_5{]}", bend left=15] \arrow[dr, "{[}\lsub{6}\q_1\otimes_{\q}\lsub{1}(\q^*)_5{]}", swap, bend right=15] \\
			& & & \q \arrow[ul, "\lsub{5}\q_6", swap, bend right=15] \arrow[from=uuu, "\lsub{6}\q_4", crossing over] \arrow[uuuulll, "\lsub{1}(\q^*)_6", bend left=45]
		\end{tikzcd}
	\end{equation*}
	with potential
	\begin{equation*}
		\begin{aligned}
			\widetilde{W} =& \sum_{i\in \{3, 4\}} \lsub{2}1_i\otimes (\lsub{i}1^*_5 \otimes \lsub{5}1_2 + \lsub{i}\eta^*_5 \otimes \lsub{5}\eta_2 + \lsub{i}(\eta^2)^*_5 \otimes \lsub{5}\eta^2_2) + \\
			&- \sum_{i\in \{3, 4\}} \lsub{6}1_i\otimes (\lsub{i}1^*_5 \otimes \lsub{5}1_6 + \lsub{i}\eta^*_5 \otimes \lsub{5}\eta_6 + \lsub{i}(\eta^2)^*_5 \otimes \lsub{5}\eta^2_6) + \\
			&+ [\lsub{2}1_1\otimes \lsub{1}1^*_5] \otimes \lsub{5}1_2 + [\lsub{2}1_1\otimes \lsub{1}\eta^*_5] \otimes \lsub{5}\eta_2 + [\lsub{2}1_1\otimes \lsub{1}(\eta^2)^*_5] \otimes \lsub{5}\eta^2_2) + \\
			&+ [\lsub{6}1_1\otimes \lsub{1}1^*_5] \otimes \lsub{5}1_6 + [\lsub{6}1_1\otimes \lsub{1}\eta^*_5] \otimes \lsub{5}\eta_6 + [\lsub{6}1_1\otimes \lsub{1}(\eta^2)^*_5] \otimes \lsub{5}\eta^2_6) + \\
			&+ \sum_{i \in \{2, 6\}} \lsub{1}1^*_i\otimes [\lsub{i}1_1\otimes \lsub{1}1^*_5] \otimes \lsub{5}1_1 + \lsub{1}1^*_i\otimes [\lsub{i}1_1\otimes \lsub{1}\eta^*_5] \otimes \lsub{5}\eta_1 + \lsub{1}1^*_i\otimes [\lsub{i}1_1\otimes \lsub{1}(\eta^2)^*_5] \otimes \lsub{5}\eta^2_1.
		\end{aligned}
	\end{equation*}
\end{myex}

\begin{myex}\label{ex - example G_3 with its inverse}
	Let $S^1$ be the species in Example~\ref{ex - G3 example tensor with itself} and let $S^2$ be the species over $Q^2$ where they are given as
	\begin{equation*}
		\Q \xrightarrow{\q_\beta}\q, \qquad 1 \xrightarrow{\beta} 2
	\end{equation*}
	respectively. Again, by Proposition~\ref{prop - preprojective algebra of tensor species} $\Pi(T(S^1)\otimes_\Q T(S^2))\cong \mathcal{P}(S(S^1, S^2), W(S^1, S^2))$ where $S(S^1, S^2)$ and $Q\tilde{\otimes}Q$ are given as
	\begin{equation*}
		\begin{tikzcd}[column sep = 3cm, row sep = 2cm]
			\q\otimes_\Q \Q \arrow[r, "\q_\alpha\otimes_\Q \Q"] \arrow[d, "\q\otimes_\Q \q_\beta"] & \Q \otimes_\Q \Q \arrow[d, "\Q \otimes_\Q \q_\beta"] \\
			\q\otimes_\Q \q \arrow[r, "\q_\alpha\otimes_\Q \q"] & \Q\otimes_\Q \q \arrow[ul, "\q_\alpha^*\otimes_\Q \q_\beta^*", swap]
		\end{tikzcd}, \qquad \begin{tikzcd}
			(1,1) \arrow[r] \arrow[d] & (2,1) \arrow[d] \\
			(1,2) \arrow[r] & (2,2) \arrow[lu]
		\end{tikzcd}.
	\end{equation*}
	Similarly as in Example~\ref{ex - G3 example tensor with itself} we have that $\Pi(T(S^1)\otimes_\Q T(S^2))\cong \mathcal{P}(S(S^1, S^2), W(S^1, S^2))\cong \mathcal{P}(S', W')$, where $S'$ is given as
	\begin{equation*}
		\begin{tikzcd}[column sep=1.2cm, row sep = 1cm]
			& & \q \arrow[rrrrrr, "\lsub{2}\q_1"] \arrow[dddd, "\lsub{3}\q_1"] \arrow[dddddl, "\lsub{5}\q_1", bend right=5, swap] \arrow[ddddddll, "\lsub{6}\q_1", bend right=15, swap] & & & & & & \Q \arrow[dddd, "\lsub{4}\q_2"] \\ \\ \\ \\
			& & \q \arrow[rrrrrr, "\lsub{4}\q_3"] & & & & & & \q \arrow[uuuullllll, "\lsub{1}((\lsub{\sigma^2}\q)^*)_4", bend left = 25, swap] \arrow[uuuullllll, "\lsub{1}((\lsub{\sigma}\q)^*)_4", swap] \arrow[uuuullllll, "\lsub{1}(\q^*)_4", bend right=25, swap] \\
			& \q \arrow[rrrrrrru, "\lsub{4}(\lsub{\sigma}\q)_5", bend right=5] \\
			\q \arrow[rrrrrrrruu, "\lsub{4}(\lsub{\sigma^2}\q)_6", bend right=15]
		\end{tikzcd}
	\end{equation*}
	over the quiver
	\begin{equation*}
		\begin{tikzcd}[column sep=1cm, row sep = 0.8cm]
			& & 1 \arrow[rrrrrr] \arrow[dddd] \arrow[dddddl, bend right=5] \arrow[ddddddll, bend right=15] & & & & & & 2 \arrow[dddd] \\ \\ \\ \\
			& & 3 \arrow[rrrrrr] & & & & & & 4 \arrow[uuuullllll, bend left = 20, "\gamma^{\sigma^2}", swap] \arrow[uuuullllll, "\gamma^\sigma", swap] \arrow[uuuullllll, bend right=20, "\gamma", swap] \\
			& 5 \arrow[rrrrrrru, bend right=5] \\
			6 \arrow[rrrrrrrruu, bend right=15]
		\end{tikzcd}
	\end{equation*}
	and the potential given as
	\begin{equation*}
		\begin{aligned}
			W' =& \lsub{2}1_1 \otimes (\lsub{1}1_4^* + (\lsub{1}1_4^\sigma)^* + (\lsub{1}1_4^{\sigma^2}))^*\otimes \lsub{4}1_2 + \\
			&- \sum_{x, y\in \{1, \eta, \eta^2 \}} (\lsub{3}y_1 + \lsub{5}y_1 + \lsub{6}y_1) \otimes ((\lsub{1}xy_4)^* + (\lsub{1}\sigma(x)y_4^\sigma)^* + (\lsub{1}\sigma^2(x)y_4^{\sigma^2})^*)\otimes (\lsub{4}x_3 + \lsub{4}\sigma(x)_5 + \lsub{4}\sigma^2(x)_6).
		\end{aligned}
	\end{equation*}
	Let us now mutate at vertex $2$. It yields the following species
	\begin{equation*}
		\begin{tikzcd}[column sep=1.5cm, row sep = 1.2cm]
			& & \q \arrow[dddd, "\lsub{3}\q_1"] \arrow[dddddl, "\lsub{5}\q_1", bend right=5, swap] \arrow[ddddddll, "\lsub{6}\q_1", bend right=15, swap] \arrow[ddddrrrrrr, "{[}\lsub{4}\q_2\otimes_{\Q}\lsub{2}\q_1{]}", bend left=25] & & & & & & \Q \arrow[llllll, "\lsub{1}(\q^*)_2", swap] \\ \\ \\ \\
			& & \q \arrow[rrrrrr, "\lsub{4}\q_3"] & & & & & & \q \arrow[uuuullllll, "\lsub{1}((\lsub{\sigma^2}\q)^*)_4", bend left = 25] \arrow[uuuullllll, "\lsub{1}((\lsub{\sigma}\q)^*)_4", bend left=5] \arrow[uuuullllll, "\lsub{1}(\q^*)_4", bend right=12] \arrow[uuuu, "\lsub{2}(\q^*)_4", swap] \\
			& \q \arrow[rrrrrrru, "\lsub{4}(\lsub{\sigma}\q)_5", bend right=5] \\
			\q \arrow[rrrrrrrruu, "\lsub{4}(\lsub{\sigma^2}\q)_6", bend right=15]
		\end{tikzcd}
	\end{equation*}
	with the potential
	\begin{equation*}
		\begin{aligned}
			\widetilde{W} =& (\lsub{1}1_4^* + (\lsub{1}1_4^\sigma)^* + (\lsub{1}1_4^{\sigma^2}))^*\otimes [\lsub{4}1_2 \otimes \lsub{2}1_1] + \\
			&- \sum_{x, y\in \{1, \eta, \eta^2 \}} (\lsub{3}y_1 + \lsub{5}y_1 + \lsub{6}y_1) \otimes ((\lsub{1}xy_4)^* + (\lsub{1}\sigma(x)y_4^\sigma)^* + (\lsub{1}\sigma^2(x)y_4^{\sigma^2})^*)\otimes (\lsub{4}x_3 + \lsub{4}\sigma(x)_5 + \lsub{4}\sigma^2(x)_6) + \\
			&+ \lsub{2}1_4^*\otimes [\lsub{4}1_2\otimes \lsub{2}1_1]\otimes \lsub{1}1_2^*.
		\end{aligned}
	\end{equation*}
	Note that here we had to apply $\varepsilon_r$ to the first term in $W'$ before we did the mutation. Similarly as in Example~\ref{ex - G3 example tensor with itself} the Nakayama automorphism will be the identity morphism.
\end{myex}

All of the examples above satisfy the criteria in Theorem~\ref{theorem - Jasso-Muro} due to the main theorems in this article. The upshot of our main theorems is that we can generate a lot of examples that can be quite hard to show to fit into Theorem~\ref{theorem - Jasso-Muro}. In particular, the mutated species in Example~\ref{ex - G3 example tensor with itself} and Example~\ref{ex - example G_3 with its inverse} are self-injective which is certainly not obvious from their descriptions.

\newpage
\bibliographystyle{alpha}
\bibliography{References}
\end{document}